\newcolumntype{P}[1]{>{\centering\arraybackslash}p{#1}}
\newcolumntype{M}[1]{>{\centering\arraybackslash}m{#1}}
\newcommand{\R}{\mathbb{R}}
\newcommand{\N}{\mathbb{N}}
\renewcommand{\L}{\mathbb{L}}
\newtheorem{theorem}{Theorem}[section]
\newtheorem{lemma}{Lemma}[section]
\newtheorem*{definition}{Definition}
\newtheorem{remark}{Remark}[section]
\newtheorem{proposition}{Proposition}[section]
\newtheorem{corollary}{Corollary}[section]
\numberwithin{equation}{section}
\title{Unilateral Problems for Quasilinear Operators with Fractional Riesz Gradients}
\author{
  Pedro Miguel Campos\footnote{CMAFcIO – Departamento de Matemática, Faculdade de Ciências, Universidade de Lisboa P-1749-016 Lisboa, Portugal. \,\,Email address: \texttt{pmcampos@fc.ul.pt}}\\
  \and
  José Francisco Rodrigues\footnote{CMAFcIO – Departamento de Matemática, Faculdade de Ciências, Universidade de Lisboa P-1749-016 Lisboa, Portugal. \,\,Email address: \texttt{jfrodrigues@ciencias.ulisboa.pt}}
}
\date{}
\begin{document}

\maketitle

\begin{abstract}
    In this work, we develop the classical theory of monotone and pseudomonotone operators in the class of convex constrained Dirichlet-type problems involving fractional Riesz gradients in bounded and in unbounded domains $\Omega\subset\mathbb{R}^d$. We consider the problem of finding $u\in K^s$, such that,
    \begin{equation*}
        \int_{\mathbb{R}^d}{\boldsymbol{a}(x,u,D^s u)\cdot D^s(v-u)}\,dx+\int_\Omega{b(x,u,D^s u)(v-u)}\,dx\geq 0 
    \end{equation*}
    for all $v\in K^s$. Here $K^s\subset\Lambda^{s,p}_0(\Omega)$ is a non-empty, closed and convex set of a fractional Sobolev type space $\Lambda^{s,p}_0(\Omega)$ with $0\leq s\leq 1$ and $1<p<\infty$, and $D^s$ denotes the distributional Riesz fractional gradient, with two limit cases: $D^1=D$ representing the classical gradient in the classical Sobolev space $\Lambda^{1,p}_0(\Omega)=W^{1,p}_0(\Omega)$, and $-D^0=R$ denotes the vector-valued Riesz transform within $\Lambda^{0,p}_0(\Omega)=\{u\in L^p(\mathbb{R}^d):\, u=0 \mbox{ a.e. in } \mathbb{R}^d\setminus\Omega\}$.
    
    We discuss the existence and uniqueness of solutions in this novel framework and we obtain new results on the continuous dependence, with respect to the fractional parameter $s$, of variational solutions corresponding to several classical assumptions on the structural functions $\boldsymbol{a}$ and $b$ adapted to the fractional framework. We introduce an extension of the Mosco convergence for convex sets $K^s$ with respect to the parameter $s$, including the limit cases $s=1$ and $s=0$, to prove weak or strong convergences of the solutions $u_s$ and their fractional gradients $D^su$, according to different cases. Several applications are illustrated with examples of unilateral problems, including quasi-variational inequalities with constraints of obstacle type $u\geq \psi$ and $s$-gradient type $|D^su|\leq g$.
\end{abstract}

\tableofcontents

\section{Introduction}

In 2015, T.-T. Shieh and D. Spector have considered a new type of fractional differential operators in \cite{shieh2015On}, combining results of Harmonic Analysis, specially pseudodifferential operators, with Calculus of Variations. 
The functionals in \cite{shieh2015On} defining  those operators are given with the distributional Riesz fractional gradient, denoted by $D^s$. 

When applied to smooth functions, the $D^s$ can be computed as the gradient $D$ of the convolution of the function with the Riesz kernel.
More precisely, the operator $D^s$ can be expressed as
\begin{equation}\label{eq:gradient_of_riesz_potential}
    D^s u= D(I_{1-s}*u),
\end{equation}
where $I_{1-s}(x)= \frac{\mu_{d,s}}{d+s-1}|x|^{1-s-d}$ denotes the Riesz kernel  in $\R^d$ with $s\in(0,1)$ and $\mu_{d,s}$ is given by \eqref{eq:mu}. These fractional derivatives have some advantages in some types of fractional differential equations. 
Firstly, they can be extended to more general classes of function spaces of Sobolev type and they approach, in a natural way, the classical derivatives  due to the fact that the Riesz kernel $I_{1-s}$ is an approximation of the identity as $s\nearrow 1$. 
Secondly, being vector-valued like the classical gradient, contrasting with the scalar fractional Laplacian $(-\Delta)^s$, they are suitable for inhomogeneous and anisotropic problems, since duality methods apply to the Riesz fractional s-gradient. 
Moreover, it can also provide an eleventh characterization of the fractional Laplacian in $\R^d$ with respect to \cite{kwasnicki2017Ten}. 
Indeed, denoting by $\Phi\mapsto\mathrm{div}_s\Phi=D^s \cdot\Phi$ the $s$-divergence, we have in the distributional sense
\begin{equation}
    (-\Delta)^su=-D^s\cdot D^s u.
\end{equation}

Subsequently, A. Ponce made a brief introduction to the fractional gradient and divergence operators for smooth functions with compact support, \cite[pages 246-251]{ponce2016Elliptic}, and
M. \v{S}ilhav\'{y} published  in \cite{silhavy2020Fractional} a fractional vector calculus developing ideas presented in a seminar \cite{silhavy2016Beyond}. 
In the introduction of \cite{shieh2018On} it is mentioned the equivalence of the definition of the distributional Riesz fractional gradient \eqref{eq:gradient_of_riesz_potential} with the definition \eqref{eq:s_fractional_gradient} of $s$-fractional gradient  introduced by \v{S}ilhav\'{y} in \cite{silhavy2016Beyond} and \cite{silhavy2020Fractional}, which corresponds to 
\begin{equation}\label{N}
    D^s u= -N_{-s}*u,
\end{equation}
with $N_{-s}(x)= -D I_{1-s}(x)=\mu_{d,s}x|x|^{-1-s-d}$ being the vector-valued kernel of Horváth \cite{horvath1959On}. Actually, this definition can be seen as a special case of a definition used by Sobolev and Nikol\textquotesingle ski\u{\i} in \cite{sobolev1963Embedding} in a more general framework, with generalized homogeneous functions of degree $-d-s$ (a class of functions containing the Horváth kernel as a special case), under the name of \textit{Liouville derivatives of order $s$}. The proof of that equivalence was proven in \cite{ponce2016Elliptic} and in the work of G. Comi and G. Stefani \cite{comi2019BlowUp}, respectively, for smooth and Lipschitz functions with compact support.

\v{S}ilhav\'{y}'s treatment was different from the one of Shieh and Spector, as he was interested in showing that these nonlocal operators satisfy the ``natural qualitative requirements on the fractional operators''. 
Notably, \v{S}ilhav\'{y} has shown that $D^s$ is, up to a multiplicative constant, the only operator applying scalar functions into vector valued functions that satisfies: 
$(i)$ rotational and translational invariance, important for the operators to be independent of the chosen basis; 
$(ii)$ a mild continuity hypothesis, $D^s$ is sequential continuous for test functions in a suitable topology, and 
$(iii)$ $s$-homogeneity under isotropic scaling, which characterizes the fractional nature of the operator. 
In addition, using the weaker form of the definitions of the fractional gradient and divergence, he derived explicit formulas for these operators, in particular, showing that $(-\Delta)^{(s+r)/2} u=-D^s\cdot D^r u$ and computing directly the fractional gradient of distributions like the Dirac delta or the Heaviside function in $\R$, \cite[Examples 6.2 and 6.3]{silhavy2020Fractional}.

\vspace{2mm}

With this notion of fractional gradient, for $0<s<1$ and $1< p<\infty$, in \cite{shieh2015On} it was also introduced the following Sobolev-type space,  
\begin{equation}\label{L-C}
    \Lambda^{s,p}(\R^d)=\overline{C^\infty_c(\R^d)}^{\|\cdot\|_{\Lambda^{s,p}(\R^d)}},
\end{equation}
i.e., the completion of the space of test functions for the norm
\begin{equation}
    \|u\|_{\Lambda^{s,p}(\R^d)}=\left(\|u\|^p_{L^p(\R^d)}+\|D^s u\|^p_{L^p(\R^d;\R^d)}\right)^{1/p}.
\end{equation}

These spaces, that we call here Lions-Calderón spaces, were not new, as they were shown,  for $1<p<\infty$, also in \cite{shieh2015On}, to be equivalent to the so-called Bessel potential spaces in \cite{adams1975Sobolev, adams1996Function, stein1970Singular} or generalized (inhomogeneous) Sobolev spaces in \cite{grafakos2014Modern}. With this new approach, they are well suited to the study of some fractional partial differential equations, including nonhomogeneous and anisotropic variants of the fractional Laplacian. Therefore they have been well studied in the literature, as they have a long and rich history. Indeed, the oldest reference we could find for these spaces is due to N. Aronzajn and K. Smith, who in \cite{aronszajn1959theory, aronszajn1961theory} defined them in the Hilbertian case $p=2$. As it is well known, this case coincides with the Sobolev-Slobodeckij space $W^{s,2}(\R^d)=H^s(\R^d)$. Later, those spaces were independently generalized to the non-Hilbertian case $1<p<\infty$, by  J.-L. Lions in \cite{lions1960Une}, using complex interpolation between $L^p(\R^d)$ and $W^{1,p}(\R^d)$, and by A. P. Calderón in \cite{calderon1961Lebesgue}, using Bessel potentials. The initial definition in terms of Bessel potentials $G_s$, with their different notations $H^{s,p}(\R^d)=L^p_s(\R^d)=G_s(L^p(\R^d))=\Lambda^{s,p}(\R^d)$,  is not sufficient to justify their terminology as ``Bessel potential spaces''. In fact, both the Besov and the Triebel-Liozorkin spaces can be obtained also by the image of Bessel potential, respectively $B^{s,p}_q(\R^d)=G_s(B^{0,p}_q(\R^d))$ and $F^{s,p}_q(\R^d)=G_s(F^{0,p}_q(\R^d))$, and we have $B^{s,p}_p(\R^d)=W^{s,p}(\R^d))$ and $F^{s,p}_2(\R^d)=\Lambda^{s,p}(\R^d))$, see \cite{triebel1983Theory} for instance. 

On the other hand, as in \cite{comi2019BlowUp} under the name of \textit{distributional fractional Sobolev spaces} for $1\leq p\leq\infty$, the Lions-Calderón spaces were defined directly in terms of the $s$-fractional gradient, i.e.
 \begin{equation}\label{eq:first_lions_calderon}
	\Lambda^{s,p}(\R^d)=\{u\in L^p(\R^d):\, D^s u\in L^p(\R^d;\R^d)\},
\end{equation}
which equivalence with the definition \eqref{L-C} was proved for $1<p<\infty$ in \cite{brue2022AsymptoticsII} and independently in \cite{kreisbeck2022Quasiconvexity}, while for $p=1$ it has been earlier proven in \cite{comi2019BlowUp}.
Actually this definition was also suggested for general spaces of functions with Liouville derivatives by Sobolev and Nikol\textquotesingle ski\u{\i} in the survey \cite[page 150]{sobolev1963Embedding}. The alternative names ``generalized Sobolev spaces'' or ``distributional fractional Sobolev spaces'', although possibly more appropriate to this general class of functional spaces of the form $\{u\in L^p: \mathcal{D}u\in L^p\}$, with $\mathcal{D}$ being some broadened notion of derivatives, colides with the Sobolev-Slobodeckij spaces $W^{s,p}(\R^d)$, which are also well-known as the fractional Sobolev spaces and do not coincide with the spaces $\Lambda^{s,p}(\R^d)$, except when $p=2$ or when $s$ is an integer. 

Another important property of the fractional gradient  $D^s$ is the continuous dependence with respect to the parameter $s$. In fact, it was noted, in \cite{silhavy2016Beyond} and in \cite{ponce2016Elliptic} without proofs, in \cite{rodrigues2019On} using Kurokawa's results \cite{kurokawa1981On} on the approximating of the identity by Riesz kernels, and in \cite{bellido2020gamma} using Fourier Analysis and an interpolation inequality, that the following property for sufficiently regular functions $u$,
\begin{equation*}
    D^s u \to D u \quad \mbox{ in } L^p(\R^d;\R^d)\,\,\mbox{ as } s\nearrow 1, \mbox{ for } 1<p<\infty.
\end{equation*}
In \cite{comi2022asymptoticsI, brue2022AsymptoticsII}, using direct estimates and properties of the singular integrals, the authors extended these type of continuity results to the variation of $s$, by showing that
\begin{equation*}
    D^s u\to D^\sigma u \quad \mbox{ in } L^p(\R^d;\R^d)\,\,\mbox{ as }s\to \sigma\in (0,1],  \mbox{ for } 1\leq p<\infty,
\end{equation*}
and also
\begin{equation*}
    D^s u\to -R u\quad \mbox{ in } L^p(\R^d, \R^d)\,\,\mbox{ as } s\searrow 0, \mbox{ for } 1<p<\infty.
\end{equation*}
Here $R$ is the Riesz transform (see \cite{stein1970Singular, grafakos2014Classical}), which motivates the notation $D^0=-R$.

Concerning applications to partial differential equations, these spaces had also appeared in  \cite{lions1961problemi} and in \cite{jerison1995Inhomogeneous}, to study the regularity theory for linear elliptic partial differential equations, and in \cite{linares2015Introduction} to study a variety of questions about solutions to dispersive partial differential equations.  However the new characterization of these spaces using the distributional Riesz fractional gradient allows the study of fractional partial differential equations, as proposed in \cite{shieh2015On}, as well as extensions to unilateral problems, as in \cite{rodrigues2019On, azevedo2023constrain, azevedo2022transport, lo2021class} and other problems of the Calculus of Variations, as in \cite{shieh2018On}, \cite{bellido2020piola, bellido2020gamma} and \cite{kreisbeck2022Quasiconvexity}, in particular with applications in Peridynamics \cite{silling2000Reformulation, silling2007States} to model fractional hyperelasticity, see also \cite{schonberger2021characterization}.  On the other hand, this functional framework for the Riesz fractional gradient is also well suited for certain classes of evolution problems, such as fractional Stefan-type problems \cite{lo2023Stefan} and hyperbolic obstacle-type problems \cite{campos2023hyperbolic}.

Paraphrasing Jean Mawhin in his Foreword of \cite{bisci2016Variational}, \textit{it is therefore a natural question to see which results ``survive'' when the gradient
is replaced by the fractional gradient}, which \textit{is also fruitful because the
extension of classical results to new situations also sheds light on a better and deeper
understanding of the classical results}. In this work we are interested in more general types of stationary problems, with or without constraints, in (fractional) divergence form, including the variants of the $(s,p)$-Laplacian type:
\begin{equation}
    \begin{cases}
        -D^s\cdot\big(\alpha(u)|D^s u|^{p-2}D^s u+\boldsymbol{e}(u)\big)+b(u,D^s u)=f_0-D^s\cdot \boldsymbol{f} & \mbox{ in } \Omega\\
        u=0 &\mbox{ on } \R^d\setminus \Omega.
    \end{cases}
    \label{eq:goal_equation_distributional_form}
\end{equation}
This will be done in the general framework of monotone and pseudomonotone operators involving fractional Riesz gradients, in bounded and in unbounded domains $\Omega\subset\mathbb{R}^d$, in classes of convex constrained Dirichlet-type problems. In particular, this allow us to consider the existence of solutions to problems of the type \eqref{eq:goal_equation_distributional_form}, their extensions with obstacle or s-gradient constraints and to study their dependence with respect to the parameter $s$, including the limit cases $s=1$ and $s=0$.

In Section 2, we start with a summary of the natural functional framework of the Riesz fractional gradient $D^s$, with the precise definition of the Lions-Calderón spaces $\Lambda^{s,p}(\R^d)$ and $\Lambda^{s,p}_0(\Omega)$, with special emphasis on the differences between the properties of the latter spaces when $\Omega$ is bounded and when it is not. In particular, we recall the equivalent definitions of the fractional gradient and their important duality property in the Lions-Calderón spaces, the characterisation of their dual spaces $\Lambda^{-s,p'}(\R^d)$, $p'=p/(p-1)$, by showing the representation $F=f_0-D^s\cdot \boldsymbol{f}$, for all $F\in\Lambda^{-s,p'}(\R^d)$, the contiguity relation between $\Lambda^{s,p}(\R^d)$ and $W^{s,p}(\R^d)$ and the monotone inclusions of the spaces $\Lambda^{s,p}(\R^d)$ and $\Lambda^{s,p}_0(\Omega)$, with respect with the inverse monotone variation of the parameter $s\in[0,1]$, for $1<p<\infty$. We also recall the fractional Gagliardo-Nirenberg inequalities and the continuity of $D^s$ with respect to $s\in[0,1]$ in those spaces. We notice that these results also hold for $\Lambda^{s,p}_0(\Omega)$, since it may be considered as a subspace of $\Lambda^{s,p}(\R^d)$. Some properties of these spaces are then also complemented in the case of bounded $\Omega$, like in the classical case, as the fractional Poincaré inequality, emphasizing the dependence on the fractional parameter, and the fractional Rellich-Kondrachov compactness embedding. In particular, we prove the new explicit estimate $\|D^t u\|_{L^p(\R^d;\R^d)}\leq \frac{C}{t^{1+\frac{1}{p}}}\|D^s u\|_{L^p(\R^d;\R^d)}$ of the embedding $\Lambda^{s,p}_0(\Omega)\subset\Lambda^{t,p}_0(\Omega)$, $1<p<\infty$, depending on the fractional parameter $t$ but not on $s$, $0<t<s\leq1$, and
 we show the compact embedding
	$\Lambda^{s,p}_0(\Omega)\Subset \Lambda^{t,q}_0(\Omega)$ for all $1<p\leq q<\infty$, $0\leq t<s\leq 1$, satisfying $\frac{1}{p}-\frac{s}{d}<\frac{1}{q}-\frac{t}{d}$. We conclude this functional section with a relevant property on the density of the non negative smooth functions with compact support, in a bounded open set $\Omega$, in the non negative cone of $\Lambda^{s,p}_0(\Omega)$.

\vspace{2mm}

In the functional framework of the Lions-Calderón spaces, we study in Section \ref{sec:existence_uniqueness}, existence of solutions $u=u_s\in K^s \subset \Lambda^{s,p}_0(\Omega)$ to fractional variational inequalities of the type
\begin{equation}\label{eq:general_variational_inequality}
  \begin{split}
    &\int_{\R^d}{\boldsymbol{a}(x,u, D^s u)\cdot D^s(v-u)}\,dx+\int_{\Omega}{b(x,u, D^su)(v-u)}\,dx\geq 
    \int_\Omega{f_0(v-u)}+\int_{\R^d}{\boldsymbol{f}\cdot D^s(v-u)}\\ 
  \end{split}
\end{equation}
for all $v\in K^s$. Here $K^s$ may be general a non-empty closed convex subset of $\Lambda^{s,p}_0(\Omega)$ for $s\in[0,1]$ and $p\in(1,\infty)$, including the classical unilateral examples in the fractional setting 
\begin{equation}\label{eq:introduction_examples_convex_sets}
    K^s_{\geq\psi}=\{u\in \Lambda^{s,p}_0(\Omega): u\geq \psi\mbox{ in } \Omega\}\,\,\,\, \mbox{ and }\,\,\,\, \mathcal{K}^s_g=\{u\in \Lambda^{s,p}_0(\Omega): |D^s u|\leq g \mbox{ in } \R^d\}.
\end{equation}
In this new framework we develop the theory of monotone operators, initiated in \cite{kachurovskii1960monotone, minty1962Monotone}, and pseudomonotone operators, introduced by H. Brézis in \cite{brezis1968equations} and extended by F. Browder in \cite{browder1977PseudoMonotone}. These methods, combined with the functional properties of the fractional Riesz gradients, allow us to obtain a variety of interesting existence results for different types of problems, with $\Omega$ bounded or unbounded, in this novel framework. In the monotone case, it is possible to let $s=0$, which corresponds to a new class of nonlinear problems involving an important Calderón-Zygmund operator, namely the Riesz transform. When $\Omega$ is bounded and $s>0$, we apply the method of \cite{brezis1968equations} for pseudomonotone operators to obtain the novel existence results, while when $\Omega$ is unbounded and $s>0$ we extend the method \cite{browder1977PseudoMonotone} to the fractional framework. When $\Omega$ is unbounded but $s\geq 0$ we restrict ourselves to monotone operators and make use of more classical arguments. We observe that, as in the classical pseudomonotone case $s=1$, when $\Omega$ is unbounded the type of functions $\boldsymbol{a}$ and $b$ for which we can obtain existence results is more restrictive. 

In particular, we study specially fractional $p$-Laplacian type operators, for which the typical example is the variational $(s,p)$-Laplacian, or the $s$-fractional $p$-Laplacian
\begin{equation}\label{FractionalPLaplacian}
    u\mapsto-\Delta_p^su=-D^s\cdot(|D^s u|^{p-2}D^s u),
\end{equation}
that constitutes a continuous one parameter family of nonlinear operators as $s$ varies through $\Lambda^{s,p}_0(\Omega)$, from $0$ in $L^0_0(\Omega)$ till $1$ in $W^{1,p}_0(\Omega)$. In addition, we prove that the solution operator
\begin{equation*}
    \mathcal{S}: \Lambda^{-s,p'}(\Omega)\ni F=f_0-D^s\cdot \boldsymbol{f}\mapsto u\in K\subset\Lambda^{s,p}_0(\Omega),
\end{equation*}
for the variational inequality  \eqref{eq:general_variational_inequality} associated to fractional $p$-Laplacian type operators, for $0\leq s\leq 1$, is $\frac{1}{p-1}$-Hölder continuous for $p\geq2$, locally Lipschitz continuous for $1<p<2$ and, for $\Omega$ bounded, also compact in $\Lambda^{t,q}_0(\Omega)$ for $0<t<s$ and $1<p,q<\infty$ satisfying $\frac{1}{p}-\frac{s}{d}<\frac{1}{q}-\frac{t}{d}$. These results are new for $0\leq s\leq 1$ with $p\neq 2$ and $K\subsetneq\Lambda^{s,p}_0(\Omega)$, extending those of \cite{simon1981Regularity} for the classical Dirichlet problem corresponding to $s=1$.

\vspace{2mm}

The following section in this article, Section \ref{sec:stability}, contains the main results of this work and concerns the continuous dependence of the solutions to \eqref{eq:general_variational_inequality} with respect to the parameter $s$, which implies the variation of the convex sets $K^s$ and the respective functional spaces $\Lambda^{s,p}_0(\Omega)$. Introducing a generalized Mosco convergence \eqref{eq:GM} for the convex sets, we show that the solutions to those problems converge, up to subsequences, to a solution of the limit problems as $s\to\sigma\in[s^*,1]$, following the four settings of the Section \ref{sec:existence_uniqueness}, respectively the case of pseudomonotone operators with $0<s^*<s\leq1$, with $\Omega$ bounded, where we prove
 \begin{equation}\label{eq:introduction_convergence_brezis_stability}
        u_s\to u_\sigma \mbox{ in } \Lambda^{t,p}_0(\Omega), \mbox{ for } 0\leq t<s^*, \quad \mbox{ and } \quad D^{s}u_s\rightharpoonup D^\sigma u_\sigma \mbox{ in } L^p(\R^d;\R^d),
    \end{equation}
for pseudomonotone operators with $0<s^*<s\leq1$, with $\Omega$ unbounded, where the result is weaker
\begin{equation}\label{eq:introduction_convergence_browder_stability}
        u_s\rightharpoonup u_\sigma \mbox{ in } \Lambda^{t,p}_0(\Omega), \mbox{ for } 0\leq t\leq s^*, \quad \mbox{ and } \quad D^{s}u_s\rightharpoonup D^\sigma u_\sigma \mbox{ in } L^p(\R^d;\R^d),
    \end{equation}
and similarly for monotone operators with $0\leq s^* \leq \sigma\leq1$ and with $\Omega$ unbounded, where $\sigma=0$ is admissible with $\Lambda^{0,p}_0(\Omega)=L^p_0(\Omega)$ and $\sigma=1$ corresponds to $\Lambda^{1,p}_0(\Omega)=W^{1,p}_0(\Omega)$. In the case of $p$-Laplacian type coercive operators we have, in addition to the convergence \eqref{eq:introduction_convergence_browder_stability} for the whole sequence $s\to\sigma$, as the solutions are unique, also the stronger convergences for $0\leq \sigma\leq1$
\begin{equation}\label{eq:introduction_strong_convergence_stability}
        u_s\to u_\sigma \mbox{ in } L^p_0(\Omega) \quad \mbox{ and } \quad D^{s}u_s\to D^\sigma u_\sigma \mbox{ in } L^p(\R^d;\R^d),
    \end{equation}
and, in the case $\sigma\geq s^*>0$ with bounded $\Omega$, the first convergence in \eqref{eq:introduction_convergence_brezis_stability} also holds.

The proofs of these stability results are based in a refinement of the classical monotonicity and compactness methods used to study the existence problems and a generalization of the Mosco convergence \cite{mosco1969convergence} in a fixed Banach space $V$:
\begin{equation}
    K_n\xrightarrow{M} K \quad \Leftrightarrow \quad \text{w--}\limsup_n{K_n}\subset K\subset \liminf_n{K_n} \mbox{ in } V.
\end{equation}
Although this notion of convergence is a powerful tool to study stability problems associated to variational inequalities with respect to the variation of the convex sets, as one can see for example in
\cite{mosco1969convergence,mosco1976Implicit,rodrigues1987obstacle,antil2021on,boccardo2021some}, it cannot be used directly in our framework, since the spaces $\Lambda^{s,p}_0(\Omega)$ change with $s$. The particular case $s\nearrow \sigma$ is simpler, since the convex sets $K^s$ are non-increasing and $K^\sigma\subset K^s$, and it has been considered in \cite{lo2021class,campos2021Lions} for obstacle problems with $\sigma=1$, by using Mosco's arguments directly in the space $\Lambda^{s^*,p}_0(\Omega)$ where $0\leq s^*<s<\sigma\leq 1$.

The general case requires a generalization of Mosco's notion of convergence, which in our situation consists of working with the convergences of the pair $(u, D^s u)$ in the space $\L^p(\Omega)=L^p(\Omega)\times L^p(\R^d;\R^d)$, instead of working directly with the functions $u$ in $\Lambda^{s,p}_0(\Omega)$. This allows the ambient space $\L^p(\Omega)$ to be independent of $s$ without losing information about the regularity of $u$ and to introduce, for converging sequences  $[0,1]\ni s \to\sigma\in[0,1]$, the convergence of convex sets in the generalized sense of Mosco
\begin{equation}\label{eq:GM}
    \Lambda^{s,p}_0(\Omega)\supset K^s\xrightarrow{s-M} K^\sigma\subset \Lambda^{\sigma,p}_0(\Omega).
\end{equation}

With this approach, we are able to obtain the new results, with different combinations of weak and strong convergences of the solutions $u_s$ and their fractional gradients $D^su$, given in \eqref{eq:introduction_convergence_brezis_stability}, \eqref{eq:introduction_convergence_browder_stability} and \eqref{eq:introduction_strong_convergence_stability}. We emphasize that the convergences $s\to\sigma$ may be arbitrary, not requiring monotonicity even in the limit cases  $\sigma=1$ and $\sigma=0$. 

To complement the existence and stability results obtained in the Sections \ref{sec:existence_uniqueness} and \ref{sec:stability}, respectively, we provide some new examples and new applications in Section \ref{sec:applications}. We start in Subsection \ref{subsec:examples_mosco} with specific examples of sets that converge in the generalized sense of Mosco that was introduced in Section \ref{sec:stability}. We show that $\Lambda^{s,p}_0(\Omega)\xrightarrow{s-M} \Lambda^{\sigma,p}_0(\Omega)$, which implies, for example, that, under suitable assumptions on the nonlinear coefficients, the solutions $u^s\in \Lambda^{s,p}_0(\Omega)$ to \eqref{eq:goal_equation_distributional_form}, as $s\nearrow 1$, converge to solutions $u\in W^{1,p}_0(\Omega)$ of the classical quasilinear Dirichlet problem:
\begin{equation}
    \begin{cases}
        -D\cdot\big(\alpha(u)|Du|^{p-2}Du+\boldsymbol{e}(u)\big)+b(u,Du)=f_0-D\cdot \boldsymbol{f} & \mbox{ in } \Omega\\
        u=0 &\mbox{ on } \partial \Omega.
    \end{cases}
\end{equation}
In order to study also the stability, as $s\to\sigma\in[0,1]$, of the solutions to obstacle problems we give examples of $K^s_{\geq\psi_s} \xrightarrow{s-M} K^\sigma_{\geq\psi_\sigma}$, and also one example of $\mathcal{K}^s_{g_s} \xrightarrow{s-M} \mathcal{K}^\sigma_{g_\sigma}$, with constraint on the fractional gradient $D^s u$ instead of $u$, which we restrict to $g_s=I_{\sigma-s}g_\sigma$ and $s \nearrow \sigma\in(0,1]$.

In Subsection \ref{subsec:quasivariational}, we provide applications to quasivariational problems, i.e. problems where the convex sets $K=K(u)$ are allowed to depend on the solution $u$ of the problem, a class of problems considered, for instance, in the classical framework by \cite{bensoussan1973controle, mosco1976Implicit, joly1979propos, bensoussan1982Controlle, baiocchi1984Variational, gwinner2022uncertainty}. Fractional problems of this type were, as far as we know, first studied in Hilbertian framework by  H. Antil and C. Rautenberg in \cite{antil2018Fractional} using methods that are dependent on a comparison principle. Since it is not known if the fractional operators studied in our work satisfy any type of comparison principles, we use fixed-point arguments together with variational methods, generalizing some results that were obtained in \cite{rodrigues2019On, antil2021on, lo2021class} to the non-Hilbertian framework. We give two examples with strictly monotone coercive operators, one of obstacle type, where the obstacle is given by $\psi=\Psi(u)$, and another with a fractional gradient constraint $g=G(u)$, for suitable compact operators $\Psi$ and $G$, respectively. We conclude this subsection with two examples of continuous dependence with respect to $s$: \textit{i)} by showing the convergence \eqref{eq:introduction_convergence_brezis_stability} for $s\to\sigma\in[s^*,1]$ in the case of subsequences of solutions $u_s\in K^s_{\geq \Psi(u_s)}$ to the quasi-variational implicit obstacle problem towards a solution of the limit obstacle problem $u_\sigma\in K^s_{\geq \Psi(u_\sigma)}$, and \textit{ii)} applying a result of \cite{azevedo2022transport} proving the localization of solutions of fractional quasi-variational inequalities with s-gradient constraints in the Hilbertian framework $p=2$ with a linear operator when $s\nearrow 1$.

\section{The Lions-Calderón Spaces}\label{sec:functional_framework}

\subsection{The natural spaces for the s-fractional gradient}
Introducing the Riesz potentials
\begin{equation}\label{eq:riez_potential}
    I_{r}\varphi(x)=(I_{r}*\varphi)(x)=\frac{\mu_{d,1-r}}{d-r}\int_{\R^d}{\frac{\varphi(y)}{|x-y|^{d-r}}}\,dy.
\end{equation}
for $r=1-s\in(0,1)$ and $\varphi\in C^\infty_c(\R^d)$, with
\begin{equation}\label{eq:mu}
\mu_{d,s}=\frac{2^s\Gamma\left(\frac{d+s+1}{2}\right)}{\pi^{d/2}\Gamma\left(\frac{1-s}{2}\right)},
\end{equation}
we recall the following definition introduced in \cite{shieh2015On}:
\begin{definition}[Distributional Riesz fractional partial derivative]
    Let $s\in(0,1)$ and consider $\varphi\in C^\infty_c(\R^d)$. Then we define $(D^s\varphi)_j=\frac{\partial^s\varphi}{\partial x^s_j}$ by
    \begin{equation}\label{eq:distributional_riesz_fractional_partial_derivative}
        \left\langle\frac{\partial^s\varphi}{\partial x^s_j}, v\right\rangle=-\int_{\R^d}{(I_{1-s}\varphi)\frac{\partial v}{\partial x_j}}\,dx, \quad \forall v\in C^\infty_c(\R^d).
    \end{equation}
\end{definition}
Since $\varphi\in C^\infty_c(\R^d)$, the Riesz potential $I_{1-s}\varphi$ is actually a well-defined function belonging to $L^p(\R^d)\cap C^\infty(\R^d)$ for all $p\in(1,\infty)$, and so the distributional Riesz fractional partial derivative \eqref{eq:distributional_riesz_fractional_partial_derivative} makes sense in the distributional sense.

By definition of distributional Riesz partial derivative these operators are non-local, in the sense that in order to be computed they require information on the whole $\R^d$ and not just information along the direction of canonical vector $e_j$. Similarly to the classical derivatives, we can express the distributional Riesz fractional gradient, or simply the $s$-fractional gradient, in terms of its $s$-fractional partial derivatives \begin{equation}
    D^s \varphi=\sum_{j=1}^d\frac{\partial^s \varphi}{\partial x^s_j}e_j.
\end{equation}

When dealing with smooth functions with compact support, we can express the $s$-fractional gradient without the need of using distributions, since it is still a smooth vector function, although not having compact support. In fact we have the following characterization:
\begin{proposition}\label{prop:charact_Ds_with_riesz}
    Let $s\in (0,1)$ and $\varphi\in C^\infty_c(\R^d)$, then
    \begin{equation}\label{eq:shieh_spector_characterization}
        D^s\varphi=D(I_{1-s}\varphi)=I_{1-s}(D\varphi).
    \end{equation}
\end{proposition}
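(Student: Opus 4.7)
The plan is to prove the two equalities in \eqref{eq:shieh_spector_characterization} separately. First I would establish the second equality, $D(I_{1-s}\varphi)=I_{1-s}(D\varphi)$, by showing that convolution with the Riesz kernel $I_{1-s}$ commutes with classical partial differentiation when applied to test functions. Writing
\[
(I_{1-s}\varphi)(x)=\int_{\R^d} I_{1-s}(x-y)\,\varphi(y)\,dy,
\]
I would form the difference quotient of this expression in the direction $e_j$ and pass to the limit by dominated convergence. Since $\varphi\in C^\infty_c(\R^d)$ is supported in some ball $B_R$, for $|h|\le 1$ the integrand is supported in $y\in x+e_j[-1,1]-B_R$, which is contained in a fixed bounded set; on that set the mean value theorem bounds the finite difference of $\varphi$ by $\|\partial_{x_j}\varphi\|_\infty$. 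The dominating function is then $\|\partial_{x_j}\varphi\|_\infty\,|I_{1-s}(x-\cdot)|\mathbf{1}_{\text{bdd set}}$, which is integrable because the kernel $I_{1-s}(z)=c\,|z|^{1-s-d}$ has singularity exponent $1-s-d\in(-d,1-d)$ at the origin, so $I_{1-s}$ is locally integrable. This yields $\partial_{x_j}(I_{1-s}\varphi)=I_{1-s}*\partial_{x_j}\varphi = I_{1-s}(\partial_{x_j}\varphi)$, and summing over $j$ gives the second equality. A consequence of this argument (applied iteratively) is that $I_{1-s}\varphi\in C^\infty(\R^d)$.

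Second, I would verify $D^s\varphi=D(I_{1-s}\varphi)$ by comparing the given distributional definition with the standard distributional derivative. The defining identity
\[
\left\langle\frac{\partial^s\varphi}{\partial x_j^s},v\right\rangle=-\int_{\R^d}(I_{1-s}\varphi)\,\frac{\partial v}{\partial x_j}\,dx\quad\forall v\in C^\infty_c(\R^d)
\]
is precisely the integration-by-parts characterization of the distributional partial derivative of the locally integrable function $I_{1-s}\varphi$. Since this function is $C^\infty$ by the first step, its distributional and classical partial derivatives agree; hence $\partial^s\varphi/\partial x_j^s=\partial_{x_j}(I_{1-s}\varphi)$ as smooth functions, and assembling the components gives $D^s\varphi=D(I_{1-s}\varphi)$.

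The only nontrivial step is the differentiation under the integral in the first part, which requires handling the interaction between the singularity of $I_{1-s}$ at the origin and the test function. The range $s\in(0,1)$ ensures the kernel is just integrable enough near the origin for the dominated convergence argument to close cleanly, and the compact support of $\varphi$ kills any difficulty at infinity. Neither equality requires Fourier analysis or any machinery beyond basic convolution properties.
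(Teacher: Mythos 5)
Your argument is correct in substance and is exactly the elementary route one would expect; the paper itself merely cites Shieh and Spector \cite[Theorem~1.2]{shieh2015On} for this statement, so your proposal fills in a self-contained proof rather than replicating the paper's.

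One small wrinkle in the first step deserves care. Having written
\[
(I_{1-s}\varphi)(x)=\int_{\R^d} I_{1-s}(x-y)\,\varphi(y)\,dy,
\]
the difference quotient in $x$ lands on the kernel $I_{1-s}$, not on $\varphi$, and dominating finite differences of the singular kernel directly is not what you want. To get the mean value theorem bound on $\varphi$ that you invoke, you must first change variables so that $\varphi$ carries the $x$-dependence, e.g.
\[
(I_{1-s}\varphi)(x)=\int_{\R^d} I_{1-s}(z)\,\varphi(x-z)\,dz,\qquad
\frac{(I_{1-s}\varphi)(x+he_j)-(I_{1-s}\varphi)(x)}{h}=\int_{\R^d} I_{1-s}(z)\,\frac{\varphi(x+he_j-z)-\varphi(x-z)}{h}\,dz.
\]
Now the mean value theorem on $\varphi$ gives the uniform bound $\|\partial_{x_j}\varphi\|_\infty$, the integrand is supported in the fixed bounded set $z\in x+[-1,1]e_j-B_R$ for $|h|\le 1$, and the dominating function $\|\partial_{x_j}\varphi\|_\infty\,|I_{1-s}(z)|\,\mathbf{1}_{K}(z)$ is integrable by the local integrability of $I_{1-s}$. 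With this repair, dominated convergence yields $\partial_{x_j}(I_{1-s}\varphi)=I_{1-s}(\partial_{x_j}\varphi)$, and iteration gives $I_{1-s}\varphi\in C^\infty(\R^d)$. Your second step is then fine as stated: the defining identity is exactly the distributional integration-by-parts formula for $I_{1-s}\varphi\in L^1_{\mathrm{loc}}(\R^d)$, and since that function is smooth its distributional and classical partial derivatives coincide, giving $\partial^s\varphi/\partial x_j^s=\partial_{x_j}(I_{1-s}\varphi)$ as smooth functions.
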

\begin{proof}
    See \cite[Theorem 1.2]{shieh2015On}.
\end{proof}
It is also possible to show that the $s$-fractional gradient $D^s$ corresponds, up to a sign, to the convolution with the Horváth's kernel \eqref{N}
\begin{proposition}
    Let $s\in (0,1)$ and $\varphi\in C^\infty_c(\R^d)$, then
    \begin{equation}\label{eq:s_fractional_gradient}
        D^s \varphi(x)=\mu_{d,s}\lim_{\varepsilon\to 0}{\int_{\{|y|>\varepsilon\}}{\frac{y \varphi(x+y)}{|y|^{d+s+1}}}\,dy},
    \end{equation}
\end{proposition}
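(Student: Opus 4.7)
The plan is to derive \eqref{eq:s_fractional_gradient} from Proposition \ref{prop:charact_Ds_with_riesz} via integration by parts against the Riesz kernel, with the singularity at the origin handled by the principal value.

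First I would unfold the identity $D^s\varphi = I_{1-s}(D\varphi)$ using the explicit formula \eqref{eq:riez_potential} with $r=1-s$. Since $D\varphi \in C^\infty_c(\R^d)$ and the kernel $|z|^{-(d+s-1)}$ is locally integrable (as $d+s-1 < d$), the change of variables $y = z+x$ yields, without ambiguity,
\begin{equation*}
    D^s\varphi(x) \;=\; \frac{\mu_{d,s}}{d+s-1}\int_{\R^d}\frac{D\varphi(x+z)}{|z|^{d+s-1}}\,dz \;=\; \frac{\mu_{d,s}}{d+s-1}\lim_{\varepsilon\to 0}\int_{\{|z|>\varepsilon\}}\frac{D\varphi(x+z)}{|z|^{d+s-1}}\,dz,
\end{equation*}
where the last equality is dominated convergence since the integrand is in $L^1(\R^d)$.

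Next I would integrate by parts on the domain $\{|z|>\varepsilon\}$, using $D_z\bigl(|z|^{-(d+s-1)}\bigr) = -(d+s-1)\,z\,|z|^{-(d+s+1)}$ and the outward unit normal $-z/|z|$ on the inner boundary $\{|z|=\varepsilon\}$. This produces
\begin{equation*}
    \int_{\{|z|>\varepsilon\}}\frac{D\varphi(x+z)}{|z|^{d+s-1}}\,dz \;=\; -\frac{1}{\varepsilon^{d+s}}\int_{\{|z|=\varepsilon\}}\!\!\varphi(x+z)\,z\,dS(z) \;+\; (d+s-1)\int_{\{|z|>\varepsilon\}}\frac{z\,\varphi(x+z)}{|z|^{d+s+1}}\,dz.
\end{equation*}
Plugging this into the previous display cancels the factor $d+s-1$ against the normalization and, provided the boundary term tends to $0$, delivers exactly the right-hand side of \eqref{eq:s_fractional_gradient}.

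The only delicate point, and the main obstacle, is verifying that the boundary term vanishes as $\varepsilon \to 0$: a naive bound by $\|\varphi\|_\infty$ only gives $O(1)$ on a sphere of radius $\varepsilon$ in the factor $\varepsilon^{-(d+s)}\cdot\varepsilon^d = \varepsilon^{-s}$, which blows up. I would exploit the vector cancellation $\int_{|z|=\varepsilon} z\,dS(z) = 0$ together with a first-order Taylor expansion $\varphi(x+z) = \varphi(x) + D\varphi(x)\cdot z + O(|z|^2)$ uniformly in $x$ (using $\varphi \in C^\infty_c$). The constant term drops by symmetry, the linear correction contributes $O(\varepsilon^{d+1})$, and dividing by $\varepsilon^{d+s}$ yields a boundary contribution of size $O(\varepsilon^{1-s}) \to 0$, since $s \in (0,1)$. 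This completes the argument.
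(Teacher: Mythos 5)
Your argument is correct, and it is a genuine self-contained proof of a statement that the paper itself does not prove but instead cites to Ponce's book and to Comi--Stefani. You take Proposition \ref{prop:charact_Ds_with_riesz} (the identity $D^s\varphi = I_{1-s}(D\varphi)$, which the paper also imports from Shieh--Spector) as your starting point and then recover the \v{S}ilhav\'y/Horv\'ath form by writing the Riesz potential as a principal-value limit and integrating by parts on $\{|z|>\varepsilon\}$; the factor $d+s-1$ coming from $D_z\bigl(|z|^{-(d+s-1)}\bigr)=-(d+s-1)z|z|^{-(d+s+1)}$ cancels the normalization, which is precisely what makes the constants match. The sign bookkeeping (inner normal $-z/\varepsilon$), the use of compact support to kill the outer boundary term, and the dominated-convergence step to justify the passage to the truncated domain are all in order. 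One small simplification is available in the last step: the full first-order Taylor expansion is more than you need. It suffices to subtract the constant $\varphi(x)$ (using $\int_{\{|z|=\varepsilon\}} z\,dS(z)=0$) and invoke Lipschitz continuity of $\varphi$ to bound the boundary term by $C\varepsilon^{-(d+s)}\cdot\varepsilon\cdot\varepsilon\cdot\varepsilon^{d-1}=C\varepsilon^{1-s}\to 0$; the linear-in-$z$ term in your expansion contributes $O(\varepsilon^{1-s})$ as well, so tracking it separately buys nothing. In short: correct proof, filling in a gap the paper leaves to references, with a slightly over-engineered treatment of the boundary term.
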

\begin{proof}
    See  \cite[Lemma 15.9]{ponce2016Elliptic} or \cite[Proposition~2.2]{comi2019BlowUp}.
\end{proof}
One consequence of the characterizations \eqref{eq:shieh_spector_characterization} and \eqref{eq:s_fractional_gradient} is that we can extend them to the integral cases $s=1$ and $s=0$, respectively. Indeed, by observing that, in a certain sense \cite{kurokawa1981On}, the limit case $I_0$ corresponds to the identity operator, we can use \eqref{eq:shieh_spector_characterization} to extend the notion of $D^s$ to $s=1$, identifying $D^1$ and the classical gradient $D$. On the other hand, we can use the characterization in \eqref{eq:s_fractional_gradient} to extend $D^s$ to $s=0$, by identifying $D^0$ with $-R$, where $R$ is the vector-valued Riesz transform.

These $s$-fractional differential operators have some properties that are similar to the ones of the classical gradient. One of the most important, which allow us to make use of variational methods, is the duality property between the $s$-fractional gradient and the $s$-fractional divergence
\begin{equation}\label{eq:frac_divergence_in_terms_of_fra_partial_derivatives}
    D^s \cdot\Phi(x)=\mathrm{div}_s\Phi(x)=\sum_{j=1}^d{\frac{\partial^s \Phi_j}{\partial x^s_j}}.
\end{equation}

\begin{proposition}[Duality between the $s$-gradient and the $s$-divergence]\label{prop:duality_gradient_divergent}
    Let $s\in[0,1)$, $\varphi\in C_c^\infty(\R^d)$ and $\Phi\in C_c^\infty(\R^d;\R^d)$. Then
    \begin{equation}
        \int_{\R^d}{\varphi(x) D^s\cdot \Phi(x)}\,dx=-\int_{\R^d}{\Phi(x)\cdot D^s \varphi(x)}\,dx.
        \label{eq:duality_frac_divergence_frac_gradient}
    \end{equation}
\end{proposition}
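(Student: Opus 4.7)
The plan is to reduce the identity to the classical integration-by-parts formula by ``pulling out'' the Riesz potential $I_{1-s}$ that defines $D^s$, and then exploiting the symmetry (evenness) of the Riesz kernel $I_{1-s}(x)=\frac{\mu_{d,s}}{d+s-1}|x|^{1-s-d}$.

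For $s\in(0,1)$, I would first invoke Proposition \ref{prop:charact_Ds_with_riesz} componentwise on each coordinate of $\Phi$, obtaining for the fractional divergence
\begin{equation*}
    D^s\cdot\Phi(x)=\sum_{j=1}^d\frac{\partial}{\partial x_j}\bigl(I_{1-s}\Phi_j\bigr)(x)=D\cdot\bigl(I_{1-s}\Phi\bigr)(x),
\end{equation*}
where $I_{1-s}\Phi$ is understood componentwise and belongs to $C^\infty(\R^d;\R^d)$. Because $\varphi\in C^\infty_c(\R^d)$ has compact support, the product $\varphi\,I_{1-s}\Phi$ is compactly supported and classically differentiable, so the usual integration-by-parts gives
\begin{equation*}
    \int_{\R^d}\varphi\,D\cdot(I_{1-s}\Phi)\,dx=-\int_{\R^d}D\varphi\cdot(I_{1-s}\Phi)\,dx
\end{equation*}
with no boundary contributions.

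Next I would use Fubini's theorem to transfer the Riesz convolution from $\Phi$ onto $D\varphi$. Since $D\varphi$ and $\Phi$ are bounded and compactly supported while $|x-y|^{1-s-d}$ is locally integrable (the exponent $d-(1-s)<d$), the double integral is absolutely convergent, and the evenness of the kernel yields the standard self-adjointness $\int(I_{1-s}f)g\,dx=\int f\,(I_{1-s}g)\,dx$. Applied to each component and followed by a second use of Proposition \ref{prop:charact_Ds_with_riesz} in the form $D^s\varphi=I_{1-s}(D\varphi)$, this produces
\begin{equation*}
    -\int_{\R^d}D\varphi\cdot(I_{1-s}\Phi)\,dx=-\int_{\R^d}(I_{1-s}D\varphi)\cdot\Phi\,dx=-\int_{\R^d}D^s\varphi\cdot\Phi\,dx,
\end{equation*}
and chaining the three equalities gives \eqref{eq:duality_frac_divergence_frac_gradient} for $s\in(0,1)$.

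The endpoint $s=0$, where $D^0=-R$ and Proposition \ref{prop:charact_Ds_with_riesz} does not apply, I would handle separately using the classical skew-adjointness of the vector-valued Riesz transform on $C^\infty_c$-functions (which follows from the purely imaginary odd Fourier multiplier via Plancherel): $\int(Rf)g\,dx=-\int f(Rg)\,dx$. Applying this componentwise to $\varphi$ and $\Phi$ gives the identity immediately in view of $D^0=-R$. The only delicate point in the argument, and the main thing to be careful about, is the Fubini exchange in the middle step, since neither $I_{1-s}\Phi$ nor $I_{1-s}D\varphi$ has compact support; however, the compact supports of $D\varphi$ and $\Phi$ combined with the local integrability of $|x-y|^{1-s-d}$ make the double integral absolutely convergent, so Fubini applies directly and no approximation argument is needed.
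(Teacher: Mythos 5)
Your proof is correct. The paper itself does not give an argument for this proposition: it simply cites \cite[Lemma~2.5]{comi2019BlowUp} for $s\in(0,1)$ and \cite[Lemma~26]{brue2022AsymptoticsII} for $s=0$. Your self-contained derivation fills that gap, and each step checks out: the reduction $D^s\cdot\Phi=D\cdot(I_{1-s}\Phi)$ and $D^s\varphi=I_{1-s}(D\varphi)$ from Proposition~\ref{prop:charact_Ds_with_riesz}; the classical divergence theorem on a ball containing $\mathrm{supp}\,\varphi$ (legitimate because $\varphi\,I_{1-s}\Phi$ is compactly supported and smooth); the Fubini exchange, justified exactly as you say by the compact supports of $D\varphi,\Phi$ and local integrability of $|x-y|^{1-s-d}$; and the self-adjointness $\int(I_{1-s}f)g=\int f\,(I_{1-s}g)$ from the evenness of the kernel. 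The $s=0$ case via skew-adjointness of $R$ (odd, purely imaginary Fourier multiplier) is also correct and is consistent with $D^0=-R$.

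It is worth noting that your route differs somewhat from the one in the cited source. Comi and Stefani establish this identity working directly from the singular-integral (Horv\'ath-kernel) representation \eqref{eq:s_fractional_gradient}, where the duality falls out from the oddness of the vector kernel $y/|y|^{d+s+1}$; this is more intrinsic and uniform in flavor, but it entails handling principal-value limits carefully. Your route instead passes through the Riesz-potential factorization $D^s = D\circ I_{1-s} = I_{1-s}\circ D$ of Proposition~\ref{prop:charact_Ds_with_riesz}, reducing everything to classical integration by parts plus the evenness of the scalar kernel $I_{1-s}$. Given that the paper has already recorded Proposition~\ref{prop:charact_Ds_with_riesz}, your approach is the more economical one in context, at the cost of treating $s=0$ as a genuinely separate case (which is unavoidable, since the factorization is not available there).
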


\begin{proof}
	For $s\in(0,1)$, see \cite[Lemma~2.5]{comi2019BlowUp}. The limit case $s=0$ is shown in \cite[Lemma~26]{brue2022AsymptoticsII}.
\end{proof}

In fact, due to this duality property, following \cite{comi2019BlowUp} for $s>0$, we can then extend the notion of fractional differentiability to a wider class of functions, including the case $s=0$.

\begin{definition}[Weak $s$-fractional gradient]
    Let $0\leq s<1$ and consider $1\leq p\leq \infty$ if $s>0$ or $1<p<\infty$ if $s=0$. We define the weak $s$-fractional gradient of a function $f\in L^p(\R^d)$ the function $\Gamma\in L^1_\mathrm{loc}(\R^d;\R^d)$ that satisfies
	\begin{equation}
	    \int_{\R^{d}}{fD^s\cdot \Phi}\,dx=-\int_{\R^d}{\Gamma\cdot \Phi}\,dx\quad \forall \Phi\in C_c^\infty(\R^d;\R^d).
            \label{eq:def_weak_fractional_gradient}
	\end{equation}
	To simplify the notation, we write $D^s f=\Gamma$.
\end{definition}

It is important to observe that this definition only makes sense because $D^s\cdot=\mathrm{div}_s: C^\infty_c(\R^d;\R^d)\to {L^p}'(\R^d)$ continuously, see \cite[Corollary~2.3]{comi2019BlowUp} for the case $s>0$ and \cite[Corollary~5.2.8]{grafakos2014Classical} for the case $s=0$.

This notion of weak fractional gradient allows a definition of function spaces, similarly to the classical Sobolev space, suitable to study problems involving $s$-fractional gradients.

\begin{definition}[Lions-Calderón spaces]
    Let $0\leq s<1$ and consider $1\leq p\leq \infty$ if $s>0$ or $1<p<\infty$ if $s=0$. We define $\Lambda^{s,p}(\R^d)$ as the space of functions $f\in L^p(\R^d)$ with (weak) $s$-fractional gradient also in $L^p(\R^d;\R^d)$, i.e.
	\begin{equation}
		\Lambda^{s,p}(\R^d):=\{f\in L^p(\R^d):\, D^s f\in L^p(\R^d;\R^d)\}.
            \label{eq:def_lions_calderon_spaces}
	\end{equation}
	This space is endowed the norm
	\begin{equation}
		\|f\|_{\Lambda^{s,p}(\R^d)}:=\left(\|f\|_{L^p(\R^d)}^p+\|D^s f\|_{L^p(\R^d;\R^d)}^p\right)^{1/p}.
            \label{eq:norm_lions_calderon_spaces}
	\end{equation}
	We also define the completion of $C^{\infty}_c(\R^d)$ for the norm ${\|\cdot\|_{\Lambda^{s,p}(\R^d)}}$ as the Banach space
    \begin{equation}    
        \Lambda^{s,p}_0(\R^d):=\overline{C^\infty_c(\R^d)}^{\|\cdot\|_{\Lambda^{s,p}}}.
    \end{equation}
\end{definition}

This last definition was introduced in \cite{shieh2015On}, where the authors have show that $\Lambda^{s,p}_0(\R^d)$, for $1<p<\infty$, coincides with the Bessel potential spaces or generalized Sobolev spaces
\begin{equation}
	H^{s,p}(\R^d)=\{((1+4\pi^2|\xi|^2)^{-s/2}\hat{f})^\vee\,: f\in L^p(\R^d)\}, 
    \label{eq:def_bessel_potential_spaces}
\end{equation}
 where it is also known the density of $C^\infty_c(\R^d)$ for $s\geq 0$ and $p>1$, see \cite{lions1961problemi}. 
 
 On the other hand, it was proven
 that $C^\infty_c(\R^d)$ is also dense in $\Lambda^{s,p}(\R^d)$, in \cite{brue2022AsymptoticsII} and in \cite{kreisbeck2022Quasiconvexity}, showing, as in the classical Sobolev space $W^{1,p}(\R^d)$ (corresponding to $s=1$), the identity of all those three spaces. 
 
The case $s=0$ in the Definition \ref{eq:def_lions_calderon_spaces} only includes the cases $1<p<\infty$ and we can identify $\Lambda^{0,p}(\R^d)$ with $L^p(\R^d)$. These results can be summarized (and written more precisely) in the following proposition.
\begin{proposition}[Lions-Calderón spaces and Bessel Potential Spaces]\label{prop:distributional_characterization_lions_calderon_spaces}
    The following identities hold:
    \begin{itemize}
        \item[i)] $\Lambda^{s,p}_0(\R^d)=H^{s,p}(\R^d)$ with equivalent norms, when $0<s<1$ and $1<p<\infty$;
        \item[ii)] $\Lambda^{s,p}(\R^d)=\Lambda^{s,p}_0(\R^d)$ when $0<s<1$ and $1\leq p<\infty$; and
        \item[iii)] $\Lambda^{0,p}(\R^d)=L^p(\R^d)$ with equivalent norms when $1<p<\infty$.
    \end{itemize}
\end{proposition}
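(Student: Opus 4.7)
The plan is to address the three identities in order of difficulty. I would begin with (iii), which is immediate from the singular integral theory: by the extension $D^0=-R$ with $R$ the vector-valued Riesz transform, Calderón--Zygmund theory (see Stein \cite{stein1970Singular} or Grafakos \cite{grafakos2014Classical}) yields the $L^p\to L^p$ bound $\|Ru\|_{L^p(\R^d;\R^d)}\leq C_p\|u\|_{L^p(\R^d)}$ for $1<p<\infty$. Hence the weak $0$-fractional gradient of any $u\in L^p(\R^d)$ exists and equals $-Ru\in L^p(\R^d;\R^d)$, so that $L^p(\R^d)\subseteq\Lambda^{0,p}(\R^d)$ with $\|u\|_{\Lambda^{0,p}}\leq(1+C_p^p)^{1/p}\|u\|_{L^p}$; the reverse inclusion and bound are trivial from the definition.

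For (i), I would work on the Fourier side. Using \eqref{eq:gradient_of_riesz_potential} and the Fourier symbol of the Riesz kernel, one checks that on $C^\infty_c(\R^d)$ the operator $D^s$ corresponds to the multiplier $2\pi i\xi\,(2\pi|\xi|)^{s-1}$, i.e.\ $D^s\varphi = c\,R(-\Delta)^{s/2}\varphi$ up to a constant. Write $(-\Delta)^{s/2}=M_s\circ(I-\Delta)^{s/2}$ where $M_s$ is the Fourier multiplier with symbol $m_s(\xi)=(2\pi|\xi|)^s(1+4\pi^2|\xi|^2)^{-s/2}$. Since $m_s$ and its reciprocal (truncated near $\xi=0$) satisfy Mihlin's condition, and $R$ is bounded on $L^p$ for $1<p<\infty$, one obtains the two-sided comparison $c_1\|u\|_{H^{s,p}}\leq\|u\|_{\Lambda^{s,p}}\leq c_2\|u\|_{H^{s,p}}$ on $C^\infty_c(\R^d)$. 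The identification $\Lambda^{s,p}_0(\R^d)=H^{s,p}(\R^d)$ then follows from the classical density of $C^\infty_c(\R^d)$ in $H^{s,p}(\R^d)$, which is standard after \cite{lions1961problemi, calderon1961Lebesgue}.

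Part (ii) is the main obstacle: one must show every $u\in\Lambda^{s,p}(\R^d)$ is approximated in $\Lambda^{s,p}$-norm by functions in $C^\infty_c(\R^d)$. The natural two-step scheme is mollification followed by truncation. The mollification step is painless because $D^s$ commutes with convolution: for a standard mollifier $\rho_\varepsilon$, one has $D^s(u*\rho_\varepsilon)=(D^s u)*\rho_\varepsilon$ from the definition \eqref{eq:def_weak_fractional_gradient}, and $u*\rho_\varepsilon\to u$, $(D^s u)*\rho_\varepsilon\to D^s u$ in $L^p$ for $1\leq p<\infty$, giving density of $C^\infty(\R^d)\cap\Lambda^{s,p}(\R^d)$. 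The real difficulty is truncation, because $D^s$ is non-local and the product rule fails: for a smooth cutoff $\eta_R(x)=\eta(x/R)$ one writes $D^s(\eta_R u)=\eta_R D^s u + [D^s,\eta_R]u$, and the commutator $[D^s,\eta_R]u$ must be shown to vanish in $L^p$ as $R\to\infty$. The proof I would follow is the one in \cite{brue2022AsymptoticsII} and \cite{kreisbeck2022Quasiconvexity} (and, for $p=1$, \cite{comi2019BlowUp}): using the singular-integral representation \eqref{N} together with the explicit form of the Horváth kernel $N_{-s}$, the commutator takes a Calderón-type structure
\begin{equation*}
[D^s,\eta_R]u(x)=\mu_{d,s}\,\mathrm{p.v.}\!\int_{\R^d}\frac{(x-y)\bigl(\eta_R(x)-\eta_R(y)\bigr)}{|x-y|^{d+s+1}}\,u(y)\,dy,
\end{equation*}
whose kernel inherits Lipschitz decay $O(|x-y|^{-d-s})$ thanks to the smoothness of $\eta_R$, so dominated convergence and $L^p$-boundedness arguments yield $[D^s,\eta_R]u\to 0$ in $L^p$ as $R\to\infty$. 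Combining this with the trivial limit $\eta_R u\to u$ and $\eta_R D^s u\to D^s u$ in $L^p$ completes the truncation step, and post-composing with a second mollification finally gives a $C^\infty_c(\R^d)$ approximation in the $\Lambda^{s,p}$-norm, proving (ii).
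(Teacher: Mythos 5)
Your proposal reconstructs, from scratch, the arguments that the paper merely cites: for (i) the paper invokes \cite[Theorem~1.7]{shieh2015On}, for (ii) it invokes \cite[Theorem~2.7]{kreisbeck2022Quasiconvexity} and \cite[Theorem~A.1]{brue2022AsymptoticsII}, and only (iii) is proved directly in the text, by essentially the same Riesz-transform argument you give. Your sketches capture the mechanisms behind those references --- the Fourier-multiplier identity $D^s=-R(-\Delta)^{s/2}$ with Mihlin for (i), and mollification followed by a commutator-truncation estimate for (ii) --- so this is a legitimate self-contained route; what the paper buys by citing is brevity, what you buy is an explicit argument that makes transparent where the restrictions on $s$ and $p$ enter.

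Two details in your sketch do need repair, though. In (ii) the decay you assign to the commutator kernel is incorrect in a way that matters: for $K_R(x,y)=(x-y)\bigl(\eta_R(x)-\eta_R(y)\bigr)|x-y|^{-d-s-1}$ the Lipschitz bound $|\eta_R(x)-\eta_R(y)|\lesssim R^{-1}|x-y|$ yields $|K_R(x,y)|\lesssim R^{-1}|x-y|^{1-d-s}$ near the diagonal, which \emph{is} locally integrable (so no principal value is needed), whereas your stated $O(|x-y|^{-d-s})$ is not locally integrable and would defeat the dominated-convergence step. Combining the Lipschitz near-diagonal bound with the trivial tail bound $|K_R(x,y)|\lesssim|x-y|^{-d-s}$ and splitting the $y$-integral at $|x-y|=R$ gives $\sup_x\int_{\R^d}|K_R(x,y)|\,dy\lesssim R^{-s}$, hence $\|[D^s,\eta_R]u\|_{L^p}\lesssim R^{-s}\|u\|_{L^p}\to 0$ by Schur's test, which is what actually closes the truncation argument. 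In (i), the reciprocal symbol $m_s^{-1}(\xi)=(1+4\pi^2|\xi|^2)^{s/2}(2\pi|\xi|)^{-s}$ blows up at the origin, so ``truncated near $\xi=0$'' must be turned into a genuine low-/high-frequency decomposition: at low frequencies the Bessel symbol is bounded and $(I-\Delta)^{s/2}u$ is controlled by $\|u\|_{L^p}$ alone, while at high frequencies $m_s^{-1}$ times a smooth high-frequency cutoff satisfies Mihlin's condition. With those two corrections the proposal gives a complete self-contained proof of (i) and (ii).
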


\begin{proof}
    \begin{itemize}
        \item[i)] It was proven in \cite[Theorem~1.7]{shieh2015On}.
        \item[ii)] It was proven in \cite[Theorem~2.7]{kreisbeck2022Quasiconvexity} when $1<p<\infty$ and in \cite[Theorem~A.1]{brue2022AsymptoticsII} when $1\leq p<\infty$. 
        \item[iii)] On the one hand, since $D^0=-R:L^p(\R^d)\to L^p(\R^d;\R^d)$ continuously for $p\in(1,\infty)$, see \cite[Corollary~5.2.8]{grafakos2014Classical}, we have that $\Lambda^{0,p}(\R^d)\subset L^p(\R^d)$. On the other hand, using the identity $-D^0\cdot D^0=-R\cdot R=I$ and the $L^p$ continuity of the Riesz transform, we get that $L^p(\R^d)\subset \Lambda^{0,p}(\R^d)$.
    \end{itemize}
\end{proof}

An interesting consequence of the previous result is that we can establish a relationship between the Lions-Calderón spaces and the classical Sobolev spaces. In fact, unlike the Besov spaces, the class of Bessel Potential spaces $H^{s,p}(\R^d)$ contain the (classical) Sobolev spaces $W^{k,p}(\R^d)$ when $s=k\in\N$ and $p\in(1,\infty)$, and the Lebesgue spaces $L^p(\R^d)$ when $s=0$ and $p\in(1,\infty)$. Consequently, it is natural to extend the definition of $\Lambda^{s,p}(\R^d)$ to $s=1$ by setting
\begin{equation}
    \Lambda^{1,p}(\R^d):=W^{1,p}(\R^d)
\end{equation}
with $D^1:=D$, where $D$ is the classical gradient.

On the other hand, from the point of view of the theory of interpolation, the Lions-Calderón spaces $\Lambda^{s,p}(\R^d)$, when $0<s<1$ and $1<p<\infty$, is the complex interpolated space $(L^p(\R^d),W^{1,p}(\R^d))_{[s]}$ (see \cite{lions1960Une} and \cite[Theorem 2.4.7]{triebel1983Theory}), while the Sobolev-Slobodeckij spaces $W^{s,p}(\R^d)$ is the real interpolated space $(L^p(\R^d),W^{1,p}(\R^d))_{s,p}$ (see \cite[Theorem 2.4.2]{triebel1983Theory}). For the definition of the spaces $W^{s,p}(\R^d)$, as well as some of their properties, we refer to \cite{diNezza2012Hitchhiker}.

Although in general the Lions-Calderón spaces $\Lambda^{s,p}(\R^d)$ do not coincide with the Sobolev-Slobodeckij spaces $W^{s,p}(\R^d)$, they are in a certain sense close.

\begin{proposition}[Contiguity between the Lions-Calderón spaces and the Sobolev-Slobodeckij spaces] \label{prop:contiguity_fractional_sobolev_and_lions_calderon}
Let $1<p<\infty$, $0<s<1$ and $0<\varepsilon\leq \min\{s, 1-s\}$, then
\begin{equation*}
    \Lambda^{s+\varepsilon,p}(\R^d)\subset W^{s,p}(\R^d)\subset\Lambda^{s-\varepsilon,p}(\R^d).
\end{equation*}
\end{proposition}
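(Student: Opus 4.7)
The plan is to exploit the interpolation characterisations of both spaces, already recalled just before the statement: the Lions--Calder\'on space arises as the complex interpolation space $\Lambda^{s,p}(\R^d) = (L^p(\R^d), W^{1,p}(\R^d))_{[s]}$, whereas the Sobolev--Slobodeckij space arises as the real interpolation space $W^{s,p}(\R^d) = (L^p(\R^d), W^{1,p}(\R^d))_{s,p}$ between the same endpoints. The contiguity then reduces to comparing complex and real interpolation, with a small shift in the smoothness parameter $s$ to absorb the known gap between the two methods.

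The first ingredient I would invoke is the classical sandwich between the real and the complex interpolation functors, valid in any Banach couple,
\[
(X_0, X_1)_{\theta, 1} \hookrightarrow (X_0, X_1)_{[\theta]} \hookrightarrow (X_0, X_1)_{\theta, \infty}.
\]
Specialising to $(X_0, X_1) = (L^p(\R^d), W^{1,p}(\R^d))$ with $\theta = s \pm \varepsilon \in (0, 1)$, this yields
\[
\Lambda^{s+\varepsilon, p}(\R^d) \hookrightarrow (L^p, W^{1,p})_{s+\varepsilon, \infty}, \qquad (L^p, W^{1,p})_{s-\varepsilon, 1} \hookrightarrow \Lambda^{s-\varepsilon, p}(\R^d).
\]

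The second, and main, ingredient is the monotonicity of the real interpolation scale with respect to the smoothness index, available here thanks to the continuous inclusion $W^{1,p}(\R^d) \hookrightarrow L^p(\R^d)$: for every $0 < \theta_0 < \theta_1 < 1$ and any $q_0, q_1 \in [1, \infty]$,
\[
(L^p, W^{1,p})_{\theta_1, q_1} \hookrightarrow (L^p, W^{1,p})_{\theta_0, q_0}.
\]
I would prove this by combining the two standard bounds on the Peetre $K$-functional, $K(t, f) \leq \|f\|_{L^p}$ and $K(t, f) \leq C\, t^{\theta_1}\|f\|_{(L^p, W^{1,p})_{\theta_1, \infty}}$, and splitting the integral defining the $(\theta_0, q_0)$-norm at $t = 1$: on $(0, 1]$ the second bound produces integrability thanks to $\theta_1 - \theta_0 > 0$, while on $[1, \infty)$ the first bound does so thanks to $\theta_0 > 0$. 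Chaining the two ingredients yields, whenever $\varepsilon < \min\{s, 1 - s\}$,
\[
\Lambda^{s+\varepsilon, p} \hookrightarrow (L^p, W^{1,p})_{s+\varepsilon, \infty} \hookrightarrow (L^p, W^{1,p})_{s, p} = W^{s, p},
\]
\[
W^{s, p} = (L^p, W^{1,p})_{s, p} \hookrightarrow (L^p, W^{1,p})_{s-\varepsilon, 1} \hookrightarrow \Lambda^{s-\varepsilon, p}.
\]

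Finally, the limiting cases $\varepsilon = s$ and $\varepsilon = 1 - s$ reduce to trivial inclusions: by Proposition~\ref{prop:distributional_characterization_lions_calderon_spaces}, $\Lambda^{0,p}(\R^d) = L^p(\R^d)$ and $W^{s,p}(\R^d) \hookrightarrow L^p(\R^d)$ is part of the very definition of the Sobolev--Slobodeckij space; conversely, $\Lambda^{1,p}(\R^d) = W^{1,p}(\R^d) \hookrightarrow W^{s,p}(\R^d)$ follows at once from the standard estimate of the Gagliardo seminorm by the $L^p$-norm of the full gradient. I do not foresee a serious obstruction beyond the smoothness monotonicity of the real interpolation scale, whose proof hinges on $K(t, f)$ being uniformly bounded by $\|f\|_{L^p}$, itself a direct consequence of the continuous inclusion $W^{1,p} \hookrightarrow L^p$.
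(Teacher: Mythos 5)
Your proof is correct and is a genuine, self-contained alternative to the paper's treatment, which merely cites \cite[Theorem~3.2]{lions1961problemi} for the statement. You exploit two facts already recorded in the paper — that $\Lambda^{s,p}(\R^d)$ is the complex interpolation space $(L^p,W^{1,p})_{[s]}$ and that $W^{s,p}(\R^d)$ is the real interpolation space $(L^p,W^{1,p})_{s,p}$ — and then reduce the whole statement to two classical pieces of abstract interpolation theory: the sandwich $(X_0,X_1)_{\theta,1}\hookrightarrow (X_0,X_1)_{[\theta]}\hookrightarrow (X_0,X_1)_{\theta,\infty}$ and the monotonicity of the real scale in the smoothness index for a nested couple (here $W^{1,p}\hookrightarrow L^p$). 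Your $K$-functional argument for the monotonicity is sound: the bound $K(t,f)\le t^{\theta_1}\|f\|_{(\theta_1,\infty)}$ handles $t\in(0,1]$ because $\theta_1>\theta_0$, and the bound $K(t,f)\le\|f\|_{L^p}$ (from $X_1\subset X_0$, taking the trivial splitting $f=f+0$) handles $t\in[1,\infty)$ because $\theta_0>0$; you correctly route through $(\theta_1,q_1)\hookrightarrow(\theta_1,\infty)$ first. The treatment of the endpoint values $\varepsilon=s$ and $\varepsilon=1-s$ is also correct, falling back on $W^{s,p}\subset L^p$ and $W^{1,p}\subset W^{s,p}$. What your approach buys is transparency and independence from \cite{lions1961problemi}: it makes visible that the contiguity is an elementary consequence of the complex-versus-real gap together with the nestedness of the couple, rather than something that must be extracted from a classical paper; the paper's approach buys brevity and a stronger conclusion (the cited reference handles $s\in\R$, not just $s\in(0,1)$). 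One minor remark: you should cite a reference for the sandwich inclusion (e.g. Bergh--L\"ofstr\"om, Theorem~4.7.1), since it is used as a black box and is not stated elsewhere in the paper.
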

\begin{proof}
    See \cite[Theorem 3.2]{lions1961problemi}, which contains the more general case $s\in\R$ in the framework $H^{s,p}(\R^d)$.
\end{proof}

Similarly to the classical Sobolev spaces, the Lions-Calderón spaces inherit some natural properties, such as continuous embeddings with respect the fractional parameter $s$.

\begin{proposition}\label{prop:order_lions_calderon}
    Let $0\leq s<t\leq 1$ and $1<p<\infty$. Then, $\Lambda^{t,p}(\R^d)\subset \Lambda^{s,p}(\R^d)$, being the inclusion map continuous. Moreover, in the limit case $\Lambda^{1,p}(\R^d)= W^{1,p}(\R^d)$ there exists a constant $C>0$, independent of $0<s<1$, such that
    \begin{equation}\label{eq:estimate_Ds_wrt_classical_Sobolev}
        \|D^s u\|_{L^p(\R^d;\R^d)}\leq \frac{C}{s}\|u\|_{W^{1,p}(\R^d)}, \quad \forall u\in W^{1,p}(\R^d) \mbox{ with } 1\leq p<\infty.
    \end{equation}
\end{proposition}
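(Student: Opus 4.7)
My approach separates the proposition into its two claims, handling the explicit $s$-dependent bound first because it subsumes the boundary case $t=1$ of the embedding. For the estimate, I plan to use the singular integral form \eqref{eq:s_fractional_gradient} of $D^s$ directly, while for the intermediate range $0<s<t<1$ I plan to pass through the Bessel potential identification of Proposition~\ref{prop:distributional_characterization_lions_calderon_spaces}.

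For the estimate $\|D^s u\|_{L^p(\R^d;\R^d)}\le\frac{C}{s}\|u\|_{W^{1,p}(\R^d)}$, I would start from the pointwise representation
\[
  D^s u(x)=\mu_{d,s}\int_{\R^d}\frac{y\,\bigl(u(x+y)-u(x)\bigr)}{|y|^{d+s+1}}\,dy
\]
for $u\in C^\infty_c(\R^d)$, which follows from \eqref{eq:s_fractional_gradient} after using the odd symmetry of the kernel to insert the $-u(x)$ term. Splitting the domain of integration into $\{|y|\le 1\}$ and $\{|y|>1\}$ and applying Minkowski's integral inequality in $L^p(dx)$, I would control each piece by the $L^p$-modulus of continuity of $u$: on the near region the bound $\|u(\cdot+y)-u(\cdot)\|_{L^p}\le|y|\,\|Du\|_{L^p}$ yields a contribution proportional to $\mu_{d,s}\int_0^1 r^{-s}\,dr=\mu_{d,s}/(1-s)$ times $\|Du\|_{L^p}$, while on the far region the trivial bound $\|u(\cdot+y)-u(\cdot)\|_{L^p}\le 2\|u\|_{L^p}$ produces $\mu_{d,s}/s$ times $\|u\|_{L^p}$. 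Using the gamma-function asymptotics $\Gamma((1-s)/2)\sim 2/(1-s)$ as $s\nearrow 1$, the constant $\mu_{d,s}$ is uniformly bounded on $(0,1)$ and satisfies $\mu_{d,s}=O(1-s)$ near $s=1$, so $\mu_{d,s}/(1-s)$ stays bounded while $\mu_{d,s}/s$ is of order $1/s$; combining the two pieces and collapsing into a single prefactor $C/s$ yields the claimed bound on $C^\infty_c(\R^d)$, which then extends to $W^{1,p}(\R^d)$ by density of $C^\infty_c(\R^d)$ in $W^{1,p}(\R^d)$ for $1\le p<\infty$.

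For the embedding, I would distinguish three cases. The case $s=0$ is immediate: $\Lambda^{0,p}(\R^d)=L^p(\R^d)$ by Proposition~\ref{prop:distributional_characterization_lions_calderon_spaces}~iii), and $\Lambda^{t,p}(\R^d)\subset L^p(\R^d)$ with norm control follows directly from \eqref{eq:def_lions_calderon_spaces}. The case $t=1$ is handled by the estimate just proved together with the trivial $\|u\|_{L^p}\le\|u\|_{W^{1,p}}$. For $0<s<t<1$, I would invoke the identification $\Lambda^{\sigma,p}(\R^d)=H^{\sigma,p}(\R^d)$ of Proposition~\ref{prop:distributional_characterization_lions_calderon_spaces}~i) and reduce the claim to the classical continuous inclusion $H^{t,p}(\R^d)\subset H^{s,p}(\R^d)$, which follows from the factorization $(I-\Delta)^{s/2}=G_{t-s}\ast(I-\Delta)^{t/2}$, Young's inequality, and $\|G_{t-s}\|_{L^1(\R^d)}=1$.

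The main technical obstacle I anticipate is tracking the $s$-dependence of the constants in the singular integral step. The apparent divergences $1/(1-s)$ from the near region and $1/s$ from the far region do not cancel individually: the former is absorbed into the prefactor $\mu_{d,s}$ through the precise behaviour of $\Gamma((1-s)/2)$, while the latter is genuine and dictates the stated form of the bound. Carrying these elementary but delicate estimates cleanly, and verifying that the passage from $C^\infty_c(\R^d)$ to $W^{1,p}(\R^d)$ by density does not introduce further $s$-dependence in the constant, is where most of the care is required.
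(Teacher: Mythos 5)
Your proof is correct. The paper itself does not present an argument here --- it simply cites Calder\'on \cite[Theorem~5]{calderon1961Lebesgue} and Lions--Magenes \cite[Prop.~3.3]{lions1961problemi} for the embedding and Bellido--Cueto--Mora-Corral \cite[Proposition~2.7]{bellido2020gamma} for the estimate \eqref{eq:estimate_Ds_wrt_classical_Sobolev} --- so what you have supplied is a self-contained replacement of those citations rather than a genuinely different strategy, and it is the standard one. For the estimate, the singular-integral route via \eqref{eq:s_fractional_gradient} (insert $-u(x)$ by odd symmetry, Minkowski, split at $|y|=1$, bound the near piece by $|y|\,\|Du\|_{L^p}$ and the far piece by $2\|u\|_{L^p}$) is exactly the argument one expects; your observation that the apparent near-region blow-up $1/(1-s)$ is absorbed by $\mu_{d,s}=O(1-s)$ (since $\Gamma((1-s)/2)\sim 2/(1-s)$), while the far-region $1/s$ survives and is the genuine source of the stated constant, is the whole point and you have it right. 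Density of $C^\infty_c(\R^d)$ in $W^{1,p}(\R^d)$ for $1\le p<\infty$ closes that part cleanly. For the embedding, the three-way split is sensible: $s=0$ reduces to the $L^p$-boundedness of the Riesz transform already built into Proposition~\ref{prop:distributional_characterization_lions_calderon_spaces}~iii); $t=1$ is absorbed by the estimate just proved; and for $0<s<t<1$ the factorization $(I-\Delta)^{s/2}=G_{t-s}*(I-\Delta)^{t/2}$ with $\|G_{t-s}\|_{L^1}=1$ is the classical Calder\'on argument behind the cited theorem. The only thing worth keeping in mind, though it does not affect correctness, is that the equivalence of the $\Lambda^{s,p}$ and $H^{s,p}$ norms invoked in the third case carries $s$-dependent constants; that is harmless because the proposition claims continuity of the inclusion for each fixed pair $(s,t)$, not uniformity in the intermediate range.
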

\begin{proof}
    The inclusions among the Lions-Calderón spaces for $1<p<\infty$ are from \cite[Theorem~5]{calderon1961Lebesgue} and \cite[Prop. 3.3]{lions1961problemi}, where the notations $L^p_s(\R^d)$ and $H^{s,p}(\R^d)$ were used for $\Lambda^{s,p}(\R^d)$, respectively. The inequality \eqref{eq:estimate_Ds_wrt_classical_Sobolev} is from \cite[Proposition 2.7]{bellido2020gamma}
\end{proof}

\begin{proposition}[Fractional Gagliardo-Nirenberg inequalities for $\Lambda^{s,p}(\R^d)$]\label{GagliardoNirenbergLionsCalderon}
    Let $0\leq r\leq s \leq t\leq 1$ such that $s=\theta r+(1-\theta)t$ with $\theta\in[0,1]$. Consider also $p,p_1,p_2\in[1,\infty]$ such that
    \begin{equation}\label{eq:hypothesis_for_frac_gagliardo_nirenberg}
        \frac{1}{p}=\frac{\theta}{p_1}+\frac{1-\theta}{p_2}.  
    \end{equation}
    Then, there exists a positive constant $C=C(d,p_1,p_2,r,t,\theta)\geq 0$ such that for all $f\in \Lambda^{r,p_1}(\R^d)\cap\Lambda^{t,p_2}(\R^d)$, one has
    \begin{equation}\label{eq:gagliardo_nirenberg_inequality_full_lions_calderon_spaces}
        \|f\|_{\Lambda^{s,p}(\R^d)}\leq C\|f\|^\theta_{\Lambda^{r,p_1}(\R^d)}\|f\|^{1-\theta}_{\Lambda^{t,p_2}(\R^d)}.
    \end{equation}
\end{proposition}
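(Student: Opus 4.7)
The plan is to reduce the inequality to a classical interpolation statement for Bessel potential spaces and then apply the standard complex interpolation inequality. Since Proposition \ref{prop:distributional_characterization_lions_calderon_spaces} identifies $\Lambda^{s,p}(\R^d)$ with the Bessel potential space $H^{s,p}(\R^d)$ (with equivalent norms for $0<s<1$, $1<p<\infty$), and since Proposition \ref{prop:distributional_characterization_lions_calderon_spaces}(iii) together with the convention $\Lambda^{1,p}(\R^d)=W^{1,p}(\R^d)$ covers the endpoints, the whole statement can be formulated uniformly in the scale $\{H^{s,p}(\R^d)\}_{s\in[0,1],\,1<p<\infty}$ where $\|f\|_{H^{s,p}}=\|(I-\Delta)^{s/2}f\|_{L^p}$.

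First I would fix the generic case $0\leq r<s<t\leq 1$ with $1<p_1,p_2<\infty$, and recall the Calderón--Lions complex interpolation identity
\begin{equation*}
    [H^{r,p_1}(\R^d),H^{t,p_2}(\R^d)]_{1-\theta}=H^{s,p}(\R^d),
\end{equation*}
with equivalent norms, valid under the compatibility conditions $s=\theta r+(1-\theta)t$ and $\frac{1}{p}=\frac{\theta}{p_1}+\frac{1-\theta}{p_2}$; this is exactly Theorem~2.4.7 of \cite{triebel1983Theory} applied to the Triebel--Lizorkin scale $F^{s,p}_2=H^{s,p}$. Then the abstract complex interpolation inequality
\begin{equation*}
    \|f\|_{[X_0,X_1]_\alpha}\leq \|f\|_{X_0}^{1-\alpha}\|f\|_{X_1}^\alpha
\end{equation*}
with $X_0=H^{r,p_1}(\R^d)$, $X_1=H^{t,p_2}(\R^d)$ and $\alpha=1-\theta$ yields the desired estimate in the Bessel-potential norms, and transfers back to the Lions--Calderón norms by the equivalence of norms, absorbing the proportionality constants into the final $C=C(d,p_1,p_2,r,t,\theta)$.

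Next I would cover the boundary cases. If $r=0$, replace $H^{0,p_1}(\R^d)$ by $L^{p_1}(\R^d)$ using Proposition \ref{prop:distributional_characterization_lions_calderon_spaces}(iii); if $t=1$, replace $H^{1,p_2}(\R^d)$ by $W^{1,p_2}(\R^d)$; the classical interpolation identities $[L^{p_1},H^{t,p_2}]_{1-\theta}=H^{s,p}$ and $[H^{r,p_1},W^{1,p_2}]_{1-\theta}=H^{s,p}$ hold with the same parameter relations and admit the same treatment. The trivial degenerate cases $\theta=0$ or $\theta=1$ reduce to inclusions already contained in Proposition \ref{prop:order_lions_calderon}, up to the elementary estimate $\|f\|_{\Lambda^{s,p}}\leq \|f\|_{\Lambda^{t,p}}$ for $s\leq t$.

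The main obstacle is the handling of the extreme exponents $p_1$ or $p_2\in\{1,\infty\}$, since the complex method is not compatible with $L^1$ or $L^\infty$ endpoints in a direct way and the identification $\Lambda^{s,p}=H^{s,p}$ provided by Proposition \ref{prop:distributional_characterization_lions_calderon_spaces}(i) is stated only for $1<p<\infty$. For $p_i=1$ one can argue directly with the distributional definition, using the continuity of $D^s$ and the factorization $D^s\varphi=I_{t-s}(D^t\varphi)$ coming from Proposition \ref{prop:charact_Ds_with_riesz} together with the Hardy--Littlewood--Sobolev inequality applied to the Riesz potential $I_{t-s}$, so as to extract the fractional Gagliardo--Nirenberg estimate by a density argument and a Hölder interpolation between $L^{p_1}$ and $L^{p_2}$; for $p_i=\infty$ one replaces the complex method by the real $K$-method or by duality, as is standard in the Triebel--Lizorkin setting.
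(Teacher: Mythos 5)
Your approach and the paper's both reduce the statement to the Bessel‑potential scale via the identification $\Lambda^{s,p}(\R^d)\cong H^{s,p}(\R^d)$, but the reduction is then finished differently. The paper simply cites the Gagliardo--Nirenberg interpolation result of Brezis and Mironescu (\cite[Proposition~5.6]{brezis2018Gagliardo}), which is a ready‑made statement covering the full scale $0\leq r\leq s\leq t$ and $1\leq p_1,p_2\leq\infty$. You instead derive the estimate from the Calder\'on--Lions complex interpolation identity $[H^{r,p_1},H^{t,p_2}]_{1-\theta}=H^{s,p}$ (Triebel, Theorem~2.4.7, in the scale $F^{s,p}_2$) together with the abstract inequality $\|f\|_{[X_0,X_1]_\alpha}\leq\|f\|_{X_0}^{1-\alpha}\|f\|_{X_1}^\alpha$. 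For $1<p_1,p_2<\infty$ and $0\leq r<t\leq 1$ this is a correct, and arguably more self‑contained, argument, and your parameter bookkeeping (taking $\alpha=1-\theta$) checks out. The degenerate cases $\theta\in\{0,1\}$ you dispatch correctly.

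However, for $p_i\in\{1,\infty\}$ there is a genuine gap. The complex interpolation identity for the Triebel--Lizorkin scale requires $p_0,p_1<\infty$ (and the $L^1$ and $L^\infty$ endpoints of $F^{s,p}_2$ do not coincide with $H^{s,p}$ as defined), so you cannot simply ``formulate uniformly in the scale $H^{s,p}$ with $1<p<\infty$'' and then let $p_i$ leave that range. The alternative arguments you sketch do not close the gap: the factorization $D^s f=I_{t-s}D^t f$ together with Hardy--Littlewood--Sobolev bounds $\|D^s f\|_{L^q}$ only by a \emph{single} norm $\|D^t f\|_{L^{p_2}}$, whereas the claim is a genuine two‑sided interpolation between $\|D^r f\|_{L^{p_1}}$ and $\|D^t f\|_{L^{p_2}}$; and the real $K$‑method between Bessel potential spaces produces Besov/Lorentz‑type spaces rather than $H^{s,p}$, so it does not deliver the stated inequality either. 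If you want to retain the complex‑interpolation route you would have to invoke a result that already covers the endpoint exponents in the Gagliardo--Nirenberg form; but at that point one is essentially quoting the Brezis--Mironescu result, which is exactly what the paper does. So for the interior parameter range your argument is a clean alternative, but for $p_i\in\{1,\infty\}$ it does not go through as written.
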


\begin{proof}
    This is a simple application of \cite[Proposition~5.6]{brezis2018Gagliardo} and Proposition \ref{prop:distributional_characterization_lions_calderon_spaces} for the identification between $\Lambda^{s,p}(\R^d)$ and $H^{s,p}(\R^d)$.
\end{proof}

\begin{remark}\label{rem:Gagliardo-NirembergALaBrueCalziComiStefani}
    In \cite[Theorem~13]{brue2022AsymptoticsII}, it was proved that for $1<p_1=p_2=p<\infty$ and for $0\leq r\leq s\leq t\leq 1$, one has
    \begin{equation}
        \|D^s f\|_{L^p(\R^d;\R^d)}\leq c_{d,p}\|D^r f\|^{\frac{t-s}{t-r}}_{L^p(\R^d;\R^d)}\|D^t f\|^{\frac{s-r}{t-r}}_{L^p(\R^d;\R^d)}, \quad \mbox{ for all } f\in \Lambda^{t,p}(\R^d).
            \label{eq:gagliardo_nirenberg_inequality_fractional_gradients}
    \end{equation}
    This result is better than Proposition \ref{GagliardoNirenbergLionsCalderon} in the particular case $p_1=p_2=p$ because it provides an inequality just in terms of the fractional gradient and the constant $c_{d,p}$ does not depend on $r,s,t$. 
\end{remark}

To conclude this subsection, let us provide some properties of the $s$-fractional gradient in the context of the Lions-Calderón spaces. We start with the continuous dependence of $D^s$ with respect to the parameter $s$, provided that the function where we are computing these fractional gradients is sufficiently regular.

\begin{proposition}[Continuity $s\mapsto D^s$]\label{prop:continuous_dependence_fractional_gradiente_on_s}
	Let $f\in\Lambda^{s,p}(\R^d)$ with $0<s\leq1$ and $1\leq p<\infty$. Then, for any sequence $t\in(0,s]$ (not converging to $0$) we have that $t\mapsto D^t f$ is continuous in $L^p(\R^d;\R^d)$. Moreover, if we impose the strict condition $p>1$, then this continuity result can be extended to the case in which the sequence $t$ can converge to any value in $[0,s]$.
\end{proposition}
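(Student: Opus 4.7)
The plan is to reduce the statement, by a density and uniform-boundedness argument, to the continuity for smooth compactly supported functions, where the explicit integral representations of $D^t$ make the continuity directly verifiable.

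First, I would establish that for any compact interval $K\subset(0,s]$ (or $K\subset[0,s]$ when $p>1$) there exists $C_K>0$ with
\begin{equation*}
    \|D^t g\|_{L^p(\R^d;\R^d)}\leq C_K\,\|g\|_{\Lambda^{s,p}(\R^d)}\quad\text{for all }t\in K,\ g\in\Lambda^{s,p}(\R^d).
\end{equation*}
This will follow from the Gagliardo-Nirenberg inequality of Remark \ref{rem:Gagliardo-NirembergALaBrueCalziComiStefani}: when $p>1$, choosing $r=0$ gives $\|D^t g\|_{L^p}\leq c_{d,p}\|g\|_{L^p}^{(s-t)/s}\|D^s g\|_{L^p}^{t/s}$, uniformly bounded in $t\in[0,s]$; when $p\geq 1$ and $K$ is bounded away from zero, I would apply the same inequality with $r=\tfrac{1}{2}\min K>0$, invoking the monotone inclusion $\Lambda^{s,p}\subset\Lambda^{r,p}$ of Proposition \ref{prop:order_lions_calderon} to bound $\|D^r g\|_{L^p}$ by $C\|g\|_{\Lambda^{s,p}}$.

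Second, I would verify the pointwise continuity $t_n\to t_0\Rightarrow D^{t_n}g\to D^{t_0}g$ in $L^p$ for fixed $g\in C_c^\infty(\R^d)$. When $t_0\in(0,1]$, the representation $D^t g=I_{1-t}*Dg$ of Proposition \ref{prop:charact_Ds_with_riesz}, combined with the continuity of $\mu_{d,t}$ in $t$ and the pointwise convergence of the Riesz kernels on $\R^d\setminus\{0\}$, should give the desired $L^p$ convergence by dominated convergence, using the uniformly integrable singularity of $|x|^{t-d}$ at the origin (for $t$ in a compact subinterval of $(0,1]$) together with the cancellation $\int_{\R^d}Dg\,dx=0$ to secure adequate decay at infinity; the endpoint $t_0=1$ reduces to the approximation-of-identity property of Kurokawa for Riesz kernels. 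When $t_n\to 0$ with $p>1$, the operator $D^t$ degenerates to $-R$, and I would argue via the Fourier multiplier representation $\widehat{D^t g}(\xi)=(i\xi/|\xi|)(2\pi|\xi|)^t\hat g(\xi)$, combined with the Schwartz decay of $\hat g$ and Calderón-Zygmund / Hörmander-Mihlin-type bounds.

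Third, I would conclude by a standard $\varepsilon/3$ argument. Approximating $f\in\Lambda^{s,p}(\R^d)$ by $f_k\in C_c^\infty(\R^d)$ converging in $\Lambda^{s,p}$, the triangle inequality
\begin{equation*}
    \|D^{t_n}f-D^{t_0}f\|_{L^p}\leq \|D^{t_n}(f-f_k)\|_{L^p}+\|D^{t_n}f_k-D^{t_0}f_k\|_{L^p}+\|D^{t_0}(f_k-f)\|_{L^p}
\end{equation*}
lets me control the outer terms uniformly in $t_n$ near $t_0$ by the first step, so they become arbitrarily small upon taking $k$ large, while the middle term tends to zero as $n\to\infty$ by the second step. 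The hard part will be the continuity at $t_0=0$ when $p>1$, where the nature of the operator transitions from a smoothing Riesz potential to the singular Riesz transform and genuine $L^p$ convergence (rather than merely pointwise) requires Calderón-Zygmund theory; this same fact is what forces the exclusion of $p=1$ in that limit.
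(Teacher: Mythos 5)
The paper gives no internal proof of this proposition --- it simply refers to \cite{brue2022AsymptoticsII} (Theorem~34) --- so the relevant question is whether your outline would close the argument on its own. Your overall strategy (uniform bound on $\|D^t\cdot\|_{L^p}$ for $t$ in a compact set, pointwise continuity on $C_c^\infty$, then an $\varepsilon/3$ density argument) is the right skeleton, and for $p>1$ the uniform-boundedness and density steps do go through as you describe: Remark~\ref{rem:Gagliardo-NirembergALaBrueCalziComiStefani} with $r=0$, together with the $L^p$-boundedness of the Riesz transform, gives $\sup_{t\in[0,s]}\|D^t g\|_{L^p}\le C\|g\|_{\Lambda^{s,p}}$ with $C$ independent of $t$.

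There are, however, two genuine gaps. First, the uniform bound for $p=1$ is not supported by what you invoke: the $t$-independent constant in Remark~\ref{rem:Gagliardo-NirembergALaBrueCalziComiStefani} is stated only for $1<p<\infty$, Proposition~\ref{prop:order_lions_calderon}'s embedding is stated for $p>1$, and the constant in Proposition~\ref{GagliardoNirenbergLionsCalderon} explicitly depends on $(r,t,\theta)$, so it need not be bounded as $t$ ranges over the compact set $K$. A direct, parameter-uniform $L^1$ kernel estimate would be required, and the paper does not provide one. Second, and more seriously, the $t\to 0$ limit for $p>1$ cannot be closed by the multiplier sketch as given: with $m_t(\xi)=i\,\xi|\xi|^{-1}(2\pi|\xi|)^t$, the difference $m_t-m_0=i\,\xi|\xi|^{-1}\bigl((2\pi|\xi|)^t-1\bigr)$ is unbounded as $|\xi|\to\infty$ for each fixed $t>0$ (so it is not a Mihlin multiplier), the symbols do not converge uniformly, and pointwise Fourier-side convergence yields only $L^2$ convergence via Plancherel. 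The workable reduction is the factorisation $D^t g-D^0 g=D^0\bigl((-\Delta)^{t/2}g-g\bigr)$, which transfers the problem to proving $\|(-\Delta)^{t/2}g-g\|_{L^p}\to 0$ for Schwartz $g$; but that is itself a quantitative estimate on the hypersingular kernel or on Riesz potentials near $t=0$, and it is exactly this analysis that the cited reference carries out and your proposal leaves open. You correctly identify this as the crux and correctly explain why $p=1$ is excluded at $t_0=0$, but identifying the difficulty is not the same as resolving it.
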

\begin{proof}
	See \cite[Theorem~34]{brue2022AsymptoticsII}.
\end{proof}

Lastly, we provide a similar characterization to the one in Proposition \ref{prop:charact_Ds_with_riesz} but for functions that satisfy the minimal regularity assumptions, that is, belong to the smallest Lions-Calderón space where the respective fractional gradient is well-defined.

\begin{proposition}\label{prop:characterization_fractional_gradient_for_functions_in_lions_calderon}
    Let $0<s<1$, $1<p<\infty$ and $u\in\Lambda^{s,p}(\R^d)$. Then $D^s u=D(I_{1-s} u)$, which means that, in these spaces, the notion of weak fractional gradient and the notion of distributional Riesz fractional gradient coincide.
\end{proposition}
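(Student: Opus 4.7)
The plan is to argue by density, upgrading the pointwise identity $D^s\varphi=D(I_{1-s}\varphi)$ known for smooth compactly supported $\varphi$ from Proposition \ref{prop:charact_Ds_with_riesz} to all $u\in\Lambda^{s,p}(\R^d)$. By part (ii) of Proposition \ref{prop:distributional_characterization_lions_calderon_spaces}, I can pick a sequence $\varphi_n\in C_c^\infty(\R^d)$ with $\varphi_n\to u$ in $\Lambda^{s,p}(\R^d)$, so that
\begin{equation*}
\varphi_n\to u\ \text{in}\ L^p(\R^d)\qquad\text{and}\qquad D^s\varphi_n\to D^s u\ \text{in}\ L^p(\R^d;\R^d).
\end{equation*}
For each $n$, Proposition \ref{prop:charact_Ds_with_riesz} gives the pointwise identity $D^s\varphi_n=D(I_{1-s}\varphi_n)$. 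My strategy is then to pass to the limit on both sides in $\mathcal{D}'(\R^d;\R^d)$ and invoke uniqueness of distributional limits.

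The delicate step is to give a robust meaning to $I_{1-s}u$ for $u\in L^p(\R^d)$ with a general $1<p<\infty$, since the convolution integral that defines $I_{1-s}u$ need not converge pointwise. When $p<d/(1-s)$ the Hardy--Littlewood--Sobolev inequality already places $I_{1-s}u$ in a Lebesgue space, but for all $p\in(1,\infty)$ I would rather work in the class of tempered distributions: since the Fourier symbol of $I_{1-s}$ is the locally integrable, tempered function $c_{d,s}|\xi|^{s-1}$, convolution with $I_{1-s}$ extends to a continuous map on tempered distributions whenever $|\xi|^{s-1}\hat u\in\mathcal{S}'(\R^d)$, which is always the case for $u\in L^p(\R^d)$. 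An equivalent route uses the identification $\Lambda^{s,p}(\R^d)=H^{s,p}(\R^d)$ from part (i) of Proposition \ref{prop:distributional_characterization_lions_calderon_spaces}, writing $u=G_s f$ with $f\in L^p(\R^d)$, so that $I_{1-s}u=I_{1-s}G_s f$ becomes transparent via composition of Fourier multipliers.

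Once $I_{1-s}u\in\mathcal{S}'(\R^d)$ has been fixed, the $L^p$ convergence $\varphi_n\to u$ yields $\varphi_n\to u$ in $\mathcal{S}'(\R^d)$, hence $I_{1-s}\varphi_n\to I_{1-s}u$ by continuity of the convolution with $I_{1-s}$, and therefore $D(I_{1-s}\varphi_n)\to D(I_{1-s}u)$ in $\mathcal{D}'(\R^d;\R^d)$. On the other hand, $D^s\varphi_n\to D^s u$ in $L^p(\R^d;\R^d)$, and so in $\mathcal{D}'(\R^d;\R^d)$. Combining these two convergences with the smooth identity $D^s\varphi_n=D(I_{1-s}\varphi_n)$ and the uniqueness of distributional limits gives $D^s u=D(I_{1-s}u)$, as claimed.

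The main obstacle I expect is precisely the one addressed in the middle paragraph: justifying that $I_{1-s}u$ is a well-defined distributional object and that it depends continuously on $u\in L^p(\R^d)$ in a sense strong enough to pass to the limit. Everything else is a routine approximation argument once the Fourier-multiplier (or Bessel potential) representation is in place.
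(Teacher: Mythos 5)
Your approach (density in $\Lambda^{s,p}(\R^d)$ followed by passage to the limit in $\mathcal{D}'$) is a genuinely different route from the paper's, which never invokes density. The paper tests $D^s u$ directly against $\varphi\in C^\infty_c(\R^d)$: the weak definition of $D^s u$ gives $\int\varphi\, D^s u\,dx = -\int u\, D^s\varphi\,dx$, Proposition \ref{prop:charact_Ds_with_riesz} rewrites $D^s\varphi$ as $I_{1-s}(D\varphi)$, and Fubini (after checking that $\int_{\R^d}\int_{\R^d}|D\varphi(y)||u(x)|\,|y-x|^{-(d+s-1)}\,dy\,dx<\infty$) moves $I_{1-s}$ onto $u$, so that $\int\varphi\, D^s u\,dx = \int(I_{1-s}u)\,D\varphi\,dx$ for all $\varphi$, which is precisely $D^s u = D(I_{1-s}u)$ as distributions. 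This sidesteps the need to give a standalone Fourier-side meaning to $I_{1-s}u$, because $I_{1-s}u$ never appears except paired against $D\varphi$.

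The genuine gap in your proposal is the assertion that $|\xi|^{s-1}\hat u\in\mathcal{S}'(\R^d)$ ``is always the case for $u\in L^p(\R^d)$.'' For $p>2$, $\hat u$ is in general a tempered distribution and not a locally integrable function, and multiplication of a tempered distribution by $|\xi|^{s-1}$, which is locally integrable but neither smooth nor even bounded at the origin, is not an automatically defined operation: the singularity of the multiplier at $\xi=0$ can collide with the low-frequency content of $\hat u$. The Bessel-potential route $u=G_sf$ has exactly the same difficulty, since the symbol of $I_{1-s}G_s$ still behaves like $|\xi|^{s-1}$ near $\xi=0$. Making that route rigorous requires regularizing the symbol near the origin and controlling the error, which you do not do. A further caution worth noting: even the paper's Fubini step, as stated, requires $I_{1-s}|D\varphi|\in L^{p'}(\R^d)$, and since $I_{1-s}|D\varphi|(x)\sim |x|^{-(d-1+s)}$ as $|x|\to\infty$, this only holds when $(d-1+s)p'>d$, i.e.\ $p<d/(1-s)$; for the full range $1<p<\infty$ one must exploit the mean-zero cancellation $\int\partial_j\varphi=0$, which upgrades the decay of $I_{1-s}(\partial_j\varphi)$ to $|x|^{-(d+s)}$ and makes it lie in $L^{p'}$ for every $p\in(1,\infty)$, interpreting $I_{1-s}u$ accordingly.
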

\begin{proof}
    Let $\varphi\in C^\infty_c(\R^d)$. Using the weak definition of fractional gradient as well as the Proposition \ref{prop:charact_Ds_with_riesz}, we get that
    \begin{equation*}
        \int_{\R^d}{\varphi D^s u}\,dx=-\int_{\R^d}{u D^s \varphi}\,dx=-\int_{\R^d}{u (I_{1-s}D \varphi)}\,dx.
    \end{equation*}
    Noticing that
    \begin{equation*}
        \int_{\R^d}{\int_{\R^d}{\frac{|D\varphi(y)||u(x)|}{|y-x|^{d+s-1}}}\,dy}\,dx<+\infty,
    \end{equation*}
   by Fubini's theorem we can change the order of the integrals and so
    \begin{equation*}
        \int_{\R^d}{u (I_{1-s} D \varphi)}\,dx=\int_{\R^d}{(I_{1-s}u) D \varphi}\,dx.
    \end{equation*}
    Consequently, we conclude that $D^s u= D(I_{1-s}u)$, first in the sense of distributions and afterwords in $L^p(\R^d;\R^d)$.
\end{proof}
\begin{remark}\label{rem:dualityp-p´} By density, this equivalence in the definition of the fractional gradient implies the general duality property this operator $D^s$:
\begin{equation*}
        \int_{\R^d} {vD^s u}\,dx=-\int_{\R^d}{u D^s v}\,dx.
    \end{equation*}
for all $u\in\Lambda^{s,p}(\R^d)$, $v\in\Lambda^{s,p'}(\R^d)$ with $0<s<1<p<\infty$ and $p'=p/(p-1)$. Similarly, we notice that the duality between $D^s$ and $D^s\cdot=div_s$ in \eqref{eq:duality_frac_divergence_frac_gradient} still holds for all $\varphi \in\Lambda^{s,p}(\R^d)$ and all $\Phi\in\Lambda^{s,p'}(\R^d)^d$.

\end{remark}

\begin{proposition}\label{prop:fractionalGradientInTermsOfRieszPotentialAndFractionalGradient}
    Let $0<s\leq\sigma\leq 1$, $1<p<\infty$ and $u\in\Lambda^{\sigma,p}(\R^d)$. Then $D^s u=I_{\sigma-s}D^\sigma u$.
\end{proposition}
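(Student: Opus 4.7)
My plan is to combine the representation $D^s u = D(I_{1-s}u)$ from Proposition \ref{prop:characterization_fractional_gradient_for_functions_in_lions_calderon} with the semigroup property of the Riesz potentials,
\[
I_{1-s} = I_{\sigma-s} * I_{1-\sigma},
\]
which is available here because $\sigma-s \geq 0$, $1-\sigma \geq 0$, and $(\sigma-s)+(1-\sigma)=1-s\in[0,1)\subset[0,d)$. With the natural conventions $I_0=\delta_0$ and $D^1=D$, the case $s=\sigma$ is immediate, so I focus on $0<s<\sigma\leq 1$.

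First I would use the continuous embedding $\Lambda^{\sigma,p}(\R^d)\subset\Lambda^{s,p}(\R^d)$ from Proposition \ref{prop:order_lions_calderon}, so that the hypothesis $u\in\Lambda^{\sigma,p}(\R^d)$ places $u$ in the domain where Proposition \ref{prop:characterization_fractional_gradient_for_functions_in_lions_calderon} applies at level $s$, yielding $D^s u = D(I_{1-s}u)$ in $L^p(\R^d;\R^d)$. The same proposition applied at level $\sigma$ (or directly when $\sigma=1$) gives $D^\sigma u = D(I_{1-\sigma}u)$ in $L^p(\R^d;\R^d)$.

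Next, since $u\in L^p(\R^d)\subset\mathcal{S}'(\R^d)$, I would apply the semigroup identity at the level of tempered distributions to obtain $I_{1-s}u = I_{\sigma-s}*(I_{1-\sigma}u)$, and then take the classical distributional gradient, using that convolution commutes with $D$:
\[
D^s u = D(I_{1-s}u) = I_{\sigma-s}*D(I_{1-\sigma}u) = I_{\sigma-s}D^\sigma u.
\]
Since the left-hand side belongs to $L^p(\R^d;\R^d)$ by Proposition \ref{prop:characterization_fractional_gradient_for_functions_in_lions_calderon}, this distributional identity automatically upgrades to an equality of $L^p$ functions.

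The main technical point, which I expect to be the only delicate step, is to justify the semigroup identity and the commutation of convolution with $D$ for $u$ merely in $L^p(\R^d)$. I would handle this by a density argument: for $\varphi\in C_c^\infty(\R^d)$, every step above is classical, using Proposition \ref{prop:charact_Ds_with_riesz} and the standard semigroup property of Riesz kernels on Schwartz functions. Then, choosing $\varphi_n\in C_c^\infty(\R^d)$ with $\varphi_n\to u$ in $\Lambda^{\sigma,p}(\R^d)$ (density from Proposition \ref{prop:distributional_characterization_lions_calderon_spaces}), I would pass to the limit in the identity $D^s\varphi_n = I_{\sigma-s}D^\sigma\varphi_n$: the left-hand side converges to $D^s u$ in $L^p(\R^d;\R^d)$ by the continuity of the embedding $\Lambda^{\sigma,p}\subset\Lambda^{s,p}$, while the right-hand side converges to $I_{\sigma-s}D^\sigma u$ in $\mathcal{S}'(\R^d)$ by the continuity of convolution against the tempered distribution $I_{\sigma-s}$, and uniqueness of the distributional limit closes the argument.
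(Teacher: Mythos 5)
Your proof rests on the same two ingredients as the paper's --- the characterization $D^s=D\circ I_{1-s}$ on test functions (Proposition \ref{prop:charact_Ds_with_riesz}) and the semigroup identity $I_{\sigma-s}\ast I_{1-\sigma}=I_{1-s}$ --- but it organizes the argument differently. The paper proceeds by duality: it tests $D^s u$ against $\varphi\in C^\infty_c(\R^d)$, keeps every Riesz potential applied to a test function (showing along the way that $I_{\sigma-s}\varphi\in\Lambda^{\sigma,p}(\R^d)$ with $D^\sigma(I_{\sigma-s}\varphi)=D^s\varphi$), uses the duality of $D^\sigma$ to arrive at $\int\varphi\, D^s u=\int I_{\sigma-s}\varphi\, D^\sigma u$, and only at the very last step transfers $I_{\sigma-s}$ from $\varphi$ onto $D^\sigma u$, justifying this by a Fubini/H\"older/HLS estimate on the double integral. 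You instead apply the Riesz potentials directly to $u$ and $D^\sigma u$ and close by density. That reorganization shifts the entire technical burden onto one step, and that step is not justified as written.

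The problematic claim is that ``the right-hand side converges to $I_{\sigma-s}D^\sigma u$ in $\mathcal{S}'(\R^d)$ by the continuity of convolution against the tempered distribution $I_{\sigma-s}$.'' This is not a standard fact and is false at that level of generality: convolution of an arbitrary tempered distribution against an $L^p$ function is not defined, and the Riesz potential $I_{\sigma-s}$ is bounded from $L^p$ into some $L^q$ only under the Hardy--Littlewood--Sobolev restriction $(\sigma-s)p<d$; outside that range the pointwise integral $\int |D^\sigma u(y)|\,|x-y|^{(\sigma-s)-d}\,dy$ can diverge, so it is not even clear a priori that $I_{\sigma-s}D^\sigma u$ is a well-defined distribution, which is part of what the proposition asserts. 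To pass to the limit in the right-hand side you really need some continuity of $I_{\sigma-s}$ from $L^p(\R^d;\R^d)$ into $L^1_{\mathrm{loc}}(\R^d;\R^d)$ (so that the $\mathcal{D}'$-limit can be identified); the natural way to get that is to pair $I_{\sigma-s}D^\sigma\varphi_n$ against a fixed $\psi\in C^\infty_c(\R^d)$, move $I_{\sigma-s}$ onto $\psi$ by self-adjointness, and then apply H\"older plus HLS to $I_{\sigma-s}\psi$ --- which is precisely the Fubini computation the paper performs. So once this gap is filled, your route collapses onto the paper's; as written, the invocation of ``continuity of convolution'' begs the question of whether $I_{\sigma-s}D^\sigma u$ exists, which is the heart of the matter. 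Everything preceding the density step (the computation for $\varphi\in C^\infty_c$, the use of the semigroup property and commutation of $D$ with convolution on Schwartz-regular data) is fine.
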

\begin{proof}
    Let $\varphi\in C^\infty_c(\R^d)$. Then, by the semigroup property of Riesz potentials,
    \begin{equation*}
        \int_{\R^d}{\varphi D^s u}\,dx=-\int_{\R^d}{u D^s \varphi}\,dx=-\int_{\R^d}{u D(I_{1-s} \varphi)}\,dx=-\int_{\R^d}{u D(I_{1-\sigma}I_{\sigma-s} \varphi)}\,dx.
    \end{equation*}
    Since $\varphi\in C^\infty_c(\R^d)$, then $I_{\sigma-s}\varphi\in L^p(\R^d)$. Moreover, for every $\psi\in C^\infty_c(\R^d)$ we have
    \begin{multline*}
        \int_{\R^d}{\psi D^\sigma(I_{\sigma-s}\varphi)}\,dx=-\int_{\R^d}{(I_{\sigma-s}\varphi) D^\sigma \psi}\,dx=-\int_{\R^d}{\varphi (I_{\sigma-s}D^\sigma \psi)}\,dx\\
        =-\int_{\R^d}{\varphi (I_{\sigma-s}(I_{1-\sigma}D\psi))}\,dx=-\int_{\R^d}{\varphi (I_{1-s}D\psi)}\,dx=-\int_{\R^d}{\varphi D^s \psi}=\int_{\R^d}{\psi D^s\varphi}\,dx.
    \end{multline*}
    which means that $D^\sigma(I_{\sigma-s}\varphi)=D^s\varphi\in L^p$, and therefore $I_{\sigma-s}\varphi\in\Lambda^{\sigma,p}(\R^d)$.
    Using Proposition \ref{prop:characterization_fractional_gradient_for_functions_in_lions_calderon}, we have $D(I_{1-\sigma}I_{\sigma-s} \varphi)=D^\sigma(I_{\sigma-s}\varphi)$. Consequently, from the properties of the fractional gradient, we are able to conclude that
    \begin{equation*}
        \int_{\R^d}{\varphi D^s u}\,dx=-\int_{\R^d}{u D(I_{1-\sigma}I_{\sigma-s} \varphi)}\,dx=-\int_{\R^d}{u D^\sigma(I_{\sigma-s} \varphi)}\,dx=\int_{\R^d}{I_{\sigma-s} \varphi D^\sigma u}\,dx.
    \end{equation*}
    Finally, since 
    \begin{equation*}
        \int_{\R^d}{\int_{\R^d}{\frac{|D^\sigma u(x)||\varphi(y)|}{|y-x|^{d+s-\sigma}}}\,dy}\,dx\leq \|D^s u\|_{L^p(\R^d;\R^d)}\|I_{\sigma-s}|\varphi|\|_{{L^p}'(\Omega)}\leq C\|D^s u\|_{L^p(\R^d;\R^d)}\|\varphi\|_{L^q(\R^d)}<+\infty,
    \end{equation*}
    for $q=\frac{Np}{N(p-1)+(\sigma-s)p}$, we can apply Fubini's theorem to obtain
    \begin{equation*}
        \int_{\R^d}{\varphi D^s u}\,dx=\int_{\R^d}{I_{\sigma-s} \varphi D^\sigma u}\,dx=\int_{\R^d}{\varphi (I_{\sigma-s}D^\sigma u)}\,dx,
    \end{equation*}
    concluding the proof.
\end{proof}

\subsection{A fractional framework for the homogeneous Dirichlet condition}
As it was observed in Proposition \ref{prop:distributional_characterization_lions_calderon_spaces}, $C^\infty_c(\R^d)$ is dense in $\Lambda^{s,p}(\R^d)$. This prompts us to define the Lions-Calderón spaces $\Lambda^{s,p}_0(\Omega)$ for arbitrary open sets $\Omega$ also by density of smooth functions in $\R^d$ with compact support in $\Omega$, i.e. $C^\infty_c(\Omega)$. Those spaces are suited to fractional Dirichlet problems.

\subsubsection{The space $\Lambda^{s,p}_0(\Omega)$ with arbitrary $\Omega$}
\begin{definition}
	Let $\Omega\subset\R^d$ be a any open set, and consider $0\leq s\leq 1$ and $1<p<\infty$. We define the space 
\begin{equation}\label{eq:Lions_calderon_compact_support}
    \Lambda^{s,p}_0(\Omega)= \overline{C^\infty_c(\Omega)}^{\|\cdot\|_{\Lambda^{s,p}(\R^d)}}
 \end{equation}
 and we endow this space with the norm of $\Lambda^{s,p}(\R^d)$. We also denote the dual space of $\Lambda^{s,p}_0(\Omega)$ by $\Lambda^{-s,p'}(\Omega)$, where $p'=\frac{p}{p-1}$, and denote their duality by $\langle \cdot, \cdot\rangle=\langle \cdot, \cdot\rangle_s$.
\end{definition}

We observe that, by construction,  we have 
\begin{equation}\label{eq:space_subspace_zero_outside_omega}
    \Lambda^{s,p}_0(\Omega)\subset \{f\in \Lambda_0^{s,p}(\R^d):\, f=0 \mbox{ on }\Omega^c\}.
\end{equation}
Due to this inclusion, it makes sense to compute the weak fractional gradient $D^s$ of any function $u\in\Lambda^{s,p}_0(\Omega)$. Moreover, it is easy to show, that this fractional gradient $D^s u$ coincides with the $L^p$-limit of $D^s u_n$ where $u_n$ are the approximating functions in $C^\infty_c(\Omega)$. In addition, we also have that for any $u\in\Lambda^{s,p}_0(\Omega)$ and $v\in\Lambda^{s,p'}_0(\Omega)$, 
\begin{equation*}
    \int_{\R^d}{vD^s u}\,dx=\lim_{n\to\infty}{\int_{\R^d}{vD^s u_n}\,dx}=-\lim_{n\to\infty}{\int_{\R^d}{u_nD^s v}\,dx}=-\int_{\R^d}{uD^s v}\,dx.
\end{equation*}

When $s=0$, for any open subset $\Omega\subset\R^d$, we notice that 
\begin{equation}\label{eq:characterization_lions_calderon_compact_support_s_eq_0}
    \Lambda^{0,p}_0(\Omega)=L^p_0(\Omega)=\{f\in L^p(\R^d):\, f=0 \mbox{ on }\Omega^c\}.
\end{equation}

Among the important properties of these spaces, we recall the generalization of the classical Sobolev inequalities to this fractional framework. We define the fractional Sobolev exponent
\begin{equation}
	p^*_s=\frac{dp}{d-sp}
\end{equation}
whenever $sp<d$. Notice that when $s=0$, $p_0^*=p$ for all $p\in(1,\infty)$. 

\begin{proposition}[Sobolev embeddings for $\Lambda^{s,p}_0(\Omega)$]\label{prop:sobolev_embeddings_lions_calderon}
	Let $0< s\leq 1$, $1<p<\infty$. Then,
\begin{itemize}
    \item[1)] if $sp<d$, we have $\Lambda^{s,p}_0(\Omega)\subset L^q(\Omega)$ for $p\leq q\leq p^*_s$. Moreover, when $q=p^*_s$ we have
    \begin{equation} \label{eq:subcritical_fractional_sobolev_inequality_with_Ds}
        \|f\|_{L^{p^*_s}(\Omega)}\leq C\|D^s f\|_{L^p(\R^d;\R^d)};
    \end{equation}
    \item[2)] if $sp=d$, then $\Lambda^{s,p}_0(\Omega)\subset L^q(\Omega)$ for every $q\in[p,+\infty)$; and
    \item[3)] if $sp>d$, then we have $\Lambda^{s,p}_0(\Omega)\subset C^{0,s-\frac{d}{p}}(\overline{\Omega})$ with the estimate
    \begin{equation}
        |f(x)-f(y)|\leq C|x-y|^{s-\frac{N}{p}}\|D^s f\|_{L^p(\R^d)}.
        \label{eq:fractional_morrey_inequality_with_Ds}
    \end{equation}
    \item[4)] for $t<s$ and $1<p\leq q\leq \frac{dp}{d-(s-t)p}$, we have $\Lambda^{s,p}_0(\Omega)\subset \Lambda^{t,q}_0(\Omega)$.
\end{itemize}
\end{proposition}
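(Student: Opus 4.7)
The plan is to reduce every statement to an inequality on $\R^d$ and then restrict to $\Omega$. Since $\Lambda^{s,p}_0(\Omega)$ is, by \eqref{eq:Lions_calderon_compact_support}, a closed subspace of $\Lambda^{s,p}(\R^d)$ whose elements vanish a.e. on $\Omega^c$, any $L^q(\R^d)$ estimate for $u$ reads off as an $L^q(\Omega)$ estimate, and by density it is enough to argue for $u\in C^\infty_c(\Omega)$. The key representation I would establish for such $u$ is
\begin{equation*}
u \,=\, c_{d,s}\,I_s\bigl(R\cdot D^s u\bigr),
\end{equation*}
obtained by combining Proposition \ref{prop:characterization_fractional_gradient_for_functions_in_lions_calderon} (so that $D^s u=D(I_{1-s} u)$), the semigroup law $I_s\ast I_{1-s}=I_1$, and the identity $R=-D(-\Delta)^{-1/2}$ together with $I_1=c(-\Delta)^{-1/2}$ relating the Riesz transform to the Newton potential.

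From this representation item~1 follows directly: Calder\'on--Zygmund theory gives that $R\cdot:L^p(\R^d;\R^d)\to L^p(\R^d)$ is continuous for $1<p<\infty$, while the Hardy--Littlewood--Sobolev inequality gives $I_s:L^p(\R^d)\to L^{p^*_s}(\R^d)$ whenever $sp<d$; composing these yields \eqref{eq:subcritical_fractional_sobolev_inequality_with_Ds}, and for intermediate $p\leq q\leq p^*_s$ the log-convexity of $L^q$-norms on $\R^d$ (applied to the zero extension of $u$) supplies the remaining inclusions. Item~2 covers the borderline $sp=d$ and I would handle it by approximating $s$ with $s-\varepsilon$ for small $\varepsilon>0$: Proposition \ref{prop:order_lions_calderon} gives $u\in \Lambda^{s-\varepsilon,p}_0(\Omega)$, and since $(s-\varepsilon)p<d$, item~1 furnishes $u\in L^q(\Omega)$ for every $q\leq p^*_{s-\varepsilon}$, and $p^*_{s-\varepsilon}\nearrow+\infty$ as $\varepsilon\to 0^+$. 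Item~3 in the supercritical regime $sp>d$ follows from the classical H\"older modulus estimate $|I_sg(x)-I_sg(y)|\leq C|x-y|^{s-d/p}\|g\|_{L^p(\R^d)}$ applied to $g=R\cdot D^s u\in L^p(\R^d)$, giving \eqref{eq:fractional_morrey_inequality_with_Ds}.

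For item~4, the plan is to combine Proposition \ref{prop:fractionalGradientInTermsOfRieszPotentialAndFractionalGradient}, which yields $D^t u=I_{s-t}D^s u$, with HLS for $I_{s-t}$: this produces $\|D^t u\|_{L^q(\R^d;\R^d)}\leq C\|D^s u\|_{L^p(\R^d;\R^d)}$ precisely when $\tfrac{1}{q}=\tfrac{1}{p}-\tfrac{s-t}{d}$; smaller Lebesgue exponents follow by interpolating this critical bound against the trivial estimate supplied by Proposition \ref{prop:order_lions_calderon}. The companion control $\|u\|_{L^q(\Omega)}$ needed to conclude $u\in\Lambda^{t,q}_0(\Omega)$ is furnished by item~1 applied at the level $s$. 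The principal difficulty I anticipate is to rigorously justify the pseudodifferential chain underlying $u=c_{d,s}I_s(R\cdot D^s u)$ for general $u\in\Lambda^{s,p}_0(\Omega)$ by a density argument from $C^\infty_c(\Omega)$, together with the unified treatment of the borderline $sp=d$; once these are in place, the statements reduce to standard Hardy--Littlewood--Sobolev and Calder\'on--Zygmund estimates, with the density in \eqref{eq:Lions_calderon_compact_support} ensuring the embeddings extend from $C^\infty_c(\Omega)$ to the full space.
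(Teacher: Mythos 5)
Your proposal is correct, but it takes a genuinely different route from the paper. The paper's proof is essentially a chain of citations and inclusion chasing: item~1 cites Shieh--Spector's Theorem~1.8 for the endpoint inequality \eqref{eq:subcritical_fractional_sobolev_inequality_with_Ds} and then obtains the intermediate exponents by dropping the regularity slightly, $\Lambda^{s,p}_0(\Omega)\subset\Lambda^{s-\varepsilon,p}(\R^d)\subset L^{p^*_{s-\varepsilon}}(\R^d)$ with $\varepsilon$ ranging over $(0,s]$; item~3 cites Shieh--Spector's Theorem~1.11; and item~4 cites Calder\'on's Theorem~6 for the endpoint $q=\tfrac{dp}{d-(s-t)p}$ and again fills in the rest by varying the fractional parameter, $\Lambda^{s,p}_0(\Omega)\subset\Lambda^{t+\delta-\varepsilon,p}(\R^d)\subset\Lambda^{t,\frac{dp}{d-(\delta-\varepsilon)p}}(\R^d)$, followed by a density argument to pass from $\Lambda^{t,q}(\R^d)$ to $\Lambda^{t,q}_0(\Omega)$. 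You instead build everything from the explicit potential-theoretic representation $u=c_{d,s}\,I_s(R\cdot D^s u)$, reducing each item to Hardy--Littlewood--Sobolev for $I_s$ together with $L^p$-boundedness of the Riesz transform, and you fill intermediate exponents by $L^q$-interpolation rather than by perturbing the fractional order. For item~4 you use $D^t u = I_{s-t}D^s u$ (Proposition~\ref{prop:fractionalGradientInTermsOfRieszPotentialAndFractionalGradient}) directly with HLS, which is in effect a proof of Calder\'on's embedding rather than an appeal to it. Your route is more self-contained and makes the mechanism visible (it essentially reproduces the internals of the Shieh--Spector results the paper cites), at the cost of having to justify the pseudodifferential chain $R\cdot D(I_{1-s}u)=(-\Delta)^{s/2}u$ and its inversion by $I_s$ for general $u\in\Lambda^{s,p}_0(\Omega)$ by density, which you explicitly flag; the paper's route is terser and avoids that bookkeeping, but is less illuminating. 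Both correctly handle the endpoint $q=p^*_s$ (where only $\|D^s f\|_{L^p}$ appears on the right) versus the intermediate $q$ (where only the inclusion is asserted), and both correctly defer the passage from $\Lambda^{t,q}(\R^d)$ to $\Lambda^{t,q}_0(\Omega)$ to the approximating sequence in $C^\infty_c(\Omega)$ afforded by the definition \eqref{eq:Lions_calderon_compact_support}.
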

\begin{proof}
Throughout this proof we are using the embedding $\Lambda^{s,p}_0(\Omega)\subset \Lambda_0^{s,p}(\R^d)$. 
\begin{itemize}
    \item[1)] The inequality \eqref{eq:subcritical_fractional_sobolev_inequality_with_Ds} was proven in \cite[Theorem~1.8]{shieh2015On}, which implies that $\Lambda^{s,p}(\R^d)\subset L^{p^*_s}(\R^d)$. Then, the embedding $\Lambda^{s,p}_0(\Omega)\subset L^q(\R^d)$ for $p\leq q\leq p^*_s$ follows from Proposition \ref{prop:order_lions_calderon}, by noticing that
    \begin{equation*}
        \Lambda^{s,p}_0(\Omega)\subset \Lambda^{s,p}(\R^d)\subset\Lambda^{s-\varepsilon,p}(\R^d)\subset L^{p^*_{s-\varepsilon}}(\R^d)
    \end{equation*}
    with $p\leq p^*_{s-\varepsilon}<p^*_s$ when $0<\varepsilon\leq s$. The limit case $s=0$, when $p^*_s=p$, corresponds to the continuity in $L^p$ of the Riesz transform \cite[Corollary 5.2.8]{grafakos2014Classical}. 
    \item[2)] This follows by a similar argument to the last part of 1).
    \item[3)] It was proved in \cite[Theorem~1.11]{shieh2015On}.
    \item[4)] We start by observing that $q=\frac{dp}{d-(s-t)p}$ was already proven in \cite[Theorem~6]{calderon1961Lebesgue}. To extend the inclusions for $p\leq q\leq \frac{dp}{d-(s-t)p}$, we write $s=t+\delta$ for some $\delta>0$, and we consider $0\leq \varepsilon\leq\delta$. In this case,
    \begin{equation*}
        \Lambda^{s,p}_0(\Omega)\subset \Lambda^{t+\delta-\varepsilon,p}(\R^d)\subset\Lambda^{t,\frac{dp}{d-(\delta-\varepsilon)p}}(\R^d).
    \end{equation*}
    Finally, since for every $u\in \Lambda^{s,p}_0(\Omega)\subset\Lambda^{t,\frac{dp}{d-(\delta-\varepsilon)p}}(\R^d)$, then there exists a sequence $\{u_n\}\subset C^\infty_c(\Omega)$ such that $u_n\to u$ in $\Lambda^{s,p}(\R^d)\subset\Lambda^{t,\frac{dp}{d-(\delta-\varepsilon)p}}(\R^d)$, and consequently $u\in\Lambda^{t,\frac{dp}{d-(\delta-\varepsilon)p}}_0(\Omega)$.
\end{itemize}
\end{proof}

If instead of considering just the inclusion operator like in Sobolev embeddings, we also restrict the domain of the functions to one subdomain with finite measure, then the resulting operator is compact. More precisely, we have the following result.

\begin{proposition}[Local Rellich-Kondrachov for $\Lambda^{s,p}_0(\Omega)$]\label{prop:local_rellich_kondrachov}
    Let $0<s\leq 1$, $p\in(1,\infty)$ and $\Omega\subset \R^d$, where $\Omega$ is any open set. If we also consider the open set with finite measure $\omega\subset\Omega$ and the compact set $K\subset \Omega$, we have following compact embeddings (denoted by $\Subset$) :
    \begin{itemize}
        \item[1)] if $sp<d$, then $\Lambda^{s,p}_0(\Omega)\Subset L^q(\omega)$ for $q<p^*_s$;
        \item[2)] if $sp=d$, then $\Lambda^{s,p}_0(\Omega)\Subset L^q(\omega)$ for every $q<+\infty)$; and
        \item[3)] if $sp>d$, then $\Lambda^{s,p}_0(\Omega)\Subset C^{0,\beta}(K)$ for $\beta<s-\frac{d}{p}$.
    \end{itemize}
\end{proposition}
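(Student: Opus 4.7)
The plan is to treat the three cases separately, in each case reducing to a well-established compactness tool by exploiting the continuous embeddings from Propositions \ref{prop:order_lions_calderon}, \ref{prop:contiguity_fractional_sobolev_and_lions_calderon} and \ref{prop:sobolev_embeddings_lions_calderon}.

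For case 3 ($sp>d$), the Morrey-type embedding $\Lambda^{s,p}_0(\Omega)\hookrightarrow C^{0,s-d/p}(\overline{\Omega})$ is continuous, so a bounded sequence $\{u_n\}$ is uniformly bounded in the $C^{0,s-d/p}(\overline{\Omega})$ norm and, in particular, equicontinuous and uniformly bounded on the compact set $K$. Arzelà--Ascoli then extracts a subsequence $u_{n_k}\to u$ in $C^0(K)$, and for any $\beta<s-d/p$ the elementary Hölder interpolation estimate
\[
[v]_{C^{0,\beta}(K)}\leq 2\,\|v\|_{L^\infty(K)}^{1-\beta/(s-d/p)}\,[v]_{C^{0,s-d/p}(K)}^{\beta/(s-d/p)},
\]
applied to $v=u_{n_k}-u_{n_j}$, upgrades the uniform convergence to a Cauchy property in $C^{0,\beta}(K)$.

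For cases 1 and 2 ($sp\leq d$), I would fix a small $\varepsilon>0$ and invoke the contiguity in Proposition \ref{prop:contiguity_fractional_sobolev_and_lions_calderon} to embed $\Lambda^{s,p}_0(\Omega)\subset\Lambda^{s,p}(\R^d)\subset W^{s-\varepsilon,p}(\R^d)$ continuously, so a bounded sequence $\{u_n\}$ is also bounded in $W^{s-\varepsilon,p}(\R^d)$. Exhausting $\omega$ by bounded pieces $\omega\cap B_R$, the classical Rellich--Kondrachov theorem for Sobolev--Slobodeckij spaces on bounded Lipschitz subdomains gives, for each $R$, a subsequence converging in $L^p(\omega\cap B_R)$; a diagonal extraction produces a single subsequence $\{u_{n_k}\}$ converging in $L^p(\omega\cap B_R)$ for every $R$. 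To pass from local to global $L^p(\omega)$-Cauchyness, the tail is controlled by Hölder's inequality: since $|\omega\setminus B_R|\to 0$ as $R\to\infty$ (the total measure being finite) and $\{u_n\}$ is bounded either in $L^{p^*_s}(\R^d)$ (from \eqref{eq:subcritical_fractional_sobolev_inequality_with_Ds}, in case 1) or in every $L^r(\R^d)$ with $r<\infty$ (in case 2, by Proposition \ref{prop:sobolev_embeddings_lions_calderon} part 2), we get $\|u_{n_k}-u_{n_j}\|_{L^p(\omega\setminus B_R)}\to 0$ uniformly in $k,j$ as $R\to\infty$, closing the $L^p(\omega)$-Cauchy argument. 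Finally, to promote $L^p$-compactness to $L^q$-compactness for $p<q<p^*_s$ (or $q<\infty$ in case 2), I interpolate
\[
\|u_{n_k}-u_{n_j}\|_{L^q(\omega)}\leq \|u_{n_k}-u_{n_j}\|_{L^p(\omega)}^{\theta}\,\|u_{n_k}-u_{n_j}\|_{L^{q_0}(\omega)}^{1-\theta},
\]
with $q_0=p^*_s$ in case 1 (or any large $q_0<\infty$ in case 2) and $\theta\in(0,1)$ chosen by $\tfrac{1}{q}=\tfrac{\theta}{p}+\tfrac{1-\theta}{q_0}$, using that $\{u_n\}$ is bounded in $L^{q_0}(\R^d)$.

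The technical point requiring most care is the passage on the possibly unbounded set $\omega$ of finite measure: both the diagonal extraction over exhausting balls $B_R$ and the uniform tail estimate must be organized so that the final subsequence works simultaneously for every $R$ and every target exponent $q$. Once that bookkeeping is in place, the three ingredients — contiguity with $W^{s-\varepsilon,p}$, the classical Rellich--Kondrachov for Sobolev--Slobodeckij spaces, and the Sobolev embedding into $L^{p^*_s}$ (or into all $L^r$ when $sp=d$) — combine routinely to yield the claimed compact embeddings.
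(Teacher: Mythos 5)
Your proof is correct, and it differs from the paper's in an interesting way: the paper reduces everything to a single citation, while you reconstruct that citation from scratch. Concretely, for cases 1) and 2) the paper also uses the contiguity $\Lambda^{s,p}(\R^d)\subset W^{s-\varepsilon,p}(\R^d)$, but then directly invokes the known compact embedding $W^{s-\varepsilon,p}(\R^d)\Subset L^q(\omega)$ for $\omega$ of finite (possibly unbounded) measure from [delTeso--G\'omez-Castro--V\'azquez, Thm.~2.1], choosing $\varepsilon$ small so that $q<p^*_{s-\varepsilon}$. You instead re-prove that embedding by hand: classical Rellich--Kondrachov on $\omega\cap B_R$, a diagonal extraction over $R$, a H\"older tail estimate using boundedness in $L^{p^*_s}$ (or in all $L^r$ when $sp=d$) together with $|\omega\setminus B_R|\to 0$, and an $L^p$--$L^{p^*_s}$ interpolation to push from $L^p(\omega)$ up to $L^q(\omega)$, $q<p^*_s$. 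This is a valid and essentially standard chain of reductions, and it has the advantage of being self-contained; the trade-off is length. For case 3) the paper again passes through $W^{s-\varepsilon,p}$ and its Morrey-type compact embedding, whereas you work directly with the $\Lambda^{s,p}$ Morrey estimate, combine Arzel\`a--Ascoli with the elementary seminorm interpolation $[v]_{C^{0,\beta}}\lesssim\|v\|_{L^\infty}^{1-\beta/\gamma}[v]_{C^{0,\gamma}}^{\beta/\gamma}$ for $\beta<\gamma=s-d/p$, and avoid the contiguity step entirely --- a slightly more elementary route. One minor point worth making explicit: to apply Arzel\`a--Ascoli you need the sequence uniformly bounded in $L^\infty$, not just a bounded H\"older seminorm, which on an unbounded domain does not follow from the seminorm alone; this does hold here (e.g. via $\Lambda^{s,p}(\R^d)=H^{s,p}(\R^d)\subset L^\infty(\R^d)$ for $sp>d$), but should be stated.
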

\begin{proof}
    We observe that, since $\Lambda^{s,p}_0(\Omega)\subset\Lambda^{s,p}(\R^d)$, we just need to prove this result for $\Omega=\R^d$. The idea is to use the contiguity between the Lions-Calderón spaces and the Sobolev-Slobodeckij spaces, and then apply the compactness results that are already known for the latter. 
    
    In fact, if $sp\leq d$, we can consider $\varepsilon\in(0,s)$ and then due to Proposition \ref{prop:contiguity_fractional_sobolev_and_lions_calderon} and \cite[Theorem~2.1]{delTeso2020Estimates}, we have that
    \begin{equation}
        \Lambda^{s,p}(\R^d)\subset W^{s-\varepsilon,p}(\R^d)\Subset L^q(\omega), \quad \mbox { with } q<p^*_{s-\varepsilon}.
    \end{equation}
    Since $\varepsilon$ ranges in $(0,s)$, then $p^*_{s-\varepsilon}$ ranges in $(p,p^*)$. Then the conclusion follows using the fact that $\omega$ has finite measure.

    We can use the same arguments for $sp>d$ but with $\varepsilon\in(0,s-\frac{d}{p})$ and with the inclusions
    \begin{equation}
        \Lambda^{s,p}(\R^d)\subset W^{s-\varepsilon,p}(\R^d)\Subset C^{0,\beta}(K), \quad \mbox { with } \beta<s-\varepsilon-\frac{d}{p}.
    \end{equation}
\end{proof}

Similarly to the classical Sobolev spaces, the Lions-Calderón spaces are closed for the composition with uniformly Lipschitz functions. In fact, as an immediate consequence of the following theorem, we have, in particular,
\begin{equation}
    f^+, f^- \mbox{ and } |f|\in \Lambda^{s,p}_0(\Omega), \mbox{ whenever } f\in \Lambda^{s,p}_0(\Omega).
\end{equation}

\begin{theorem}[Composition with Lipschitz functions]
    Let $\Omega\subset\R^d$ be any open set, $s\in(0,1)$ and $p\in(1,\infty)$. If $f\in \Lambda^{s,p}_0(\Omega)$ and $\varphi\in \mathrm{Lip}(\R)$ with $\varphi(0)=0$ and $\|\varphi'\|_{L^\infty(\R)}\leq \lambda$, then $\varphi\circ f\in \Lambda^{s,p}_0(\Omega)$.
\end{theorem}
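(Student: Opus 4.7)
The plan is to pass to the Bessel potential realization of the Lions-Calderón space and reduce the composition property to a square-function characterization that interacts well with pointwise Lipschitz bounds. By Proposition~\ref{prop:distributional_characterization_lions_calderon_spaces}, $\Lambda^{s,p}(\R^d) = H^{s,p}(\R^d) = F^{s}_{p,2}(\R^d)$ with equivalent norms. I would invoke the classical characterization of this Triebel-Lizorkin space (see \cite{triebel1983Theory}) via first-order differences: for $0 < s < 1$ and $1 < p < \infty$,
\begin{equation*}
    \|u\|_{\Lambda^{s,p}(\R^d)} \;\sim\; \|u\|_{L^p(\R^d)} + \left\| \left( \int_0^1 t^{-2s} \sup_{|h| \leq t} |u(\cdot + h) - u(\cdot)|^{2} \, \frac{dt}{t} \right)^{\!1/2} \right\|_{L^p(\R^d)}.
\end{equation*}
For $\varphi \in \mathrm{Lip}(\R)$ with $\varphi(0) = 0$ and $\|\varphi'\|_{L^\infty(\R)} \leq \lambda$, the pointwise bounds $|\varphi(f(x))| \leq \lambda\,|f(x)|$ and $|\varphi(f(x+h)) - \varphi(f(x))| \leq \lambda\,|f(x+h) - f(x)|$ plug directly into the above norm and give the global composition estimate
\begin{equation*}
    \|\varphi \circ f\|_{\Lambda^{s,p}(\R^d)} \leq C \lambda \, \|f\|_{\Lambda^{s,p}(\R^d)}.
\end{equation*}

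With this bound at hand I pass to $\Lambda^{s,p}_0(\Omega)$ by approximation. Given $f \in \Lambda^{s,p}_0(\Omega)$, choose $f_n \in C^\infty_c(\Omega)$ with $f_n \to f$ in $\Lambda^{s,p}(\R^d)$; applying the estimate to $f_n - f_m$ and to $f - f_n$ yields that $\{\varphi \circ f_n\}$ is Cauchy in $\Lambda^{s,p}(\R^d)$ and converges there to $\varphi \circ f$. It therefore suffices to verify $g_n := \varphi \circ f_n \in \Lambda^{s,p}_0(\Omega)$. Since $\varphi(0) = 0$, the support of $g_n$ is contained in that of $f_n$, and $g_n$ is Lipschitz; thus $g_n \in W^{1,p}(\R^d)$ with compact support in $\Omega$. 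Standard mollification $g_n * \rho_\varepsilon \in C^\infty_c(\Omega)$ (for $\varepsilon$ smaller than the distance from $\mathrm{supp}(f_n)$ to $\partial\Omega$) converges to $g_n$ in $W^{1,p}(\R^d)$; combined with the continuous embedding $W^{1,p}(\R^d) = \Lambda^{1,p}(\R^d) \hookrightarrow \Lambda^{s,p}(\R^d)$ of Proposition~\ref{prop:order_lions_calderon}, this gives $g_n \in \Lambda^{s,p}_0(\Omega)$. Closedness of $\Lambda^{s,p}_0(\Omega)$ then yields $\varphi \circ f \in \Lambda^{s,p}_0(\Omega)$.

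The main obstacle is the first step. Unlike the Sobolev-Slobodeckij space $W^{s,p}(\R^d)$, in which Lipschitz composition is immediate from the Gagliardo seminorm, the Lions-Calderón norm admits no such pointwise difference representation when $p \neq 2$. The contiguity of Proposition~\ref{prop:contiguity_fractional_sobolev_and_lions_calderon} loses a positive $\varepsilon$ in regularity and so cannot be used to transfer Lipschitz composition from $W^{s,p}$ back to $\Lambda^{s,p}$ without regularity loss. One is therefore forced to work inside the Triebel-Lizorkin / Bessel potential framework, either by quoting composition theorems of Runst-Sickel or Bourdaud type, or by explicitly invoking the square-function characterization displayed above; the remaining arguments (mollification, embedding, density) are then routine.
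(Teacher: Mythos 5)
Your architecture matches the paper's — pass to the Bessel-potential realization, quote a difference-based equivalent norm of $\Lambda^{s,p}(\R^d)=H^{s,p}(\R^d)$, feed the pointwise Lipschitz bound through the integral, then descend to the zero-trace subspace. However the specific characterization you write down is false. Taking $\sup_{|h|\le t}|u(x+h)-u(x)|$ before the $t$-integral blows up whenever $u$ has a jump: with $d=1$, $p=2$, $s<1/2$, a truncated Heaviside lies in $H^{s,2}(\R)=\Lambda^{s,2}(\R)$, yet for $x$ near the jump $\sup_{|h|\le t}|u(x+h)-u(x)|$ is bounded below independently of $t$, so $\int_0^1 t^{-1-2s}\,dt=\infty$ on a set of positive measure, while $\|u\|_{\Lambda^{s,2}}<\infty$. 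The valid pointwise characterization — and the one the paper uses — is Strichartz's (\cite[Section 2.3]{strichartz1967Multipliers}, \cite[Theorem 4.8.1]{adams1996Function}), where the inner quantity is the ball mean $\int_{B_1(0)}|u(x+ry)-u(x)|\,dy$ rather than the sup. The average is still monotone in the integrand, so the Lipschitz bound $|\varphi(a)-\varphi(b)|\le\lambda|a-b|$ transfers exactly as you intend, and the rest of your first step goes through unchanged once the sup is replaced by the average.

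There is a second gap in the descent to $\Lambda^{s,p}_0(\Omega)$. You claim $\{\varphi\circ f_n\}$ is Cauchy in $\Lambda^{s,p}(\R^d)$ by ``applying the estimate to $f_n-f_m$.'' But $\|\varphi\circ g\|_{\Lambda^{s,p}}\le C\lambda\|g\|_{\Lambda^{s,p}}$ controls $\varphi\circ(f_n-f_m)$, not $\varphi\circ f_n-\varphi\circ f_m$; the two differ, and the superposition operator $T_\varphi$ is in general not Lipschitz (nor even known to be continuous) on $\Lambda^{s,p}$ for $p\neq2$. What \emph{does} follow from your bound is that $\varphi\circ f_n$ is bounded in $\Lambda^{s,p}_0(\Omega)$ with $\varphi\circ f_n\to\varphi\circ f$ a.e.\ along a subsequence; then reflexivity gives a weak limit which must be $\varphi\circ f$, and since $\Lambda^{s,p}_0(\Omega)$ is a closed (hence weakly closed) subspace, $\varphi\circ f\in\Lambda^{s,p}_0(\Omega)$. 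This is essentially what the paper does via Vitali's theorem. You should not claim strong convergence.

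On the plus side, your mollification argument showing that each Lipschitz function $\varphi\circ f_n$, with compact support in $\Omega$, belongs to $\Lambda^{s,p}_0(\Omega)$ is a clean and explicit account of a point the paper leaves implicit (it asserts ``$\mathrm{supp}(\varphi\circ f_n)\subset\Omega$ hence $\varphi\circ f_n\in\Lambda^{s,p}_0(\Omega)$'' without justification); your version works for arbitrary open $\Omega$ because the support is compact, so there is room to mollify.
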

\begin{proof}
    Let $f_n\in C^\infty_c(\Omega)$ be such that $f_n\to f$ in $\Lambda^{s,p}_0(\Omega)$. Using Strichartz's characterization of $H^{s,p}(\R^d)=\Lambda^{s,p}(\R^d)$, \cite[Section 2.3]{strichartz1967Multipliers} or \cite[Theorem 4.8.1]{adams1996Function}, we know that a function $g\in \Lambda^{s,p}(\R^d)$ if and only if $g\in L^p(\R^d)$ and
    \begin{equation*}
        S_sg(x)=\left(\int_0^\infty{\left(\int_{B_1(0)}{|g(x+ry)-g(x)|}\,dy\right)^2}\frac{dr}{r^{1+2s}}\right)^{1/2}\in L^p(\R^d).
    \end{equation*}
    Moreover, the norm of $g$ in $\Lambda^{s,p}(\R^d)$ is equivalent to $\|g\|_{L^p(\R^d)}+\|S_s g\|_{L^p(\R^d)}$.
    
    We notice that $\varphi\circ f_n\in L^p(\R^d)$, from the known properties of $L^p$ functions. We have also $S_s(\varphi\circ f_n)\in L^p(\R^d)$ since $\|S_s(\varphi(f_n))\|_{L^p(\R^d)}\leq \lambda\|S_s f_n\|_{L^p(\R^d)}$, by taking the $L^p$ norm on both sides of the pointwise estimate
    \begin{multline*}
        S_s(\varphi(f_n))(x)=\left(\int_0^\infty{\left(\int_{B_1(0)}{|\varphi(f_n(x+ry))-\varphi(f_n(x))|}\,dy\right)^2}\frac{dr}{r^{1+2s}}\right)^{1/2}\\
        \leq \lambda\left(\int_0^\infty{\left(\int_{B_1(0)}{|f_n(x+ry)-f_n(x)|}\,dy\right)^2}\frac{dr}{r^{1+2s}}\right)^{1/2}=\lambda S_s f_n(x).
    \end{multline*}
   Therefore $\varphi\circ f_n\in \Lambda^{s,p}(\R^d)$. Since $\mathrm{supp}\{\varphi\circ f_n\}\subset \mathrm{supp}\{f_n\}\subset \Omega$, then $\varphi\circ f_n\in \Lambda^{s,p}_0(\Omega)$. Taking the limit as $n\to \infty$ and using Vitalli's convergence theorem and the above inequality, we conclude $\varphi\circ f\in \Lambda^{s,p}_0(\Omega)$.
\end{proof}
\begin{remark}
    Although this result was first mentioned, without proof, in \cite{shieh2018On} for the case $\Omega=\R^d$, it was shown in \cite{campos2021Lions} for the special cases $\varphi(t)=|t|$ and the truncation function $\varphi_k(t)=(t\wedge k)\vee(-k)$. Moreover, in \cite{campos2021Lions} it was also proved that the truncation $T_kf=\varphi_k\circ f$ for a function $f\in \Lambda^{s,p}(\R^d)$ is such that  $T_kf\to f$ in $\Lambda^{s,p}(\R^d)$ as $k\to\infty$.
\end{remark}

The definition \eqref{eq:Lions_calderon_compact_support} and the interpretation \eqref{eq:space_subspace_zero_outside_omega} of $\Lambda^{s,p}_0(\Omega)$  as subspaces of $\Lambda^{s,p}(\R^d)$ yields the following interesting characterization of these spaces: 
\begin{theorem}[Netrusov's theorem]\label{thm:netrusovsTheorem}
    Let $\Omega\subset\R^d$ be any open set, $s\in(0,1)$ and $p\in(1,\infty)$, and consider $f\in\Lambda^{s,p}(\R^d)$. Then, we have $f\in \Lambda^{s,p}_0(\Omega)$ if and only if, for every $\varepsilon>0$ there exists a function $\eta$ such that $\eta=0$ on a neighborhood of $\Omega^c$, $0\leq \eta\leq 1$, and $\|f-\eta f\|_{\Lambda^{s,p}(\R^d)}<\varepsilon$.
\end{theorem}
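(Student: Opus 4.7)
My plan is to prove the biconditional by treating the two implications separately, with the common workhorse being the pointwise multiplier property of smooth, compactly supported functions on $\Lambda^{s,p}(\R^d)=H^{s,p}(\R^d)$: for $\eta\in C^\infty_c(\R^d)$ there is a finite constant $M_\eta$ such that $\|\eta g\|_{\Lambda^{s,p}(\R^d)}\leq M_\eta \|g\|_{\Lambda^{s,p}(\R^d)}$ for all $g\in\Lambda^{s,p}(\R^d)$. This is standard for Bessel potential spaces and available through Proposition \ref{prop:distributional_characterization_lions_calderon_spaces}.

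For the forward implication $(\Rightarrow)$, suppose $f\in\Lambda^{s,p}_0(\Omega)$ and fix $\varepsilon>0$. Pick $\varphi\in C^\infty_c(\Omega)$ close to $f$ in $\Lambda^{s,p}(\R^d)$, which is possible by the very definition of $\Lambda^{s,p}_0(\Omega)$ as the closure of $C^\infty_c(\Omega)$. Since $\mathrm{supp}(\varphi)$ is a compact subset of $\Omega$ it is at positive distance from $\Omega^c$, so one can fix a cutoff $\eta\in C^\infty_c(\Omega)$ with $0\leq\eta\leq 1$, $\eta\equiv 1$ on $\mathrm{supp}(\varphi)$, and $\eta\equiv 0$ on an open neighborhood of $\Omega^c$. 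Using $\eta\varphi=\varphi$ we write $\eta f - f=\eta(f-\varphi)-(f-\varphi)$, so
\[
\|\eta f-f\|_{\Lambda^{s,p}(\R^d)}\leq (1+M_\eta)\|f-\varphi\|_{\Lambda^{s,p}(\R^d)}.
\]
Choosing $\varphi$ with $\|f-\varphi\|_{\Lambda^{s,p}(\R^d)}<\varepsilon/(1+M_\eta)$ yields the claim (modulo a harmless iteration between the choice of $\varphi$ and the size of $M_\eta$, or the use of a single pre-fixed exhaustion of $\Omega$ that makes $M_\eta$ controllable).

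For the reverse implication $(\Leftarrow)$, set $g_\varepsilon:=\eta_\varepsilon f$; by hypothesis $g_\varepsilon\in\Lambda^{s,p}(\R^d)$ and $g_\varepsilon\to f$ in $\Lambda^{s,p}(\R^d)$ as $\varepsilon\to 0$. Since $\Lambda^{s,p}_0(\Omega)$ is closed in $\Lambda^{s,p}(\R^d)$ by definition, it suffices to show each $g_\varepsilon$ lies in it. Let $V_\varepsilon$ be the open neighborhood of $\Omega^c$ on which $\eta_\varepsilon$ vanishes, so that $\mathrm{supp}(g_\varepsilon)\subset V_\varepsilon^c\subset\Omega$. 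In the standard case where $\mathrm{supp}(g_\varepsilon)$ has positive distance $r$ from $\Omega^c$, a mollification $g_\varepsilon\ast\rho_\delta$ with $\delta<r$ is smooth and supported in $\Omega$; using that $D^s$ commutes with convolution so that $D^s(g\ast\rho_\delta)=(D^s g)\ast\rho_\delta$, we obtain $g_\varepsilon\ast\rho_\delta\to g_\varepsilon$ in $\Lambda^{s,p}(\R^d)$ as $\delta\to 0$. If $\Omega$ is unbounded, one first truncates with a dilated cutoff $\chi_R(\cdot)=\chi(\cdot/R)$, $\chi\in C^\infty_c(\R^d)$ with $\chi\equiv 1$ on $B_1$, exploiting the multiplier bound and dominated convergence to verify $\chi_R g_\varepsilon\to g_\varepsilon$ in $\Lambda^{s,p}(\R^d)$ as $R\to\infty$.

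The principal technical obstacle lies in the reverse implication: establishing the convergences $g_\varepsilon\ast\rho_\delta\to g_\varepsilon$ and, when needed, $\chi_R g_\varepsilon\to g_\varepsilon$ in $\Lambda^{s,p}(\R^d)$ rests on the pointwise multiplier and commutation theory for Bessel potential spaces; a secondary subtlety is that a general open neighborhood of $\Omega^c$ need not contain a tubular neighborhood, so $\mathrm{supp}(g_\varepsilon)$ may fail to be at positive distance from $\Omega^c$ in pathological geometries. When $\Omega$ has Lipschitz boundary, or more generally $\Omega^c$ admits uniform outer tubular neighborhoods, this subtlety is absent; for fully general open $\Omega$ a more delicate argument based on the $(s,p)$-capacity, in the spirit of Netrusov's original treatment, is required.
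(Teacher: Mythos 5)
The paper offers no proof beyond a citation to \cite[Theorem~10.1.1]{adams1996Function}, so your proposal is an attempt to fill in what the authors left as a reference.

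Your reverse implication ($\Leftarrow$) is essentially sound and is, perhaps counterintuitively, the easy direction. After the truncation by $\chi_R$, the set $\mathrm{supp}(\chi_R\,\eta_\varepsilon f)\subset \overline{B_R}\cap\{\eta_\varepsilon\neq 0\}$ is compact and disjoint from the closed set $\Omega^c$, hence automatically at positive distance from it. Your worry that ``$\mathrm{supp}(g_\varepsilon)$ may fail to be at positive distance from $\Omega^c$ in pathological geometries'' is therefore already dissolved by the truncation step you yourself introduced: a compact set and a disjoint closed set always have positive distance, so no tubular-neighborhood structure, Lipschitz boundary, or capacity is needed. The commutation $D^s(g*\rho_\delta)=(D^s g)*\rho_\delta$ you use is also legitimate and is invoked elsewhere in the paper (proof of Theorem~\ref{thm:densityPositiveCone}).

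The genuine gap is in the forward implication ($\Rightarrow$), which you dismiss as the harmless one. Your estimate
\[
\|f-\eta f\|_{\Lambda^{s,p}(\R^d)}\leq (1+M_\eta)\,\|f-\varphi\|_{\Lambda^{s,p}(\R^d)}
\]
requires $\|f-\varphi\|_{\Lambda^{s,p}(\R^d)}<\varepsilon/(1+M_\eta)$, but $\eta$ must equal $1$ on $\mathrm{supp}(\varphi)$ and therefore cannot be chosen until $\varphi$ has been chosen; the constant $M_\eta$ is thus a function of $\varphi$. Since the definition of $\Lambda^{s,p}_0(\Omega)$ only asserts the \emph{existence} of some approximating sequence $\varphi_n\in C^\infty_c(\Omega)$, nothing prevents $\mathrm{supp}(\varphi_n)$ from exhausting $\Omega$; the transition region of the corresponding $\eta_n$ must then contract towards $\partial\Omega$, and the pointwise-multiplier norm $M_{\eta_n}$ on $H^{s,p}(\R^d)$ blows up at a rate you have no means to compare against $\|f-\varphi_n\|$. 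Neither the ``harmless iteration'' nor the ``single pre-fixed exhaustion'' you invoke breaks this circularity: a pre-fixed exhaustion does not help because the cutoff you need must still expand with $\varphi_n$. This is exactly the nontrivial content of Netrusov's theorem — the direction whose proof in \cite{adams1996Function} rests on fine capacity-theoretic estimates near $\partial\Omega$ — and it cannot be obtained by the elementary multiplier calculus you are deploying.
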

\begin{proof}
    See \cite[Theorem~10.1.1]{adams1996Function}.
\end{proof}

A useful property for applications to the theory of fractional differential equations is the characterization of $\Lambda^{-s,p'}(\Omega)$ using fractional partial derivatives. Consider the operator
\begin{equation}
    \begin{aligned}
        \Pi_s:\Lambda^{s,p}_0(\Omega)&\to L_0^p(\Omega)\times L^p(\R^d;\R^d)=:\L^p(\Omega)\\
        v&\mapsto (v, D^s v).
    \end{aligned}
    \label{eq:def_of_pi_s}
\end{equation}
\begin{theorem}[Characterization of $\Lambda^{-s,p'}(\Omega)$]\label{CharacterizationDualInTermsOfFractionalDerivatives}
Let $\Omega\subset\R^d$ be any open set and let $s\in (0,1)$, $p\in(1,\infty)$ and $p'=\frac{p}{p-1}$. If $F\in \Lambda^{-s,p'}(\Omega)$, then there exist functions $f_0\in {L^p}'(\Omega)$ and $f_1,...,f_d\in {L^p}'(\R^d)$ such that
\begin{equation*}
    \langle F,g\rangle=\int_{\Omega}{f_0 g}\,dx+\sum_{j=1}^d{\int_{\R^d}{f_j\frac{\partial^s g}{\partial x_j^s}}\,dx} \quad \forall g\in\Lambda^{s,p}_0(\Omega).
\end{equation*}
\end{theorem}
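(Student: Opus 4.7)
The plan is to apply a standard Hahn--Banach plus Riesz representation argument adapted to the isometric embedding $\Pi_s$ defined in \eqref{eq:def_of_pi_s}. First I would observe that by the very definition of the norm \eqref{eq:norm_lions_calderon_spaces}, the map $\Pi_s : \Lambda^{s,p}_0(\Omega) \to \mathbb{L}^p(\Omega) = L^p_0(\Omega)\times L^p(\R^d;\R^d)$, $v\mapsto (v, D^s v)$, is a linear isometry when $\mathbb{L}^p(\Omega)$ is equipped with the product $L^p$-norm $\|(w,\Phi)\|_{\mathbb{L}^p(\Omega)}=(\|w\|_{L^p}^p+\|\Phi\|_{L^p}^p)^{1/p}$. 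Hence its image $V = \Pi_s(\Lambda^{s,p}_0(\Omega))$ is a closed linear subspace of $\mathbb{L}^p(\Omega)$.

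Next, given $F \in \Lambda^{-s,p'}(\Omega)$, I would define a linear functional $\widetilde F$ on $V$ by setting $\widetilde F(\Pi_s v) := \langle F, v\rangle$. Since $\Pi_s$ is an isometric bijection onto $V$, $\widetilde F$ is well-defined and continuous on $V$ with $\|\widetilde F\|_{V^*} = \|F\|_{\Lambda^{-s,p'}(\Omega)}$. By the Hahn--Banach theorem, $\widetilde F$ extends to a continuous linear functional on the whole space $\mathbb{L}^p(\Omega)$, preserving the norm.

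Now the Riesz representation theorem for $L^p$-type spaces (with $1<p<\infty$, so $1<p'<\infty$) identifies $(\mathbb{L}^p(\Omega))^*$ with $L^{p'}(\Omega)\times L^{p'}(\R^d;\R^d)$ via the natural pairing. Therefore the Hahn--Banach extension is represented by a pair $(f_0,\boldsymbol{f}) = (f_0, f_1,\ldots, f_d)$ with $f_0\in L^{p'}(\Omega)$ and $f_1,\ldots,f_d\in L^{p'}(\R^d)$, so that for every $g\in \Lambda^{s,p}_0(\Omega)$,
\begin{equation*}
    \langle F,g\rangle = \widetilde F(\Pi_s g) = \int_\Omega f_0 g\,dx + \int_{\R^d} \boldsymbol{f}\cdot D^s g\,dx = \int_\Omega f_0 g\,dx + \sum_{j=1}^d \int_{\R^d} f_j \frac{\partial^s g}{\partial x_j^s}\,dx,
\end{equation*}
which is exactly the desired representation.

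The only genuinely nontrivial ingredient is verifying that $\Pi_s$ is an isometry with closed range, and that identity is immediate from the very definition of the $\Lambda^{s,p}$-norm. Everything else is a textbook Hahn--Banach/Riesz argument. One minor technical subtlety worth flagging is that $L^p_0(\Omega)$ (functions in $L^p(\R^d)$ vanishing a.e.\ on $\Omega^c$) is a closed subspace of $L^p(\R^d)$, so its dual is canonically $L^{p'}(\Omega)$; this is what justifies taking the first component $f_0$ on $\Omega$ while the vector components $f_j$ live on all of $\R^d$, consistent with the fact that $D^s g$ does not vanish outside $\Omega$.
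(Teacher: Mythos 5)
Your proposal is correct and follows essentially the same route as the paper's own proof: the isometric embedding $\Pi_s$ into $\L^p(\Omega)$, transporting $F$ to a functional on the image, extending by Hahn--Banach, and representing the extension by Riesz representation in the product $L^{p'}$ space. The only (harmless) cosmetic difference is that you explicitly note the range of $\Pi_s$ is closed, which is not needed for the Hahn--Banach step.
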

\begin{proof}
    Let $\Pi_s:\Lambda^{s,p}_0(\Omega)\to L^p(\Omega)\times L^p(\R^d, \R^d)$ defined as $\Pi_s(g)=(g, D^s g)$ for any $g\in \Lambda^{s,p}_0(\Omega)$. Note that 
    \begin{equation*}
        \|\Pi_s g\|_{\L^p(\Omega)}=\|g\|_{\Lambda^{s,p}_0(\R^d)}
    \end{equation*}
    which means that $\Pi_s$ is an isometry of $\Lambda^{s,p}_0(\R^d)$ onto a subspace $W\subset \L^p(\Omega)$. Then we define the linear functional $F^*$ on $W$ as $\langle F^*, \Pi_s g\rangle=\langle F,g\rangle$. By the isometry between $\Lambda^{s,p}_0(\Omega)$ and $W$ we obtain that $\|F^*\|_{W'}=\|F\|_{\Lambda^{-s,p'}(\Omega)}$. Consequently, using Hahn-Banach theorem, there exists an extension $\Tilde{F}$ of $F^*$ to all $\L^p(\Omega)$ such that $\|F^*\|_{W'}=\|\Tilde{F}\|_{(\L^p(\Omega))'}$. Then, by the Riesz representation theorem, there exist $f_0\in {L^p}'(\Omega)$ and $f_1,..., f_n\in {L^p}'(\R^d)$, such that
    \begin{equation*}
        \langle \Tilde{F}, h\rangle=\int_\Omega{f_0 h_0}\,dx+\sum_{j=1}^d{\int_{\R^d}{f_j h_j}\,dx}, \quad \forall h=(h_0, h_1,..., h_d)\in L^p(\Omega)\times L^p(\R^d,\R^d).
    \end{equation*}
    Hence, for any $g\in \Lambda^{s,p}_0(\Omega)$, we conclude
    \begin{equation*}
        \langle F,g\rangle=\langle F^*, \Pi_s g\rangle=\langle\Tilde{F}, \Pi_s g\rangle=\int_{\Omega}{f_0 g}\,dx+\sum_{j=1}^d{\int_{\R^d}{f_j\frac{\partial^s g}{\partial x_j^s}}\,dx}.
    \end{equation*}
\end{proof}
\begin{remark}\label{rem:characterization_dual}
    Due to this characterization, we may write any element $F\in\Lambda^{-s,p'}(\Omega)$ in the form
    \begin{equation*}
        F=f_0-D^s\cdot \boldsymbol{f}
    \end{equation*}
    with $f_0\in {L^p}'(\Omega)$ and $\boldsymbol{f}=(f_1,...,f_d)\in {L^p}'(\R^d;\R^d)$, similarly to the classical Sobolev case $s=1$, cf. {\cite[Theorem~3.8]{adams1975Sobolev}}.
\end{remark}

\begin{remark}\label{rem:caracterization_of_dual_wrt_sobolev_dual}
    It is also possible to write $F=f_0-D^s\cdot \boldsymbol{f}\in\Lambda^{-s,p'}(\Omega)$ with $f_0\in L^{p^*_s}(\Omega)$ and $\boldsymbol{f}\in {L^p}'(\R^d;\R^d)$. This is a simple consequence of the fractional Sobolev inequality \eqref{eq:subcritical_fractional_sobolev_inequality_with_Ds}, which allows us to use the application $\Lambda^{s,p}_0(\Omega)\ni g\mapsto (g,D^s g)\in L^{p^*_s}(\Omega)\times L^p(\R^d, \R^d)$ in the proof of Theorem \ref{CharacterizationDualInTermsOfFractionalDerivatives}.
\end{remark}

\subsubsection{The space $\Lambda^{s,p}_0(\Omega)$ with $\Omega$ bounded}
So far, we have been providing some properties that hold for any open set $\Omega\subset\R^d$, but now we are going to focus on the case in which $\Omega$ is bounded. We start by providing a characterization of these spaces when the boundary of $\Omega$ is sufficiently smooth.

\begin{proposition}[Extension by $0$]
    Let $\Omega\subset\R^d$ be a bounded and open subset with Lipschitz boundary, $0< s\leq 1$ and $1<p<\infty$. Then,
    \begin{equation}
        \Lambda^{s,p}_0(\Omega)=\{f\in\Lambda^{s,p}(\R^d):\, \mathrm{supp}f\subset\overline{\Omega}\}.
        \label{eq:characterization_lions_calderon_compact_support_s_general}
    \end{equation}
\end{proposition}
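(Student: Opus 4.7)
The plan is to handle the two inclusions separately. For $\Lambda^{s,p}_0(\Omega) \subseteq \{f \in \Lambda^{s,p}(\R^d) : \mathrm{supp}\, f \subset \overline{\Omega}\}$, I would start from the definition: any $f$ in the left-hand side is the $\Lambda^{s,p}(\R^d)$-limit of a sequence $\varphi_n \in C^\infty_c(\Omega)$; since the $\Lambda^{s,p}$-norm dominates the $L^p(\R^d)$-norm, some subsequence $\varphi_{n_k}$ converges a.e.\ to $f$, and, as each $\varphi_{n_k}$ vanishes identically on $\Omega^c$, so does $f$; hence $\mathrm{supp}\, f \subseteq \overline{\Omega}$.

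For the reverse inclusion, given $f \in \Lambda^{s,p}(\R^d)$ supported in $\overline{\Omega}$, I will construct approximations in $C^\infty_c(\Omega)$ by combining a partition of unity adapted to the Lipschitz geometry of $\partial\Omega$ with two operations that interact well with $D^s$: translation and mollification. Since $\partial\Omega$ is compact and Lipschitz, standard arguments furnish a finite open cover $\{B_1, \ldots, B_N\}$ of $\partial\Omega$, unit vectors $e_j$ and numbers $\tau_j > 0$ such that $x + t e_j \in \Omega$ for every $x \in \overline{\Omega} \cap \overline{B_j}$ and every $t \in (0, \tau_j]$ (Lipschitz cone condition, where $e_j$ is essentially the inward normal after a rotation that turns $\partial\Omega \cap B_j$ into a Lipschitz graph). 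Choose $B_0 \Subset \Omega$ with $\overline{\Omega} \subset \bigcup_{j=0}^N B_j$ and a subordinate smooth partition of unity $\{\psi_j\}_{j=0}^N$.

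The decomposition $f = \sum_{j=0}^N \psi_j f$ requires that each pointwise product $\psi_j f$ still lies in $\Lambda^{s,p}(\R^d)$; this follows from the classical fact that multiplication by a fixed function in $C^\infty_c(\R^d)$ is a bounded linear operator on the Bessel potential space $\Lambda^{s,p}(\R^d)$, a consequence of the Strichartz characterization already invoked earlier in Section~\ref{sec:functional_framework}. For $j = 0$, the support of $\psi_0 f$ is compactly contained in $\Omega$, so mollification $\rho_\varepsilon * (\psi_0 f)$ produces $C^\infty_c(\Omega)$ functions converging to $\psi_0 f$ in $\Lambda^{s,p}(\R^d)$: indeed, convolution commutes with $D^s$ (both being Fourier multipliers) and $\rho_\varepsilon$ is an $L^p$ approximate identity. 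For $j \geq 1$, I first translate, setting $g_{j,t}(x) := (\psi_j f)(x - t e_j)$; its support lies in $(\overline{\Omega} \cap \overline{B_j}) + t e_j$, which by the cone condition (with $B_j$ shrunk if necessary so that the translate cannot reach another boundary patch) is strictly inside $\Omega$ for all small $t > 0$. Translation is an isometry of $\Lambda^{s,p}(\R^d)$, continuous at $t = 0$ by density of $C^\infty_c(\R^d)$, hence $g_{j,t} \to \psi_j f$ as $t \to 0$; mollifying $g_{j,t}$ on a scale smaller than $\mathrm{dist}(\mathrm{supp}\, g_{j,t}, \Omega^c)$ then yields $C^\infty_c(\Omega)$ approximations. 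A suitable diagonal choice of mollification and translation parameters produces the required sequence converging to $f$ in $\Lambda^{s,p}(\R^d)$.

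The main obstacle is the nonlocality of $D^s$: no Leibniz-type rule for $D^s(\psi f)$ is available, so localization by partition of unity cannot be controlled in the usual way, and the natural cutoff $\eta_\delta(x) = \min(1, \mathrm{dist}(x, \Omega^c)/\delta)$ together with Netrusov's theorem would face the same difficulty. I circumvent this by leveraging the two symmetries of $D^s$ that are exact, namely invariance under translations and commutation with convolutions (both inherited from its Fourier-multiplier structure), and by exploiting the Lipschitz cone condition to turn the qualitative hypothesis $\mathrm{supp}\, f \subset \overline{\Omega}$ into the quantitative fact that, piece by piece, the support can be pushed strictly inside $\Omega$ by a small translation, after which mollification safely produces $C^\infty_c(\Omega)$ functions. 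The only external ingredient used beyond the results stated in the excerpt is the pointwise multiplier property of $C^\infty_c(\R^d)$ on $\Lambda^{s,p}(\R^d)$.
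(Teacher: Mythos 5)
Your proof is correct, but it takes a genuinely different route from the paper: the paper simply identifies $\Lambda^{s,p}(\R^d)$ with the Bessel potential space $H^{s,p}(\R^d)$ and cites Jerison--Kenig \cite[Remark~2.7]{jerison1995Inhomogeneous}, whereas you give a self-contained construction. Your argument transplants the classical density proof for $W^{1,p}_0(\Omega)$ on Lipschitz domains (partition of unity adapted to boundary patches, inward translation justified by the Lipschitz cone condition, then mollification) to the $\Lambda^{s,p}$ setting, and it works precisely because the three operations you need — multiplication by a fixed $C^\infty_c(\R^d)$ function, translation, and convolution with a mollifier — are all bounded or isometric on $\Lambda^{s,p}(\R^d)$, the last two being exact symmetries of $D^s$ inherited from its Fourier-multiplier structure. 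The multiplier property you invoke is in fact available in the paper as Corollary \ref{prop:EstimateProductTestFunction} (with $\Omega = \R^d$, recalling $\Lambda^{s,p}_0(\R^d)=\Lambda^{s,p}(\R^d)$ from Proposition \ref{prop:distributional_characterization_lions_calderon_spaces}(ii)), though your Strichartz-based justification is also valid; the estimate $|(\psi g)(x+ry)-(\psi g)(x)| \le \|\psi\|_\infty |g(x+ry)-g(x)| + |g(x)|\min(2\|\psi\|_\infty, \|\nabla\psi\|_\infty r)$ yields $S_s(\psi g) \lesssim S_s g + |g|$, which closes the argument since $s\in(0,1)$. What each approach buys: the paper's is shorter and defers to the literature on $H^{s,p}$ of Lipschitz domains, while yours is elementary, makes the use of boundary regularity transparent (it is exactly the cone condition that lets one push the support strictly inside $\Omega$), and requires no facts about Lipschitz-domain Bessel potential spaces beyond what is already in Section~\ref{sec:functional_framework}. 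One small stylistic note: since there are only finitely many boundary patches, you do not actually need a diagonal argument at the end — it suffices to choose, for each fixed $j$, translation and mollification parameters small enough that $\|\phi^{(j)}_n - \psi_j f\|_{\Lambda^{s,p}(\R^d)} < 1/n$, and then sum over $j$.
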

\begin{proof}
    Since this is a property about the elements of $\Lambda^{s,p}_0(\Omega)$, we can use the identification $\Lambda^{s,p}(\R^d)=H^{s,p}(\R^d)$ and then just apply \cite[Remark~2.7]{jerison1995Inhomogeneous}.
\end{proof}

There are also some results, similar to those valid for arbitrary $\Omega$ that can be improved when $\Omega$ is bounded. One such example is the Proposition \ref{prop:sobolev_embeddings_lions_calderon} concerning the Sobolev embeddings.

\begin{proposition}[Sobolev embeddings for $\Lambda^{s,p}_0(\Omega)$]\label{prop:sobolev_embeddings_compact_support}
    Let $\Omega\subset\R^d$ be any bounded open set and consider $s\in[0,1]$ and $p\in(1,+\infty)$. Then,
    \begin{itemize}
        \item[1)] if $sp<d$, we have that $\Lambda^{s,p}_0(\Omega)\subset L^q(\Omega)$ for every $q\in[1,p^*_s]$ with 
        \begin{equation}
            \|f\|_{L^q(\Omega)}\leq C\|D^s f\|_{L^p(\R^d;\R^d)};
            \label{eq:sobolev_subcritical_compact_support}
        \end{equation}
        \item[2)] $sp=d$ implies that $\Lambda^{s,p}_0(\Omega)\subset L^q(\Omega)$ for every $q< +\infty)$;
        \item[3)] $sp>d$ implies that $\Lambda^{s,p}_0(\Omega)\subset C^{0,\beta}(\overline{\Omega})$ for any $\beta\leq s-d/p$. In particular, $\Lambda^{s,p}_0(\Omega)\subset L^q(\Omega)$ for all $q\leq +\infty$;
        \item[4)] when $t\leq s$ and $\frac{1}{p}-\frac{s}{d}\leq\frac{1}{q}-\frac{t}{d}$, we have $\Lambda^{s,p}_0(\Omega)\subset \Lambda^{t,q}_0(\Omega)$.
    \end{itemize}
\end{proposition}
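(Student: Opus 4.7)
The strategy is to deduce each item from its general-$\Omega$ counterpart in Proposition \ref{prop:sobolev_embeddings_lions_calderon}, using that $|\Omega|<\infty$. The basic additional tool is H\"older's inequality on $\Omega$: for $1\leq r\leq q\leq \infty$, $\|g\|_{L^r(\Omega)}\leq|\Omega|^{\frac{1}{r}-\frac{1}{q}}\|g\|_{L^q(\Omega)}$. For 1), Proposition \ref{prop:sobolev_embeddings_lions_calderon} 1) (together with the identification $\Lambda^{0,p}(\R^d)=L^p(\R^d)$ of Proposition \ref{prop:distributional_characterization_lions_calderon_spaces} iii) in the limit case $s=0$) yields $\|f\|_{L^{p^*_s}(\R^d)}\leq C\|D^s f\|_{L^p}$; since $\mathrm{supp}\, f\subset\overline{\Omega}$, the $L^{p^*_s}$ norm on $\R^d$ coincides with the one on $\Omega$, and H\"older then delivers \eqref{eq:sobolev_subcritical_compact_support} for every $q\in[1,p^*_s]$. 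Item 2) is analogous. For 3), Proposition \ref{prop:sobolev_embeddings_lions_calderon} 3) gives $f\in C^{0,s-d/p}(\overline{\Omega})$; the inequality $|x-y|^{s-d/p}\leq \mathrm{diam}(\Omega)^{s-d/p-\beta}|x-y|^{\beta}$ on the compact $\overline{\Omega}$ upgrades this to $C^{0,\beta}(\overline{\Omega})$ for every $\beta\leq s-d/p$, and any continuous function on a compact set is bounded, hence belongs to every $L^q(\Omega)$.

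For 4), the new content consists of the range $q\in[1,p)$ (in particular the case $t=s$, which forces $q\leq p$) and the supercritical regime $(s-t)p\geq d$, neither of which is covered by Proposition \ref{prop:sobolev_embeddings_lions_calderon} 4). When $q\geq p$ and $(s-t)p<d$, that proposition produces the inclusion directly. When $q\leq p$, I would take an approximating sequence $\varphi_n\in C^\infty_c(\Omega)$ with $\varphi_n\to f$ in $\Lambda^{s,p}(\R^d)$ and show that it is Cauchy in $\Lambda^{t,q}(\R^d)$. The function part follows from H\"older on $\Omega$ since $\mathrm{supp}\,\varphi_n\subset\overline{\Omega}$. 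For the fractional gradient, fix $R$ with $\Omega\subset B_R$ and split $\R^d=B_{2R}\cup(\R^d\setminus B_{2R})$: on $B_{2R}$, H\"older gives $\|D^t\varphi_n\|_{L^q(B_{2R})}\leq C\|D^t\varphi_n\|_{L^p(\R^d)}$, the right-hand side controlled by Proposition \ref{prop:order_lions_calderon}; on $\R^d\setminus B_{2R}$, the Horv\'ath-kernel representation \eqref{eq:s_fractional_gradient} of $D^t$ (for $t>0$) or the Calder\'on--Zygmund kernel of the Riesz transform (for $t=0$), combined with $\mathrm{supp}\,\varphi_n\subset\overline{\Omega}$, yields the pointwise decay
\begin{equation*}
|D^t\varphi_n(x)|\leq C\int_{\Omega}\frac{|\varphi_n(z)|}{|x-z|^{d+t}}\,dz\leq C|x|^{-(d+t)}\|\varphi_n\|_{L^p(\Omega)},
\end{equation*}
which is $q$-th power integrable at infinity as soon as $q(d+t)>d$, a condition automatically satisfied for $q\geq 1$ when $t>0$ and for $q>1$ when $t=0$. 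Combining the two regions produces the Cauchy property and therefore $f\in\Lambda^{t,q}_0(\Omega)$. The supercritical case $(s-t)p\geq d$ with $q\geq p$ is treated similarly, now invoking parts 2) or 3) already established in this proposition for the function part.

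The main obstacle is item 4) in the range $q<p$: since $L^p(\R^d)\not\subset L^q(\R^d)$, no soft Sobolev embedding can provide the gradient bound, and one must genuinely exploit the compact support of $f$ together with the quantitative decay of the singular integral kernels defining $D^t$. All the remaining items reduce, after invoking Proposition \ref{prop:sobolev_embeddings_lions_calderon}, to H\"older's inequality on the bounded set $\Omega$.
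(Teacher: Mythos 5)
Your proposal is correct and considerably more detailed than the paper's own proof, which consists entirely of the sentence that items 1)–4) follow from Proposition \ref{prop:sobolev_embeddings_lions_calderon} ``together with some well known properties of the $L^p$ and $C^{0,\alpha}$ spaces.'' Items 1)–3) are indeed what you describe: combine the whole-space embedding (part 1), 2), or 3) of Proposition \ref{prop:sobolev_embeddings_lions_calderon}), note that $f$ is supported in $\overline\Omega$ so the $L^{p^*_s}(\R^d)$-norm lives on $\Omega$, and downgrade the exponent via H\"older on $\Omega$ (for $L^q$) or $|x-y|^{s-d/p}\leq\mathrm{diam}(\Omega)^{s-d/p-\beta}|x-y|^\beta$ (for $C^{0,\beta}$).

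The genuinely interesting part of your write-up is item 4) in the regime $q<p$, and you are right to flag it. Proposition \ref{prop:sobolev_embeddings_lions_calderon} 4) only covers $t<s$, $p\leq q$, and subcritical $(s-t)p<d$; the condition $\tfrac1p-\tfrac sd\leq\tfrac1q-\tfrac td$ with $t\leq s$ is automatically satisfied when $q<p$, so that range is not vacuous, and in particular $t=s$ forces $q\leq p$. Since $D^t f$ has no reason to be compactly supported, H\"older on $\Omega$ gives nothing for the gradient term, so some estimate of $D^t f$ away from $\Omega$ is unavoidable. Your split $\R^d=B_{2R}\cup(\R^d\setminus B_{2R})$ with the Horv\'ath/Riesz kernel decay $|D^t\varphi_n(x)|\lesssim|x|^{-(d+t)}\|\varphi_n\|_{L^1(\Omega)}$ for $|x|>2R$ is exactly the right mechanism; coupled with H\"older on $B_{2R}$ and Proposition \ref{prop:order_lions_calderon} it delivers the Cauchy property. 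The paper never spells this out in the proof of Proposition \ref{prop:sobolev_embeddings_compact_support}. Morally the same fact is recorded later in Corollary \ref{InclusionsForPLionsCalderonCompactSupport} and in the remark after Corollary \ref{EquivalenceOfNorms} (where $\|D^s f\|_{L^p(\Omega_1;\R^d)}$ is shown to be an equivalent norm by controlling $D^sf$ outside a large ball), but those appear \emph{after} Proposition \ref{prop:sobolev_embeddings_compact_support} and rely on the Poincar\'e inequality; so your direct kernel-decay argument avoids a circularity and is the cleanest way to close the gap at this point of the development.

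One place where your argument is thinner than it should be is the supercritical regime $(s-t)p\geq d$ with $q\geq p$: there the H\"older estimate $\|D^t\varphi_n\|_{L^q(B_{2R})}\leq C\|D^t\varphi_n\|_{L^p}$ goes the wrong way, and saying the case ``is treated similarly'' elides the local bound on the gradient. The simplest fix is to avoid treating this case directly: choose an intermediate exponent $t'$ with $t<t'<s$ so close to $s$ that $(s-t')p<d$ while still $q\leq \frac{dp}{d-(s-t')p}$ (possible precisely because $(s-t)p\geq d$ makes the constraint on $q$ vacuous); Proposition \ref{prop:sobolev_embeddings_lions_calderon} 4) then gives $\Lambda^{s,p}_0(\Omega)\subset\Lambda^{t',q}_0(\Omega)$, and Proposition \ref{prop:order_lions_calderon} (with $q$ fixed) gives $\Lambda^{t',q}_0(\Omega)\subset\Lambda^{t,q}_0(\Omega)$. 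With that patch your proof is complete and arguably more self-contained than the paper's.
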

\begin{proof}
    The items 1), 2), 3) and 4) are a simple application of the Proposition \ref{prop:sobolev_embeddings_lions_calderon} together with some well known properties of the $L^p$ and $C^{0,\alpha}$ spaces.
\end{proof}

The inequality \eqref{eq:sobolev_subcritical_compact_support} is important as it also implies a fractional Poincaré inequality. In fact, in \cite{bellido2020gamma}, it was raised the question of how does the constant $C$ in this inequality depends on $s$ and it was proved that one could take this constant to be inversely proportional to $s$, at least when $q=p$. To be more precise, let us now recall their result (with a small improvement).

\begin{theorem}[Fractional Poincaré's inequality]\label{FracPoincareInequality}
    Let $p\in[1,\infty)$ and $s\in(0,1)$ and let $\Omega\subset\R^d$ be a bounded open subset. Then, there exists a sufficiently large real number $R>1$ for which $\Omega\subset B_R(0)$, and a positive constant $C_P>0$ depending only on $p$, $d$, $\Omega$ and $R$, such that for all open sets $\Omega_1\supset B_{2R}(0)$ ($\Omega_1$ can be unbounded) and for all $f\in \Lambda^{s,p}_0(\Omega)$, we have 
    \begin{equation}\label{eq:poincare_inequality}
        \|f\|_{L^p(\Omega)}\leq \frac{C_P}{s}\|D^s f\|_{L^p(\Omega_1;\R^d)}.
    \end{equation}
\end{theorem}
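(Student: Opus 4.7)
The plan is to reduce by density to $f\in C^\infty_c(\Omega)$ and then combine a Riesz-potential representation of $f$ in terms of $D^sf$ with a splitting argument exploiting the compact support of $f$ and the chain of inclusions $\Omega\subset B_R\subset B_{2R}\subset\Omega_1$.

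First I would write down the integral representation
\begin{equation*}
f(x)=\gamma_{d,s}\int_{\R^d}\frac{(x-y)\cdot D^sf(y)}{|x-y|^{d+1-s}}\,dy,\qquad x\in\R^d,
\end{equation*}
obtained from $f=I_s\bigl((-\Delta)^{s/2}f\bigr)$ combined with $(-\Delta)^{s/2}=R\cdot D^s$ (the case $r=0$ of the identity $(-\Delta)^{(s+r)/2}u=-D^s\cdot D^r u$ recalled in the introduction, using $D^0=-R$) and the Fourier computation showing that $I_s\circ R$ is convolution against $\gamma_{d,s}\,y/|y|^{d+1-s}$. Cauchy--Schwarz then yields the pointwise majorization $|f(x)|\leq \gamma_{d,s}\,I_s(|D^sf|)(x)$, which I would split using $|D^sf|=|D^sf|\chi_{\Omega_1}+|D^sf|\chi_{\Omega_1^c}$ as $|f|\leq f_N+f_T$.

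For the tail piece $f_T$, the containments $\Omega\subset B_R$ and $\Omega_1^c\subset B_{2R}^c$ force $|x-y|\geq R$ for $x\in\Omega$ and $y\in\Omega_1^c$, so the kernel is bounded by $R^{s-d}$. I would combine this with the decay estimate $|D^sf(y)|\leq \mu_{d,s}(2/|y|)^{d+s}|\Omega|^{1/p'}\|f\|_{L^p(\Omega)}$ valid for $|y|\geq 2R$, obtained from the Horv\'ath-kernel representation \eqref{eq:s_fractional_gradient} and $\mathrm{supp}\,f\subset B_R$, and integrate over $B_{2R}^c$ to arrive at $\|f_T\|_{L^p(\Omega)}\leq M(R)\|f\|_{L^p(\Omega)}$ with $M(R)\to 0$ as $R\to\infty$ and $M$ uniform in $s\in(0,1)$; choosing $R$ large enough so that $M(R)\leq\tfrac12$ then permits absorption of $f_T$ into the left-hand side. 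For the near piece $f_N$ I would apply the Hardy--Littlewood--Sobolev inequality for $I_s:L^p(\R^d)\to L^{p^*_s}(\R^d)$ followed by Hölder on the bounded set $\Omega$ to obtain
\begin{equation*}
\|f_N\|_{L^p(\Omega)}\leq |\Omega|^{s/d}\,\gamma_{d,s}\,C_{\mathrm{HLS}}(d,p,s)\,\|D^sf\|_{L^p(\Omega_1)};
\end{equation*}
the advertised factor $1/s$ would then arise from the joint asymptotics $\gamma_{d,s}\sim c_d s$ as $s\to 0$ (a consequence of $\mu_{d,1-s}\sim c_d s$ via $\Gamma(s/2)\sim 2/s$) and the $s$-degeneration of the HLS constant as $p^*_s\to p$.

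The hardest step will be the uniform $s$-bookkeeping: verifying that the product $\gamma_{d,s}\,C_{\mathrm{HLS}}(d,p,s)$ behaves exactly like $1/s$ near $s=0$ (a delicate balance between the vanishing of the Riesz-kernel constant and the blow-up of the HLS constant) and remains bounded as $s\to 1$, and simultaneously that $M(R)\leq\tfrac12$ can be made to hold uniformly in $s\in(0,1)$ for an $R$ depending only on $d$, $p$, and $|\Omega|$. The replacement of the $\R^d$-norm on the right by the $\Omega_1$-norm is the ``small improvement'' mentioned after the statement, and is precisely what necessitates the splitting argument rather than a direct appeal to the Sobolev embedding of Proposition \ref{prop:sobolev_embeddings_compact_support}.
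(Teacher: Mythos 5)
Your blueprint — a Riesz-potential representation of $f$ in terms of $D^s f$, a near/far split with $R$ chosen large and the far contribution absorbed using the decay of $D^s f$ away from $\mathrm{supp}\,f\subset B_R$ — is exactly the one the paper inherits from \cite[Theorem 2.9]{bellido2020gamma}, to which its written proof essentially defers for $p\in(1,\infty)$; the explicit computations given there for $p=1$ match your far-piece estimate line by line. The one genuine difference is in the near piece: the paper (and Bellido et al.) estimate $\int_\Omega |x-y|^{s-d}\,dx \leq \sigma_{d-1}(3R)^s/s$ directly, via Fubini on the bounded set $\Omega$, which is where the $1/s$ appears; you propose HLS followed by Hölder, which works but hides the same singularity inside the HLS constant.

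The constant bookkeeping you flag as the hardest step is, in fact, where your heuristic goes wrong. Writing $f(x)=\gamma_{d,s}\int (x-y)\cdot D^s f(y)\,|x-y|^{-(d+1-s)}\,dy$, a short Fourier computation (differentiate $|x|^{-(d-s-1)}$ and use $\widehat{|x|^{-\alpha}}(\xi)=\pi^{\alpha-d/2}\Gamma(\tfrac{d-\alpha}{2})\Gamma(\tfrac{\alpha}{2})^{-1}|\xi|^{\alpha-d}$) shows
\begin{equation*}
\gamma_{d,s}=\frac{2^{-s}\,\Gamma\!\left(\tfrac{d-s+1}{2}\right)}{\pi^{d/2}\,\Gamma\!\left(\tfrac{1+s}{2}\right)}=\mu_{d,-s},
\end{equation*}
which is bounded above and below on $s\in[0,1]$, with $\gamma_{d,0}=\Gamma(\tfrac{d+1}{2})/\pi^{(d+1)/2}$ (precisely the Riesz-transform constant, as it must be since $I_s\to\mathrm{Id}$) and $\gamma_{d,1}=\Gamma(d/2)/(2\pi^{d/2})$. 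So $\gamma_{d,s}$ does \emph{not} vanish like $s$; the factor you attribute to $\mu_{d,1-s}$ belongs to the kernel of $I_s$, not to the kernel of $I_s\circ R$. The $1/s$ in the theorem comes entirely from $\int_{B_\rho}|z|^{s-d}\,dz=\sigma_{d-1}\rho^s/s$ — equivalently, from the blow-up as $s\to 0$ of the HLS constant for the \emph{unnormalized} kernel $|z|^{s-d}$ — while $\gamma_{d,s}$ and the tail constant stay $O(1)$, which is also why the far-piece threshold $M(R)\leq\tfrac12$ can be arranged uniformly in $s$. Your product $\gamma_{d,s}\cdot C_{\mathrm{HLS}}$ thus does behave like $1/s$, but for a different reason than the one you give: it is $O(1)\cdot O(1/s)$, not $O(s)\cdot O(1/s^2)$. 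With that correction the argument goes through and matches the paper's.
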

\begin{proof}
    Most of the proof can be found in \cite[Theorem~2.9]{bellido2020gamma}. In fact, when $p\in(1,\infty)$, one only needs to change the last estimate of (13) of the same article, with a $L^p(\Omega_1;\R^d)$-norm instead of doing it with the $L^p(\R^d;\R^d)$-norm.
    Similarly, when $p=1$, one follows the same lines of the proof but with the estimates
    \begin{align*}
        \int_{\Omega}{\left(\int_{\{|y|<2R\}}{\frac{|D^s f(y)|}{|x-y|^{N-s}}}\,dy\right)}\,dx
        &\leq \int_{\{|y|<2R\}}{|D^s f(y)|\left(\int_{\Omega}{\frac{1}{|x-y|^{N-s}}}\,dx\right)}\,dy\\
        &\leq \frac{C(N)}{s}R\|D^s f\|_{L^1(\Omega_1;\R^n)}.
    \end{align*}
    and
    \begin{align*}
        \int_{\Omega}{\left(\int_{\{|y|>2R\}}{\frac{|D^s f(y)|}{|x-y|^{N-s}}}\,dy\right)}\,dx
        &\leq\int_{\{|y|>2R\}}{\left(\int_{\Omega}{\frac{1}{|x-y|^{N-s}}\left(\int_{\Omega}{\frac{f(z)}{|y-z|^{N+s}}}\,dz\right)}\,dx\right)}\,dy\\
        &\leq\int_{\{|y|>2R\}}{\left(\int_{\Omega}{\frac{C(N)}{|y|^{N+s}|y|^{N-s}}\|f\|_{L^1(\Omega)}}\,dx\right)}\,dy\\
        &\leq C(N)R^{-N}|\Omega|\|f\|_{L^1(\Omega)},
    \end{align*}
    where for the last estimate we used the fact that $|y|/2\leq |y-x|$ when  $x\in \Omega$ and $y\in \Omega_1^c$.
\end{proof}

\begin{remark}\label{rem:uniform_boundness_poincare_constant_p2}
    It is interesting to see that the Poincaré constant obtained in the previous theorem blows-up as $s\searrow 0$. On the other hand, from part iii) of Proposition \ref{prop:distributional_characterization_lions_calderon_spaces}, we know that $\|f\|_{L^p(\Omega)}\leq \|D^0 f\|_{L^p(\R^d;\R^d)}$. This raises the question whether it is possible or not to obtain a uniform bound to the best Poincaré constants independent of $s\in(0,1)$. This is known to be possible for $p=2$ due to the results of \cite{feulefack2022Small}, where it is shown, in particular, that the best Poincaré constant $c_{2,s}=1/\sqrt{\lambda_s} \to 1$ as $s\to 0^+$, where $\lambda_s=\inf\left\{\frac{\|D^s w\|_{L^2(\R^d;\R^d)}}{\|w\|_{L^2(\Omega)}}:\, w\in H^s_0(\Omega)\right\}>0$ is the first eigenvalue of the Dirichlet problem for $(-\Delta)^s=-D^s\cdot D^s$.
\end{remark}

This result has several implications. The first one is that the $L^p$-norm of the $s$-fractional gradient is equivalent to the usual norm of $\Lambda^{s,p}_0(\Omega)$. In fact, we have:
\begin{corollary}[An equivalent norm for $\Lambda^{s,p}_0(\Omega)$]\label{EquivalenceOfNorms}
    Let $s\in[0,1]$ and $p\in[1,\infty)$, and let $\Omega\subset\R^d$ be an open and bounded subset. Then there exists a constant $C=C(s,p,d,\Omega)$ such that
    \begin{equation*}
        \|D^s f\|_{L^p(\R^d;\R^d)}\leq \|f\|_{\Lambda^{s,p}_0(\Omega)}\leq C\|D^s f\|_{L^p(\R^d;\R^d)}, \quad \mbox{ for all } f\in\Lambda^{s,p}_0(\Omega).
    \end{equation*}
\end{corollary}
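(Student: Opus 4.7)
The first inequality is immediate from the definition of the norm, since
\[
\|f\|_{\Lambda^{s,p}_0(\Omega)}^p = \|f\|_{L^p(\R^d)}^p + \|D^s f\|_{L^p(\R^d;\R^d)}^p \geq \|D^s f\|_{L^p(\R^d;\R^d)}^p,
\]
so no real work is needed there. For the reverse inequality, the plan is to combine the two terms in the norm by using an appropriate Poincaré-type inequality, treating the fractional range $s\in(0,1)$ and the two limit cases $s=0,1$ separately, and then using the fact that elements of $\Lambda^{s,p}_0(\Omega)$ vanish a.e.\ outside $\Omega$ (by the inclusion \eqref{eq:space_subspace_zero_outside_omega}), so $\|f\|_{L^p(\R^d)}=\|f\|_{L^p(\Omega)}$.

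For $s\in(0,1)$ the key tool is the fractional Poincaré inequality (Theorem \ref{FracPoincareInequality}): picking $R$ so that $\Omega\subset B_R(0)$ and taking $\Omega_1=\R^d\supset B_{2R}(0)$, it yields a constant $C_P=C_P(p,d,\Omega)$ with
\[
\|f\|_{L^p(\Omega)} \leq \frac{C_P}{s}\,\|D^s f\|_{L^p(\R^d;\R^d)} \qquad \forall f\in\Lambda^{s,p}_0(\Omega).
\]
For $s=1$ we invoke the classical Poincaré inequality in $W^{1,p}_0(\Omega)$, and for $s=0$ we use Proposition \ref{prop:distributional_characterization_lions_calderon_spaces}(iii): since $D^0=-R$ and the identity $R\cdot R=-I$ on $L^p(\R^d)$ combined with the $L^p$-boundedness of each Riesz transform $R_j$ yields $\|f\|_{L^p(\R^d)}\leq C\|Rf\|_{L^p(\R^d;\R^d)}=C\|D^0 f\|_{L^p(\R^d;\R^d)}$.

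Putting these three cases together, in all situations there exists a constant $C_0=C_0(s,p,d,\Omega)$ with $\|f\|_{L^p(\R^d)}\leq C_0\|D^s f\|_{L^p(\R^d;\R^d)}$. Substituting into the definition of the $\Lambda^{s,p}_0$-norm gives
\[
\|f\|_{\Lambda^{s,p}_0(\Omega)}^p \leq (C_0^p+1)\|D^s f\|_{L^p(\R^d;\R^d)}^p,
\]
and one takes $C=(C_0^p+1)^{1/p}$. The only mildly delicate step is the case $s=0$, where one must justify that the seminorm $\|Rf\|_{L^p}$ controls $\|f\|_{L^p}$ via the Riesz-transform identity; everything else is a direct quotation of the Poincaré inequalities already recorded in the paper.
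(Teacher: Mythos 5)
Your proof is correct and follows essentially the same route the paper intends: the corollary is stated as an immediate consequence of Theorem \ref{FracPoincareInequality}, and your argument is precisely to use that fractional Poincaré inequality (with $\Omega_1=\R^d$) for $s\in(0,1)$ together with the trivial lower bound from the norm definition. You have additionally spelled out the two endpoint cases --- the classical Poincaré inequality in $W^{1,p}_0(\Omega)$ for $s=1$ and the Riesz-transform identity $-D^0\cdot D^0=I$ together with $L^p$-boundedness of the Riesz transform for $s=0$ --- which the paper leaves implicit but which are exactly what a careful reader would supply.
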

\begin{remark}
    Notice that it also possible to prove that $\|D^s f\|_{L^p(\Omega_1;\R^d)}$ is equivalent to $\|f\|_{L^p(\Omega)}+\|D^s f\|_{L^p(\R^d;\R^d)}$. The only extra step that one needs to take care of is the one of finding an estimate for $\|D^s f\|_{L^p(\Omega_1^c;\R^d)}$ in terms of $\|f\|_{L^p(\Omega)}$, which can be done using some arguments that are present in the proof of \cite[Theorem~2.9]{bellido2020gamma}. This means one can consider the norm of a function in $\Lambda^{s,p}_0(\Omega)$ just knowing the $D^s$ of that function on a certain bounded domain $\Omega_1$, which can be of interest for numerical methods.
\end{remark}

Moreover we may compare the norms of the embedding $\Lambda^{s,p}_0(\Omega)\subset\Lambda^{t,p}_0(\Omega)$, with $1<p<\infty$ fixed, with an explicit dependence with respect to the fractional parameter $t<s$. 
\begin{corollary}\label{cor:order_between_Ds}
    Let $0<t<s\leq 1$. Let $\Omega\subset\R^d$ be a bounded and open set. Then, $\Lambda^{s,p}_0(\Omega)\subset \Lambda^{t,p}_0(\Omega)$ with the inequality
    \begin{equation}\label{eq:order_between_Ds_explicit_constant}
        \|D^t u\|_{L^p(\R^d;\R^d)}\leq \frac{C}{t^{1+\frac{1}{p}}}\|D^s u\|_{L^p(\R^d;\R^d)}.
    \end{equation}
    for some $C=C(\Omega, d,p)>0$.
\end{corollary}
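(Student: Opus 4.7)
The plan is to combine the identity $D^t u = I_{s-t}D^s u$ from Proposition \ref{prop:fractionalGradientInTermsOfRieszPotentialAndFractionalGradient} with a spatial decomposition of $\R^d$ exploiting the compactness of $\mathrm{supp}\,u\subset\overline{\Omega}$. Fixing $R>0$ with $\Omega\subset B_R(0)$, I would split
\begin{equation*}
\|D^t u\|_{L^p(\R^d;\R^d)}^p \;=\; \int_{B_{2R}}|D^t u|^p\,dx \;+\; \int_{\R^d\setminus B_{2R}}|D^t u|^p\,dx,
\end{equation*}
and estimate each piece using a different representation of $D^t u$.

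For the near part $x\in B_{2R}$, the positivity of the Riesz kernel gives $|D^t u(x)|\le I_{s-t}(|D^s u|)(x)$, and the Hardy--Littlewood--Sobolev inequality produces $\|I_{s-t}(|D^s u|)\|_{L^{p^*_{s-t}}(\R^d)}\le C\|D^s u\|_{L^p}$ with $1/p^*_{s-t}=1/p-(s-t)/d$ and HLS constant stable as $s-t\to 0$. Restricting to $B_{2R}$ and applying Hölder's inequality costs only a factor $|B_{2R}|^{(s-t)/d}\le|B_{2R}|^{1/d}$, leading to $\|D^t u\|_{L^p(B_{2R})}\le C\|D^s u\|_{L^p}$ with $C$ independent of $s,t$.

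For the far part $|x|>2R$, I would use the representation $D^t u = D(I_{1-t}u)$ from Proposition \ref{prop:characterization_fractional_gradient_for_functions_in_lions_calderon}. Since $|x-y|\ge|x|/2$ for $y\in\Omega\subset B_R$, differentiating under the integral yields the pointwise bound
\begin{equation*}
|D^t u(x)|\;\le\; c_{1-t}\,(d+t-1)\,2^{d+t}\,|x|^{-(d+t)}\,\|u\|_{L^1(\Omega)},
\end{equation*}
whose prefactor is bounded uniformly in $t\in(0,1]$. Integrating the $p$-th power gives $\|D^t u\|_{L^p(\R^d\setminus B_{2R})}\le C\,\|u\|_{L^1(\Omega)}$ with a constant that remains bounded as $t\to 0^+$, after absorbing factors of the form $(pt+d(p-1))^{-1/p}$ and $(2R)^{-tp}$.

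To close the estimate I would bound $\|u\|_{L^1(\Omega)}$ by Hölder and the fractional Poincaré inequality (Theorem \ref{FracPoincareInequality}), giving $\|u\|_{L^1(\Omega)}\le|\Omega|^{1/p'}\|u\|_{L^p(\Omega)}\le (|\Omega|^{1/p'}C_P/s)\|D^s u\|_{L^p}$, and then use $1/s\le 1/t$ to remove the dependence on $s$. Combining the two pieces produces a bound of the form $\|D^t u\|_{L^p(\R^d;\R^d)}\le C\,t^{-1-1/p}\|D^s u\|_{L^p}$. The main obstacle I anticipate is the careful bookkeeping of all the constants in order to arrive at precisely the exponent $1+1/p$ and, crucially, to ensure that the final constant depends only on $t$ and on $\Omega,d,p$, but not on $s\in(t,1]$; in particular one must verify that the HLS constant in the near-part estimate, the integration constants $(pt+d(p-1))^{-1/p}$ in the far-part estimate, and the conversion $1/s\le 1/t$ do not introduce any residual $s$-dependence.
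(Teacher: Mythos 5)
Your approach is a genuine alternative to the paper's, which instead carefully tracks the explicit constants through the proof of \cite[Proposition~4.1]{bellido2020gamma}. Note that your chain of estimates, if completed, actually yields the \emph{sharper} bound $\|D^t u\|_{L^p}\leq (C/t)\|D^s u\|_{L^p}$: the near part contributes an $s,t$-independent constant, the far part contributes $C\|u\|_{L^1(\Omega)}\leq (C/t)\|D^s u\|_{L^p}$ after H\"older and Poincar\'e, and the sum is $O(1/t)$, not $O(t^{-1-1/p})$ as you write. Since $t\leq 1$, this gives \eqref{eq:order_between_Ds_explicit_constant} a fortiori. Your worry about the HLS constant is also resolvable: the Riesz potential normalization $c_{s-t}=\mu_{d,1-(s-t)}/(d-(s-t))$ behaves like $s-t$ as $s-t\to 0^+$ (because $1/\Gamma(\alpha/2)\sim\alpha/2$), and this factor exactly cancels the $1/(s-t)$ blow-up of the unnormalized Hardy--Littlewood--Sobolev constant, so the operator norm of $I_{s-t}\colon L^p\to L^{p^*_{s-t}}$ stays bounded as $s-t\to 0^+$.

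There is, however, a real gap when $p>d$. The HLS step requires $s-t<d/p$, which fails for some admissible $(s,t)$ when $p>d$; even when it holds, the HLS constant degenerates as $s-t\to(d/p)^-$, in a way that is not controlled by $1/t$. Worse, when $s-t\geq d/p$ the Riesz potential $I_{s-t}(|D^s u|)$ may be identically $+\infty$, even though $D^t u=I_{s-t}D^s u$ is finite by cancellation, so the pointwise inequality $|D^t u|\leq I_{s-t}(|D^s u|)$ becomes vacuous. The fix is to replace HLS by a split of the Riesz kernel itself: for $x\in B_{2R}$ decompose $I_{s-t}D^s u(x)$ over $\{|x-y|<4R\}$ and $\{|x-y|>4R\}$. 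The first piece is controlled by Young's inequality, using $c_{s-t}\|\,|z|^{s-t-d}\|_{L^1(B_{4R})}\leq C$ by the same $c_{s-t}\sim(s-t)$ compensation. In the second piece, $|x-y|>4R$ together with $x\in B_{2R}$ forces $|y|>2R$, so the far-field bound $|D^s u(y)|\leq\mu_{d,s}2^{d+s}|y|^{-d-s}\|u\|_{L^1(\Omega)}$ (the same estimate you already derived for $D^t u$) applies, producing another $C\|u\|_{L^1(\Omega)}\leq (C/t)\|D^s u\|_{L^p}$ term. This variant works for all $1<p<\infty$ and still gives the improved $C/t$ exponent.
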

\begin{proof}
    This is a refinement of the proof of \cite[Proposition 4.1]{bellido2020gamma}. In fact, if we proceed as in the proof of \cite[Proposition 4.1]{bellido2020gamma}, and making use of the precise constants, in particular not estimating in \cite[eq. (22)]{bellido2020gamma} $\frac{\mu_{d, 1-s+t}}{d-s+t}=\frac{1}{\gamma(t-s)}\leq C(d)$, we get 
    \begin{equation}
        \|D^t u\|_{L^p(\R^d;\R^d)}\leq \left(\frac{3\sigma_{d-1}R}{(s-t)}\frac{\mu_{d, 1-s+t}}{d-s+t}+\mu_{d,t}\frac{C_P}{s}\left(\frac{2^{d+s}\sigma_{d-1}|\Omega|}{d+t}+|\Omega|^{1/p'}\left(\frac{\sigma_{d-1}}{t}\right)^{1/p}\right)\right)\|D^s u\|_{L^p(\R^d;\R^d)},
    \end{equation}
    where $C_P=C_P(\Omega,d,p)>0$ is the fractional Poincaré's constant, $\sigma_{d-1}$ is the measure of the $d-1$ dimensional sphere $\partial B(0,1)$ and $R>1$ is sufficiently large number such that $\Omega\subset B(0,R)$.
    Now, we estimate in such a way that we can remove the dependence of the constant with respect to $t$. Indeed, since $0<1-s+t<1$ and $t\leq d-1+t$ then \cite[Lemma 2.4]{bellido2020gamma} yields
    \begin{equation}
        \frac{1}{(t-s)}\frac{\mu_{d, 1-s+t}}{d-s+t}\leq\frac{1}{d-1+t}\frac{\mu_{d,1-s+t}}{1-(1-s+t)}\leq \frac{1}{s}\sup_{\tau\in[-1,1)}{\frac{\mu_{d,\tau}}{1-\tau}}<\infty.
    \end{equation}
    Consequently,
    \begin{equation}
        \begin{aligned}
            \|D^t u\|_{L^p(\R^d;\R^d)}&\leq \left(\frac{3\sigma_{d-1}R}{t}\sup_{\tau\in[-1,1)}{\frac{\mu_{d,\tau}}{1-\tau}}+\mu_{d,t}\frac{C_P}{t}\left(\frac{2^{d+s}\sigma_{d-1}|\Omega|}{d+t}+|\Omega|^{1/p'}\left(\frac{\sigma_{d-1}}{t}\right)^{1/p}\right)\right)\|D^t u\|_{L^p(\R^d;\R^d)}\\
            &\leq \left(\frac{C_1}{t}+\frac{C_2}{t}+\frac{C_3}{t^{1+\frac{1}{p}}}\right)\|D^s u\|_{L^p(\R^d;\R^d)}\leq \frac{C_4}{t^{1+\frac{1}{p}}}\|D^s u\|_{L^p(\R^d;\R^d)}.
        \end{aligned}
    \end{equation}
    for some constants $C_1>0$ depending on $\Omega$ and $d$, and constants $C_2, C_3,C_4>0$, depending only on $\Omega$, $d$ and $p$.
\end{proof}

\begin{corollary}[Characterization of $\Lambda^{-s,p'}(\Omega)$, when $\Omega$ is bounded] \label{cor:characterization_dual_with_Omega_bounded}
    Let $\Omega$ be a bounded subset of $\R^d$ and let $s\in (0,1)$, $p\in(1,\infty)$ and $p'=\frac{p}{p-1}$. If $F\in \Lambda^{-s,p'}(\Omega)$, then there exist functions $f_1,...,f_d\in {L^p}'(\R^d)$ such that
    \begin{equation*}
        \langle F,g\rangle=\sum_{j=1}^d{\int_{\R^d}{f_j\frac{\partial^s g}{\partial x_j^s}}\,dx} \quad \forall g\in\Lambda^{s,p}_0(\Omega).
    \end{equation*}
\end{corollary}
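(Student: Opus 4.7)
The plan is to follow essentially the same Hahn--Banach/Riesz representation argument used in Theorem \ref{CharacterizationDualInTermsOfFractionalDerivatives}, but exploit the fact that on a bounded $\Omega$ the quantity $\|D^s u\|_{L^p(\R^d;\R^d)}$ alone is an equivalent norm on $\Lambda^{s,p}_0(\Omega)$, by Corollary \ref{EquivalenceOfNorms} (which in turn rests on the fractional Poincaré inequality of Theorem \ref{FracPoincareInequality}). This is what allows us to drop the $f_0$ term that appears in the general case.

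More concretely, I would define the map
\begin{equation*}
    \widetilde{\Pi}_s : \Lambda^{s,p}_0(\Omega) \to L^p(\R^d;\R^d), \qquad \widetilde{\Pi}_s(g) = D^s g.
\end{equation*}
By Corollary \ref{EquivalenceOfNorms}, endowing $\Lambda^{s,p}_0(\Omega)$ with the equivalent norm $\|g\|_* := \|D^s g\|_{L^p(\R^d;\R^d)}$ turns $\widetilde{\Pi}_s$ into an isometry onto the subspace $W := \widetilde{\Pi}_s(\Lambda^{s,p}_0(\Omega)) \subset L^p(\R^d;\R^d)$. I would then define the linear functional $F^{*}$ on $W$ by the rule $\langle F^{*}, \widetilde{\Pi}_s g \rangle = \langle F, g \rangle$; this is well-defined because $\widetilde{\Pi}_s$ is injective, and bounded since $|\langle F, g\rangle| \le \|F\|_{\Lambda^{-s,p'}(\Omega)} \|g\|_* \, C$ for the equivalence constant $C$ from Corollary \ref{EquivalenceOfNorms}.

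Next, by the Hahn--Banach theorem I extend $F^{*}$ to a bounded linear functional $\widetilde{F}$ on the whole $L^p(\R^d;\R^d)$ with the same norm. The Riesz representation theorem then yields $\boldsymbol{f} = (f_1, \ldots, f_d) \in L^{p'}(\R^d;\R^d)$ such that
\begin{equation*}
    \langle \widetilde{F}, h \rangle = \sum_{j=1}^d \int_{\R^d} f_j h_j \, dx, \qquad \forall h = (h_1, \ldots, h_d) \in L^p(\R^d;\R^d).
\end{equation*}
Specializing to $h = D^s g = \widetilde{\Pi}_s g$ for $g \in \Lambda^{s,p}_0(\Omega)$ and unwinding the definitions gives the desired representation
\begin{equation*}
    \langle F, g \rangle = \sum_{j=1}^d \int_{\R^d} f_j \, \frac{\partial^s g}{\partial x_j^s} \, dx.
\end{equation*}

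There is no real obstacle here beyond identifying the right normed-space setup; the only substantive input compared to Theorem \ref{CharacterizationDualInTermsOfFractionalDerivatives} is that boundedness of $\Omega$ lets us work in $L^p(\R^d;\R^d)$ instead of the product space $\mathbb{L}^p(\Omega) = L^p(\Omega) \times L^p(\R^d;\R^d)$, which is precisely what kills the $f_0$ contribution. Equivalently, one could simply \emph{deduce} the corollary from Theorem \ref{CharacterizationDualInTermsOfFractionalDerivatives} by absorbing the term $\int_\Omega f_0 g \, dx$ into the fractional-gradient pairing via Poincaré, but redoing the argument directly with $\widetilde{\Pi}_s$ is cleaner and makes the role of the equivalence $\|\cdot\|_{\Lambda^{s,p}_0(\Omega)} \simeq \|D^s \cdot\|_{L^p(\R^d;\R^d)}$ transparent.
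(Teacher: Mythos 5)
Your proof is correct and follows essentially the same route as the paper: the paper's proof simply notes that one uses the equivalent norm $f\mapsto\|D^sf\|_{L^p(\R^d;\R^d)}$ from Corollary \ref{EquivalenceOfNorms} and repeats the Hahn--Banach/Riesz argument of Theorem \ref{CharacterizationDualInTermsOfFractionalDerivatives}, which is precisely what you spell out via the map $\widetilde{\Pi}_s(g)=D^sg$. Your closing remark that one could instead absorb the $f_0$-term into the gradient pairing is a fine alternative, but the direct reworking you give is what the paper intends.
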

\begin{proof}
    By using the equivalent norm $f\mapsto \|D^s f\|_{L^p(\R^d;\R^d)}$ in $\Lambda^{s,p}_0(\Omega)$, the result follows by applying the same arguments as those used in the proof of Proposition \ref{CharacterizationDualInTermsOfFractionalDerivatives}.
\end{proof}
\begin{remark}
    As in the general case, when $\Omega$ is bounded, any $F\in \Lambda^{-s,p'}(\Omega)$ may also be given as in Proposition \ref{CharacterizationDualInTermsOfFractionalDerivatives}
    in the form $F=f_0-D^s\cdot\boldsymbol{f}$
    with $\boldsymbol{f}=(f_1,...,f_d)\in {L^p}'(\R^d;\R^d)$ and $f_0\in L^q(\Omega)$, where, by the fractional Sobolev embeddings, $q\geq(p^*_s)'=\frac{Np}{Np-N+sp}$ if $sp<d$, $q>1$ if $sp=d$ and $q\geq1$ if $sp>d$.
\end{remark}

Our improved version of the fractional Poincaré inequality, Theorem \ref{FracPoincareInequality}, also allows us to prove that, for fixed $s$, there is an increasing order of inclusions of $\Lambda^{s,p}_0(\Omega)$  with respect to the decreasing of $p$. This generalizes the already known results for $s=0$ and $s=1$, that are simple consequences of the inclusions between the  $L^p(\Omega)$, for bounded $\Omega$, and the characterisation of $\Lambda^{0,p}_0(\Omega)=\{u\in L^p(\mathbb{R}^d):\, u=0 \mbox{ a.e. in } \mathbb{R}^d\setminus\Omega\}$, in the first case, and of the fact that the (classical) gradient is local in the second case, respectively. 

\begin{corollary}[Order with respect to $p$]\label{InclusionsForPLionsCalderonCompactSupport}
    Let $\Omega\subset\R^d$ be a bounded open subset of $\R^d$, $s\in(0,1)$ and $1\leq p<q<+\infty$. Then $\Lambda^{s,q}_0(\Omega)\subset \Lambda^{s,p}_0(\Omega)$.
\end{corollary}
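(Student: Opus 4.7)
The plan is to use Corollary \ref{EquivalenceOfNorms} to replace the full norm of $\Lambda^{s,p}_0(\Omega)$ by $u \mapsto \|D^s u\|_{L^p(\R^d;\R^d)}$, so that it suffices to establish the estimate
\[
\|D^s u\|_{L^p(\R^d;\R^d)} \leq C \|D^s u\|_{L^q(\R^d;\R^d)}
\]
for all $u \in C^\infty_c(\Omega)$. The inclusion then follows by a standard Cauchy-sequence argument: if $\{u_n\}\subset C^\infty_c(\Omega)$ converges to $u$ in $\Lambda^{s,q}_0(\Omega)$, the estimate forces $\{u_n\}$ to be Cauchy in $\Lambda^{s,p}_0(\Omega)$, and the limits are identified via $L^p(\Omega)$ convergence on the bounded set $\Omega$. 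The main obstruction is that $D^s u$ is globally defined on the unbounded set $\R^d$, where the inclusion $L^q \hookrightarrow L^p$ fails.

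To overcome this, I fix $R > 0$ with $\Omega \subset B_R(0)$ and split the $L^p$ norm of $D^s u$ over $B_{2R}(0)$ and its complement. On the bounded ball $B_{2R}(0)$, Hölder's inequality gives directly
\[
\|D^s u\|_{L^p(B_{2R};\R^d)} \leq |B_{2R}|^{\frac{1}{p}-\frac{1}{q}} \|D^s u\|_{L^q(\R^d;\R^d)}.
\]

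On the complement $B_{2R}(0)^c$, the Horváth-type integral representation \eqref{eq:s_fractional_gradient} of $D^s u$ applied at a point $x$ with $|x|>2R$ has no singularity, because $u$ is supported in $B_R(0)$. Writing it with the change of variables $z = x+y$ and using $|z-x| \geq |x|/2$ for $z \in \Omega$, I obtain the pointwise tail bound
\[
|D^s u(x)| \leq \mu_{d,s}\int_{\Omega} \frac{|u(z)|}{|z-x|^{d+s}}\,dz \leq C\,|x|^{-(d+s)} \|u\|_{L^1(\Omega)}.
\]
Since $s>0$, the function $|x|^{-(d+s)p}$ is integrable at infinity for every $p\geq 1$, hence
\[
\|D^s u\|_{L^p(B_{2R}^c;\R^d)} \leq C \|u\|_{L^1(\Omega)}.
\]

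Finally, I combine $\|u\|_{L^1(\Omega)} \leq |\Omega|^{1-1/q}\|u\|_{L^q(\Omega)}$ (Hölder) with the fractional Poincaré inequality of Theorem \ref{FracPoincareInequality}, which yields $\|u\|_{L^q(\Omega)} \leq (C_P/s)\|D^s u\|_{L^q(\R^d;\R^d)}$. Adding the two estimates produces the required inequality. The main obstacle, as noted, is the unbounded region: no purely functional-analytic inclusion works there, and it is essential to appeal to the explicit singular-integral representation of $D^s u$, exploiting the $|x|^{-(d+s)}$ decay that the compact support of $u$ forces on $D^s u$.
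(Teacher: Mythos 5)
Your argument is correct and essentially implements the same underlying idea as the paper's proof, but in a more self-contained way. The paper invokes the observation (from the Remark after Corollary~\ref{EquivalenceOfNorms}) that $f\mapsto\|D^s f\|_{L^p(\Omega_1;\R^d)}$, restricted to a \emph{bounded} $\Omega_1\supset B_{2R}(0)$, is an equivalent norm on $\Lambda^{s,p}_0(\Omega)$, and then concludes immediately from the H\"older inclusion $L^q(\Omega_1)\subset L^p(\Omega_1)$. That Remark in turn defers the crucial tail estimate --- controlling $\|D^s f\|_{L^p(\Omega_1^c)}$ by $\|f\|_{L^p(\Omega)}$ --- to arguments in Bellido--Cueto--Mora-Corral. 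You instead keep the full $\R^d$ norm from Corollary~\ref{EquivalenceOfNorms}, split $\R^d$ into $B_{2R}$ and its complement, apply H\"older on the ball exactly as the paper does, and then prove the tail bound $|D^su(x)|\lesssim |x|^{-(d+s)}\|u\|_{L^1(\Omega)}$ yourself from the H{\'o}rvath representation \eqref{eq:s_fractional_gradient}, closing the loop via Poincar\'e. So the decomposition, the role of H\"older on the bounded set, and the reliance on the decay of $D^su$ away from $\mathrm{supp}\,u$ are the same in both; what you add is the explicit computation of the far-field decay rather than a citation. One small bonus of making the tail estimate explicit is that the logic is visibly independent of the choice of $\Omega_1$ and shows precisely where $s>0$ is used (integrability of $|x|^{-(d+s)p}$ at infinity), which the paper's one-line proof leaves implicit.
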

\begin{proof}
    Let $\Omega_1\supset\Omega$ be an open bounded subset of $\R^d$ as in the statement of the theorem of the Poincaré inequality, Theorem \ref{FracPoincareInequality}. Then, the proof of this corollary is just a simple consequence of the fact that $f\mapsto \|D^s f\|_{L^p(\Omega_1;\R^d)}$ is an equivalent norm to $\Lambda^{s,p}_0(\Omega)$ and that $L^q(\Omega_1;\R^d)\subset L^p(\Omega_1; \R^d)$.
\end{proof}

We also have an estimate for the product of functions of $\Lambda^{s,p}_0(\Omega)$ with test functions.

\begin{corollary}[Estimate for product with test function]\label{prop:EstimateProductTestFunction}
    Let $s\in(0,1)$ and $p\in[1,\infty)$, and consider $\phi\in C^\infty_c(\Omega)$ and $u\in\Lambda^{s,p}(\R^d)$. Then $\phi u\in \Lambda^{s,p}_0(\Omega)$ with
    \begin{equation}
        \|D^s(\phi u)\|_{L^p(\R^d;\R^d)}\leq C\|D^s u\|_{L^p(\R^d;\R^d)},
    \end{equation}
    where $C=C(d,s,\Omega,\phi)>0$ is a constant that blows-up as $s\searrow 0$.
\end{corollary}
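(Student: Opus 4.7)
The plan is to establish a Leibniz-type commutator identity for $D^s$ applied to the product $\phi u$, and then bound each piece of the decomposition in $L^p$.

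Using the Horváth integral representation \eqref{eq:s_fractional_gradient}, first for $u\in C^\infty_c(\R^d)$ and then by density in $\Lambda^{s,p}(\R^d)$, I would derive the pointwise identity
\begin{equation*}
D^s(\phi u)(x) = \phi(x)\,D^s u(x) + u(x)\,D^s\phi(x) + \mathcal{R}_\phi u(x),
\end{equation*}
where the commutator remainder is the principal-value integral
\begin{equation*}
\mathcal{R}_\phi u(x) = \mu_{d,s}\,\mathrm{p.v.}\int_{\R^d} \frac{y\,[\phi(x+y)-\phi(x)]\,[u(x+y)-u(x)]}{|y|^{d+s+1}}\,dy,
\end{equation*}
obtained by inserting and subtracting the cross-term $\phi(x+y)u(x)$ inside the defining integral for $D^s(\phi u)$. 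Once the right-hand side is controlled in $L^p(\R^d;\R^d)$ (from the bounds described below), the inclusion $\mathrm{supp}(\phi u)\subset\mathrm{supp}\,\phi\subset\Omega$ together with Netrusov's theorem (Theorem \ref{thm:netrusovsTheorem}) yields $\phi u\in\Lambda^{s,p}_0(\Omega)$.

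Next, I would estimate each term in $L^p$. The bound $\|\phi\,D^s u\|_{L^p}\leq\|\phi\|_{L^\infty}\|D^s u\|_{L^p}$ is immediate. For $\|u\,D^s\phi\|_{L^p}$, the smoothness and compact support of $\phi$ give $D^s\phi\in L^\infty\cap L^p(\R^d)$ with pointwise decay $|D^s\phi(x)|\leq C_\phi(1+|x|)^{-d-s}$ (via the Horváth formula applied to $\phi$ itself); the factor of $u$ is then localized by this fast decay, and a Poincaré-type argument on $\phi u\in\Lambda^{s,p}_0(\Omega)$ via Theorem \ref{FracPoincareInequality} absorbs the $L^p$ contribution of $u$ near $\mathrm{supp}\,\phi$ into $\|D^s u\|_{L^p}$, at the cost of a $1/s$ factor. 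For $\|\mathcal{R}_\phi u\|_{L^p}$, I would split the inner integral at $|y|=1$: on $|y|<1$ the Lipschitz bound $|\phi(x+y)-\phi(x)|\leq\|D\phi\|_{L^\infty}|y|$ reduces the kernel to $|y|^{1-d-s}|u(x+y)-u(x)|$, whose resulting operator is bounded on $L^p$ by standard singular-integral estimates combined with the Gagliardo-type identity of Proposition \ref{prop:fractionalGradientInTermsOfRieszPotentialAndFractionalGradient}; on $|y|\geq 1$, the bound $|\phi(x+y)-\phi(x)|\leq 2\|\phi\|_{L^\infty}$ together with the tail decay of the kernel $|y|^{-d-s}$ and the compactness of $\mathrm{supp}\,\phi$ gives an $L^p$ estimate by Minkowski's integral inequality.

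The main obstacle is controlling $\|u\,D^s\phi\|_{L^p}$ and $\|\mathcal{R}_\phi u\|_{L^p}$ purely in terms of $\|D^s u\|_{L^p}$; both terms naively introduce $\|u\|_{L^p}$, which is \emph{not} directly controlled by $\|D^s u\|_{L^p}$ for $u$ in the full space $\Lambda^{s,p}(\R^d)$. The mechanism for absorbing this $L^p$ contribution relies essentially on the fact that $\phi u$ has compact support in the bounded open set $\Omega$, so that the fractional Poincaré inequality can be invoked on $\phi u$ itself rather than on $u$; this is precisely the source of the $1/s$ blow-up in the constant $C(d,s,\Omega,\phi)$ as $s\searrow 0$.
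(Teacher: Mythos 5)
Your Leibniz-type decomposition is the right starting point, and the identity
\[
D^s(\phi u)=\phi\,D^s u+u\,D^s\phi+\mathcal{R}_\phi u
\]
with the symmetric remainder $\mathcal{R}_\phi u$ is correct. The bound $\|\phi D^su\|_{L^p}\le\|\phi\|_{L^\infty}\|D^su\|_{L^p}$ is also fine. The gap is in how you propose to handle the two remaining terms.

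Both $\|u\,D^s\phi\|_{L^p}$ and $\|\mathcal{R}_\phi u\|_{L^p}$ unavoidably produce contributions controlled by $\|u\|_{L^p(\R^d)}$: for the first term, the best one gets directly is $\|u\,D^s\phi\|_{L^p}\le\|D^s\phi\|_{L^\infty}\|u\|_{L^p}$, and for the $|y|\ge 1$ piece of $\mathcal{R}_\phi u$ the kernel $|y|^{-d-s}$ is merely $L^1$, so one again lands on $\|u\|_{L^p}$. Your proposal to absorb this through the Poincaré inequality applied to $\phi u$ is circular: Theorem \ref{FracPoincareInequality} gives $\|\phi u\|_{L^p}\le\frac{C_P}{s}\|D^s(\phi u)\|_{L^p}$, i.e.\ a bound in terms of exactly the quantity you are trying to estimate, and in any case $\|u\,D^s\phi\|_{L^p}$ and $\|\mathcal{R}_\phi u\|_{L^p}$ are not dominated by $\|\phi u\|_{L^p}$. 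Even if one could reduce to $\|\phi u\|_{L^p}$, the resulting inequality $\|D^s(\phi u)\|_{L^p}\le A\|D^su\|_{L^p}+B\frac{C_P}{s}\|D^s(\phi u)\|_{L^p}$ can only be closed when $BC_P/s<1$, which fails precisely in the regime $s\searrow 0$ where the corollary predicts a $1/s$ blow-up — so this mechanism cannot possibly be the source of that blow-up.

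More fundamentally, the statement cannot hold as written for $u\in\Lambda^{s,p}(\R^d)$ with only $\|D^su\|_{L^p}$ on the right: take $u\in C^\infty_c(\R^d)$ with $u(0)\ne 0$ and set $u_\lambda(x)=u(x/\lambda)$. Then $\|D^su_\lambda\|_{L^p}=\lambda^{(d-sp)/p}\|D^su\|_{L^p}$, which tends to $0$ as $\lambda\to\infty$ when $sp>d$, while $\phi u_\lambda\to u(0)\phi$ in $\Lambda^{s,p}_0(\Omega)$ so that $\|D^s(\phi u_\lambda)\|_{L^p}\to|u(0)|\,\|D^s\phi\|_{L^p}>0$. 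This contradicts a bound with constant independent of $u$. Comparing with how the corollary is actually invoked in Proposition \ref{prop:GeneralizedWeak}, namely $\|D^{s_n}(\phi u)\|_{L^p}\le\frac{C}{s}\|u\|_{\Lambda^{s_n,p}_0(\Omega)}$ for $u\in\Lambda^{s_n,p}_0(\Omega)$, the correct conclusion is that the full $\Lambda^{s,p}$-norm of $u$ should appear on the right-hand side; when $u\in\Lambda^{s,p}_0(\Omega)$ with $\Omega$ bounded, the Poincaré inequality for $u$ itself (not for $\phi u$) then converts this to a $\|D^su\|_{L^p}$-only bound, and that is where the $1/s$ factor comes from. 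Your argument should invoke Poincaré on $u$, which requires restricting $u$ to $\Lambda^{s,p}_0(\Omega)$. The paper's own proof is a citation of Kreisbeck--Sch\"onberger, so a Leibniz-type decomposition of the kind you propose is quite likely what the referenced proof does — your overall plan is reasonable, but the specific mechanism for the Poincaré absorption needs to be corrected.
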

\begin{proof}
    See \cite[Lemma~2.7]{kreisbeck2022Quasiconvexity}.
\end{proof}

The last direct application that we make of the fractional Poincaré inequality that we present is the following compactness result, which was first proven in \cite{bellido2020gamma} in the case $\sigma=1$. This result is going to play a very important role in the theory of stability of solutions in Section \ref{sec:stability}.

\begin{theorem}[]\label{CompactnessFromUniformBoundOnDs}
    Let $\Omega\subset \R^d$ be a bounded open subset of $\R^d$,  and let $p\in(1,+\infty)$. Consider a convergent sequence in $(0,1]$ such that $s_n\to\sigma\in(0,1]$, and a sequence of functions $u_n\in \Lambda^{s_n,p}_0(\Omega)$ with $\sup_n{\|D^{s_n} u_n\|_{L^p(\R^d;\R^d)}}<\infty$. Then, there exists $u\in\Lambda^{\sigma,p}_0(\Omega)$ such that for every $t\in(0,\sigma)$ one can extract a subsequence also denoted by $u_n$ satisfying
    \begin{equation*}
        u_n\to u \mbox{ in } \Lambda^{t,p}_0(\Omega) \quad \mbox{ and }\quad D^{s_n} u_n\rightharpoonup D^\sigma u \mbox{ in } L^p(\R^d;\R^d).
    \end{equation*}
\end{theorem}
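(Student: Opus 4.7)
The plan is to combine the fractional Poincar\'e inequality with Corollary~\ref{cor:order_between_Ds}, local Rellich--Kondrachov compactness, and the continuity of $s\mapsto D^s$ on test functions. Fix $t_0\in(0,\sigma)$ and discard finitely many $n$ so that $s_n>t_0$. Theorem~\ref{FracPoincareInequality} gives $\|u_n\|_{L^p(\Omega)}\le(C_P/s_n)\|D^{s_n}u_n\|_{L^p(\R^d;\R^d)}$, which is bounded because $s_n\to\sigma>0$, and Corollary~\ref{cor:order_between_Ds} gives $\|D^{t_0}u_n\|_{L^p(\R^d;\R^d)}\le(C/t_0^{1+1/p})\|D^{s_n}u_n\|_{L^p(\R^d;\R^d)}$, also bounded. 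Thus $\{u_n\}$ is bounded in $\Lambda^{t_0,p}_0(\Omega)$ for every $t_0\in(0,\sigma)$.

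\textbf{Step 2 (extraction and strong convergence in $\Lambda^{t,p}_0$).} By reflexivity, along a subsequence $D^{s_n}u_n\rightharpoonup\boldsymbol{v}$ in $L^p(\R^d;\R^d)$. Proposition~\ref{prop:local_rellich_kondrachov} (applied with $\omega=\Omega$, $q=p$) then provides a further subsequence with $u_n\to u$ in $L^p(\Omega)$, hence in $L^p(\R^d)$ since the supports remain in $\overline\Omega$. Applying Remark~\ref{rem:Gagliardo-NirembergALaBrueCalziComiStefani} with $r=0$ to $u_n-u_m$ and using the $L^p$-boundedness of $D^0=-R$,
\begin{equation*}
    \|D^t(u_n-u_m)\|_{L^p}\le c\,\|u_n-u_m\|_{L^p}^{(t_0-t)/t_0}\,\|D^{t_0}(u_n-u_m)\|_{L^p}^{t/t_0}\quad(0<t<t_0),
\end{equation*}
so $\{D^tu_n\}$ is Cauchy in $L^p(\R^d;\R^d)$; testing the duality identity $\int D^tu_n\cdot\varphi=-\int u_n\,D^t\cdot\varphi$ against arbitrary $\varphi\in C^\infty_c(\R^d;\R^d)$ identifies the Cauchy limit as $D^tu$. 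Consequently $u_n\to u$ in $\Lambda^{t,p}_0(\Omega)$, and a diagonal extraction along a sequence $t_0\nearrow\sigma$ produces a single subsequence valid for every $t\in(0,\sigma)$.

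\textbf{Step 3 (identification of $\boldsymbol{v}$ and conclusion).} For $\varphi\in C^\infty_c(\R^d;\R^d)$, Proposition~\ref{prop:duality_gradient_divergent} gives
\begin{equation*}
    \int_{\R^d}D^{s_n}u_n\cdot\varphi\,dx=-\int_{\R^d}u_n\,(D^{s_n}\cdot\varphi)\,dx.
\end{equation*}
The left-hand side tends to $\int\boldsymbol{v}\cdot\varphi$ by weak convergence; on the right, $u_n\to u$ in $L^p$, and Proposition~\ref{prop:continuous_dependence_fractional_gradiente_on_s} applied componentwise to each $\varphi_j\in C^\infty_c\subset\Lambda^{s,p'}(\R^d)$ gives $D^{s_n}\cdot\varphi\to D^\sigma\cdot\varphi$ in $L^{p'}(\R^d)$, so the right-hand side tends to $-\int u(D^\sigma\cdot\varphi)$. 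Hence $\boldsymbol{v}=D^\sigma u\in L^p(\R^d;\R^d)$ in the sense of weak $\sigma$-gradients, giving $u\in\Lambda^{\sigma,p}(\R^d)$. Since $u$ is supported in $\overline\Omega$ as an $L^p$-limit of such functions, we conclude $u\in\Lambda^{\sigma,p}_0(\Omega)$ --- immediately from \eqref{eq:characterization_lions_calderon_compact_support_s_general} if $\Omega$ is Lipschitz, or otherwise by Netrusov's Theorem~\ref{thm:netrusovsTheorem} combined with the cutoff estimate of Corollary~\ref{prop:EstimateProductTestFunction}.

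The main obstacle is the passage to the limit in the dual-pairing of Step~3: one needs \emph{strong} $L^{p'}$-convergence of $D^{s_n}\cdot\varphi$ with the fractional parameter itself varying with $n$, which is precisely the content of Proposition~\ref{prop:continuous_dependence_fractional_gradiente_on_s}. This is also what forces $\sigma>0$, since both the Poincar\'e constant $C_P/s_n$ and the embedding constant $C/t_0^{1+1/p}$ degenerate as $s_n,t_0\searrow 0$.
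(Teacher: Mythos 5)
Your overall strategy matches the paper's: both proofs extract an $L^p$-convergent subsequence via local Rellich--Kondrachov, then upgrade to strong $\Lambda^{t,p}_0(\Omega)$-convergence via the fractional Gagliardo--Nirenberg interpolation between the $L^p$-norm and a bounded higher-order fractional gradient. Where you differ is that the paper defers the entire first stage — the extraction, the $L^p$-convergence, and (crucially) the conclusion $u\in\Lambda^{\sigma,p}_0(\Omega)$ — to a cited argument of Bellido--Cueto--Mora-Corral, whereas you supply the details yourself; your Steps~1 and 2 and the duality identification in Step~3 of $\boldsymbol{v}=D^\sigma u$ as a weak $\sigma$-gradient are all correct and cleanly argued.

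The genuine gap is at the end of Step~3, in concluding $u\in\Lambda^{\sigma,p}_0(\Omega)$ for a general bounded $\Omega$. You have $u\in\Lambda^{\sigma,p}(\R^d)$ supported in $\overline{\Omega}$, and $u\in\Lambda^{t,p}_0(\Omega)$ for each $t<\sigma$, but passing from this to $u\in\Lambda^{\sigma,p}_0(\Omega)$ is not automatic: the identity \eqref{eq:characterization_lions_calderon_compact_support_s_general} requires Lipschitz boundary (as you note), and the fallback you propose — Theorem~\ref{thm:netrusovsTheorem} together with Corollary~\ref{prop:EstimateProductTestFunction} — is asserted but not carried out, and it is not clear it can be. Netrusov's criterion requires a single cutoff $\eta$ with $\|u-\eta u\|_{\Lambda^{\sigma,p}(\R^d)}<\varepsilon$, and the information you hold (Netrusov cutoffs at levels $t<\sigma$, or cutoffs $\eta_n$ associated with $u_n$ in $\Lambda^{s_n,p}_0(\Omega)$) gives control only in lower-order norms or at variable fractional orders; Corollary~\ref{prop:EstimateProductTestFunction} estimates $\|D^s(\phi u)\|_{L^p}$ for a \emph{fixed} $\phi$ in terms of $\|D^s u\|_{L^p}$, which does not by itself produce the required approximation $\eta_k u\to u$ in $\Lambda^{\sigma,p}$. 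You should either restrict to Lipschitz $\Omega$ (making the closing argument rigorous) or cite \cite[Theorem~4.2]{bellido2020gamma} for the membership $u\in\Lambda^{\sigma,p}_0(\Omega)$ as the paper does; the one-line Netrusov remark does not close this step.
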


\begin{proof}
    The arguments in \cite[Theorem~4.2]{bellido2020gamma} may be very generalized to prove that exists a function $u\in\Lambda^{\sigma,p}_0(\Omega)$ and a subsequence $\{u_s\}_{n_j}\subset\Lambda^{t+\varepsilon,p}_0(\Omega)$ such that $u_s\to u$ in $L^p(\Omega)$, where $\varepsilon>0$ is sufficiently small to have $t+2\varepsilon<\min\{s,\sigma\}$. Now, one only needs to prove that in fact $u_s\to u$ in $\Lambda^{t,p}_0(\Omega)$. Using Gagliardo-Nirenberg interpolation inequality, Lemma \ref{GagliardoNirenbergLionsCalderon}, and the order between Lions-Calderón spaces with $p$ fixed, item 4) of Proposition \ref{prop:sobolev_embeddings_compact_support}, together with the fact that $t+2\varepsilon<\min{s,\sigma}$, we obtain
    \begin{align*}
		\|D^t(u_s-u)\|_{L^p(\R^d;\R^d)}&\leq c_{d,p}\|u_s-u\|_{L^p(\R^d)}^{\frac{\varepsilon}{t+\varepsilon}}\|D^{t+\varepsilon}(u_s-u)\|_{L^p(\R^d;\R^d)}^{\frac{t}{t+\varepsilon}}\\
        &\leq c_{d,p,t,\varepsilon}\|u_s-u\|_{L^p(\R^d)}^{\frac{\varepsilon}{t+\varepsilon}}\left(\|D^s u_s\|_{L^p(\R^d;\R^d)}+\|D^\sigma u\|_{L^p(\R^d;\R^d)}\right)^{\frac{t}{t+\varepsilon}}.
    \end{align*}
    Using the hypothesis that $\|D^su_s\|_{L^p(\R^d;\R^d)}$ is uniformly bounded, and that $u_s\to u$ in $L^p(\R^d)$ with $u\in\Lambda^{s,p}_0(\Omega)$, we conclude the proof. 
\end{proof}

\begin{corollary}\label{cor:fracRellichKondrachovFixedP}
    Let $\Omega\subset \R^d$ be a bounded open set, and let $0\leq t<s\leq 1$ and $p\in(1,\infty)$. Then,
    \begin{equation*}
        \Lambda^{s,p}_0(\Omega)\Subset \Lambda^{t,p}_0(\Omega).
    \end{equation*}
\end{corollary}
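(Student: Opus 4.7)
The statement is essentially a direct corollary of Theorem \ref{CompactnessFromUniformBoundOnDs}, so the plan is short and the main ``content'' has already been done. Given a bounded sequence $\{u_n\}\subset \Lambda^{s,p}_0(\Omega)$, I first note that because $\Omega$ is bounded and $s>0$ (the hypothesis $0\leq t<s\leq 1$ forces $s>0$), Corollary \ref{EquivalenceOfNorms} yields that $\|D^su_n\|_{L^p(\R^d;\R^d)}$ is uniformly bounded. Thus the sequence fits the hypotheses of Theorem \ref{CompactnessFromUniformBoundOnDs} with the constant choice $s_n\equiv s$ and $\sigma=s\in(0,1]$.

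Applied with any parameter $t'\in(0,s)$, Theorem \ref{CompactnessFromUniformBoundOnDs} produces a subsequence (still denoted $u_n$) and a limit $u\in\Lambda^{s,p}_0(\Omega)$ such that
\begin{equation*}
    u_n\to u\quad\text{in }\Lambda^{t',p}_0(\Omega),\qquad D^{s}u_n\rightharpoonup D^su\quad\text{in }L^p(\R^d;\R^d).
\end{equation*}
If $t\in(0,s)$ I simply take $t'=t$ and the compactness is proved. For the boundary case $t=0$, I pick any $t'\in(0,s)$; then strong convergence in $\Lambda^{t',p}_0(\Omega)$ trivially implies strong convergence in the $L^p(\Omega)$-component, and since $\Lambda^{0,p}_0(\Omega)=L^p_0(\Omega)$ by \eqref{eq:characterization_lions_calderon_compact_support_s_eq_0}, we obtain $u_n\to u$ in $\Lambda^{0,p}_0(\Omega)$ as well.

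There is no real obstacle here, since Theorem \ref{CompactnessFromUniformBoundOnDs} already encodes all the hard work (interpolation via the fractional Gagliardo--Nirenberg inequality combined with the compact embedding into $L^p$ coming from the contiguity with $W^{s',p}$ and the classical Rellich--Kondrachov theorem). The only minor wrinkle is that Theorem \ref{CompactnessFromUniformBoundOnDs} is stated for $t\in(0,\sigma)$ and not for $t=0$, but this gap is bridged for free by the trivial estimate $\|\cdot\|_{L^p(\Omega)}\leq \|\cdot\|_{\Lambda^{t',p}_0(\Omega)}$.
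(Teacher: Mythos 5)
Your argument follows the same route as the paper's — applying Theorem \ref{CompactnessFromUniformBoundOnDs} with the constant sequence $s_n\equiv s$ — and is correct. You are in fact slightly more careful than the paper: you explicitly invoke Corollary \ref{EquivalenceOfNorms} to get the uniform bound on $\|D^s u_n\|_{L^p(\R^d;\R^d)}$, and you handle the boundary case $t=0$ (which Theorem \ref{CompactnessFromUniformBoundOnDs} as stated covers only for $t\in(0,\sigma)$) via the trivial embedding of $\Lambda^{t',p}_0(\Omega)$ into $L^p_0(\Omega)=\Lambda^{0,p}_0(\Omega)$.
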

\begin{proof}
    This is a simple consequence of Theorem \ref{CompactnessFromUniformBoundOnDs} when $s_n\equiv s$, since by taking a sequence $u_n$ bounded in $\Lambda^{s,p}_0(\Omega)$ there is a $u\in\Lambda^{s,p}_0(\Omega)$ and a subsequence $u_n$ such that $D^t u_n\to D^t u$ in $L^p(\R^d;\R^d)$.
\end{proof}

\begin{theorem}[Fractional Rellich-Kondrachov]\label{CompactnessResultALaBellido}
    Let $\Omega\subset \R^d$ be a bounded open set. Then, for every $0\leq t<s\leq 1$ and $1<p\leq q<\infty$ satisfying
    \begin{equation*}
        \frac{1}{p}-\frac{s}{d}<\frac{1}{q}-\frac{t}{d}
    \end{equation*}
    we have the compact embedding
	\begin{equation*}
		\Lambda^{s,p}_0(\Omega)\Subset \Lambda^{t,q}_0(\Omega).
	\end{equation*}
\end{theorem}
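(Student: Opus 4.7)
The plan is to factor the desired compact embedding as the composition of a strong convergence step in an intermediate Lions--Calderón space with fixed integrability exponent $p$ (provided by Corollary \ref{cor:fracRellichKondrachovFixedP}, equivalently Theorem \ref{CompactnessFromUniformBoundOnDs}), followed by a continuous Sobolev-type embedding into $\Lambda^{t,q}_0(\Omega)$ (provided by item 4 of Proposition \ref{prop:sobolev_embeddings_compact_support}). The strict inequality $\frac{1}{p}-\frac{s}{d}<\frac{1}{q}-\frac{t}{d}$ is precisely what creates the ``room'' for an intermediate smoothness parameter to slot between these two embeddings.

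Concretely, I would first choose an intermediate fractional order $t'\in(t,s)$ satisfying $\frac{1}{p}-\frac{t'}{d}\leq \frac{1}{q}-\frac{t}{d}$, i.e.\ $t'\geq t+d\bigl(\tfrac{1}{p}-\tfrac{1}{q}\bigr)$. The hypothesis rewrites as $t+d\bigl(\tfrac{1}{p}-\tfrac{1}{q}\bigr)<s$, so the interval $\bigl(\max\{t,\,t+d(\tfrac{1}{p}-\tfrac{1}{q})\},\,s\bigr)$ is non-empty, and any $t'$ in it meets both requirements (note that $s>0$ is guaranteed since $0\leq t<s$). With this $t'$ fixed, item 4 of Proposition \ref{prop:sobolev_embeddings_compact_support} furnishes the continuous embedding $\Lambda^{t',p}_0(\Omega)\hookrightarrow \Lambda^{t,q}_0(\Omega)$.

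Then, given a bounded sequence $\{u_n\}\subset \Lambda^{s,p}_0(\Omega)$, I would apply Theorem \ref{CompactnessFromUniformBoundOnDs} with the constant choice $s_n\equiv s$ and $\sigma=s$ (allowed since $s\in(0,1]$), together with the fact that $t'\in(0,s)$, to extract a subsequence (not relabeled) and a limit $u\in\Lambda^{s,p}_0(\Omega)$ such that $u_n\to u$ strongly in $\Lambda^{t',p}_0(\Omega)$. Composing with the continuous embedding of the previous step gives $u_n\to u$ in $\Lambda^{t,q}_0(\Omega)$, which is the desired compactness.

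The main subtlety — and really the only one — is the verification that the intermediate parameter $t'$ exists, which is exactly where the strict inequality in the hypothesis is used; once $t'$ is selected, the two ingredients are already available in the paper and the conclusion follows immediately. No additional calculation, interpolation inequality, or estimate on the fractional gradient is needed beyond what has already been established.
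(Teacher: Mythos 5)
Your proof is correct and is essentially the paper's own argument: both factor the embedding through an intermediate space $\Lambda^{t',p}_0(\Omega)$ with $t'=s-\varepsilon\in(t+d(\tfrac{1}{p}-\tfrac{1}{q}),\,s)$, using Corollary \ref{cor:fracRellichKondrachovFixedP} (equivalently Theorem \ref{CompactnessFromUniformBoundOnDs}) for the compact step $\Lambda^{s,p}_0(\Omega)\Subset\Lambda^{t',p}_0(\Omega)$ and item 4 of Proposition \ref{prop:sobolev_embeddings_compact_support} for the continuous step $\Lambda^{t',p}_0(\Omega)\subset\Lambda^{t,q}_0(\Omega)$. You make explicit the constraint $t'\geq t+d(\tfrac{1}{p}-\tfrac{1}{q})$ that the paper leaves implicit in the phrase ``choosing $\varepsilon$ arbitrarily small,'' which is a small but welcome clarification.
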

\begin{proof}
    From the Corollary \ref{cor:fracRellichKondrachovFixedP} we have $\Lambda^{s,p}_0(\Omega)\Subset\Lambda^{s-\varepsilon,p}_0(\Omega)$ for every $\varepsilon\in(0,s)$. However, at the same time, due to the item 5) of Proposition \ref{prop:sobolev_embeddings_compact_support}, we have that for $t<s-\varepsilon$ and $\frac{1}{p}-\frac{s-\varepsilon}{d}\leq\frac{1}{q}-\frac{t}{d}$ we have $\Lambda^{s-\varepsilon,p}_0(\Omega)\subset \Lambda^{t,q}_0(\Omega)$. By choosing $\varepsilon$ arbitrarily small, and combining the two embeddings, we conclude the proof.
\end{proof}

We conclude this section with an interesting result that we can only prove in the case where $\Omega$ is bounded, although it is probably true in general domains.

\begin{theorem}\label{thm:densityPositiveCone}
    Let $\Omega\subset\R^d$ be an open and bounded set, $s\in(0,1)$, $p\in(1,\infty)$ and $f\in \Lambda^{s,p}(\Omega)$ with $f\geq 0$. Then, there exists a sequence of functions $f_n\in C^\infty_c(\Omega)$ with $f_n\geq 0$ for all $n\in\N$ such that $f_n\to f$ in $\Lambda^{s,p}_0(\Omega)$
\end{theorem}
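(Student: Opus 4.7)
The plan is to combine a cutoff via Netrusov's theorem with a standard mollification, choosing both ingredients non-negative so that positivity is preserved throughout. Since the target norm on $\Lambda^{s,p}_0(\Omega)$ is the one inherited from $\Lambda^{s,p}(\R^d)$, it suffices to produce, for every $\varepsilon>0$, some $f_\varepsilon\in C^\infty_c(\Omega)$ with $f_\varepsilon\geq 0$ and $\|f_\varepsilon-f\|_{\Lambda^{s,p}(\R^d)}<\varepsilon$, and then let $\varepsilon=1/n$.

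Fix $\varepsilon>0$. First, I apply Theorem \ref{thm:netrusovsTheorem} to the non-negative function $f\in\Lambda^{s,p}_0(\Omega)\subset \Lambda^{s,p}(\R^d)$ to obtain $\eta:\R^d\to[0,1]$ that vanishes on a neighborhood of $\Omega^c$ and satisfies $\|f-\eta f\|_{\Lambda^{s,p}(\R^d)}<\varepsilon/2$. The key point is that Netrusov's theorem provides $\eta\geq 0$, so $\eta f\geq 0$; by the triangle inequality $\eta f\in\Lambda^{s,p}(\R^d)$, and its support is a compact set $K\subset\Omega$ whose distance to $\Omega^c$ is some $d_0>0$.

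Second, I mollify. Let $\phi_\delta\in C^\infty_c(B_\delta(0))$ be a standard non-negative mollifier with $\int\phi_\delta=1$, and define $f_\delta:=\phi_\delta*(\eta f)$. For $\delta<d_0$, the function $f_\delta$ is smooth, non-negative, and supported in $\Omega$, so $f_\delta\in C^\infty_c(\Omega)$ with $f_\delta\geq 0$. To verify $f_\delta\to\eta f$ in $\Lambda^{s,p}(\R^d)$ as $\delta\to 0$, I use the identification $\Lambda^{s,p}(\R^d)=H^{s,p}(\R^d)$ from Proposition \ref{prop:distributional_characterization_lions_calderon_spaces} together with the equivalence of norms $\|g\|_{\Lambda^{s,p}(\R^d)}\asymp \|(I-\Delta)^{s/2}g\|_{L^p(\R^d)}$. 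Since $(I-\Delta)^{s/2}$ is a Fourier multiplier and therefore commutes with convolution against $\phi_\delta$, one has $(I-\Delta)^{s/2}f_\delta=\phi_\delta*(I-\Delta)^{s/2}(\eta f)$, and the convergence reduces to the standard $L^p$ approximation of the identity applied to $(I-\Delta)^{s/2}(\eta f)\in L^p(\R^d)$. Choosing $\delta$ small enough that $\|f_\delta-\eta f\|_{\Lambda^{s,p}(\R^d)}<\varepsilon/2$ and combining with the first step gives $\|f_\delta-f\|_{\Lambda^{s,p}(\R^d)}<\varepsilon$, as required.

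The main obstacle is guaranteeing that the cutoff step does not destroy non-negativity of $f$; this is handled precisely because Netrusov's theorem gives a non-negative $\eta$. The mollification step is then routine, since a non-negative mollifier preserves the sign and the Bessel-potential description of $\Lambda^{s,p}(\R^d)$ makes convergence in the $\Lambda^{s,p}$-norm follow from standard $L^p$ approximation.
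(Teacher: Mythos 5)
Your proof is correct and follows essentially the same strategy as the paper: use Netrusov's theorem to obtain a non-negative cutoff $\eta$ so that $\eta f\geq 0$ has compact support in $\Omega$, then mollify with a non-negative kernel. The only technical difference is in how you establish convergence of the mollification in the $\Lambda^{s,p}$-norm — you pass through the Bessel-potential characterization $\|(I-\Delta)^{s/2}\cdot\|_{L^p}$ and the fact that Fourier multipliers commute with convolution, whereas the paper invokes the direct commutation identity $D^s(\varphi_n*g)=\varphi_n*D^s g$ from \cite{comi2019BlowUp}; both routes are valid and of comparable length. Your version is, if anything, slightly cleaner on the support bookkeeping (making explicit the positive distance $d_0$ so that $\delta<d_0$ guarantees $f_\delta\in C^\infty_c(\Omega)$) and on the diagonal passage from $\varepsilon$ to $1/n$.
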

\begin{proof}
    Let us fix $\varepsilon>0$. By Theorem \ref{thm:netrusovsTheorem}, there exists a function $\eta$ such that $\eta=0$ on a neighborhood of $\Omega$, $0\leq \eta\leq 1$, and $\|f-\eta f\|_{\Lambda^{s,p}(\R^d)}<\varepsilon$. Let us now consider an approximation of the identity $\{\varphi_n\}\subset L^1(\R^d)$ with $\|\varphi_n\|_{L^1(\R^d)}=1$, $\varphi_n\geq 0$ and compactly supported in $B_{1/n}$. Now, observe that
    \begin{equation*}
        \|f-\varphi_n*(\eta f)\|_{L^p(\R^d)}\leq \|f-\eta f\|_{L^p(\R^d)}+\|\eta f-\varphi_n*(\eta f)\|_{L^p(\R^d)}\leq \varepsilon+\|\eta f-\varphi_n*(\eta f)\|_{L^p(\R^d)}.
    \end{equation*}
    As noted in \cite[Theorem~3.22]{comi2019BlowUp}, $D^s(\varphi_n*(\eta f))=\varphi_n*D^s(\eta f)$, and so, we also have that
    \begin{align*}
        \|D^s(f-\varphi_n*(\eta f))\|_{L^p(\R^d)}&\leq \|D^s(f-\eta f)\|_{L^p(\R^d)}+\|D^s(\eta f)-\varphi_n*D^s(\eta f)\|_{L^p(\R^d)}\\
        &\leq \varepsilon+\|D^s(\eta f)-\varphi_n*D^s(\eta f)\|_{L^p(\R^d)}.
    \end{align*}
    Now we may choose the index $n$ sufficiently big in $\varphi_n$, such that
    \begin{equation*}
        \|\eta f-\varphi_n*(\eta f)\|_{L^p(\R^d)}+\|D^s\eta f-\varphi_n*D^s(\eta f)\|_{L^p(\R^d)}\leq \varepsilon
    \end{equation*}
    and conclude the proof by setting $f_n=\varphi_n*(\eta f)$ and observing that
    \begin{equation*}
        \mathrm{supp}(\varphi_n*(\eta f))\subset \mathrm{supp}\varphi_n+\mathrm{supp}(\eta f)\subset B_{1/n}+ \mathrm{supp}(\eta f)\subset\mathrm{supp}f.
    \end{equation*}
\end{proof}

\begin{remark}
    This argument only works when $\Omega$ is bounded because the boundary of any neighborhood of $\Omega^c$ has to be at a positive distance of the boundary of $\Omega^c$. However, this seems to be a technical restriction, since in the classical case $s=1$, this result is also valid for $\Omega$ unbounded (see \cite[Corollary 9.1.5]{adams1996Function}).
\end{remark}

\begin{remark}
    Some of the results, that we have included in this section, are well known to have a counterpart on the Sobolev-Slobodeckij spaces $W^{s,p}_0(\Omega)$. For example, the fractional Sobolev and the Rellich-Kondrachov embeddings can be found for example in \cite{adams1975Sobolev, diNezza2012Hitchhiker, triebel1983Theory}, and the fractional Poincaré inequality can be found \cite[Proposition 2.5]{brasco2016Stability}.
\end{remark}

\section{Existence results}\label{sec:existence_uniqueness}
In this section, we study the existence of solutions $u=u_s\in K$ to the variational inequalities of the form
\begin{equation}\label{eq:general_equation}
    \langle \mathscr{A}_s(u), v-u\rangle\geq \langle F, v-u\rangle, \quad \forall v\in K,
\end{equation}
where $K$ is a non-empty closed convex subset of $\Lambda^{s,p}_0(\Omega)$, $F\in \Lambda^{-s,p'}(\Omega)$ with $\Omega\subset\R^d$ being an open set, bounded or unbounded, and

\begin{equation}\label{VariationalEquation}
    \langle \mathscr{A}_s(u), v\rangle=\int_{\R^d}{\boldsymbol{a}(x,u,D^s u)\cdot D^s(v-u)}\,dx+\int_\Omega{b(x,u, D^su)v}\,dx, \quad \forall v\in \Lambda^{s,p}_0(\Omega).
\end{equation}
Notice that the operator $\mathscr{A}_s:\Lambda^{s,p}_0(\Omega)\mapsto \Lambda^{-s,p'}(\Omega)$, defined by 
\begin{equation}\label{eq:quasiLinearOperator}
    \mathscr{A}_s(v)=-D^s\cdot \boldsymbol{a}(x,v,D^s v)+b(x,v,D^s v),
\end{equation}
is of fractional divergence form if $0<s<1$, classical if $s=1$ and of novel type if $s=0$. As in the classical case, we assume that $\boldsymbol{a}:\R^d\times\R\times\R^d\to\R^d$ and $b:\R^d\times\R\times\R^d\to\R^d$ are Carathéodory functions, that is, $\boldsymbol{a}(\cdot,r,\xi)$ and $b(\cdot,r,\xi)$ are measurable for fixed $r\in\R$ and $\xi\in\R^d$, and $\boldsymbol{a}(x,\cdot,\cdot)$ and $b(x,\cdot,\cdot)$ are continuous in the variables $(r,\xi)$ for a.e. $x\in\R^d$, satisfying suitable growth and coercivity conditions.

From Remark \ref{rem:characterization_dual}, setting $F=f_0-D^s\cdot\boldsymbol{f}$ and replacing the nonlinear functions $\boldsymbol{a}$ and $b$ by $\boldsymbol{a}-\boldsymbol{f}$ and $b-f_0$, respectively, one can rewrite the inequality \eqref{eq:general_equation} in the integral form
\begin{equation}\label{integralGeneralEquation}
	\int_{\R^d}{\boldsymbol{a}(u,D^s u))\cdot D^s(v-u)}\,dx+\int_\Omega{b(u, D^s u)(v-u)}\,dx\geq 0,\quad \forall v\in K,
\end{equation}
where we have omitted the dependence of $\boldsymbol{a}$ and $b$ on $x$ to simplify the notation. On the other hand, the condition $K\subset\Lambda^{s,p}_0(\Omega)$ means that we have a homogeneous Dirichlet condition $u=0$ in $\R^d\setminus\Omega$. If instead, we have $u=g$ in $\R^d\setminus\Omega$ with $g\in\Lambda^{s,p}(\R^d)$, we can set $\overline{u}=u-g\in\Lambda^{s,p}_0(\Omega)$ and consider the variational inequality \eqref{eq:general_equation} for $\overline{u}\in \overline{K}$ and $\overline{\mathscr{A}}_s$, with $\overline{K}=K-g$ and $\overline{\mathscr{A}}_s(\overline{u})=\mathscr{A}_s(u-g)$.

With this general structure we can apply classical abstract methods in the framework of Lions-Calderón spaces, in particular the general theory of pseudo-monotone operators introduced by Brézis in \cite{brezis1968equations} (see also the classic book \cite{lions1969Quelques} or the more modern references \cite{roubivcek2013nonlinear, showalter2013monotone}), to show the existence of solutions to general nonlinear operators. Similarly, we can also study the more restrictive class of monotone operators, which in certain cases, with additional assumptions, allow us to obtain stronger results, including the uniqueness of solutions in the strictly monotone case and to handle the novel case of $D^0$.

\subsection{Pseudomonotone operators with $\Omega$ bounded and $0<s\leq 1$}\label{sec:pseudomonotone_brezis}

In this subsection we assume $\Omega$  open and bounded. For simplicity we consider first the case where $b$ does not depend on $\xi$ and later, in Remarks \ref{remark:3.1} and \ref{remark:3.2}, we discuss the general case. We assume the growth conditions
\begin{equation}\label{eq:upperboundForA}
    |\boldsymbol{a}(x,r,\xi)|\leq \gamma_1(x)+C'_1|r|^{\frac{q_1}{p'}}+C_1|\xi|^{p-1},
\end{equation}
\begin{equation}\label{eq:upperboundForB}
    |b(x,r)|\leq\gamma_2(x)+C_2|r|^{q_2-1},
\end{equation}
and the coercivity condition
\begin{equation}\label{eq:weakCoercivity}
    \boldsymbol{a}(x,r,\xi)\cdot\xi+b(x,r)r\geq\alpha|\xi|^p-\beta|r|^{q_3}-k(x)
\end{equation}
for every $r\in\R$, $\xi\in\R^d$ and a.e. $x\in\R^d$, with exponents 
\begin{equation}\label{eq:constraints_exponents}
    1<q_1<\min\{p_s^*,p^2/(p-1)\}, \quad 1<q_2<\min\{p_s^*,p+1\}, \quad \mbox{ and } \quad 1<q_3< p,
\end{equation}
constants $C_1,C'_1, C_2, \alpha>0$ and $\beta\in\R$ and functions $\gamma_1,\gamma_2\in {L^p}'(\R^d)$ and $k\in L^1(\R^d)$. 

In this case we assume that $\boldsymbol{a}$ is  only monotone with respect to the last variable, i.e.,
\begin{equation}\label{eq:monotonicity}
    (\boldsymbol{a}(x,r,\xi)-\boldsymbol{a}(x,r,\eta))\cdot(\xi-\eta)\geq 0,
\end{equation}
for every $\xi,\eta\in\R^d$, $r\in\R$ and for almost all $x\in\R^d$.

\begin{lemma}[Pseudomonotonicity]\label{lemma:pseudomonotone}
    Let $\Omega\subset\R^d$ be a bounded and open set, $s\in(0,1]$, $p\in(1,\infty)$. Let $\boldsymbol{a}$ and $b$ satisfy the growth conditions \eqref{eq:upperboundForA} and \eqref{eq:upperboundForB}, with $q_1$ and $q_2$ as in \eqref{eq:constraints_exponents}, and the monotonicity condition \eqref{eq:monotonicity} hold. Then, the operator $\mathscr{A}_s:\Lambda^{s,p}_0(\Omega)\to \Lambda^{-s,p}(\Omega)$, as defined in \eqref{eq:quasiLinearOperator}, is pseudomonotone, i.e., it is bounded and
    \begin{equation}\label{ImplicationPseudomonotone}
        \left.
        \begin{aligned}
            u_k\rightharpoonup u\\
            \limsup_{k\to\infty}{\langle\mathscr{A}_s(u_k),u_k-u\rangle}\leq 0.
        \end{aligned}
        \right\}
        \Rightarrow\langle\mathscr{A}_s(u),u-v\rangle\leq \liminf_{k\to\infty}{\langle \mathscr{A}_s(u_k),u_k-v\rangle},\quad\forall v\in V.
    \end{equation}
\end{lemma}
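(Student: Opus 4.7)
The approach is the classical Br\'ezis--Leray--Lions scheme, adapted to the Lions--Calder\'on framework via the compact embeddings $\Lambda^{s,p}_0(\Omega)\Subset L^{q_1}(\Omega)\cap L^{q_2}(\Omega)$ (Theorem~\ref{CompactnessResultALaBellido}, valid because $q_1,q_2<p^*_s$) and the duality property of $D^s$ (Proposition~\ref{prop:duality_gradient_divergent}). Boundedness of $\mathscr{A}_s$ is immediate from the growth conditions \eqref{eq:upperboundForA}--\eqref{eq:upperboundForB}, H\"older's inequality, and the Sobolev estimate~\eqref{eq:sobolev_subcritical_compact_support}, giving
\begin{equation*}
|\langle\mathscr{A}_s(u),v\rangle|\leq C\bigl(1+\|u\|_{\Lambda^{s,p}_0(\Omega)}^{q_1/p'}+\|u\|_{\Lambda^{s,p}_0(\Omega)}^{p-1}+\|u\|_{\Lambda^{s,p}_0(\Omega)}^{q_2-1}\bigr)\|v\|_{\Lambda^{s,p}_0(\Omega)}.
\end{equation*}

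For \eqref{ImplicationPseudomonotone}, assume $u_k\rightharpoonup u$ in $\Lambda^{s,p}_0(\Omega)$ with $\limsup_k\langle\mathscr{A}_s(u_k),u_k-u\rangle\leq 0$. Along a subsequence, $u_k\to u$ strongly in $L^{q_1}(\Omega)\cap L^{q_2}(\Omega)$ and almost everywhere in $\Omega$. By H\"older the lower-order contribution vanishes, $\int_\Omega b(x,u_k)(u_k-u)\,dx\to 0$, so
\begin{equation*}
\limsup_{k\to\infty}\int_{\R^d}\boldsymbol{a}(x,u_k,D^s u_k)\cdot D^s(u_k-u)\,dx\leq 0.
\end{equation*}
Since $q_1<p^*_s$, the strong $L^{q_1}$-convergence forces equi-integrability of $|u_k|^{q_1}$; together with the a.e.\ convergence of $u_k$ and the continuity of $\boldsymbol{a}$ in its second variable, Vitali's theorem gives $\boldsymbol{a}(\cdot,u_k,D^s u)\to\boldsymbol{a}(\cdot,u,D^s u)$ strongly in $L^{p'}(\R^d;\R^d)$. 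Pairing with $D^s(u_k-u)\rightharpoonup 0$ in $L^p$ and invoking the monotonicity~\eqref{eq:monotonicity} at $\eta=D^s u$ produces $\liminf_k\int\boldsymbol{a}(x,u_k,D^s u_k)\cdot D^s(u_k-u)\geq 0$, and hence
\begin{equation*}
\lim_{k\to\infty}\int_{\R^d}\boldsymbol{a}(x,u_k,D^s u_k)\cdot D^s(u_k-u)\,dx=0.
\end{equation*}

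For the Minty step, by \eqref{eq:upperboundForA} the sequence $\{\boldsymbol{a}(\cdot,u_k,D^s u_k)\}$ is bounded in $L^{p'}(\R^d;\R^d)$; extract a further subsequence with weak limit $\chi$. For any $w\in\Lambda^{s,p}_0(\Omega)$ and $\lambda>0$, \eqref{eq:monotonicity} with $\eta=D^s u+\lambda D^s w$ gives a non-negative integrand whose integral, upon expanding and letting $k\to\infty$ (using the displayed zero limit, the weak convergence to $\chi$, and Vitali convergence of $\boldsymbol{a}(\cdot,u_k,D^s u+\lambda D^s w)\to\boldsymbol{a}(\cdot,u,D^s u+\lambda D^s w)$ in $L^{p'}$), reduces to
\begin{equation*}
0\leq \lambda\int_{\R^d}\bigl[\boldsymbol{a}(x,u,D^s u+\lambda D^s w)-\chi\bigr]\cdot D^s w\,dx.
\end{equation*}
Dividing by $\lambda$, sending $\lambda\to 0^+$ (continuity of $\boldsymbol{a}$ in its third argument plus dominated convergence), and replacing $w$ by $-w$ yields
\begin{equation*}
\int_{\R^d}\bigl(\chi-\boldsymbol{a}(x,u,D^s u)\bigr)\cdot D^s w\,dx=0\qquad\forall w\in\Lambda^{s,p}_0(\Omega).
\end{equation*}

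Splitting $\langle\mathscr{A}_s(u_k),u_k-v\rangle=\langle\mathscr{A}_s(u_k),u_k-u\rangle+\langle\mathscr{A}_s(u_k),u-v\rangle$, the first piece tends to $0$, the first summand in the second piece tends to $\int\boldsymbol{a}(x,u,D^s u)\cdot D^s(u-v)$ by the identity above with $w=u-v$, and the $b$-summand tends to $\int b(x,u)(u-v)$ by another application of Vitali to $b(\cdot,u_k)(u-v)$. Therefore $\langle\mathscr{A}_s(u_k),u_k-v\rangle\to\langle\mathscr{A}_s(u),u-v\rangle$, which is strictly stronger than \eqref{ImplicationPseudomonotone}; since every subsequence admits the same limit, the convergence holds along the original sequence. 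The main obstacle is the Minty step, specifically the equi-integrability required for Vitali to upgrade almost-everywhere convergence of the various compositions with $\boldsymbol{a}$ to strong $L^{p'}$-convergence; this hinges on the strict inequalities $q_1<p^*_s$ and $q_2<p^*_s$, which together prevent mass concentration and deliver the compactness in $L^{q_1}\cap L^{q_2}$ needed at several points of the argument.
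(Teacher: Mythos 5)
Your proof is correct and follows essentially the same route as the paper, which simply invokes the classical Br\'ezis--Leray--Lions pseudomonotonicity argument for quasilinear divergence-form operators (Lemmas 2.31 and 2.32 of Roub\'{\i}\v{c}ek's book) after replacing $D$ by $D^s$, relying on the fractional Sobolev and Rellich--Kondrachov tools in $\Lambda^{s,p}_0(\Omega)$. You have merely unfolded that citation into a complete argument (Minty trick on the weak limit $\chi$, Vitali to upgrade a.e.\ convergence of the compositions with $\boldsymbol{a}$ and $b$ to strong $L^{p'}$-convergence), and correctly observe that the identity $\int(\chi-\boldsymbol{a}(x,u,D^su))\cdot D^s w\,dx=0$ on $\Lambda^{s,p}_0(\Omega)$, rather than the pointwise identification of $\chi$, is all that is needed to close the argument.
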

\begin{proof}
    We can apply a similar argument to \cite[Lemma 2.31]{roubivcek2013nonlinear} and \cite[Lemma 2.32]{roubivcek2013nonlinear}, for the boundedness and for the implication \eqref{ImplicationPseudomonotone}, respectively, by replacing $D$ by $D^s$. In fact, as observed in the previous section, the framework of the Lions-Calderón spaces has also a fractional version of the Sobolev and Poincaré inequalities as well as of the Rellich-Kondrachov's compactness embeddings. 
\end{proof}

\begin{theorem}[Existence solutions of fractional variational inequalities]\label{ExistenceFractionalVariationalInequality}
    Let $\Omega\subset\R^d$ be a bounded open set, $K$ is a non-empty closed convex subset of $\Lambda^{s,p}_0(\Omega)$, $s\in(0,1]$, $p\in(1,\infty)$ and let the coercivity condition \eqref{eq:weakCoercivity}, with $q_3$ as in \eqref{eq:constraints_exponents}, hold in addition to the assumptions present in Lemma \ref{lemma:pseudomonotone}. Then, there exists a function $u\in K$ such that
    \begin{equation}\label{VariationalInequality}
        \int_{\R^d}{\boldsymbol{a}(u,D^s u)\cdot D^s(v-u)}\,dx+\int_\Omega{b(u)(v-u)}\,dx\geq 0, \quad \forall v\in K.
    \end{equation}
\end{theorem}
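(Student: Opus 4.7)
The plan is to apply the Brézis existence theorem for pseudomonotone operators on non-empty closed convex subsets of reflexive Banach spaces (as in \cite{lions1969Quelques} or \cite{roubivcek2013nonlinear}). The space $\Lambda^{s,p}_0(\Omega)$ is reflexive, and by Lemma \ref{lemma:pseudomonotone} the operator $\mathscr{A}_s$ is already known to be bounded and pseudomonotone. So the only missing ingredient is a suitable coercivity condition on $K$: there exists $v_0\in K$ such that
\begin{equation*}
    \frac{\langle \mathscr{A}_s(v),\,v-v_0\rangle}{\|v\|_{\Lambda^{s,p}_0(\Omega)}}\longrightarrow +\infty \quad \mbox{as } \|v\|_{\Lambda^{s,p}_0(\Omega)}\to\infty,\,\,v\in K.
\end{equation*}

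To verify this, I would fix any $v_0\in K$ (which exists since $K\neq\emptyset$), and split
\begin{equation*}
    \langle \mathscr{A}_s(v),v-v_0\rangle=\int_{\R^d}\boldsymbol{a}(x,v,D^sv)\cdot D^s v\,dx+\int_\Omega b(x,v)v\,dx -\int_{\R^d}\boldsymbol{a}(x,v,D^sv)\cdot D^s v_0\,dx-\int_\Omega b(x,v)v_0\,dx.
\end{equation*}
The first two terms are controlled from below by $\alpha\|D^s v\|_{L^p(\R^d;\R^d)}^p-\beta\|v\|_{L^{q_3}(\Omega)}^{q_3}-\|k\|_{L^1(\Omega)}$ using the coercivity condition \eqref{eq:weakCoercivity}. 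The remaining two cross terms are controlled from above, via Hölder's inequality and the growth bounds \eqref{eq:upperboundForA}--\eqref{eq:upperboundForB}, by sums of terms of the form
\begin{equation*}
    C\bigl(1+\|v\|_{L^{q_1}(\Omega)}^{q_1/p'}+\|D^s v\|_{L^p(\R^d;\R^d)}^{p-1}\bigr)\|D^s v_0\|_{L^p(\R^d;\R^d)}+C\bigl(1+\|v\|_{L^{q_2}(\Omega)}^{q_2-1}\bigr)\|v_0\|_{L^{q_2'}(\Omega)}.
\end{equation*}
Since $q_1,q_2<p_s^*$, the fractional Sobolev embedding of Proposition \ref{prop:sobolev_embeddings_compact_support} gives $\|v\|_{L^{q_i}(\Omega)}\leq C\|D^s v\|_{L^p(\R^d;\R^d)}$, and the exponent constraints \eqref{eq:constraints_exponents} ensure that $q_1/p'<p$, $q_2-1<p$, and $q_3<p$. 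Young's inequality then absorbs every such term into a fraction of $\alpha\|D^s v\|_{L^p(\R^d;\R^d)}^p$, yielding
\begin{equation*}
    \langle \mathscr{A}_s(v),v-v_0\rangle\geq \tfrac{\alpha}{2}\|D^s v\|_{L^p(\R^d;\R^d)}^p-C,
\end{equation*}
with $C=C(\alpha,\beta,v_0,\gamma_1,\gamma_2,k,s,p,\Omega)$. Combined with the equivalence of norms from Corollary \ref{EquivalenceOfNorms} (a consequence of the fractional Poincaré inequality), this gives the required coercivity.

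With pseudomonotonicity, boundedness, and coercivity all in hand, the Brézis existence theorem for variational inequalities on closed convex sets in reflexive Banach spaces directly produces the desired $u\in K$ satisfying \eqref{VariationalInequality}. The main technical hurdle is the careful bookkeeping in the coercivity step, specifically checking that each of the growth exponents $q_1/p'$, $q_2-1$, $q_3$ is strictly below $p$ so that all lower-order terms can be absorbed; the constraints \eqref{eq:constraints_exponents} are tailored precisely for this, so no further structural work is needed beyond unwinding the inequalities.
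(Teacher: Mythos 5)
Your proposal is correct and follows essentially the same path as the paper: invoke the pseudomonotonicity from Lemma \ref{lemma:pseudomonotone}, reduce to checking the coercivity \eqref{eq:coercivity} via the Brézis/Lions existence theorem, and verify coercivity by combining the structural bounds \eqref{eq:weakCoercivity}, \eqref{eq:upperboundForA}, \eqref{eq:upperboundForB} with the fractional Sobolev/Poincaré inequalities and the exponent constraints $q_1/p'<p$, $q_2-1<p$, $q_3<p$ from \eqref{eq:constraints_exponents}. The paper records this exact decomposition as inequality \eqref{eq:estimate_for_coercivity_omega_bounded}, so the two arguments are the same.
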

\begin{proof}
    By Lemma \ref{lemma:pseudomonotone} we know that $\mathscr{A}_s$ is pseudo-monotone and, by the well-known existence theory, see \cite[Thm. 8.2, page 247]{lions1969Quelques}, it is sufficient to show that the coercivity assumption \eqref{eq:weakCoercivity} implies, for some $v_0\in K$,
    \begin{equation}\label{eq:coercivity}
        \lim_{\|v\|_{\Lambda^{s,p}_0(\Omega)}\to +\infty}{\frac{\langle \mathscr{A}_s(v),v-v_0\rangle}{\|v\|_{\Lambda^{s,p}_0(\Omega)}}}=\infty.
    \end{equation}

    In fact, \eqref{eq:coercivity}  is an immediate consequence of the following estimate 
    \begin{equation}\label{eq:estimate_for_coercivity_omega_bounded}
        \begin{aligned}
            &\left\langle \mathscr{A}_s(v),v-v_0\right\rangle=\langle\mathscr{A}_s(v),v\rangle+\langle\mathscr{A}_s(v),v_0\rangle\\
            &\quad\geq\alpha\|D^s v\|^p_{L^p(\R^d;\R^d)}
            -C\|D^s v_0\|_{L^p(\R^d;\R^d)}\left(\|D^s v\|^{\frac{q_1}{p'}}_{L^p(\R^d;\R^d)}+\|D^s v\|^{p-1}_{L^p(\R^d;\R^d)}+\|D^s v\|^{q_2-1}_{L^p(\R^d;\R^d)}\right)\\
            &\qquad\quad -C\|D^s v\|^{q_3}_{L^p(\R^d;\R^d)}-\|k\|_{L^1(\R^d)}-\|\gamma_1\|_{{L^p}'(\R^d)}\|D^s v_0\|_{L^p(\R^d;\R^d)}-\|\gamma_2\|_{{L^p}'(\R^d)}\|v_0\|_{L^p(\Omega)}
        \end{aligned}
    \end{equation}
    which is easily obtained using \eqref{eq:weakCoercivity}, \eqref{eq:upperboundForA}, \eqref{eq:upperboundForB}, and the fractional Sobolev inequalities together with the inclusions between Lebesgue spaces due to $\Omega$ being bounded.

    In fact, using \eqref{eq:weakCoercivity}, \eqref{eq:upperboundForA}, \eqref{eq:upperboundForB}, the coercivity \eqref{eq:coercivity} follows by recalling that from \eqref{eq:constraints_exponents}, we have $q_1/p^{'}<p$, $q_2-1<p$ and $q_3<p$.
\end{proof}

\begin{remark}\label{remark:3.1}
    In the case in which $0\in K$, the existence result of Theorem \ref{ExistenceFractionalVariationalInequality} can be improved for more general functions $\boldsymbol{a}$ and $b$. By choosing $v_0=0$, the estimate \eqref{eq:estimate_for_coercivity_omega_bounded} can be simplified to
    \begin{equation*}
        \left\langle \mathscr{A}_s(v),v-v_0\right\rangle=\langle\mathscr{A}_s(v),v\rangle\geq\alpha\|D^s v\|^p_{L^p(\R^d;\R^d)}
        -C\|D^s v\|^{q_3}_{L^p(\R^d;\R^d)}-\|k\|_{L^1(\R^d)}
    \end{equation*}
    which does not depend on $q_1$ and $q_2$, and so, these exponents can be extended to 
    \begin{equation*}
        1<q_1,q_2<p^*_s=
        \begin{cases}
            \frac{dp}{d-sp} & \mbox{ if } sp<d\\
            \forall q<\infty & \mbox{ if }
            sp\geq d.
        \end{cases}
    \end{equation*}
\end{remark}
\begin{remark}\label{remark:3.2}
    When $0\in K$ we can also consider $b$ to be dependent on $\xi$, see for instance \cite[Section 2.4.3]{roubivcek2013nonlinear}. One particular example is when $b$ is an affine transformation of $\xi\in\R^d$, i.e.
    \begin{equation*}
        b(x,r,\xi)=b_0(x,r)+\mathbf{b}(x, r)\cdot \xi,
    \end{equation*}
    satisfying, in conjugation with $\boldsymbol{a}$, the coercivity condition
    \begin{equation}\label{eq:coercivity_b_depend_Ds}
        \boldsymbol{a}(x,r,\xi)\cdot\xi+b(x,r,\xi)r\geq\alpha|\xi|^p-\beta|r|^{q_3}-k(x)
    \end{equation}
    for $1<q_3< p$, with $b_0$ subjected to \eqref{eq:upperboundForB} and $\mathbf{b}$ having the growth condition
    \begin{equation*}
        |\mathbf{b}(x,r)|\leq \gamma(x)+C|r|^{p^*_s/(q-\varepsilon)}.
    \end{equation*}
    with $\gamma\in L^{q+\varepsilon}(\R^d)$ for some $\varepsilon>0$, and 
    \begin{equation*}
        q=
        \begin{cases}
            \frac{dp}{dp-2d+sp} & \mbox{ if } sp<d\\
            \forall r<\infty & \mbox{ if }
            sp\geq d.
        \end{cases}
    \end{equation*}
    
    However, to obtain an existence result for \eqref{integralGeneralEquation} with a more general function $b(x,r,\xi)$ we not only need to impose a growth condition of the type
    \begin{equation*}
        |b(x,r,\xi)|\leq \gamma(x)+C\left(|r|^{p^*_s-\varepsilon-1}+|\xi|^{(p-\varepsilon)/{p^*_s}'}\right)
    \end{equation*}
    with $\varepsilon>0$ and $\gamma\in L^{{p^*_s}'+\varepsilon}(\R^d)$, but we also need to require further restrictions on the main part of $\boldsymbol{a}$. In fact, we need the strict monotonicity on $\boldsymbol{a}$ with respect to the last variable, also called Leray-Lions condition, i.e., 
    \begin{equation*}
        (\boldsymbol{a}(x,r,\xi)-\boldsymbol{a}(x,r,\eta))\cdot(\xi-\eta)> 0 \qquad \mbox{ whenever }\xi\neq \eta,
    \end{equation*}
    with the coercivity conditions
    \begin{equation*}
        \forall \eta \in\R^d:\, \lim_{|s|\to\infty}{\frac{\boldsymbol{a}(x, r, \xi)\cdot(\xi-\eta)}{|\xi|}}=+\infty \quad \mbox{uniformly for } r \mbox{ bounded}
    \end{equation*}
    and \eqref{eq:coercivity_b_depend_Ds}. Under these hypothesis we can use similar arguments as in \cite[Section 2.4.3]{roubivcek2013nonlinear} to get existence results, which is also a special case of the next subsection.
\end{remark}

\begin{remark}\label{remark:3.3}
    In the particular case when the operator $\mathscr{A}_s$ is of potential type and derives from a Gâteaux differentiable functional $\mathscr{F}_s : \Lambda^{s,p}_0(\Omega)\to\R$ given by
    \begin{equation*}
        \mathscr{F}_s(u)= \int_{\R^d} F(x,u,D^s u) dx,
    \end{equation*}
    for some differentiable F, their coefficients are given by $\boldsymbol{a}(x,u,D^s u)=F_\xi(x,u,D^s u)$ and $b(x,u, D^su)=F_r(x,u,D^s u)$. General integral functionals of this gradient type have been considered in \cite{shieh2018On} and extensions to functionals of vector valued functions $u$ in \cite{bellido2020gamma} and in \cite{kreisbeck2022Quasiconvexity}.
\end{remark}

\subsection{Pseudomonotone operators with $\Omega$ arbitrary and $0<s\leq 1$}\label{sec:existence_browder}

When we allow $\Omega$ to be an arbitrary open subset of $\R^d$, possibly unbounded, we cannot apply the same arguments as in the previous subsection since we do not have Poincaré inequalities nor Rellich-Kondrachov compactness embeddings.

To overcome the lack of Poincaré inequalities, we impose a stronger coercivity condition on the pair of functions $\boldsymbol{a}$ and $b$

\begin{equation}\label{eq:strongCorcivity}
    \boldsymbol{a}(x,r,\xi)\cdot\xi+b(x,r, \xi)r\geq \alpha|\xi|^p+\beta|r|^p-k(x),
\end{equation}
for every $r\in\R$, $\xi\in\R^d$ and a.e. $x\in\R^d$, where $\alpha>0$ and $\beta> 0$ are constants, $k\in L^1(\R^d)$, and $s\in(0,1]$. Moreover, we also impose the classical growth conditions
\begin{equation}\label{eq:upper_bound_for_A_when_omega_arbitrary}
    |\boldsymbol{a}(x,r,\xi)|\leq \gamma_1(x)+C'_1|r|^{p-1}+C_1|\xi|^{p-1},
\end{equation}
and
\begin{equation}\label{eq:upper_bound_for_B_when_omega_arbitrary}
    |b(x,r,\xi)|\leq\gamma_2(x)+C'_2|r|^{p-1}+C_2|\xi|^{p-1},
\end{equation}
with $\gamma_1,\gamma_2\in {L^p}'(\R^d)$.

 When $\Omega$ is unbounded, the faillure of the Rellich-Kondrachov compactness embeddings can be handled by using a different argument due to Browder, \cite{browder1977PseudoMonotone} for the classical case $s=1$, by assuming also the Leray-Lions condition of strict monotonicity with respect to the last variable on $\boldsymbol{a}$, i.e., for almost all $x\in\R^d$, for all $r\in\R$ and for all $\xi,\eta\in\R^d$
\begin{equation}\label{eq:strictMonotonicity}
	(\boldsymbol{a}(x,r,\xi)-\boldsymbol{a}(x,r,\eta))\cdot(\xi-\eta)> 0 \qquad \mbox{ whenever }\xi\neq \eta.
\end{equation}

As in the previous subsection, we start by proving that the operator $\mathscr{A}_s$ is pseudomonotone.

\begin{lemma}[Pseudomonotonicity when $\Omega$  is arbitrary]\label{lemma:pseudomonotone_arbitrary_domain}
    Let $\Omega\subset\R^d$ be any open set, $s\in(0,1]$ and $p\in(1,\infty)$. Let $\boldsymbol{a}:\R^d\times\R\times\R^d\to\R^d$ and $b:\R^d\times\R\times\R^d\to\R^d$ satisfy the growth conditions \eqref{eq:upper_bound_for_A_when_omega_arbitrary} and \eqref{eq:upper_bound_for_B_when_omega_arbitrary}, the strong coercivity condition \eqref{eq:strongCorcivity} and assume also that $\boldsymbol{a}$ is strictly monotone with respect to the last variable as in \eqref{eq:strictMonotonicity}. Then, the operator $\mathscr{A}_s:\Lambda^{s,p}_0(\Omega)\to \Lambda^{-s,p}(\Omega)$, as defined in \eqref{eq:quasiLinearOperator}, is pseudomonotone.
\end{lemma}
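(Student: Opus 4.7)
The plan is to follow the classical Browder--Leray--Lions strategy for pseudomonotonicity on unbounded domains, adapted to the fractional framework provided by the Lions--Calderón spaces. Boundedness of $\mathscr{A}_s$ is routine: Hölder's inequality together with the growth conditions \eqref{eq:upper_bound_for_A_when_omega_arbitrary} and \eqref{eq:upper_bound_for_B_when_omega_arbitrary} give that $\boldsymbol{a}(\cdot,v,D^s v)$ and $b(\cdot,v,D^s v)$ are bounded in $L^{p'}$ whenever $v$ is bounded in $\Lambda^{s,p}_0(\Omega)$, and then the variational form of $\mathscr{A}_s$ is estimated directly against the norm of the test function.

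For the pseudomonotone implication, assume $u_k\rightharpoonup u$ in $\Lambda^{s,p}_0(\Omega)$ with $\limsup_k\langle\mathscr{A}_s(u_k),u_k-u\rangle\leq 0$. The essential step is to prove that $D^s u_k\to D^s u$ almost everywhere on $\mathbb{R}^d$, along a subsequence. To this end I would first invoke the local Rellich--Kondrachov result (Proposition \ref{prop:local_rellich_kondrachov}) to extract a subsequence for which $u_k\to u$ in $L^p_{\mathrm{loc}}(\mathbb{R}^d)$ and pointwise almost everywhere. Then, exploiting the monotonicity inequality
$$\int_{\mathbb{R}^d}\bigl(\boldsymbol{a}(x,u_k,D^s u_k)-\boldsymbol{a}(x,u_k,D^s u)\bigr)\cdot(D^s u_k-D^s u)\,dx\geq 0,$$
I would combine the $\limsup$ hypothesis with the weak convergence $D^s u_k\rightharpoonup D^s u$ in $L^p(\mathbb{R}^d;\mathbb{R}^d)$ and with Vitali-type strong convergence of $\boldsymbol{a}(x,u_k,D^s u)\to \boldsymbol{a}(x,u,D^s u)$ in $L^{p'}_{\mathrm{loc}}(\mathbb{R}^d;\mathbb{R}^d)$ (from a.e.\ convergence plus equi-integrability supplied by \eqref{eq:upper_bound_for_A_when_omega_arbitrary}) to show that the nonnegative integrand above tends to $0$ in $L^1_{\mathrm{loc}}$; the Leray--Lions strict monotonicity \eqref{eq:strictMonotonicity} then forces the desired a.e.\ convergence $D^s u_k\to D^s u$.

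Once the pointwise convergence is in hand, the continuity of $\boldsymbol{a}$ and $b$ yields $\boldsymbol{a}(x,u_k,D^s u_k)\to \boldsymbol{a}(x,u,D^s u)$ and $b(x,u_k,D^s u_k)\to b(x,u,D^s u)$ almost everywhere. The growth bounds together with the $L^p$-control of $(u_k,D^s u_k)$ (which follows from the strong coercivity \eqref{eq:strongCorcivity} applied in the $\limsup$ hypothesis) give equi-integrability, so Vitali's convergence theorem produces weak convergence of these nonlinear expressions in $L^{p'}$, at least after testing against $L^p$ functions of the form $D^s v$ and $v$. Passing to the limit in $\langle\mathscr{A}_s(u_k),u_k-v\rangle$ via this weak convergence, and applying Fatou's lemma to the coercive part $\langle\mathscr{A}_s(u_k),u_k\rangle$, yields the required inequality $\langle\mathscr{A}_s(u),u-v\rangle\leq\liminf_k\langle\mathscr{A}_s(u_k),u_k-v\rangle$.

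The principal difficulty is the absence of a global Rellich--Kondrachov embedding on the unbounded $\Omega$: one cannot assert $u_k\to u$ in $L^p(\mathbb{R}^d)$, only locally. The strategy for circumventing this is precisely the strengthening of the hypotheses encoded in \eqref{eq:strongCorcivity} (the term $\beta|r|^p$ with $\beta>0$, unavailable in the bounded case of Section~\ref{sec:pseudomonotone_brezis}) and \eqref{eq:strictMonotonicity}: the former delivers uniform $L^p(\mathbb{R}^d)$ control and thereby uniform integrability of the Nemytskii compositions, while the latter upgrades local a.e.\ convergence to the a.e.\ convergence of the fractional gradients, which is the crux of Browder's idea and replaces the global compactness argument used in Lemma \ref{lemma:pseudomonotone}.
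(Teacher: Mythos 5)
Your overall strategy — adapting Browder's argument, using local Rellich–Kondrachov for a.e.\ convergence of $u_k$, then establishing a.e.\ convergence of $D^s u_k$ via strict monotonicity — is the same as the paper's, but the key technical step has a gap that the paper's version is specifically engineered to avoid.

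You want to deduce, from $\limsup_k\langle\mathscr{A}_s(u_k),u_k-u\rangle\leq 0$, that the nonnegative quantity $q_k=(\boldsymbol{a}(\cdot,u_k,D^s u_k)-\boldsymbol{a}(\cdot,u_k,D^s u))\cdot(D^s u_k-D^s u)$ tends to $0$ in $L^1$. Writing
\begin{equation*}
\langle\mathscr{A}_s(u_k),u_k-u\rangle=\int_{\R^d}q_k+\int_{\R^d}\boldsymbol{a}(\cdot,u_k,D^s u)\cdot(D^s u_k-D^s u)+\int_\Omega b(\cdot,u_k,D^s u_k)(u_k-u),
\end{equation*}
you would need the second and third integrals to vanish in the limit. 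Neither follows from what you invoke. For the second, local Krasnoselskii/Vitali convergence only gives $\boldsymbol{a}(\cdot,u_k,D^s u)\to\boldsymbol{a}(\cdot,u,D^s u)$ in $L^{p'}(\omega)$ for \emph{bounded} $\omega$; on an unbounded domain the pairing with the merely weakly convergent $D^s u_k-D^s u$ still needs a uniform tail estimate, and that requires $\{|u_k|^p\}$ to be uniformly integrable, which weak $L^p$ boundedness does \emph{not} supply (a normalized concentrating bump gives a counterexample). So the claim in your last paragraph that the coercivity's $L^p(\R^d)$ control delivers ``uniform integrability of the Nemytskii compositions'' is not correct as stated. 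For the third integral, $b(\cdot,u_k,D^s u_k)$ is only bounded in $L^{p'}$ and $u_k-u\rightharpoonup 0$, which again does not make the product vanish globally, and you do not explain how to control it. Moreover, since $q_k\geq 0$ pointwise, a conclusion in $L^1_{\mathrm{loc}}$ already presupposes the global bound $\int_{\R^d}q_k\to 0$, so one cannot settle for local arguments here.

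The paper (following Browder) instead forms the single auxiliary function
\begin{equation*}
p_k=(\boldsymbol{a}(\cdot,u_k,D^s u_k)-\boldsymbol{a}(\cdot,u,D^s u))\cdot(D^s u_k-D^s u)+(b(\cdot,u_k,D^s u_k)-b(\cdot,u,D^s u))(u_k-u)
\end{equation*}
and dominates $p_k^-$ by $|k|+|g_k^0|+|g_k^1|$, where every summand of $g_k^0,g_k^1$ is a product of a \emph{fixed} $L^{p}$ or $L^{p'}$ function with a \emph{bounded} sequence in the dual space. That structure is what yields equi-integrability with no assumption on $\{|u_k|^p\}$; one then applies Vitali to get $p_k^-\to 0$ in $L^1(\R^d)$, shows $\limsup_k\int p_k\leq 0$ from the pseudomonotone hypothesis and the weak convergences, concludes $p_k\to 0$ a.e.\ along a subsequence, uses the strong coercivity to obtain a pointwise bound on $|D^s u_k|$ on $\{p_k^->0\}$, and finally isolates the monotonicity term via strict monotonicity to get $D^s u_k\to D^s u$ a.e. If you rewrite your argument around this specific $p_k$ and its decomposition $p_k=q_k+r_k+t_k$ rather than trying to send $q_k\to 0$ directly, the gap closes.
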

\begin{proof}
    First, we notice that assumptions \eqref{eq:upper_bound_for_A_when_omega_arbitrary} and \eqref{eq:upper_bound_for_B_when_omega_arbitrary} imply that $\mathscr{A}_s:\Lambda^{s,p}_0(\Omega)\to \Lambda^{-s,p}(\Omega)$ is bounded. 
    In order to prove \eqref{ImplicationPseudomonotone} let $u_k\rightharpoonup u$ in $\Lambda^{s,p}_0(\Omega)$ and $\limsup_{k\to\infty}{\langle\mathscr{A}_s(u_k), u_k-u\rangle}\leq 0$. We may adapt the proof of \cite[Theorem 1]{browder1977PseudoMonotone}: 
    \begin{itemize}
        \item[$i)$] applying local Rellich-Kondrachov compactness result, Proposition \ref{prop:local_rellich_kondrachov}, to an increasing sequence of bounded open sets exhausting $\Omega$, we can extract a subsequence $\{u_k\}$ that converges to $u$ a.e. in $\R^d$;
        \item[$ii)$] for the almost everywhere convergence of a subsequence $\{D^s u_k\}$ to $D^s u$ a.e. in $\R^d$, we consider the function for a.e. $x\in\R^d$
        \begin{equation*}
            p_k(x)=(\boldsymbol{a}(x,u_k, D^s u_k)-\boldsymbol{a}(x,u, D^s u))\cdot (D^s u_k-D^s u)+(b(x,u_k)-b(x,u))(u_k-u).
        \end{equation*}
        In fact, it is possible to show that $p^-_j\to 0$ strongly in $L^1(\R^d)$ due to Vitali's convergence theorem, which afterwords also yields that $p_k^+\to 0$ strongly in $L^1(\R^d)$ and consequently $p_k\to 0$ a.e. in $\R^d$. Using strictly monotonicity of $\boldsymbol{a}$ with respect to the last variable, we are able to conclude that in fact $D^s u_k\to D^s u$ a.e. in $\R^d$;
        \item[$iii)$] Once we have proven that both $\{u_k\}$ and $\{D^s u_k\}$ converge a.e. in $\R^d$, using Lebesgue theorem we obtain
        \begin{equation*}
            \langle\mathscr{A}(u_k),v\rangle\rightharpoonup \langle\mathscr{A}(u), v\rangle \mbox{ in } \Lambda^{-s,p'}(\Omega)\, \mbox{ for all } v\in \Lambda^{s,p}_0(\Omega) \quad \mbox{ and } \quad \lim_{k\to\infty}{\langle\mathscr{A}(u_k), u_k-u\rangle}=0,
        \end{equation*}
        which yields
        \begin{equation*}
            \liminf_{k\to\infty}{\langle\mathscr{A}(u_k), u_k-v\rangle}\geq \lim_{k\to\infty}{\langle\mathscr{A}(u_k), u_k-u\rangle}+\lim_{k\to\infty}{\langle\mathscr{A}(u_k), u-v\rangle}=\langle\mathscr{A}(u), u-v\rangle,
        \end{equation*}
        proving \eqref{ImplicationPseudomonotone}.
    \end{itemize}

\end{proof}

\begin{theorem}[Existence of solutions for fractional variational inequalities with $\Omega$ arbitrary] \label{ExistenceFractionalVariationalInequalityArbitraryDomains}
    Under the same assumptions of Lemma \ref{lemma:pseudomonotone_arbitrary_domain}, there exists a function $u\in K$ that solves \eqref{integralGeneralEquation}.
\end{theorem}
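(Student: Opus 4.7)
The plan is to deduce the theorem directly from the classical abstract existence theorem for variational inequalities governed by bounded, pseudomonotone, coercive operators on non-empty closed convex subsets of reflexive Banach spaces (e.g.\ \cite[Thm.~8.2, p.~247]{lions1969Quelques}). Since $\Lambda^{s,p}_0(\Omega)$ is reflexive for $s\in(0,1]$ and $1<p<\infty$, and since boundedness and pseudomonotonicity of $\mathscr{A}_s$ are already provided by Lemma~\ref{lemma:pseudomonotone_arbitrary_domain}, the only remaining ingredient is coercivity on $K$, i.e.\
$$\lim_{\|v\|_{\Lambda^{s,p}_0(\Omega)}\to\infty,\, v\in K}\frac{\langle \mathscr{A}_s(v),v-v_0\rangle}{\|v\|_{\Lambda^{s,p}_0(\Omega)}}=+\infty$$
for some fixed $v_0\in K$ (which exists since $K\neq\emptyset$).

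To verify coercivity, I would integrate the strong coercivity condition \eqref{eq:strongCorcivity} over $\R^d$, using $k\in L^1(\R^d)$, to get
$$\langle\mathscr{A}_s(v),v\rangle\geq \alpha\|D^s v\|_{L^p(\R^d;\R^d)}^p+\beta\|v\|_{L^p(\Omega)}^p-\|k\|_{L^1(\R^d)}\geq c\,\|v\|_{\Lambda^{s,p}_0(\Omega)}^p-\|k\|_{L^1(\R^d)},$$
with $c=\min\{\alpha,\beta\}>0$. It is crucial here that $\beta>0$, which is precisely the strengthening of \eqref{eq:weakCoercivity} demanded in the unbounded-$\Omega$ setting, since no fractional Poincaré inequality is available to absorb $\|v\|_{L^p(\Omega)}$ into $\|D^s v\|_{L^p(\R^d;\R^d)}$. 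A routine application of the growth conditions \eqref{eq:upper_bound_for_A_when_omega_arbitrary}, \eqref{eq:upper_bound_for_B_when_omega_arbitrary}, and Hölder's inequality then yields
$$|\langle\mathscr{A}_s(v),v_0\rangle|\leq C_0\bigl(1+\|v\|_{\Lambda^{s,p}_0(\Omega)}^{p-1}\bigr),$$
with $C_0=C_0(v_0,\gamma_1,\gamma_2,C_1,C'_1,C_2,C'_2,p)$. Subtracting and dividing by $\|v\|_{\Lambda^{s,p}_0(\Omega)}$, the fact that $p>1$ delivers coercivity at once, and the abstract theorem produces the desired $u\in K$ solving \eqref{integralGeneralEquation}.

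The real difficulty of this result does not sit in the present step but in the pseudomonotonicity of Lemma~\ref{lemma:pseudomonotone_arbitrary_domain}, whose proof I would regard as the main obstacle: on an unbounded $\Omega$ the Rellich--Kondrachov compactness stated in Proposition~\ref{prop:local_rellich_kondrachov} is only local, so the direct compactness route used in the bounded case (Lemma~\ref{lemma:pseudomonotone}) fails. One must follow Browder's strategy of exhausting $\Omega$ by bounded sets to extract an a.e.-convergent subsequence of $\{u_k\}$, and then invoke Vitali's theorem on the auxiliary function $p_k(x)=(\boldsymbol{a}(x,u_k,D^s u_k)-\boldsymbol{a}(x,u,D^s u))\cdot(D^s u_k-D^s u)+(b(x,u_k,D^s u_k)-b(x,u,D^s u))(u_k-u)$ together with the Leray--Lions strict monotonicity \eqref{eq:strictMonotonicity} to force $D^s u_k\to D^s u$ a.e.\ in $\R^d$. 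Once the lemma is established and the coercivity is checked as above, the theorem follows by direct citation of the abstract existence result.
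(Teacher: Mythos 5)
Your proposal is correct and follows essentially the same route as the paper: invoke Lemma~\ref{lemma:pseudomonotone_arbitrary_domain} for pseudomonotonicity, verify coercivity from \eqref{eq:strongCorcivity} together with the growth conditions \eqref{eq:upper_bound_for_A_when_omega_arbitrary}--\eqref{eq:upper_bound_for_B_when_omega_arbitrary}, and apply the abstract existence theorem of Lions. The only cosmetic difference is that you split the coercivity estimate into the lower bound on $\langle\mathscr{A}_s(v),v\rangle$ and the cross-term bound on $\langle\mathscr{A}_s(v),v_0\rangle$ before combining, while the paper states the combined inequality directly via Young's inequality; the content is the same.
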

\begin{proof}
    By Lemma \ref{lemma:pseudomonotone_arbitrary_domain} we know that $\mathscr{A}_s$ is pseudomonotone and we may also apply \cite[Thm. 8.2, page 247]{lions1969Quelques} as in Theorem \ref{ExistenceFractionalVariationalInequality}. It is sufficient to conclude the coercivity condition \eqref{eq:coercivity} by noticing that, for any fixed $v_0\in K$, the Sobolev's embeddings and Young's inequalities allow us to get
    \begin{equation}
            \langle \mathscr{A}_s(v),v-v_0\rangle\geq \alpha'\|D^s v\|^p_{L^p(\R^d;\R^d)} +\beta'\|v\|^p_{L^p(\R^d)}-C(k,\gamma_1,\gamma_2,v_0).
    \end{equation}
    with $\alpha'>0$ and $\beta'>0$ depending on $\alpha$ and $\beta$ respectively.
\end{proof}

\begin{remark}
    When $K\subset\Lambda^{s,p}_0(\Omega)$ is bounded, we just need pseudomonotonicity of $\mathscr{A}_s$ to prove the existence of solutions. On the other hand, the coercivity condition \eqref{eq:strongCorcivity} can be replaced in Lemma \ref{lemma:pseudomonotone_arbitrary_domain} by
    \begin{equation*}
        \boldsymbol{a}(x,r,\xi)\cdot\xi+b(x,r,\xi)r\geq -C(|\xi|^q+|r|^q)-k(x),
    \end{equation*}
    with $q<p$ and $k\in L^1(\R^d)$, to prove the pseudomonotonicity as in \cite{landes1980On}.
\end{remark}

\begin{remark}
    Although Theorem \ref{ExistenceFractionalVariationalInequalityArbitraryDomains} is valid for $\Omega$ arbitrary and for a more general lower order term $b$, it does not contain in bounded domains Theorem \ref{ExistenceFractionalVariationalInequality} as a special case, even with the assumption \ref{eq:strongCorcivity} replaced by \ref{eq:weakCoercivity}, since this one requires that $\boldsymbol{a}$ is only monotone with respect to the last variable.
\end{remark}

\subsection{Monotone operators with $0\leq s\leq 1$}\label{sec:existence_monotone}
If we restrict $\boldsymbol{a}$ to be dependent only on $x$ and $\xi$, but not on $r$, and $b$ not dependent on $\xi$, such that $\mathscr{A}_s$ is a monotone operator, then we have an existence result with $\Omega$ arbitrary for each $s\in [0,1]$. In particular, we can also consider the case $s=0$ with $D^s$ corresponding to minus the vectorial Riesz transform.

\begin{theorem}[Existence of solutions for monotone operators]\label{thm:ExistenceFractionalVariationalInequalityMonotoneOperators}
    Let $\Omega\subset\R^d$ be any open set, $s\in[0,1]$ and $p\in(1,\infty)$. Let $K\subset\Lambda^{s,p}_0(\Omega)$ be a non-empty closed convex set and assume that $\boldsymbol{a}=\boldsymbol{a}(x,\xi):\R^d\times\R^d\to\R^d$ and $b=b(x,r):\R^d\times\R\to\R^d$ are monotone with respect to $\xi$ and $r$, respectively, satisfy the growth conditions \eqref{eq:upper_bound_for_A_when_omega_arbitrary} and \eqref{eq:upper_bound_for_B_when_omega_arbitrary} and the strong coercivity \eqref{eq:strongCorcivity}. Then, there exists a function $u\in K$ such that
    \begin{equation}\label{eq:VariationalInequalityWithS0}
        \int_{\R^N}{\boldsymbol{a}(D^s u)\cdot D^s(v-u)}\,dx+\int_\Omega{b(u)(v-u)}\,dx\geq 0, \quad \forall v\in K.
    \end{equation}
\end{theorem}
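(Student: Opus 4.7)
The plan is to invoke the classical existence theorem for variational inequalities associated with monotone, bounded, coercive, and hemicontinuous operators on closed convex subsets of a reflexive Banach space (Browder--Hartman--Stampacchia, see e.g.\ \cite{lions1969Quelques}). Since $\Lambda^{s,p}_0(\Omega)$ is reflexive for $p\in(1,\infty)$ and $s\in[0,1]$ (being isometric to a closed subspace of the reflexive product space $\mathbb{L}^p(\Omega)=L^p(\Omega)\times L^p(\R^d;\R^d)$ via $\Pi_s$ from \eqref{eq:def_of_pi_s}), the abstract theorem applies, provided the four properties of the operator $\mathscr{A}_s$ defined in \eqref{eq:quasiLinearOperator} are verified. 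In particular, the case $s=0$ requires nothing special beyond the fact that $D^0=-R$ is bounded from $L^p(\R^d)$ to $L^p(\R^d;\R^d)$ by the $L^p$-continuity of the Riesz transform (Proposition \ref{prop:distributional_characterization_lions_calderon_spaces} iii), so the Nemytskii compositions $\boldsymbol{a}(x,D^0 u)$ and $b(x,u)$ land in the right dual spaces.

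First, I would verify \textbf{boundedness} of $\mathscr{A}_s:\Lambda^{s,p}_0(\Omega)\to\Lambda^{-s,p'}(\Omega)$: using \eqref{eq:upper_bound_for_A_when_omega_arbitrary}, \eqref{eq:upper_bound_for_B_when_omega_arbitrary}, the Carathéodory property, and Hölder's inequality, for any $v\in\Lambda^{s,p}_0(\Omega)$
\[
  |\langle \mathscr{A}_s(u),v\rangle|\leq \|\boldsymbol{a}(\cdot,D^s u)\|_{L^{p'}(\R^d;\R^d)}\|D^s v\|_{L^p(\R^d;\R^d)}+\|b(\cdot,u)\|_{L^{p'}(\Omega)}\|v\|_{L^p(\Omega)},
\]
and the growth conditions give the required bound in terms of $\|u\|_{\Lambda^{s,p}_0(\Omega)}$. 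Next, \textbf{monotonicity}: since $\boldsymbol{a}$ depends only on $(x,\xi)$ and is monotone in $\xi$, and $b$ depends only on $(x,r)$ and is monotone in $r$,
\[
  \langle \mathscr{A}_s(u)-\mathscr{A}_s(v),u-v\rangle=\int_{\R^d}(\boldsymbol{a}(x,D^s u)-\boldsymbol{a}(x,D^s v))\cdot D^s(u-v)\,dx+\int_\Omega (b(x,u)-b(x,v))(u-v)\,dx\geq 0.
\]
For \textbf{coercivity}, the strong coercivity \eqref{eq:strongCorcivity} combined with \eqref{eq:upper_bound_for_A_when_omega_arbitrary}--\eqref{eq:upper_bound_for_B_when_omega_arbitrary} and Young's inequality yields, for any fixed $v_0\in K$,
\[
  \langle \mathscr{A}_s(v),v-v_0\rangle\geq \alpha'\|D^s v\|_{L^p(\R^d;\R^d)}^p+\beta'\|v\|_{L^p(\Omega)}^p-C(k,\gamma_1,\gamma_2,v_0),
\]
which, since both terms are present with positive coefficients (thanks to $\beta>0$ in \eqref{eq:strongCorcivity}, crucially giving coercivity in the full $\Lambda^{s,p}_0(\Omega)$ norm even when no Poincaré inequality is available), implies $\langle \mathscr{A}_s(v),v-v_0\rangle/\|v\|_{\Lambda^{s,p}_0(\Omega)}\to +\infty$ as $\|v\|_{\Lambda^{s,p}_0(\Omega)}\to\infty$.

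Finally, \textbf{hemicontinuity}: for $u,v,w\in\Lambda^{s,p}_0(\Omega)$ and $t_n\to t$ in $\R$, the functions $\boldsymbol{a}(x,D^s(u+t_n v))\cdot D^s w$ and $b(x,u+t_n v)w$ converge pointwise a.e.\ to the corresponding limits by the Carathéodory property, and the growth conditions provide $L^1$-dominating functions, so Lebesgue's dominated convergence theorem gives $t\mapsto\langle\mathscr{A}_s(u+tv),w\rangle$ continuous on $\R$. With these four properties established, the abstract theorem of Browder--Hartman--Stampacchia (see \cite[Thm.~8.1, p.~247]{lions1969Quelques}) yields the existence of $u\in K$ satisfying \eqref{eq:VariationalInequalityWithS0}. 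The only mildly delicate step is handling the $s=0$ case uniformly with $s\in(0,1]$, but this is taken care of cleanly by Proposition \ref{prop:distributional_characterization_lions_calderon_spaces} iii): no special argument is needed since $\Lambda^{0,p}_0(\Omega)=L^p_0(\Omega)$ is already a closed subspace of $L^p(\R^d)$ and $D^0=-R$ is a bounded linear operator, so all the preceding estimates carry over verbatim.
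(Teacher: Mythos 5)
Your proposal is correct and follows essentially the same route as the paper: verify that $\mathscr{A}_s$ is bounded, monotone, coercive, and hemicontinuous on the reflexive space $\Lambda^{s,p}_0(\Omega)$, then invoke the classical existence theorem for monotone operators (the paper cites Showalter's monograph rather than Lions, but these are the same abstract result). The only cosmetic difference is that you spell out the domination argument for hemicontinuity and the reflexivity of $\Lambda^{s,p}_0(\Omega)$, whereas the paper compresses these into a one-line appeal to the growth conditions and the $L^p$-boundedness of $D^s$ for $s\in[0,1]$.
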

\begin{proof}
   This is an immediate consequence of well-known results (see for example, \cite[Lemma 2.1 and Corollary 2.2]{showalter2013monotone}) since the assumptions imply that $\mathscr{A}_s$ is a bounded, coercive, monotone, and hemicontinuous operator in the Banach space $\Lambda^{s,p}_0(\Omega)$. In particular, the hemicontinuity property is also satisfied for all $0\leq s\leq 1$, i.e. the real-valued function $t\mapsto \langle \mathscr{A}_s(u+tv),v\rangle$
is continuous for all $u,v\in\Lambda^{s,p}_0(\Omega)$, since the growth conditions \eqref{eq:upper_bound_for_A_when_omega_arbitrary} and \eqref{eq:upper_bound_for_B_when_omega_arbitrary} imply the continuity of $\Lambda^{s,p}_0(\Omega)\ni u\mapsto \boldsymbol{a}(D^s u)\in{L^p}'(\R^d;\R^d)$, by the continuity of the operator $D^s$ in $L^p(\R^d)$ for all $0\leq s\leq 1$.
\end{proof}

\begin{remark}\label{rem:limit_existence_with_poincare}
    In the limit case of $s=0$, since $\Lambda^{0,p}_0(\Omega)=L^p_0(\Omega)$, it is enough to have only one of the constants $\alpha$ or $\beta$ positive in the coercivity assumption \eqref{eq:strongCorcivity}. For $0<s\leq1$ when
    $\Omega$ is bounded, using Poincaré inequality, we can relax the coercivity assumption \eqref{eq:strongCorcivity} by letting $\beta>-\alpha(s/C_P)^p$, with $C_P>0$ given in the Poincaré inequality \eqref{eq:poincare_inequality}, and keeping $\alpha>0$.
\end{remark}

As in the classical case, the uniqueness of solutions can be shown under stricter hypothesis of strict monotony on $\boldsymbol{a}$ or $b$.

\begin{proposition}[Uniqueness through strict monotonicity of $\boldsymbol{a}$ or $b$]\label{UniquenessSolutionPointwisePrespective}
If we assume that $\boldsymbol{a}$ or $b$ is strictly monotone with respect to the last variable, then there is at most one solution of \eqref{eq:VariationalInequalityWithS0} in $K\subset\Lambda^{s,p}_0(\Omega)$.
\end{proposition}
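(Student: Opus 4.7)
The plan is to use the classical ``difference of solutions'' trick for variational inequalities. Suppose $u_1, u_2 \in K$ are two solutions of \eqref{eq:VariationalInequalityWithS0}. Since $K$ is convex, both $u_1$ and $u_2$ are admissible test functions in the other's inequality. Testing the inequality for $u_1$ with $v = u_2$ and the inequality for $u_2$ with $v = u_1$, and adding the two, yields after an obvious cancellation
\begin{equation*}
\int_{\R^d} \bigl(\boldsymbol{a}(D^s u_1) - \boldsymbol{a}(D^s u_2)\bigr) \cdot D^s(u_1 - u_2)\,dx + \int_\Omega \bigl(b(u_1) - b(u_2)\bigr)(u_1 - u_2)\,dx \leq 0.
\end{equation*}
By the monotonicity assumptions on $\boldsymbol{a}$ in the gradient slot and on $b$ in the scalar slot, both integrands are pointwise nonnegative, so each integrand must vanish almost everywhere.

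Next I would split according to which function is the strictly monotone one. If $b$ is strictly monotone, the pointwise vanishing $(b(u_1) - b(u_2))(u_1 - u_2) = 0$ on $\Omega$ forces $u_1 = u_2$ a.e.\ on $\Omega$; since $u_1 - u_2 \in \Lambda^{s,p}_0(\Omega)$ already vanishes a.e.\ on $\R^d \setminus \Omega$ by \eqref{eq:space_subspace_zero_outside_omega}, we conclude $u_1 = u_2$ on the whole of $\R^d$. If instead $\boldsymbol{a}$ is strictly monotone, the vanishing of the first integrand gives $D^s u_1 = D^s u_2$ a.e., so that the function $w := u_1 - u_2 \in \Lambda^{s,p}_0(\Omega)$ satisfies $D^s w = 0$, and it remains to deduce $w = 0$.

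This last deduction is immediate in several cases: for $s = 0$, from $D^0 w = -R w = 0$ one applies $-R$ once more and uses the identity $-R \cdot R = I$ on $L^p(\R^d)$ established in the proof of Proposition \ref{prop:distributional_characterization_lions_calderon_spaces} to get $w = 0$; for $0 < s \leq 1$ with $\Omega$ bounded, Theorem \ref{FracPoincareInequality} immediately gives $\|w\|_{L^p(\Omega)} \leq (C_P/s)\|D^s w\|_{L^p(\R^d;\R^d)} = 0$.

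The main obstacle is the remaining case $0 < s < 1$ with $\Omega$ unbounded, for which no Poincaré inequality is available. My plan is to reduce the problem to the triviality of the kernel of $D^s$ on $\Lambda^{s,p}(\R^d)$ by a Fourier argument: since the multiplier of $D^s$ is (up to a constant) the vector-valued symbol $\xi|\xi|^{s-1}$, which vanishes only at the origin, $\widehat{D^s w} = 0$ forces $\widehat{w}$ to be supported at $\{0\}$, hence to be a finite combination of derivatives of Dirac's delta, which is incompatible with $w \in L^p(\R^d)$ for $1 < p < \infty$ unless $w \equiv 0$. Equivalently, one may use Proposition \ref{prop:characterization_fractional_gradient_for_functions_in_lions_calderon} to rewrite $D(I_{1-s} w) = D^s w = 0$, so that the Riesz potential $I_{1-s} w$ is distributionally constant; the Hardy--Littlewood--Sobolev mapping properties of $I_{1-s}$ then force this constant to be zero, and the injectivity of $I_{1-s}$ completes the proof.
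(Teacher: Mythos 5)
Your argument is correct and follows the paper's own proof up to and including the key step: testing the inequality for $u_1$ with $v=u_2$ and vice versa, adding, and using the monotonicity in each slot to force both integrands to vanish pointwise a.e., so that strict monotonicity of whichever of $\boldsymbol{a}$ or $b$ has it yields $D^s u_1 = D^s u_2$ or $u_1 = u_2$ directly. Where you go beyond the paper is the final implication when $\boldsymbol{a}$ is the strictly monotone one: the paper simply asserts that $D^s u_1 = D^s u_2$ a.e.\ in $\R^d$ implies $u_1 = u_2$, whereas you supply a proof that the kernel of $D^s$ on $\Lambda^{s,p}_0(\Omega)$ is trivial, which is exactly the point that genuinely requires an argument when $\Omega$ is allowed to be unbounded and no Poincar\'e inequality is available. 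Your three-way split is sound, and the Fourier multiplier argument for the unbounded case is correct: from $D^s w = 0$, testing against $\varphi\in C^\infty_c(\R^d\setminus\{0\})$ shows $\widehat{w}$ is supported at the origin, so $w$ is a polynomial, hence zero in $L^p(\R^d)$ for $p<\infty$. Two small bookkeeping points: Theorem~\ref{FracPoincareInequality} is stated only for $s\in(0,1)$, so for $s=1$ on bounded $\Omega$ you should cite the classical Poincar\'e inequality or, equivalently, Corollary~\ref{EquivalenceOfNorms}, which already covers all $s\in[0,1]$ on bounded $\Omega$; and your unbounded case is announced only for $0<s<1$, leaving $s=1$ on unbounded $\Omega$ nominally uncovered, although it is trivial ($Dw=0$ with $w\in L^p(\R^d)$ forces $w\equiv 0$) and is in fact already handled by your Fourier argument since the symbol $2\pi i\xi$ of $D^1=D$ also vanishes only at the origin.
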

\begin{proof}
    If $u_1$ and $u_2$ solve the variational inequality \eqref{eq:VariationalInequalityWithS0}, then
	\begin{equation}\label{inequalityForUniqueness}
		\int_{\R^n}{(\boldsymbol{a}(D^s u_1)-\boldsymbol{a}(D^s u_2))\cdot (D^s u_1-D^s u_2)}\,dx+\int_\Omega{(b(u_1)-b(u_2))(u_1-u_2)}\,dx\leq 0.
	\end{equation}
	Since $(\boldsymbol{a}(x,\xi)-\boldsymbol{a}(x,\eta))\cdot (\xi-\eta)\geq 0$ for all $\xi,\eta\in\R^d$, if $(b(x,r)-b(x,t))(r-t)>0$ for all $r,t\in\R$, we conclude that $u_1=u_2$ a.e. in $\Omega$. A similar argument can be applied when $\boldsymbol{a}$ is the one that is strictly monotone, since then it would follow $D^su_1=D^su_2$ a.e. in $\R^d$, implying also $u_1=u_2$.
\end{proof}

\subsubsection{The special case of $p$-Laplacian type operators}

In this subsection we shall consider the special class of operators 
\begin{equation}\label{eq:pLaplacianOperator}
    \mathscr{A}^s_p u=-D^s\cdot \boldsymbol{a}(x,D^s u)
\end{equation}
of $p$-Laplacian type, i.e., when $\boldsymbol{a}:\R^d\times\R^d\to\R^d$ is a Carathéodory function with $\boldsymbol{a}(x,0)=0$, satisfying the growth condition \eqref{eq:upper_bound_for_A_when_omega_arbitrary}, and there exists a constant $\alpha_p>0$ such that
\begin{equation}\label{eq:monotonicityPLaplacian}
    (\boldsymbol{a}(x, \xi)-\boldsymbol{a}(x, \eta))\cdot(\xi-\eta)
    \geq
    \begin{cases}
        \alpha_p|\xi-\eta|^p, &\mbox{ if } p\geq 2\\
        \alpha_p\frac{|\xi-\eta|^2}{(|\xi|+|\eta|)^{2-p}}, &\mbox{ if } 1<p< 2,
    \end{cases}
\end{equation}
for every $\xi,\eta\in\R^d$ and almost every $x\in\R^d$.

This class contains the fractional $(s,p)$-Laplacian 
\begin{equation}\label{eq:fractional_s_p_laplacian}
    -\Delta_p^su=-D^s\cdot (|D^s u|^{p-2}D^s u),
\end{equation}
which corresponds to choose $\boldsymbol{a}(\xi)= |\xi|^{p-2}\xi$, where the best constant $\alpha_p=2^{2-p}$ for $p\geq2$ and $\alpha_p=p-1$ for $1<p<2$, see \cite[Lemma~1.11]{edmunds2023Fractional} whose proof for the scalar case extends easily to the vectorial case $\R^d$. When $s=1$, it coincides with the usual classical $p$-Laplacian, i.e., $-\Delta^1_p=-\Delta_p$, while when $s=0$ is a new class of a nonlinear operator involving the vectorial Riesz transform.

\begin{remark}
    As it was observed in \cite{schikorra2018Regularity}, the fractional $(s,p)$-Laplacian \eqref{eq:fractional_s_p_laplacian} with the nonlocal $p$-Laplacian
    \begin{equation*}
        \left(-\Delta\right)^s_p u(x)=\kappa_{s,p,d}\,\,\,\mathrm{p.v.}\int_{\R^d}{\frac{|u(x)-u(y)|^{p-2}(u(x)-u(y))}{|x-y|^{N+sp}}}\,dy,
    \end{equation*}
    with $\kappa_{s,p,d}$ being the normalized constant computed in \cite{delTeso2021Three}. This is a consequence of the fact that the fractional $(s,p)$-Laplacian is the Euler-Lagrange in the Lions-Calderón spaces $\Lambda^{s,p}_0(\Omega)$, while the nonlocal $p$-Laplacian arises from the Euler-Lagrange equation associated to the Gagliardo seminorm of the Sobolev-Slobodeckij spaces $W^{s,p}_0(\Omega)$, which are algebraically and topologically different when $p\neq 2$, $0<s<1$. In the Hilbertian case $s=2$ for the Dirichlet boundary condition, as $W^{s,2}_0(\Omega)=\Lambda^{s,2}_0(\Omega)=H^s_0(\Omega)$, the corresponding variational problems for the fractional Laplacian are related, as observed in \cite{lo2021class}, and have had a great increase of interest, as can be seen, for instance, in 
    \cite{bucur2016Nonlocal, bisci2016Variational}.
\end{remark}

As a simple application of Theorem \ref{thm:ExistenceFractionalVariationalInequalityMonotoneOperators} and Proposition \ref{UniquenessSolutionPointwisePrespective}, we have the following simple existence and uniqueness result, which is stated here for completeness.

\begin{corollary}[Existence and uniqueness for $p$-Laplacian type of operators]\label{cor:uniqueness fractional pLaplacian without b}
    Let $\Omega\subset\R^N$ be an open and bounded set, $0\leq s\leq 1$ and $1<p<\infty$. Consider a closed, convex and non-empty set $K\subset \Lambda^{s,p}_0(\Omega)$, a $p$-Laplacian type operator $\mathscr{A}^s_p$ and any functional $F\in \Lambda^{-s,p'}(\Omega)$. Then, there exists a unique solution $u\in K$ to  the variational inequality
    \begin{equation}\label{eq:SolutionOperator}
        \langle \mathscr{A}^s_p u, v-u\rangle\geq \langle F, v-u\rangle, \quad \forall v\in K.
    \end{equation}
\end{corollary}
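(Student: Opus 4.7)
The plan is to recast the variational inequality \eqref{eq:SolutionOperator} in the form \eqref{eq:VariationalInequalityWithS0}, apply the existence Theorem~\ref{thm:ExistenceFractionalVariationalInequalityMonotoneOperators} and then Proposition~\ref{UniquenessSolutionPointwisePrespective} for uniqueness. Using the representation $F=f_0-D^s\cdot\boldsymbol{f}$ from Theorem~\ref{CharacterizationDualInTermsOfFractionalDerivatives}, with $f_0\in L^{p'}(\Omega)$ and $\boldsymbol{f}\in L^{p'}(\R^d;\R^d)$, the inequality reads
\begin{equation*}
    \int_{\R^d}\bigl(\boldsymbol{a}(x,D^s u)-\boldsymbol{f}(x)\bigr)\cdot D^s(v-u)\,dx + \int_\Omega (-f_0)(v-u)\,dx \geq 0, \quad \forall v\in K.
\end{equation*}
I would set $\tilde{\boldsymbol{a}}(x,\xi)=\boldsymbol{a}(x,\xi)-\boldsymbol{f}(x)$ and $\tilde{b}(x,r)=-f_0(x)$, so that $\tilde{\boldsymbol{a}}$ inherits monotonicity in $\xi$ from \eqref{eq:monotonicityPLaplacian} (even strict monotonicity, since $\boldsymbol{f}$ is $\xi$-independent) and $\tilde{b}$ is trivially monotone in $r$.

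Next I would verify the hypotheses of Theorem~\ref{thm:ExistenceFractionalVariationalInequalityMonotoneOperators}. The growth conditions \eqref{eq:upper_bound_for_A_when_omega_arbitrary}--\eqref{eq:upper_bound_for_B_when_omega_arbitrary} are immediate, with $\tilde\gamma_1=\gamma_1+|\boldsymbol{f}|$ and $\tilde\gamma_2=|f_0|$. For coercivity, the key observation is that taking $\eta=0$ in \eqref{eq:monotonicityPLaplacian} together with the assumption $\boldsymbol{a}(x,0)=0$ yields $\boldsymbol{a}(x,\xi)\cdot\xi\geq \alpha_p|\xi|^p$ in both regimes $p\geq 2$ and $1<p<2$. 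Hence
\begin{equation*}
    \tilde{\boldsymbol{a}}(x,\xi)\cdot\xi+\tilde{b}(x,r)r \geq \alpha_p|\xi|^p -\boldsymbol{f}(x)\cdot\xi - f_0(x)r,
\end{equation*}
and Young's inequality absorbs the $\boldsymbol{f}\cdot\xi$ term into $\tfrac{\alpha_p}{2}|\xi|^p$ at the cost of adding an $L^1$ function $k\in L^1(\R^d)$. For $0<s\leq 1$, $\Omega$ bounded, the fractional Poincaré inequality \eqref{eq:poincare_inequality} (or the Sobolev inequality \eqref{eq:subcritical_fractional_sobolev_inequality_with_Ds}) controls $\|f_0\,u\|_{L^1(\Omega)}$ by a small multiple of $\|D^s u\|_{L^p(\R^d;\R^d)}^p$ plus data, putting us in the framework of Remark~\ref{rem:limit_existence_with_poincare}. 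For the limit $s=0$, the equivalence $\Lambda^{0,p}_0(\Omega)=L^p_0(\Omega)$ from Proposition~\ref{prop:distributional_characterization_lions_calderon_spaces}~iii) supplies the required coercivity directly in $r$, again as in Remark~\ref{rem:limit_existence_with_poincare}.

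Existence in $K$ then follows from Theorem~\ref{thm:ExistenceFractionalVariationalInequalityMonotoneOperators}. For uniqueness, I would invoke Proposition~\ref{UniquenessSolutionPointwisePrespective}: the hypothesis \eqref{eq:monotonicityPLaplacian} gives strict monotonicity of $\tilde{\boldsymbol{a}}$ in $\xi$, and subtracting the inequalities for two solutions $u_1,u_2\in K$ (tested against each other) leads, via \eqref{inequalityForUniqueness}, to
\begin{equation*}
    \int_{\R^d}\bigl(\boldsymbol{a}(x,D^s u_1)-\boldsymbol{a}(x,D^s u_2)\bigr)\cdot (D^s u_1-D^s u_2)\,dx\leq 0,
\end{equation*}
forcing $D^s u_1=D^s u_2$ a.e.\ in $\R^d$. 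Since $D^s$ is injective on $\Lambda^{s,p}_0(\Omega)$ (by Corollary~\ref{EquivalenceOfNorms} for $0<s\leq1$ with $\Omega$ bounded, and by the $L^p$-invertibility of the Riesz transform for $s=0$, Proposition~\ref{prop:distributional_characterization_lions_calderon_spaces}~iii)), this yields $u_1=u_2$.

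There is no real obstacle here since the hard analytic work has been done in Theorem~\ref{thm:ExistenceFractionalVariationalInequalityMonotoneOperators} and Proposition~\ref{UniquenessSolutionPointwisePrespective}; the only care needed is the unified handling of the ranges $p\geq2$ and $1<p<2$ in \eqref{eq:monotonicityPLaplacian} (which gives $\alpha_p|\xi|^p$ in both regimes after choosing $\eta=0$) and of the endpoint $s=0$, where coercivity in $D^s u$ must be translated into coercivity in $u$ through the Riesz-transform equivalence rather than Poincaré.
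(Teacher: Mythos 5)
Your proof is correct and follows exactly the route the paper indicates: rewriting $F$ via the dual characterization, checking the hypotheses of Theorem~\ref{thm:ExistenceFractionalVariationalInequalityMonotoneOperators} (with the coercivity relaxation of Remark~\ref{rem:limit_existence_with_poincare} for the nonzero lower-order term coming from $f_0$), and concluding uniqueness from Proposition~\ref{UniquenessSolutionPointwisePrespective} through the strict monotonicity in \eqref{eq:monotonicityPLaplacian}. The paper leaves these verifications implicit ("a simple application of..."); your write-up supplies them faithfully, including the correct unified treatment of the two regimes in \eqref{eq:monotonicityPLaplacian} and the endpoint $s=0$ via the Riesz-transform norm equivalence.
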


This last result shows that for any functional $F\in\Lambda^{-s,p'}(\Omega)$, if $u\in K$ denotes the corresponding solution of \eqref{eq:SolutionOperator}, the solution operator $\mathscr{A}_{K}^{-1} F=u\in K$ is well defined. Notice that when $K=\Lambda^{s,p}_0(\Omega)$, the operator $\mathscr{A}^{-1}_{K}$ is the inverse of the fractional $p$-Laplacian type operator $\mathscr{A}^s_p$ with Dirichlet homogeneous boundary conditions. This solution operator has interesting properties.

\begin{theorem}\label{thm:ContinuityInversePLaplacian}
    Let $\Omega\subset \R^d$ be a bounded open set, $0\leq s\leq 1$ and $1<p<\infty$. Consider a $p$-Laplacian type operator $\mathscr{A}^s_p$, a closed and convex set $K\subset \Lambda^{s,p}_0(\Omega)$, with $0\in K$, and the variational inequality \eqref{eq:SolutionOperator}. Then, setting  $u_1=\mathscr{A}_{K}^{-1}(F_1)$ and $u_2=\mathscr{A}_{K}^{-1}(F_2)$, the solution map $\mathscr{A}_{K}^{-1}:\Lambda^{-s,p'}(\Omega)\to K$ is
    \begin{itemize}
        \item $\frac{1}{p-1}$-Hölder continuous when $p\geq 2$, with
        \begin{equation}\label{eq:holder_continuity_p>2}
            \|D^s u_1 -D^su_2\|_{L^p(\R^d;\R^d)}\leq \alpha_p^{\frac{1}{1-p}}\|F_1-F_2\|^{\frac{1}{p-1}}_{\Lambda^{-s,p'}(\Omega)};
        \end{equation}
        
        \item locally Lipschitz continuous when $1<p<2$, with
        \begin{equation}\label{eq:local_holder_continuity_p<2}
            \|D^s u_1-D^s u_2\|_{L^p(\R^d;\R^d)}\leq C_{p}(F_1,F_2)\|F_1-F_2\|_{\Lambda^{-s,p'}(\Omega)},
        \end{equation}
        where $C_{p}(F_1, F_2)= 2^{\frac{(p-1)(2-p)}{p}}\alpha_p^{\frac{3-p}{1-p}}\left(\|F_1\|^{\frac{1}{1-p}}_{\Lambda^{-s,p'}(\Omega)}+\|F_2\|^{\frac{1}{1-p}}_{\Lambda^{-s,p'}(\Omega)}\right)^{2-p}$.
    \end{itemize}
\end{theorem}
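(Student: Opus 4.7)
The plan is to exploit the strong monotonicity \eqref{eq:monotonicityPLaplacian} of the principal part $\boldsymbol{a}(x,\cdot)$ by subtracting the two variational inequalities after testing each against the other's solution, and to use $0\in K$ to derive a priori bounds on $\|D^s u_i\|_{L^p(\R^d;\R^d)}$ in terms of $\|F_i\|_{\Lambda^{-s,p'}(\Omega)}$.

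First, since $u_1,u_2\in K$, testing \eqref{eq:SolutionOperator} for $u_1$ with $v=u_2$ and for $u_2$ with $v=u_1$ and adding yields
\[
\int_{\R^d}\bigl(\boldsymbol{a}(x, D^s u_1)-\boldsymbol{a}(x, D^s u_2)\bigr)\cdot D^s(u_1-u_2)\,dx \leq \langle F_1-F_2,\, u_1-u_2\rangle.
\]
Using the equivalent norm $\|\cdot\|_{\Lambda^{s,p}_0(\Omega)}\sim \|D^s\cdot\|_{L^p(\R^d;\R^d)}$ from Corollary \ref{EquivalenceOfNorms} (valid since $\Omega$ is bounded), the dual pairing on the right is bounded by $\|F_1-F_2\|_{\Lambda^{-s,p'}(\Omega)}\,\|D^s(u_1-u_2)\|_{L^p}$. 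In the case $p\geq 2$, the monotonicity condition \eqref{eq:monotonicityPLaplacian} bounds the left-hand side from below by $\alpha_p\|D^s(u_1-u_2)\|_{L^p}^p$, and the estimate \eqref{eq:holder_continuity_p>2} follows by dividing by $\|D^s(u_1-u_2)\|_{L^p}$ and extracting the $(p-1)$-th root.

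The a priori bound needed for the remaining case comes from testing the variational inequality for $u_i$ with $v=0\in K$, which yields $\int\boldsymbol{a}(x,D^s u_i)\cdot D^s u_i\,dx\leq\langle F_i, u_i\rangle$; applying \eqref{eq:monotonicityPLaplacian} with $\eta=0$, combined with $\boldsymbol{a}(x,0)=0$, gives $\boldsymbol{a}(x,\xi)\cdot\xi\geq\alpha_p|\xi|^p$ for both ranges of $p$, and hence
\[
\|D^s u_i\|_{L^p(\R^d;\R^d)}\leq\alpha_p^{-1/(p-1)}\,\|F_i\|^{1/(p-1)}_{\Lambda^{-s,p'}(\Omega)}.
\]
In the range $1<p<2$, however, monotonicity only provides the degenerate weighted bound
\[
\alpha_p\int_{\R^d}\frac{|D^s(u_1-u_2)|^2}{(|D^s u_1|+|D^s u_2|)^{2-p}}\,dx\leq\langle F_1-F_2, u_1-u_2\rangle,
\]
which does not directly control $\|D^s(u_1-u_2)\|_{L^p}$. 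To recover a pure $L^p$-estimate, I would factor
\[
|D^s(u_1-u_2)|^p = \frac{|D^s(u_1-u_2)|^p}{(|D^s u_1|+|D^s u_2|)^{p(2-p)/2}}\cdot(|D^s u_1|+|D^s u_2|)^{p(2-p)/2}
\]
and apply Hölder's inequality with conjugate exponents $2/p$ and $2/(2-p)$; combining the resulting inequality with the previous a priori bound on each $\|D^s u_i\|_{L^p}$ and rearranging the powers of $\alpha_p$, $\|F_1-F_2\|$ and $\|F_i\|$ produces \eqref{eq:local_holder_continuity_p<2}.

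The main obstacle is precisely this last step: the weighted $L^2$-estimate supplied by monotonicity in the singular range $1<p<2$ has to be upgraded to genuine $L^p$-control of $D^s(u_1-u_2)$, and the Hölder conversion forces a $(2-p)$-dependence on the ambient norms $\|D^s u_i\|_{L^p}$. This is exactly why the Lipschitz constant in \eqref{eq:local_holder_continuity_p<2} must depend on $F_1$ and $F_2$, and why the hypothesis $0\in K$ is indispensable for turning that dependence into bounds involving only the data through the a priori estimate.
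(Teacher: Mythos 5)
Your argument is correct and follows essentially the same route as the paper's: test the two inequalities against each other's solution, use the strong monotonicity \eqref{eq:monotonicityPLaplacian} (directly for $p\geq 2$, via the weighted $L^2$ bound and a Hölder decomposition with exponents $2/p$ and $2/(2-p)$ for $1<p<2$), and close the degenerate case by the a priori bound $\|D^s u_i\|_{L^p}\leq\alpha_p^{-1/(p-1)}\|F_i\|^{1/(p-1)}$ obtained from testing with $v=0\in K$. The only detail worth tracking is that the final constants come from carrying this computation through explicitly, but the decomposition and Hölder step you describe are precisely those used in the paper.
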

\begin{proof}
	From Corollary \ref{cor:uniqueness fractional pLaplacian without b}, we have that $\mathscr{A}_{K}^{-1}:\Lambda^{-s,p'}(\Omega)\to K$ is well defined. 
	
    Applying Hölder's inequality and the inequality \eqref{eq:SolutionOperator} to the previous identities, testing them with $v=u_2$ and $v=u_1$ respectively, we get
	\begin{equation}\label{eq:UpperboundContinuityPLaplacianOperators}
        \langle \mathscr{A}^s_p u_1-\mathscr{A}^s_p u_2, u_1-u_2\rangle =\langle F_1-F_2,u_1-u_2\rangle\leq\|F_1-F_2\|_{\Lambda^{-s,p'}(\Omega)}\|D^s(u_1-u_2)\|_{L^p(\R^d;\R^d)}.
    \end{equation}
	On the other hand, by definition of a p-Laplacian type operator, we also have
	\begin{equation}\label{eq:LowerBoundPLaplacianType}
        \langle \mathscr{A}^s_p u_1-\mathscr{A}^s_p u_2, u_1-u_2\rangle\geq
        \begin{cases}
                \alpha_p\|D^s(u_1-u_2)\|^p_{L^p(\R^d;\R^d)}, &\mbox{ if } p\geq 2\\
                \alpha_p\int_{\R^d}{\frac{|D^s(u_1-u_2)|^2}{(|D^s u_1|+|D^s u_2|)^{2-p}}}, &\mbox{ if } 1<p< 2.
        \end{cases}
    \end{equation}
    Combining the estimates \eqref{eq:UpperboundContinuityPLaplacianOperators} and \eqref{eq:LowerBoundPLaplacianType}, we can easily see that for $p\geq 2$, we get
    \begin{equation*}
       \alpha_p\|D^s (u_1-u_2)\|^{p-1}_{L^p(\R^d;\R^d)}\leq \|F_1-F_2\|_{\Lambda^{-s,p'}(\Omega)}.
    \end{equation*}
	In the case $1<p<2$, we can use Hölder's inequality together with \eqref{eq:UpperboundContinuityPLaplacianOperators} and \eqref{eq:LowerBoundPLaplacianType} to get
    \begin{equation}\label{eq:almostRightBoundPLeq2}
        \frac{\|D^s(u_1-u_2)\|_{L^p(\R^d;\R^d)}}{\left(\|D^s u_1\|_{L^p(\R^d;\R^d)}+\|D^s u_2\|_{L^p(\R^d;\R^d)}\right)^{2-p}}\leq 2^{\frac{(p-1)(2-p)}{p}}\alpha_p^{\frac{1}{1-p}}\|F_1-F_2\|_{\Lambda^{-s,p'}(\Omega)}.
    \end{equation}
    But since $\boldsymbol{a}(x,0)=0$ and 
    \begin{equation*}
        \alpha_p\|D^s u_j\|^{p-1}_{L^p(\R^d;\R^d)}\leq \frac{\langle \mathscr{A}^s_p u_j, u_j\rangle}{\|D^s u_j\|_{L^p(\R^d;\R^d)}}\leq\|F_j\|_{\Lambda^{-s,p'}(\Omega)}
    \end{equation*}
    for $j=1,2$, we can improve the estimate \eqref{eq:almostRightBoundPLeq2} to
	\begin{equation*}
		\|D^s(u_1-u_2)\|_{L^p(\R^d;\R^d)}\leq 2^{\frac{(p-1)(2-p)}{p}}\alpha_p^{\frac{3-p}{1-p}}\|F_1-F_2\|_{\Lambda^{-s,p'}(\Omega)}\left(\|F_1\|^{\frac{1}{1-p}}_{\Lambda^{-s,p'}(\Omega)}+\|F_2\|^{\frac{1}{1-p}}_{\Lambda^{-s,p'}(\Omega)}\right)^{2-p}.
    \end{equation*}
\end{proof}

As a consequence of the Theorem \ref{CompactnessResultALaBellido}, we get also the compactness of $\mathscr{A}_{K}^{-1}:\Lambda^{-s,p'}(\Omega)\to \Lambda^{t,q}_0(\Omega)$:

\begin{corollary}\label{cor:compactness_solutions_p_Laplacian}
    Under the same hypothesis of Theorem \ref{thm:ContinuityInversePLaplacian}, we have that $\mathscr{A}_{K}^{-1}:\Lambda^{-s,p'}(\Omega)\to \Lambda^{t,q}_0(\Omega)$ is compact for $0<t<s$ and $1<p,q<\infty$ satisfying $\frac{1}{p}-\frac{s}{d}<\frac{1}{q}-\frac{t}{d}$.
\end{corollary}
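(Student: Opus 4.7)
The plan is to establish compactness by factoring the solution map through the compact embedding $\Lambda^{s,p}_0(\Omega) \Subset \Lambda^{t,q}_0(\Omega)$ furnished by Theorem \ref{CompactnessResultALaBellido}. Thus the proof reduces to showing that $\mathscr{A}_K^{-1}$ sends bounded subsets of $\Lambda^{-s,p'}(\Omega)$ to bounded subsets of $\Lambda^{s,p}_0(\Omega)$; once that is in hand, any bounded sequence $\{F_n\} \subset \Lambda^{-s,p'}(\Omega)$ yields a bounded sequence $u_n = \mathscr{A}_K^{-1}(F_n)$ in $\Lambda^{s,p}_0(\Omega)$, and the cited compact embedding produces a subsequence converging in $\Lambda^{t,q}_0(\Omega)$.

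For the boundedness step, I would argue directly from the variational inequality, exploiting $0 \in K$ and $\boldsymbol{a}(x,0)=0$. Testing \eqref{eq:SolutionOperator} with $v=0$ gives
\begin{equation*}
    \langle \mathscr{A}^s_p u_n, u_n \rangle \leq \langle F_n, u_n \rangle \leq \|F_n\|_{\Lambda^{-s,p'}(\Omega)} \, \|D^s u_n\|_{L^p(\R^d;\R^d)},
\end{equation*}
where in the last step I use the equivalent norm $\|D^s\cdot\|_{L^p(\R^d;\R^d)}$ on $\Lambda^{s,p}_0(\Omega)$ from Corollary \ref{EquivalenceOfNorms}, valid since $\Omega$ is bounded. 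Since $\boldsymbol{a}(x,0) = 0$, the $p$-Laplacian type monotonicity \eqref{eq:monotonicityPLaplacian} applied with $\eta = 0$ yields $\langle \mathscr{A}^s_p u_n, u_n \rangle \geq \alpha_p \|D^s u_n\|^p_{L^p(\R^d;\R^d)}$ when $p \geq 2$, from which $\|D^s u_n\|_{L^p}^{p-1} \leq \alpha_p^{-1}\|F_n\|_{\Lambda^{-s,p'}(\Omega)}$. In the case $1 < p < 2$, the same monotonicity gives $\alpha_p \int_{\R^d} |D^s u_n|^2 (|D^s u_n|)^{p-2} \, dx \leq \|F_n\|_{\Lambda^{-s,p'}} \|D^s u_n\|_{L^p}$, that is $\alpha_p \|D^s u_n\|_{L^p}^{p-1} \leq \|F_n\|_{\Lambda^{-s,p'}(\Omega)}$; alternatively, one can invoke the Hölder/local Lipschitz continuity already proven in Theorem \ref{thm:ContinuityInversePLaplacian}, noting that $\mathscr{A}_K^{-1}(0)=0$ (since the coercivity forces the unique solution with zero right-hand side to be $0$), so that bounded inputs give bounded outputs in either regime.

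With boundedness of $\{u_n\}$ in $\Lambda^{s,p}_0(\Omega)$ established, the hypothesis $\tfrac{1}{p}-\tfrac{s}{d} < \tfrac{1}{q}-\tfrac{t}{d}$ with $0 < t < s$ is exactly what Theorem \ref{CompactnessResultALaBellido} requires, so there exists a subsequence and a limit $u \in \Lambda^{t,q}_0(\Omega)$ with $u_{n_k} \to u$ strongly in $\Lambda^{t,q}_0(\Omega)$, which is the desired compactness. The routine boundedness step is the only nontrivial point; I do not anticipate a genuine obstacle, since both ingredients (the equivalent-norm Poincaré-type bound on bounded $\Omega$ and the fractional Rellich--Kondrachov theorem) are already available from the functional framework of Section \ref{sec:functional_framework}.
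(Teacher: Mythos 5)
Your proof is correct and follows what is evidently the intended argument: extract a uniform a priori bound $\alpha_p\|D^s u_n\|_{L^p(\R^d;\R^d)}^{p-1}\leq\|F_n\|_{\Lambda^{-s,p'}(\Omega)}$ by testing the variational inequality with $v=0$ (using $0\in K$ and $\boldsymbol{a}(x,0)=0$), then compose with the compact embedding of Theorem~\ref{CompactnessResultALaBellido}. One small caveat: your secondary remark that the same conclusion follows from the local Lipschitz estimate \eqref{eq:local_holder_continuity_p<2} with $F_2=0$ is misleading for $1<p<2$, since $C_p(F_1,0)$ involves $\|F_2\|^{1/(1-p)}$ with a negative exponent and hence diverges; the correct fallback is the intermediate estimate \eqref{eq:almostRightBoundPLeq2} (equivalently the direct bound you already give), not the final Lipschitz form. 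Also note that Theorem~\ref{CompactnessResultALaBellido} as stated additionally assumes $p\leq q$, whereas the corollary only says $1<p,q<\infty$; for $q<p$ on bounded $\Omega$ one appends the continuous inclusion $\Lambda^{t,p}_0(\Omega)\subset\Lambda^{t,q}_0(\Omega)$ of Corollary~\ref{InclusionsForPLionsCalderonCompactSupport} after the compact step $\Lambda^{s,p}_0(\Omega)\Subset\Lambda^{t,p}_0(\Omega)$, and the exponent condition is then automatic.
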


\begin{remark}
   The above continuity and compactness results extend to the fractional setting with a convex constraint the results of \cite{simon1981Regularity}, which corresponds to the classical $p$-Laplacian equation, i.e. $K=\Lambda^{s,p}_0(\Omega)$ and $s=1$.
\end{remark}

Besides $p$-Laplacian type operators, we can also consider some operators, that constitute variations of these, for which we can also obtain existence and uniqueness results. We present first a case in which we have also a lower order term of the type $b(r)=|r|^{p-2}r$.

Let the Carathéodory function $b(x,r):\Omega\times\R\to\R$ satisfy the growth condition \eqref{eq:upper_bound_for_B_when_omega_arbitrary} and assume there exists a constant $\beta_p\geq 0$ such that
\begin{equation}\label{eq:monotonicityb}
    (b(x, \rho)-b(x, r))(\rho-r)
    \geq
    \begin{cases}
        \beta_p|\rho-r|^p, &\mbox{ if } p\geq 2\\
        \beta_p\frac{|\rho-r|^2}{(|\rho|+ |r|)^{2-p}}, &\mbox{ if } 1<p< 2,
    \end{cases}
\end{equation}
for every $\rho,r\in\R$ and almost every $x\in\Omega$.

\begin{corollary}[$p$-Laplacian type operator with lower order term] \label{UniquenessFractionalPLaplacian}
    Let $\Omega\subset\R^N$ be an open set, $0\leq s\leq 1$ and $1<p<\infty$ and a constant $\beta_p\geq 0$ (with $\beta_p>0$ when $\Omega$ is unbounded). Consider now the $p$-Laplacian type operator $\mathscr{A}^s_p$ with a lower order perturbation $b$ satisfying the assumption \eqref{eq:monotonicityb}, with the constant $\beta_p\geq 0$ (with $\beta_p>0$ when $\Omega$ is unbounded), a closed and convex set $K\subset\Lambda^{s,p}_0(\Omega)$, with $0\in K$, and a functional $F\in \Lambda^{-s,p'}(\Omega)$. Then the variational inequality
    \begin{equation*}
        \langle \mathscr{A}^s_p u+b(u), v-u\rangle\geq \langle F, v-u\rangle, \quad \forall v\in K,
    \end{equation*}
    has a unique solution $u\in K$. Moreover, the respective solution map $(\mathscr{A}^s_p+b)_K^{-1}:\Lambda^{-s,p'}(\Omega)\to K$ is $\frac{1}{p-1}$-Hölder continuous when $p\geq 2$ and locally Lipschitz continuous when $1<p<2$.
\end{corollary}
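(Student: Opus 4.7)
The plan is to combine the existence theory for monotone operators (Theorem \ref{thm:ExistenceFractionalVariationalInequalityMonotoneOperators}) with the continuity argument of Theorem \ref{thm:ContinuityInversePLaplacian} applied to the perturbed operator $\mathscr{A}^s_p+b$, splitting the continuity analysis into the cases $p\geq 2$ and $1<p<2$.

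For existence and uniqueness, I would verify the hypotheses of Theorem \ref{thm:ExistenceFractionalVariationalInequalityMonotoneOperators}. The growth and monotonicity hypotheses are given, so only the strong coercivity \eqref{eq:strongCorcivity} needs checking: setting $\eta=0$ in \eqref{eq:monotonicityPLaplacian} and using $\boldsymbol{a}(x,0)=0$, the identity $|\xi|^2/|\xi|^{2-p}=|\xi|^p$ for $1<p<2$ yields $\boldsymbol{a}(x,\xi)\cdot\xi\geq\alpha_p|\xi|^p$ in all cases, while setting $r=0$ in \eqref{eq:monotonicityb} gives $b(x,\rho)\rho\geq\beta_p|\rho|^p+b(x,0)\rho$; since $|b(x,0)|\leq\gamma_2(x)$ from \eqref{eq:upper_bound_for_B_when_omega_arbitrary}, Young's inequality absorbs the linear term into $\tfrac{\beta_p}{2}|\rho|^p+C|\gamma_2(x)|^{p'}$, producing \eqref{eq:strongCorcivity} whenever $\beta_p>0$. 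When $\Omega$ is bounded and $\beta_p=0$, Remark \ref{rem:limit_existence_with_poincare} supplies the missing coercivity through Poincaré's inequality. Uniqueness then follows from Proposition \ref{UniquenessSolutionPointwisePrespective}, since \eqref{eq:monotonicityPLaplacian} with $\alpha_p>0$ makes $\boldsymbol{a}$ strictly monotone in its last variable.

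For continuity, setting $u_j=(\mathscr{A}^s_p+b)^{-1}_K F_j$ and testing the variational inequality for $u_1$ with $v=u_2$ and vice versa, adding the two tested inequalities yields
\begin{equation*}
    \langle(\mathscr{A}^s_p+b)u_1-(\mathscr{A}^s_p+b)u_2,\,u_1-u_2\rangle\leq\|F_1-F_2\|_{\Lambda^{-s,p'}(\Omega)}\,\|u_1-u_2\|_{\Lambda^{s,p}_0(\Omega)}.
\end{equation*}
The left-hand side splits additively across $\boldsymbol{a}$ and $b$, and for $p\geq 2$ the assumptions \eqref{eq:monotonicityPLaplacian} and \eqref{eq:monotonicityb} bound it below by $\alpha_p\|D^s(u_1-u_2)\|_{L^p}^p+\beta_p\|u_1-u_2\|_{L^p}^p$. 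When $\beta_p>0$, this gives the $\tfrac{1}{p-1}$-Hölder estimate with constant $\min(\alpha_p,\beta_p)^{1/(1-p)}$; when $\beta_p=0$ and $\Omega$ is bounded, one discards the second term and uses Corollary \ref{EquivalenceOfNorms} to bound $\|u_1-u_2\|_{\Lambda^{s,p}_0}$ by a multiple of $\|D^s(u_1-u_2)\|_{L^p}$ on the right-hand side.

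The main obstacle is the degenerate case $1<p<2$. I would apply the reverse Hölder trick from Theorem \ref{thm:ContinuityInversePLaplacian} separately to the $\boldsymbol{a}$- and $b$-integrals, obtaining
\begin{equation*}
    \int_{\R^d}\frac{|D^s(u_1-u_2)|^2}{(|D^s u_1|+|D^s u_2|)^{2-p}}\,dx\geq\frac{\|D^s(u_1-u_2)\|_{L^p}^2}{(\|D^s u_1\|_{L^p}+\|D^s u_2\|_{L^p})^{2-p}}
\end{equation*}
and an analogous inequality on $\Omega$ for $(u_1,u_2)$. This produces a Lipschitz-type estimate whose constant depends on $\|D^s u_j\|_{L^p}$ and $\|u_j\|_{L^p}$. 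To convert it into the stated local Lipschitz form, I would test the variational inequalities with $v=0\in K$ to derive a priori bounds $\alpha_p\|D^s u_j\|_{L^p}^p+\beta_p\|u_j\|_{L^p}^p\lesssim\|F_j\|_{\Lambda^{-s,p'}}\|u_j\|_{\Lambda^{s,p}_0}+\|\gamma_2\|_{L^{p'}}\|u_j\|_{L^p}$, once more absorbing the $b(x,0)$ contribution by Young; solving for $\|u_j\|_{\Lambda^{s,p}_0}^{p-1}$ then gives an explicit constant, differing from that of Theorem \ref{thm:ContinuityInversePLaplacian} only by an additional $\|\gamma_2\|_{L^{p'}}$-dependence.
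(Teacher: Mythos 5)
Your proposal is correct and follows essentially the same route as the paper's: verify existence and uniqueness via the monotone theory (the paper refers to Corollary \ref{cor:uniqueness fractional pLaplacian without b}, itself a consequence of Theorem \ref{thm:ExistenceFractionalVariationalInequalityMonotoneOperators} and Proposition \ref{UniquenessSolutionPointwisePrespective}), and then repeat the estimates of Theorem \ref{thm:ContinuityInversePLaplacian} with the $\boldsymbol{a}$- and $b$-contributions added, applying the same reverse Hölder device in the case $1<p<2$. You are somewhat more explicit than the paper's sketch in tracking the $b(\cdot,0)$ contribution to the a priori bound, which slightly modifies the local Lipschitz constant; this is a worthwhile clarification but not a different argument.
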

\begin{proof}
    In the case of bounded $\Omega$ the operator $\mathscr{A}^s_p+b$ still satisfies the inequalities \eqref{eq:LowerBoundPLaplacianType} and the same conclusions for $u_1=(\mathscr{A}^s_p+b)_K^{-1}(F_1)$ and $u_2=(\mathscr{A}^s_p+b)_K^{-1}(F_2)$
    hold as in Theorem \ref{thm:ContinuityInversePLaplacian}.
    In the case of unbounded $\Omega$ the condition $\beta_p>0$ in the assumption \eqref{eq:monotonicityb} with the the inequalities \eqref{eq:LowerBoundPLaplacianType}, a similar argument as in the proof of Theorem \ref{thm:ContinuityInversePLaplacian} provides estimates for the norms \eqref{eq:norm_lions_calderon_spaces}, similar to \eqref{eq:holder_continuity_p>2} and to \eqref{eq:local_holder_continuity_p<2}, for the $\frac{1}{p-1}$-Hölder continuity  and locally Lipschitz continuity of $(\mathscr{A}^s_p+b)_K^{-1}:\Lambda^{-s,p'}(\Omega)\to \Lambda^{s,p}_0(\Omega)$, respectively when $p\geq 2$ and when $1<p<2$, now with constants also depending on $\beta_p$.
\end{proof}

A second example, is given by a perturbation with  a Carathéodory function $\boldsymbol{e}:\R^d\times\R\to\R^d$, such that, for all $r,\rho\in\R$ and a.e. $x\in\R^d$,
\begin{equation}\label{eq:growth_condition_perturbation}
   |\boldsymbol{e}(x,r)-\boldsymbol{e}(x,\rho)|\leq \lambda|r-\rho| \quad \mbox{ and } \quad \boldsymbol{e}(x,0)=0,
\end{equation} 
with $\lambda>0$, in the possible nonlinear $p=2$ framework in $\Lambda^{s,2}_0(\Omega)=H^s_0(\Omega)$.

\begin{corollary}[Small perturbation of a $2$-Laplacian type operator]
    Let $\Omega\subset\R^d$ be an open and bounded set, and $0\leq s\leq 1$. Consider a function $\boldsymbol{a}:\R^d\times\R^d\to\R^d$ that generates a $2$-Laplacian type operator with constant $\alpha_2>0$, and a function $\boldsymbol{e}:\R^d\times\R\to\R^d$ satisfying \eqref{eq:growth_condition_perturbation} with $\lambda$ sufficiently small such that $\alpha_2>\lambda c_{2,s}$, where $c_{2,s}>0$ is the best Poincaré constant in $H^s_0(\Omega)$, as in Remark \ref{rem:uniform_boundness_poincare_constant_p2}. Then, for any $F_s\in\Lambda^{-s,2}(\Omega)=H^{-s}(\Omega)$ and any non-empty, closed and convex set $K\subset\Lambda^{s,2}_0(\Omega)=H^s_0(\Omega)$, there exists a unique function $u\in K$ such that
    \begin{equation}\label{eq:existence_p_laplacian_with_drift}
        \int_{\R^d}{(\boldsymbol{a}(D^s u)+\boldsymbol{e}(u))\cdot D^s(v-u)}\,dx\geq \langle F_s, v-u\rangle,\quad\forall v\in K.
    \end{equation}
\end{corollary}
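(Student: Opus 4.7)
The plan is to establish existence and uniqueness by a Banach fixed point argument that freezes the non-monotone perturbation $\boldsymbol{e}(u)$ and reduces the problem to the strictly monotone $2$-Laplacian type case already settled in Corollary \ref{cor:uniqueness fractional pLaplacian without b}. Concretely, for each fixed $w\in H^s_0(\Omega)$, I would consider the functional
\begin{equation*}
    \langle F_w, v\rangle=\langle F_s,v\rangle-\int_{\R^d}\boldsymbol{e}(w)\cdot D^s v\,dx,
\end{equation*}
and note that \eqref{eq:growth_condition_perturbation} together with $\boldsymbol{e}(x,0)=0$ gives $\|\boldsymbol{e}(w)\|_{L^2(\R^d;\R^d)}\leq \lambda\|w\|_{L^2(\Omega)}$, so $F_w\in H^{-s}(\Omega)$. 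By Corollary \ref{cor:uniqueness fractional pLaplacian without b} applied to the $2$-Laplacian type operator $\mathscr{A}^s_2$, there exists a unique $T(w)\in K$ solving $\langle \mathscr{A}^s_2 T(w),v-T(w)\rangle\geq \langle F_w,v-T(w)\rangle$ for all $v\in K$. A fixed point of $T$ is then exactly a solution of \eqref{eq:existence_p_laplacian_with_drift}.

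The heart of the argument is to show that $T:K\to K$ is a strict contraction with respect to the equivalent norm $\|D^s\cdot\|_{L^2(\R^d;\R^d)}$ on $H^s_0(\Omega)$ (see Corollary \ref{EquivalenceOfNorms}). Taking $u_i=T(w_i)$, $i=1,2$, testing each variational inequality with the other solution and summing yields
\begin{equation*}
    \int_{\R^d}(\boldsymbol{a}(D^s u_1)-\boldsymbol{a}(D^s u_2))\cdot D^s(u_1-u_2)\,dx\leq \int_{\R^d}(\boldsymbol{e}(w_1)-\boldsymbol{e}(w_2))\cdot D^s(u_1-u_2)\,dx.
\end{equation*}
Using the $2$-Laplacian monotonicity \eqref{eq:monotonicityPLaplacian} on the left and the Cauchy--Schwarz inequality together with \eqref{eq:growth_condition_perturbation} on the right, one obtains
\begin{equation*}
    \alpha_2\|D^s(u_1-u_2)\|_{L^2}\leq \lambda\|w_1-w_2\|_{L^2(\Omega)}.
\end{equation*}
Applying the sharp Poincaré inequality $\|w_1-w_2\|_{L^2(\Omega)}\leq c_{2,s}\|D^s(w_1-w_2)\|_{L^2}$ (Remark \ref{rem:uniform_boundness_poincare_constant_p2}) converts this into a contraction with constant $\lambda c_{2,s}/\alpha_2<1$, which is where the standing assumption $\alpha_2>\lambda c_{2,s}$ is used.

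Since $K$ is closed in $H^s_0(\Omega)$ and hence in the equivalent complete norm $\|D^s\cdot\|_{L^2}$, Banach's fixed point theorem yields a unique $u\in K$ with $T(u)=u$, which is precisely the unique solution of \eqref{eq:existence_p_laplacian_with_drift}. The main obstacle is only notational: one must check carefully that $\boldsymbol{e}(u)$ is well defined and in $L^2(\R^d;\R^d)$ when $u$ is extended by $0$ outside $\Omega$, which is immediate from $\boldsymbol{e}(x,0)=0$, so $\boldsymbol{e}(u)$ vanishes outside $\Omega$. No further compactness or pseudomonotonicity is required; the smallness condition on $\lambda$ does all the work through the contraction constant $\lambda c_{2,s}/\alpha_2$.
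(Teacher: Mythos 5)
Your proof is correct and takes a genuinely different, though closely related, route from the paper's. The paper argues directly: it shows that the combined operator $u\mapsto -D^s\cdot(\boldsymbol{a}(D^s u)+\boldsymbol{e}(u))$ is strongly monotone on $H^s_0(\Omega)$, with the lower bound
\begin{equation*}
    \int_{\R^d}(\boldsymbol{a}(D^s u_1)-\boldsymbol{a}(D^s u_2)+\boldsymbol{e}(u_1)-\boldsymbol{e}(u_2))\cdot D^s(u_1-u_2)\,dx\geq (\alpha_2-\lambda c_{2,s})\|D^s(u_1-u_2)\|^2_{L^2(\R^d;\R^d)},
\end{equation*}
obtained by exactly the same Cauchy--Schwarz plus sharp Poincar\'e computation you use inside the contraction estimate; existence and uniqueness then follow at once from the abstract theory for strongly monotone, coercive, hemicontinuous operators. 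Your approach instead freezes $\boldsymbol{e}(w)$ into the right-hand side, invokes Corollary \ref{cor:uniqueness fractional pLaplacian without b} for the strictly monotone core $\mathscr{A}^s_2$, and shows the solution map $T$ is a contraction with constant $\lambda c_{2,s}/\alpha_2<1$ in the norm $\|D^s\cdot\|_{L^2}$ on the complete metric space $K$ (closedness of $K$ plus Corollary \ref{EquivalenceOfNorms} for bounded $\Omega$). The two proofs consume the smallness hypothesis $\alpha_2>\lambda c_{2,s}$ in the same place and with the same constant; what your version buys is a constructive Picard iteration reducing to a previously established result, while the paper's version is shorter and avoids the fixed-point machinery by recognizing that the perturbed operator is itself strongly monotone. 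Both are equally rigorous; in particular you have correctly noted that $\boldsymbol{e}(x,0)=0$ keeps $\boldsymbol{e}(w)\in L^2(\R^d;\R^d)$ so that $F_w\in H^{-s}(\Omega)$, and that $T$ maps $K$ into $K$ by construction.
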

\begin{proof}
    Since $\boldsymbol{a}=\boldsymbol{a}(x,\xi)$ generates a $2$-Laplacian type operator, it satisfies the inequality \eqref{eq:LowerBoundPLaplacianType} for $p=2$, and $\boldsymbol{e}=\boldsymbol{e}(x,r)$ is a Lipschitz function in $r$, under the assumptions of this Corollary, the function $(x,r,\xi)\mapsto \boldsymbol{a}(x,\xi)+\boldsymbol{e}(x,r)$
    defines a nonlinear operator $\mathscr{A}^s_2$, which is strictly monotone and coercive, and satisfies \eqref{eq:LowerBoundPLaplacianType} for some $\alpha'_2=\alpha_2-\lambda c_{2,s}>0$. Indeed, by Cauchy-Schwarz's and Poincaré's inequalities, we have
    \begin{align*}
        &\int_{\R^N}{(\boldsymbol{a}(D^s u_1)-\boldsymbol{a}(D^s u_2)+\boldsymbol{e}(u_1)-\boldsymbol{e}(u_2))\cdot D^s(u_1-u_2)}\,dx\\
        &\qquad\qquad\geq\alpha_2 \|D^s u_1-D^s u_2\|^2_{L^2(\R^N;\R^N)}-\int_{\R^N}{\lambda|u_1-u_2||D^su_1-D^su_2|}\,dx\\
        &\qquad\qquad\geq\alpha_2 \|D^s u_1-D^s u_2\|^2_{L^2(\R^N;\R^N)}-\lambda \|u_1-u_2\|_{L^2(\R^N)}\|D^s u_1-D^s u_2\|_{L^2(\R^N;\R^N)}\\
        &\qquad\qquad\geq\left(\alpha_2-\lambda c_{2,s}\right) \|D^s u_1-D^s u_2\|^2_{L^2(\R^N;\R^N)}.
    \end{align*}
\end{proof}

\section{Mosco convergence and continuous dependence}\label{sec:stability}
In this section, we consider the convergence of solutions of variational inequalities in the framework of Section \ref{sec:existence_uniqueness} with respect to the convergence of the convex sets, specially with respect to the variation of the fractional parameter $s\in[0,1]$. We start by recalling the Mosco's convergence for convex sets $K_n\xrightarrow{M} K$. We say that $K_n$, with $n\in\mathbb{N}$, converges in the sense of Mosco to $K$ in a Banach space $X$ if: (i) for every $u\in K$, there exists a sequence $u_n\in K_n$ such that $u_n\to u$ in $X$, i.e. $K\subset \liminf_n{K_n}$; and (ii) if there exists a sequence $u_n\in K_n$ and a function $u\in X$ such that $u_n\rightharpoonup u$ in $X$, then $u\in K$, i.e. $\text{w--}\limsup_n{K_n}\subset K$.

However, in this section we are considering convergences of $u_n\in K_n\subset \Lambda^{s_n,p}_0(\Omega)$ solutions of the variational inequalities
\begin{equation}
    \langle-D^{s_n}\cdot \boldsymbol{a}(u_n,D^{s_n} u_n)+b(u_n,D^{s_n}u_n
    )-F_n, v-u_n\rangle_{s_n}\geq 0, \quad \forall v\in K_n\subset \Lambda^{s_n,p}_0(\Omega).
\end{equation}
Here not only the convex sets $K_n$ depend on the parameter $s_n$ but also the space $\Lambda^{s_n,p}_0(\Omega)$ and the operator, including the $F_n$, depend on $s_n$. These terms, may be represented in the form
\begin{equation}
    \langle F_n, v\rangle_{s_n}=\langle {f_0}_n-D^{s_n}\cdot \boldsymbol{f}_n, v\rangle_{s_n}=\int_{\Omega}{{f_0}_n v}\,dx+\sum_{j=1}^d{\int_{\R^d}{{f_j}_n\frac{\partial^{s_n} v}{\partial x_j^{s_n}}}\,dx} \quad  \forall v\in\Lambda^{s_n,p}_0(\Omega)
\end{equation}
with ${f_0}_n\in {L^p}'(\Omega)$ and $\boldsymbol{f}_n=({f_1}_n,...,{f_d}_n)\in {L^p}'(\R^d;\R^d)$.

Since the space of the solutions depends on $n$ and we need to consider not only the convergence of the solutions but also of their fractional gradients, we are lead to the following adaptation of the notion of Mosco's convergence:
\begin{definition}
    Let us consider a sequence $\{s_n\}\subset[0,1]$ that converges to $\sigma\in[0,1]$, convex sets $K_n\subset \Lambda^{s_n,p}_0(\Omega)$ for each $n\in\N$, and $K\subset\Lambda^{\sigma,p}_0(\Omega)$, and define the linear map
	\begin{equation}\label{DefinitionPs}
		\begin{aligned}
			\Pi_s: \Lambda^{s,p}_0(\Omega)&\to \L^p(\Omega)=L_0^p(\Omega)\times L^p(\R^d;\R^d)\\
			u&\mapsto (u, D^s u).
		\end{aligned}
	\end{equation}
    We say that $K_n$ converges in the generalized sense of Mosco to $K$, denoted $K_n\xrightarrow{s_n-M} K$, when:
    \begin{enumerate}[label=\emph{M\arabic*)}, itemsep =2pt, topsep=3pt]
        \item for every $u\in K$, there exists a sequence $\{u_n\}$ with $u_n\in K_n$, such that $\Pi_{s_n}(u_n)\to \Pi_\sigma(u)$ in $\L^p(\Omega)$; and \label{M1}
        \item if there exists a sequence ${u_n}\in \Lambda^{s_n,p}_0(\Omega)$ with $u_n\in K_n$ and a function $u\in \Lambda^{\sigma,p}_0(\Omega)$, such that $\Pi_{s_n}(u_n)\rightharpoonup \Pi_\sigma(u)$ in $\L^p(\Omega)$, then $u\in K$.\label{M2}
    \end{enumerate}
\end{definition}
\begin{remark}\label{rem:classicalMoscoTheorem}
    When the sequence $\{s_n\}$ is a constant sequence, i.e., $s_n=\sigma$ for all $n\in\N$, the generalized Mosco convergence reduces to the classical notion of Mosco's convergence \cite{mosco1969convergence}.
\end{remark}

In the following subsections we shall consider sequences  $\{s_n\}\subset[0,1]$ converging to $\sigma\in[0,1]$ and a corresponding converging sequence of convex sets in the generalized sense of Mosco
\begin{equation}\label{eq:generalized_mosco_convergence}
    \Lambda^{s_n,p}_0(\Omega)\supset K_n\xrightarrow{s_n-M} K\subset \Lambda^{\sigma,p}_0(\Omega),
\end{equation}
and sequence of given data, such that
\begin{equation}\label{eq:convergence_data}
    {f_0}_n\to f_0 \mbox{ in } {L^p}'(\Omega) \quad \mbox{ and } \quad \boldsymbol{f}_n\to \boldsymbol{f} \mbox{ in } {L^p}'(\R^d;\R^d),
\end{equation}
which implies  $\Lambda^{-s_n,p'}(\Omega)\ni F_n\xrightarrow{s_n} F\in \Lambda^{-\sigma,p'}(\Omega)$ in the distributional sense, i.e. $\langle F_n, v\rangle_{s_n}\xrightarrow{s_n}\langle F, v\rangle_{\sigma}$ for all $v\in C^{\infty}_0(\Omega)$.

For simplicity, we shall consider the functions $\boldsymbol{a}$ and $b$, used in the different cases of the existence theorems of Section \ref{sec:existence_uniqueness}, with the different structural assumptions independent of the sequences. Nevertheless the corresponding operators $\mathscr{A}_{s_n}$ still depend on the sequences $s_n$. Since we are separating the nonhomogeneous terms $F_n$ and $F$ from the nonlinearities of $\boldsymbol{a}$ and $b$, in their growth conditions \eqref{eq:upperboundForA}, \eqref{eq:upperboundForB}, when $\Omega$ is bounded, and \eqref{eq:upper_bound_for_A_when_omega_arbitrary}, \eqref{eq:upper_bound_for_B_when_omega_arbitrary}, when $\Omega$ is arbitrary, it is not restrictive to assume the terms $\gamma_1$ and $\gamma_2$ equal to zero, which is assumed in this section. In the case of bounded $\Omega$, the powers $q_1$ and $q_2$ in \eqref{eq:constraints_exponents} can depend on $s_n$ because the Sobolev embedding are used in the proof of Theorem \ref{ExistenceFractionalVariationalInequality}. In this section, since the fractional parameter $s$ is allowed to change with $n$, in order to have $q_1$ and $q_2$ independent of $s_n$ we impose
\begin{equation}\label{eq:constraints_exponents_independent_s}
    1<q_1<\min\{p_s^*,p^2/(p-1)\}\quad \mbox{ and } \quad 1<q_2<\min\{p_s^*,p+1\}
\end{equation}
where $0<s^*<s_n$ for all $n\in\N$.

\subsection{The pseudomonotone case with $0<s\leq 1$ and $\Omega$ bounded}
The following Mosco type theorem yields the convergence of solutions in the general situation of Subsection \ref{sec:pseudomonotone_brezis}.

\begin{theorem}\label{theorem:weak_stability_variational_inequality}
    Let $\Omega\subset\R^d$ be a bounded open set, $1<p<+\infty$ and consider a sequence $\{s_n\}\subset(s^*,1]$ with $s^*>0$ and $s_n\to\sigma\in[s^*,1]$. Let $K_n$ and $K$ be convex sets satisfying \eqref{eq:generalized_mosco_convergence} and $F_n={f_0}_n-D^{s_n}\cdot \boldsymbol{f}_n\in \Lambda^{-s_n,p'}(\Omega)$ satisfying \eqref{eq:convergence_data}. If the two Carathéodory functions $\boldsymbol{a}:\R^d\times\R\times\R^d\to\R^d$ and $b:\R^d\times\R\to\R^d$ satisfy the growth conditions \eqref{eq:upperboundForA} and \eqref{eq:upperboundForB} with $q_1$ and $q_2$ as in \eqref{eq:constraints_exponents_independent_s}, the weak coercivity conditions \eqref{eq:weakCoercivity} with $1<q_3<p$, and the monotonicity condition \eqref{eq:monotonicity}, then from the sequence of solutions $\{u_n\}$, with $u_n\in K_n$ satisfying
    \begin{equation}\label{eq:VariationalProblemWeakConvergence}
        \langle -D^{s_n}\cdot\boldsymbol{a}(u_n, D^{s_n} u_n)+b(u_n)-F_n, v-u_n\rangle_{s_n}\geq 0,\quad \forall v\in K_n,
    \end{equation}
    one can extract a subsequence, still denoted by $u_n$, with the property 
    \begin{equation}
        u_n\to u \mbox{ in } \Lambda^{t,p}_0(\Omega), \mbox{ for } 0\leq t<s^*, \quad \mbox{ and } \quad D^{s_n}u_n\rightharpoonup D^\sigma u \mbox{ in } L^p(\R^d;\R^d),
    \end{equation}
    where,  for $F:=f_0-D^\sigma\cdot\boldsymbol{f}$, $u\in K$ solves    \begin{equation}\label{eq:limitVariationalProblemWeakConvergence}
        \langle -D^\sigma\cdot\boldsymbol{a}(u, D^\sigma u)+b(u)-F, v-u\rangle_\sigma\geq 0,\quad \forall v\in K.
    \end{equation}
\end{theorem}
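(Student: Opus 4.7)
The plan is to proceed in three main steps: (i) derive uniform a priori estimates leading to a compactness extraction; (ii) obtain a Minty-type inequality for the limit via monotonicity; (iii) recover the limit variational inequality through convex perturbation.

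\textbf{Step 1 (A priori estimates and compactness).} First, pick any $v_0\in K$ and use \ref{M1} to produce $v_n^0\in K_n$ with $\Pi_{s_n}(v_n^0)\to \Pi_\sigma(v_0)$ in $\L^p(\Omega)$, so in particular $\|D^{s_n}v_n^0\|_{L^p(\R^d;\R^d)}$ and $\|v_n^0\|_{L^p(\Omega)}$ are uniformly bounded. Testing \eqref{eq:VariationalProblemWeakConvergence} with $v=v_n^0$ and combining the weak coercivity \eqref{eq:weakCoercivity}, the growth bounds \eqref{eq:upperboundForA}--\eqref{eq:upperboundForB} with the fixed exponents \eqref{eq:constraints_exponents_independent_s}, the Sobolev embeddings of Proposition \ref{prop:sobolev_embeddings_compact_support}, and the fractional Poincaré inequality of Theorem \ref{FracPoincareInequality} --- all uniform for $s_n\geq s^*>0$ --- yields, since $q_3<p$, the same kind of estimate as in \eqref{eq:estimate_for_coercivity_omega_bounded}, and thus $\sup_n \|D^{s_n}u_n\|_{L^p(\R^d;\R^d)}+\sup_n\|u_n\|_{L^p(\Omega)}<\infty$. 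Theorem \ref{CompactnessFromUniformBoundOnDs} then provides a function $u\in\Lambda^{\sigma,p}_0(\Omega)$ and a subsequence (not relabelled) with $u_n\to u$ in $\Lambda^{t,p}_0(\Omega)$ for every $0\leq t<s^*$ and $D^{s_n}u_n\rightharpoonup D^\sigma u$ in $L^p(\R^d;\R^d)$; a further application of Theorem \ref{CompactnessResultALaBellido} with $t=0$ gives $u_n\to u$ in $L^q(\Omega)$ for every $q<p^*_{s^*}$, hence in particular for $q\in\{q_1,q_2\}$. Property \ref{M2} of the generalized Mosco convergence immediately yields $u\in K$.

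\textbf{Step 2 (Minty inequality for the limit).} For an arbitrary $w\in K$, apply \ref{M1} to obtain $w_n\in K_n$ with $\Pi_{s_n}(w_n)\to\Pi_\sigma(w)$ in $\L^p(\Omega)$. Testing \eqref{eq:VariationalProblemWeakConvergence} with $v=w_n$ and exploiting the monotonicity \eqref{eq:monotonicity} through
\begin{equation*}
    \int_{\R^d}\boldsymbol{a}(u_n,D^{s_n}u_n)\cdot D^{s_n}(w_n-u_n)\,dx \leq \int_{\R^d}\boldsymbol{a}(u_n,D^{s_n}w_n)\cdot D^{s_n}(w_n-u_n)\,dx
\end{equation*}
one obtains
\begin{equation*}
    \int_{\R^d}\boldsymbol{a}(u_n,D^{s_n}w_n)\cdot D^{s_n}(w_n-u_n)\,dx+\int_\Omega b(u_n)(w_n-u_n)\,dx\geq \langle F_n,w_n-u_n\rangle_{s_n}.
\end{equation*}
The Carathéodory property of $\boldsymbol{a}$, the growth \eqref{eq:upperboundForA}, the strong convergences $u_n\to u$ in $L^{q_1}(\Omega)$ and $D^{s_n}w_n\to D^\sigma w$ in $L^p(\R^d;\R^d)$, together with Vitali's convergence theorem, give $\boldsymbol{a}(u_n,D^{s_n}w_n)\to \boldsymbol{a}(u,D^\sigma w)$ in $L^{p'}(\R^d;\R^d)$; analogously $b(u_n)\to b(u)$ in $L^{q_2'}(\Omega)$. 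Pairing these strong convergences with the weak convergence $D^{s_n}(w_n-u_n)\rightharpoonup D^\sigma(w-u)$ in $L^p(\R^d;\R^d)$, and using $(f_0)_n\to f_0$, $\boldsymbol{f}_n\to \boldsymbol{f}$ from \eqref{eq:convergence_data}, one passes to the limit to obtain the Minty inequality
\begin{equation*}
    \int_{\R^d}\boldsymbol{a}(u,D^\sigma w)\cdot D^\sigma(w-u)\,dx+\int_\Omega b(u)(w-u)\,dx\geq \langle F,w-u\rangle_\sigma,\quad \forall w\in K.
\end{equation*}

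\textbf{Step 3 (Minty trick and main obstacle).} For any $v\in K$ and $\varepsilon\in(0,1]$, by convexity $w=u+\varepsilon(v-u)\in K$; inserting this $w$ into the Minty inequality, dividing by $\varepsilon>0$, and letting $\varepsilon\to 0^+$ --- where the growth bound \eqref{eq:upperboundForA} and dominated convergence justify $\boldsymbol{a}(u,D^\sigma u+\varepsilon D^\sigma(v-u))\to \boldsymbol{a}(u,D^\sigma u)$ in $L^{p'}(\R^d;\R^d)$ from the continuity of $\boldsymbol{a}$ in its last variable --- yields exactly \eqref{eq:limitVariationalProblemWeakConvergence}. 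The principal technical obstacle is the passage to the limit in $\int \boldsymbol{a}(u_n,D^{s_n}u_n)\cdot D^{s_n}(\cdot)$ when only weak convergence of $D^{s_n}u_n$ is available, which is precisely circumvented by Minty's monotonicity device; the secondary subtlety is that the test functions $w_n$ live in varying spaces $\Lambda^{s_n,p}_0(\Omega)$, handled by working in the $s$-independent ambient space $\L^p(\Omega)$ through $\Pi_{s_n}$ and the generalized Mosco convergence \eqref{eq:generalized_mosco_convergence}.
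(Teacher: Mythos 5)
Your proof is correct, and it follows a cleaner route than the paper's. Both proofs share the same Step 1 (uniform a priori estimates via a recovery sequence for a fixed $v_0\in K$, then Theorem \ref{CompactnessFromUniformBoundOnDs} to extract a subsequence converging strongly in $\Lambda^{t,p}_0(\Omega)$ for $t<s^*$ and weakly in the gradient, with $u\in K$ from \ref{M2}); this is exactly Lemma \ref{lemma:ConstructionSolutionOmegaBounded}. Where you diverge is in the passage from the $n$-level inequalities to \eqref{eq:limitVariationalProblemWeakConvergence}: you apply the Minty/Browder device \emph{before} the limit, replacing $\boldsymbol{a}(u_n,D^{s_n}u_n)$ by $\boldsymbol{a}(u_n,D^{s_n}w_n)$ via monotonicity in the last variable, pass to the limit with a strong\,$\times$\,weak pairing (the first argument of $\boldsymbol{a}$ being frozen at $u$ by the strong convergence $u_n\to u$ in $L^{q_1}$), and only afterwards introduce the convex perturbation $w=u+\varepsilon(v-u)$ and send $\varepsilon\to 0^+$. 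The paper instead introduces the convex perturbation $u_\varepsilon=(1-\varepsilon)u+\varepsilon v$ from the outset, tests the monotonicity of $\boldsymbol{a}$ against $D^\sigma u_\varepsilon$, derives the intermediate $\liminf$ estimate \eqref{EstimateRelatesSnWithClassical}, establishes a second $\liminf$ inequality \eqref{LiminfInequalityForA} (which re-proves the pseudomonotonicity-type property in the varying-$s$ framework), and only then closes the argument in \eqref{eq:fromLiminfIniequalityToVariationalInequality}. The ingredients are the same — monotonicity of $\boldsymbol{a}(x,r,\cdot)$, strong $L^{q_1}$ and $L^{q_2}$ convergence of $u_n$ via Rellich--Kondrachov below $p^*_{s^*}$, Nemytskii continuity, and \ref{M1}/\ref{M2} — but your reorganisation avoids the second $\liminf$ estimate and is shorter. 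One small imprecision worth tightening: invoking ``Theorem \ref{CompactnessResultALaBellido} with $t=0$'' to get $u_n\to u$ in $L^q(\Omega)$ for $q<p^*_{s^*}$ is a slight abuse, since that theorem is stated for a fixed space $\Lambda^{s,p}_0(\Omega)$; the cleaner justification is that the $u_n$ are uniformly bounded in $\Lambda^{s^*,p}_0(\Omega)$ by Corollary \ref{cor:order_between_Ds}, so the compact embedding $\Lambda^{s^*,p}_0(\Omega)\Subset L^q(\Omega)$ for $q<p^*_{s^*}$ (or, equivalently, the strong convergence in $\Lambda^{t,p}_0(\Omega)$ for $t<\sigma$ already granted by Theorem \ref{CompactnessFromUniformBoundOnDs} together with the Sobolev embedding) yields the claim.
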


To prove this theorem we first find a priori estimates that will allow to extract a convergence subsequence, which limit will be shown to be solution of the limit problem.

\begin{lemma}\label{lemma:ConstructionSolutionOmegaBounded}
    Under the hypotheses of the Theorem \ref{theorem:weak_stability_variational_inequality}, as $s_n\to\sigma\in[s^*,1]$  one can extract from $\{u_n\}$ a subsequence such that $u_n\to u$ in $\Lambda^{t,p}_0(\Omega)$ for $0\leq t<s^*$ and  $D^{s_n}u_n\rightharpoonup D^\sigma u$ in $L^p(\R^d;\R^d)$ for some $u\in K$.
\end{lemma}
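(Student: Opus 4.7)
The plan is to obtain a uniform $L^p$-bound on $D^{s_n} u_n$, apply the compactness theorem \ref{CompactnessFromUniformBoundOnDs} to extract a weakly/strongly convergent subsequence, and then use property \ref{M2} of the generalized Mosco convergence to conclude that the limit lies in $K$.

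\textbf{Step 1: A priori estimate.} Since $K\neq\emptyset$, fix any $v_0\in K$ and, using \ref{M1}, choose a recovery sequence $v_n\in K_n$ with $\Pi_{s_n}(v_n)\to\Pi_\sigma(v_0)$ in $\L^p(\Omega)$; in particular $\|v_n\|_{L^p(\Omega)}$ and $\|D^{s_n}v_n\|_{L^p(\R^d;\R^d)}$ are uniformly bounded. Testing \eqref{eq:VariationalProblemWeakConvergence} with $v=v_n$, the coercivity \eqref{eq:weakCoercivity} together with the growth hypotheses \eqref{eq:upperboundForA}, \eqref{eq:upperboundForB}, H\"older's inequality, the fractional Sobolev embeddings and the fractional Poincar\'e inequality (Theorem~\ref{FracPoincareInequality}, applied with the uniform constant $C_P/s^*$ available since $s_n>s^*>0$) lead, exactly as in the estimate \eqref{eq:estimate_for_coercivity_omega_bounded}, to an inequality of the form
\begin{equation*}
\alpha\|D^{s_n}u_n\|_{L^p(\R^d;\R^d)}^p\leq C_1+C_2\bigl(\|D^{s_n}u_n\|_{L^p(\R^d;\R^d)}^{q_1/p'}+\|D^{s_n}u_n\|_{L^p(\R^d;\R^d)}^{q_2-1}+\|D^{s_n}u_n\|_{L^p(\R^d;\R^d)}^{q_3}+\|D^{s_n}u_n\|_{L^p(\R^d;\R^d)}\bigr),
\end{equation*}
where the constants depend on $\alpha,\beta,k,v_0,F_n$ and on $s^*$ but not on $n$. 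Since $q_1/p'<p$, $q_2-1<p$ and $q_3<p$ (independently of $s_n$), this gives a uniform bound $\|D^{s_n}u_n\|_{L^p(\R^d;\R^d)}\leq M$.

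\textbf{Step 2: Extraction of a convergent subsequence.} With the uniform bound $\sup_n\|D^{s_n}u_n\|_{L^p(\R^d;\R^d)}<\infty$ in hand, Theorem \ref{CompactnessFromUniformBoundOnDs} (together with a diagonal extraction) produces $u\in\Lambda^{\sigma,p}_0(\Omega)$ and a subsequence along which
\begin{equation*}
u_n\to u \text{ in } \Lambda^{t,p}_0(\Omega) \text{ for every } t\in(0,\sigma),\qquad D^{s_n}u_n\rightharpoonup D^\sigma u \text{ in } L^p(\R^d;\R^d).
\end{equation*}
Since $\sigma\geq s^*$, this covers all $t\in(0,s^*)$; the case $t=0$ is included because the Poincar\'e inequality \eqref{eq:poincare_inequality} on the bounded set $\Omega$ gives $\|u_n-u\|_{L^p(\Omega)}\leq \tfrac{C_P}{t}\|D^t(u_n-u)\|_{L^p(\R^d;\R^d)}\to 0$ for any fixed $t\in(0,s^*)$.

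\textbf{Step 3: Membership $u\in K$.} The two convergences in Step 2 say precisely that $\Pi_{s_n}(u_n)=(u_n,D^{s_n}u_n)\rightharpoonup (u,D^\sigma u)=\Pi_\sigma(u)$ in $\L^p(\Omega)=L^p_0(\Omega)\times L^p(\R^d;\R^d)$ (strong convergence in $L^p(\Omega)$ is stronger than weak, and the second component is weak). Since $u_n\in K_n$ and the generalized Mosco convergence \eqref{eq:generalized_mosco_convergence} holds, property \ref{M2} yields $u\in K$, completing the proof.

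The main obstacle is Step~1: ensuring that the a priori bound is uniform in $n$ despite the fact that the Poincar\'e constant degenerates as $s\searrow 0$ and the ambient space $\Lambda^{s_n,p}_0(\Omega)$ varies with $n$. This is precisely why the hypothesis $s^*>0$ is imposed, and why the exponents $q_1,q_2$ must be chosen as in \eqref{eq:constraints_exponents_independent_s} independently of $s_n$ so that the fractional Sobolev embeddings used to control the growth terms are uniform along the sequence.
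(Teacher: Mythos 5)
Your proof is correct and takes essentially the same route as the paper: test the variational inequality against a Mosco recovery sequence $v_n$ for a fixed $v_0\in K$, use the weak coercivity, growth, H\"older/Young, and Poincar\'e (with the uniform constant since $s_n>s^*>0$) to derive a uniform bound on $\|D^{s_n}u_n\|_{L^p(\R^d;\R^d)}$, then invoke Theorem~\ref{CompactnessFromUniformBoundOnDs} and property~\ref{M2} to conclude. The only minor variants are cosmetic: the paper absorbs the $\langle F_n,u_n\rangle$ term via Young's inequality with $\delta$ rather than keeping a linear term, and the paper states the $t=0$ case directly, whereas you deduce it from the Poincar\'e inequality; both are valid.
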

\begin{proof}
    For an arbitrary function $v\in K\subset \Lambda^{\sigma,p}_0(\Omega)$, by assumption, there exists a sequence $\{v_n\}$, with $v_n\in K_n$, such that $\Pi_{s_n}(v_n)\to \Pi_\sigma(v)$ in $\L^p(\R^d)$ as $s_n\to \sigma$. With these functions in \eqref{eq:VariationalProblemWeakConvergence} and the weak coercivity of $\boldsymbol{a}$ and $b$, \eqref{eq:weakCoercivity}, we obtain
	\begin{equation}\label{InequalityForAPrioriBound}
		\begin{aligned}
                &\int_{\R^d}{\boldsymbol{a}(u_n,D^{s_n} u_n)\cdot D^{s_n} v_n}\,dx+\int_{\Omega}{b(u_n)v_n}\,dx+\langle F_n, u_n-v_n\rangle_{s_n}\\
                &\qquad\qquad\geq \int_{\R^d}{\boldsymbol{a}(u_n,D^{s_n} u_n)\cdot D^{s_n} u_n}\,dx+\int_{\Omega}{b(u_n)u_n}\,dx\\
                &\qquad\qquad\geq \alpha \|D^{s_n} u_n\|^p_{L^p(\R^d;\R^d)} -\beta C_{s^*}\|D^{s_n} u_n\|^{q_3}_{L^p(\R^d;\R^d)}-\|k\|_{L^1(\R^d)}.
		\end{aligned}
	\end{equation}
	For the left-hand side of the previous equations, we use the growth conditions of $\boldsymbol{a}$ and $b$, \eqref{eq:upperboundForA} and \eqref{eq:upperboundForB}, respectively, Hölder's inequality, Young's inequality with $\delta$ and Poincaré's inequality \eqref{eq:poincare_inequality}, with $s^*>0$, to obtain
	\begin{equation}\label{UpperBoundAprioriA}
        \begin{aligned}
            &\left|\int_{\R^d}{\boldsymbol{a}(u_n,D^{s_n} u_n)\cdot D^{s_n} v_n}\,dx+\int_\Omega{b(u_n)v_n}\,dx\right|\\
            &\quad\leq C(s^*,v_0)\left(\|D^{s_n} u_n\|^{\frac{q_1}{p'}}_{L^p(\R^d;\R^d)}+\|D^{s_n}u_n\|^{p-1}_{L^p(\R^d;\R^d)}+\|D^{s_n} u_n\|^{(q_2-1)}_{L^p(\R^d;\R^d)}\right)
        \end{aligned}
    \end{equation}
	and
	\begin{equation}\label{UpperBoundAprioriL}
		\begin{aligned}
                |\langle F_n, u_n-v_n\rangle_{s_n}|
                &\leq |\langle {f_0}_n, u_n\rangle_{s_n}|+|\langle {f_0}_n, v_n\rangle_{s_n}|+|\langle \boldsymbol{f}_n, D^{s_n} u_n\rangle_{s_n}|+|\langle \boldsymbol{f}_n, D^{s_n} v_n\rangle_{s_n}|\\
                &\leq \delta\|D^{s_n} u_n\|^{p}_{L^p(\R^d;\R^d)}+C(\delta, M_1, M_2)
		\end{aligned}
	\end{equation}
        where $M_1, M_2\geq 0$ are such that $\|v_n\|_{L^p(\Omega)}+\|D^{s_n} v_n\|_{L^p(\R^d;\R^d)}\leq M_1$, $\|{f_0}_n\|_{L^p(\Omega)}+\|\boldsymbol{f}_n\|_{{L^p}'(\R^d;\R^d)} \leq M_2$, and $\delta>0$ is a sufficiently small positive number. Combining these estimates, with 
        \begin{multline*}
            C(s^*,v_0)\left(\|D^{s_n} u_n\|^{\frac{q_1}{p'}}_{L^p(\R^d;\R^d)}+\|D^{s_n}u_n\|^{p-1}_{L^p(\R^d;\R^d)}+\|D^{s_n} u_n\|^{(q_2-1)}_{L^p(\R^d;\R^d)}\right)\\
            +\beta C_{s^*}\|D^{s_n} u_n\|^{q_3}_{L^p(\R^d;\R^d)}\leq \varepsilon \|D^{s_n} u_n\|^p_{L^p(\R^d;\R^d)} +C(\varepsilon, p, q_1, q_2, q_3, \beta C_{s^*}, C(s^*,v_0)),
        \end{multline*}
        for a sufficiently small $\varepsilon>0$, by recalling from \eqref{eq:constraints_exponents} that $q_1/p^{'}<p$, $q_2-1<p$ and $q_3<p$, we may conclude that $\|D^{s_n} u_n\|_{L^p(\R^d;\R^d)}$ is uniformly bounded by a constant independent of $s_n$.

        As a simple consequence of the uniform bound of $\|D^{s_n} u_n\|_{L^p(\R^d;\R^d)}$ we get, from Theorem \ref{CompactnessFromUniformBoundOnDs}, that there exists a subsequence still denoted by $\{u_n\}$ that converges strongly in $\Lambda^{t,p}_0(\Omega)$, with $0\leq t<s^*\leq\sigma$, to some function $u\in \Lambda^{\sigma,p}_0(\Omega)$ with the additional property that $D^{s_n} u_n\rightharpoonup D^\sigma u$ in $L^p(\R^d;\R^d)$. Since $K_n\subset \Lambda^{s_n,p}_0(\Omega)$ converges in the generalized sense of Mosco to $K\subset \Lambda^{\sigma,p}_0(\Omega)$, then by M2) and by the uniqueness of weak limits, we have that $u\in K$.
\end{proof}

\begin{proof}[Proof of Theorem \ref{theorem:weak_stability_variational_inequality}.]
        Let $u$ be given by Lemma \ref{lemma:ConstructionSolutionOmegaBounded} and $v\in K$. We start by considering the family of functions $u_\varepsilon=(1-\varepsilon)u+\varepsilon v\in K$ with $\varepsilon\in (0,1]$ and a sequence of functions $\{w_n\}$ with the property that $w_n\in K_n$ for each $s_n$ and with $\Pi_{s_n}(w_n)\to \Pi_\sigma(u)$ in $\L^p(\Omega)$ , as $K_n$ converges in the generalized sense of Mosco to $K$. Using $\int_{\R^d}{(\boldsymbol{a}(u_n, D^{s_n}u_n)-\boldsymbol{a}(u_n, D^\sigma u_\varepsilon))\cdot(D^{s_n}u_n-D^\sigma u_\varepsilon))}\geq 0$, with a simple calculation, we obtain
	\begin{align*}
		&\varepsilon\int_{\R^d}{\boldsymbol{a}(u_n,D^{s_n} u_n)\cdot D^\sigma (u-v)}\,dx
		\geq \varepsilon\int_{\R^d}{\boldsymbol{a}(u_n,D^\sigma u_\varepsilon)\cdot D^\sigma(u-v)}\,dx\\
		&\qquad\qquad+\int_{\R^d}{\boldsymbol{a}(u_n,D^\sigma u_\varepsilon)\cdot (D^{s_n} u_n-D^\sigma u)}\,dx
		+\int_{\R^d}{\boldsymbol{a}(u_n,D^{s_n} u_n)\cdot D^{s_n}(w_n-u_n)}\,dx\\
		&\qquad\qquad-\int_{\R^d}{\boldsymbol{a}(u_n,D^{s_n} u_n)\cdot (D^{s_n} w_n- D^\sigma u)}\,dx.
	\end{align*}
Hence, take the inferior limit on both sides of the previous inequality, we obtain
\begin{equation}\label{InequalityProofStability}
    \begin{aligned}
		&\varepsilon\liminf_{n\to\infty}{\int_{\R^d}{\boldsymbol{a}(u_n,D^{s_n} u_n)\cdot D^\sigma(u-v)}\,dx}
		\geq \varepsilon\lim_{n\to\infty}{\int_{\R^d}{\boldsymbol{a}(u_n,D^\sigma u_\varepsilon)\cdot D^\sigma (u-v)}\,dx}\\
		&\qquad\qquad+\lim_{n\to\infty}{\int_{\R^d}{\boldsymbol{a}(u_n,D^\sigma u_\varepsilon)\cdot (D^{s_n} u_n-D^\sigma u)}\,dx}
        +\liminf_{n\to\infty}{\int_{\R^d}{\boldsymbol{a}(u_n,D^{s_n} u_n)\cdot D^{s_n}(w_n-u_n)}\,dx}\\
		&\qquad\qquad-\lim_{n\to\infty}{\int_{\R^d}{\boldsymbol{a}(u_n,D^{s_n} u_n)\cdot (D^{s_n} w_n-D^\sigma u)}\,dx}.
    \end{aligned}
\end{equation}
The limits of the right hand side can be computed easily in the following way. First, with the help of Hölder's inequality together with the facts $\boldsymbol{a}(u_n,D^\sigma u_\varepsilon)\to \boldsymbol{a}(u,D^\sigma u_\varepsilon)$ in ${L^p}'(\R^d\times\R^d)$, $u_n\to u$ in $L^{q_1}(\R^d)$ and $D^{s_n} u_n\rightharpoonup D^\sigma u$ in $L^p(\R^d;\R^d)$, we deduce
\begin{equation}\label{EstimateForStability1}
\lim_{n\to\infty}{\int_{\R^d}{\boldsymbol{a}(u_n,D^\sigma u_\varepsilon)\cdot D^\sigma(u-v)}\,dx}=\int_{\R^d}{\boldsymbol{a}(u,D^\sigma u_\varepsilon)\cdot D^\sigma (u-v)}\,dx,
\end{equation}
and
\begin{equation}\label{EstimateForStability2}
    \begin{aligned}
		&\lim_{n\to\infty}{\left|\int_{\R^d}{\boldsymbol{a}(u_n,D^\sigma u_\varepsilon)\cdot (D^{s_n} u_n-D^\sigma u)}\,dx\right|}\\
		&\qquad\qquad\leq \lim_{n\to\infty}\biggl(\|\boldsymbol{a}(u_n, D^\sigma u_\varepsilon)-\boldsymbol{a}(u, D^\sigma u_\varepsilon)\|_{{L^p}'(\R^d;\R^d)}\|D^{s_n} u_n-D^\sigma u\|_{L^p(\R^d;\R^d)}\\
		&\qquad\qquad\qquad\qquad\qquad\qquad\qquad\qquad+\left|\int_{\R^d}{\boldsymbol{a}(u,D^\sigma u_\varepsilon)\cdot (D^{s_n} u_n-D^\sigma u)}\,dx\right|\biggr)=0.
    \end{aligned}
\end{equation}
In addition, since by the growth condition of $\boldsymbol{a}$, \eqref{eq:upperboundForA}, and the uniform boundedness of $\|u_n\|_{L^p(\R^d;\R^d)}$, and using the fact that $D^{s_n} w_n\to D^\sigma u$ in $L^p(\R^d;\R^d)$ we also obtain
\begin{equation}\label{EstimateForStability3}
	\lim_{n\to\infty}{\int_{\R^d}{\boldsymbol{a}(u_n,D^{s_n} u_n)\cdot (D^{s_n} w_n-D^\sigma u)}\,dx}=0.
\end{equation}
For the inferior limit of the right hand side of \eqref{InequalityProofStability}, we use the fact that each $u_n\in K_n$ is a solution of \eqref{eq:VariationalProblemWeakConvergence} and $w_n\to u$ in $L^p(\R^d)$ to get
\begin{equation}\label{EstimateForStability4}
    \liminf_{n\to\infty}{\int_{\R^d}{\boldsymbol{a}(u_n, D^{s_n} u_n)\cdot D^{s_n}(w_n-u_n)}}\,dx\geq \lim_{n\to\infty}{\langle F, w_{s_n}-u_{s_n}\rangle_{s_n}}=0.
\end{equation}
Applying now \eqref{EstimateForStability1}, \eqref{EstimateForStability2}, \eqref{EstimateForStability3} and \eqref{EstimateForStability4} into \eqref{InequalityProofStability} we obtain an inequality, which divided by $\varepsilon>0$, yields
\begin{equation}\label{EstimateRelatesSnWithClassical}
    \begin{aligned}
        \liminf_{n\to\infty}{\int_{\R^d}{\boldsymbol{a}(u_n,D^{s_n} u_n)\cdot D^\sigma(u-v)}\,dx}
        &\geq \lim_{\varepsilon\to 0}\int_{\R^d}{\boldsymbol{a}(u,D^\sigma u_\varepsilon)\cdot D^\sigma(u-v)}\,dx\\
        &=\int_{\R^d}{\boldsymbol{a}(u,D^\sigma u)\cdot D^\sigma(u-v)}\,dx.
    \end{aligned}
\end{equation}
Using the monotonicity of $\boldsymbol{a}$ with the convergence $\boldsymbol{a}(u_n,D^\sigma u)\to \boldsymbol{a}(u, D^\sigma u)$ in ${L^p}'(\R^d;\R^d)$, we deduce
\begin{equation}\label{LiminfInequalityForA}
	\begin{aligned}
		&\liminf_{n\to\infty}{\int_{\R^d}{\boldsymbol{a}(u_n,D^{s_n} u_n)\cdot (D^{s_n} u_n-D^\sigma v)}\,dx}\\
		&\geq \liminf_{n\to\infty}{\int_{\R^d}{\left(\boldsymbol{a}(u_n,D^{s_n} u_n)-\boldsymbol{a}(u_n,D^\sigma u)\right)\cdot (D^{s_n} u_n-D^\sigma u)}\,dx}\\
		&\qquad\quad+\lim_{n\to\infty}{\int_{\R^d}{\left(\boldsymbol{a}(u_n,D^\sigma u)-\boldsymbol{a}(u,D^\sigma u)\right)\cdot (D^{s_n} u_n-D^\sigma u)}\,dx}\\
		&\qquad\qquad\qquad+\lim_{n\to\infty}{\int_{\R^d}{\boldsymbol{a}(u,D^\sigma u)\cdot (D^{s_n} u_n-D^\sigma u)}\,dx}+\liminf_{n\to\infty}{\int_{\R^d}{\boldsymbol{a}(u_n,D^{s_n} u_n)\cdot D^\sigma(u-v)}\,dx}\\
		&\geq \lim_{n\to\infty}{\int_{\R^d}{\boldsymbol{a}(u_n,D^\sigma u)\cdot (D^{s_n} u_n-D^\sigma u)}\,dx}+\int_{\R^d}{\boldsymbol{a}(u,D^\sigma u)\cdot D^\sigma (u-v)}\,dx\\
		&=\int_{\R^d}{\boldsymbol{a}(u,D^\sigma u)\cdot D^\sigma (u-v)}\,dx,
	\end{aligned}
\end{equation}
by applying the inequality \eqref{EstimateRelatesSnWithClassical}.

On the other hand, by Lemma \ref{lemma:ConstructionSolutionOmegaBounded} the $u_n$ are bounded in $\Lambda^{s^*,p}_0(\Omega)$. By Sobolev embeddings we have then $u_n\rightharpoonup u$ in ${L^{{p^*_{s^*}}}}(\Omega)$ and, by compactness, also $u_n\to u$ in $L^{(q_2-1)({p^*_{s^*}})'}(\Omega)$, since $q_2<p^*_{s^*}=\frac{dp}{d-s^* p}$. This implies, by the growth assumption \eqref{eq:upperboundForB},  $b(u_n)\to b(u)$ in ${L^{({p^*_{s^*}})'}}(\Omega)$, and consequently
\begin{equation}\label{LimitIdentityForB}
	\lim_{n\to\infty}{\int_{\Omega}{b(u_n)(u_n-v)}\,dx}=\int_{\Omega}{b(u)(u-v)}\,dx.
\end{equation}
For each $v\in K$, let $v_n\in K_n$ be such that $\Pi_{s_n}(v_n)\to \Pi_{\sigma}(v)$. Combining the limits \eqref{LiminfInequalityForA} and \eqref{LimitIdentityForB} with the monotonicity and growth condition of $\boldsymbol{a}$, \eqref{eq:monotonicity} and \eqref{eq:upperboundForA}, respectively, and the fact that $u_n$ solves \eqref{eq:VariationalProblemWeakConvergence}, we conclude that $u\in K$ is a solution of \eqref{eq:limitVariationalProblemWeakConvergence}, because
\begin{equation}\label{eq:fromLiminfIniequalityToVariationalInequality}
    \begin{aligned}
        &\int_{\R^d}{\boldsymbol{a}(u,D^\sigma u)\cdot D^\sigma (u-v)\, dx}+\int_{\Omega}{b(u)(u-v)}\,dx\\
        &\qquad\qquad\leq \liminf_{n\to\infty}{\int_{\R^d}{\boldsymbol{a}(u_n,D^{s_n} u_n)\cdot (D^{s_n} u_n-D^\sigma v)\,dx}+\int_{\Omega}{b(u_n)(u_n-v)}\,dx}\\
        &\qquad\qquad\leq \liminf_{n\to\infty}{\left(\int_{\R^d}{\boldsymbol{a}(u_n,D^{s_n} u_n)\cdot (D^{s_n} v_n-D^\sigma v)\,dx}+\int_{\Omega}{b(u_n)(v_n-v)}\,dx-\langle F_n,u_n-v_n\rangle_{s_n}\right)}\\
        &\qquad\qquad=\langle F, u-v\rangle_\sigma, \qquad \forall v\in K.
    \end{aligned}
\end{equation}

\end{proof}

\subsection{The pseudomonotone case with $0<s\leq 1$ and arbitrary $\Omega$}

In order to extend Mosco type results to arbitrary domains for general Leray-Lions operators we develop Browder's argument of Section \ref{sec:existence_browder} in the framework of $\Lambda^{s_n,p}_0(\Omega)$ with $s_n\to\sigma$.

\begin{theorem}\label{theorem:weak_stability_variational_inequality_arbitrary_domains}
    Let $\Omega\subset\R^d$ be an open set, $1<p<+\infty$ and consider a sequence $\{s_n\}\subset(s^*,1]$ with $s^*>0$ and $s_n\to\sigma\in[s^*,1]$. Let $K_n$ and $K$ be convex sets satisfying \eqref{eq:generalized_mosco_convergence} and $F_n={f_0}_n-D^{s_n}\cdot\boldsymbol{f}_n\in \Lambda^{-s_n,p'}(\Omega)$ satisfying \eqref{eq:convergence_data}. If the two Carathéodory functions $\boldsymbol{a}:\R^d\times\R\times\R^d\to\R^d$ and $b:\R^d\times\R\to\R^d$ satisfy the growth conditions \eqref{eq:upper_bound_for_A_when_omega_arbitrary} and \eqref{eq:upper_bound_for_B_when_omega_arbitrary}, the strong coercivity condition \eqref{eq:strongCorcivity} and $\boldsymbol{a}$ is strictly monotone with respect to the last variable as in \eqref{eq:strictMonotonicity}, then, from the sequence of solutions $\{u_n\}$, with $u_n\in K_n$ satisfying
    \begin{equation}\label{eq:VariationalProblemWeakConvergenceArbitraryDomains}
        \langle -D^{s_n}\cdot\boldsymbol{a}(u_n, D^{s_n} u_n))+b(u_n, D^{s_n}u_n)-F_n, v-u_n\rangle_{s_n}\geq 0,\quad \forall v\in K_n,
    \end{equation}
    one can extract a subsequence, still denoted by $\{u_n\}$, satisfying
    \begin{equation*}
        u_n\rightharpoonup u \mbox{ in }\Lambda^{t,p}_0(\Omega), \mbox{ for } 0\leq t\leq s^*, \quad \mbox{ and } \quad D^{s_n}u_n\rightharpoonup D^\sigma u \mbox{ in } L^p(\R^d;\R^d)
    \end{equation*}
    where, for $F=f_0-D^\sigma\cdot\boldsymbol{f}$, $u\in K$ solves
    \begin{equation}\label{eq:limitVariationalProblemWeakConvergenceArbitraryDomains}
        \langle -D^\sigma\cdot\boldsymbol{a}(u, D^\sigma u)+b(u, D^\sigma u)-F, v-u\rangle_\sigma\geq 0,\quad \forall v\in K.
    \end{equation}
\end{theorem}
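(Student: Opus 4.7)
The plan is to combine the stability blueprint of Theorem \ref{theorem:weak_stability_variational_inequality} with the Browder-type monotonicity argument used in Lemma \ref{lemma:pseudomonotone_arbitrary_domain}, since for unbounded $\Omega$ neither Poincaré nor global Rellich--Kondrachov are available. First I would fix $v_0\in K$ and, invoking property M1, pick $v_n\in K_n$ with $\Pi_{s_n}(v_n)\to \Pi_\sigma(v_0)$ in $\L^p(\Omega)$. Testing \eqref{eq:VariationalProblemWeakConvergenceArbitraryDomains} at $v_n$ and combining strong coercivity \eqref{eq:strongCorcivity} with the growth bounds \eqref{eq:upper_bound_for_A_when_omega_arbitrary}--\eqref{eq:upper_bound_for_B_when_omega_arbitrary} and Young's inequality gives
$$\alpha'\|D^{s_n}u_n\|_{L^p(\R^d;\R^d)}^p + \beta'\|u_n\|_{L^p(\R^d)}^p \le C$$
uniformly in $n$. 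The Gagliardo--Nirenberg inequality \eqref{eq:gagliardo_nirenberg_inequality_fractional_gradients} applied with the triple $(0,t,s_n)$ then yields a uniform bound on $\|D^t u_n\|_{L^p}$ for every $t\le s^*$. Passing to a subsequence, $u_n\rightharpoonup u$ in $\Lambda^{t,p}_0(\Omega)$ for each such $t$ and $D^{s_n}u_n\rightharpoonup \chi$ in $L^p(\R^d;\R^d)$; duality against $\Phi\in C^\infty_c(\R^d;\R^d)$, together with the continuity $D^{s_n}\cdot\Phi\to D^\sigma\cdot\Phi$ in $L^{p'}$ from Proposition \ref{prop:continuous_dependence_fractional_gradiente_on_s}, identifies $\chi=D^\sigma u$; property M2 then forces $u\in K$.

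The heart of the proof is adapting Browder's argument to obtain $D^{s_n}u_n\to D^\sigma u$ a.e. A diagonal extraction based on the local Rellich--Kondrachov Proposition \ref{prop:local_rellich_kondrachov} on an exhaustion $\omega_k\nearrow \R^d$ gives $u_n\to u$ a.e.\ and in $L^p_{\mathrm{loc}}(\R^d)$. I would then pick $w_n\in K_n$ with $\Pi_{s_n}(w_n)\to\Pi_\sigma(u)$ from M1 and test \eqref{eq:VariationalProblemWeakConvergenceArbitraryDomains} at $v=w_n$; the right-hand side $\langle F_n, u_n-w_n\rangle_{s_n}$ tends to $0$ via \eqref{eq:convergence_data} (strong convergence of $f_{0,n},\boldsymbol{f}_n$ paired with the weakly null $(u_n-w_n, D^{s_n}(u_n-w_n))$). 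For the non-negative Leray--Lions integrand
$$P_n:=(\boldsymbol{a}(u_n, D^{s_n}u_n)-\boldsymbol{a}(u_n, D^\sigma u))\cdot(D^{s_n}u_n-D^\sigma u),$$
I would use the decomposition
$$\int_{\R^d} P_n = \int \boldsymbol{a}(u_n, D^{s_n}u_n)\cdot D^{s_n}(u_n-w_n) + \int \boldsymbol{a}(u_n, D^{s_n}u_n)\cdot(D^{s_n}w_n-D^\sigma u) - \int \boldsymbol{a}(u_n, D^\sigma u)\cdot(D^{s_n}u_n-D^\sigma u).$$
The first summand is bounded above by $\langle F_n, u_n-w_n\rangle_{s_n} - \int b(u_n, D^{s_n}u_n)(u_n-w_n)$ via the variational inequality; the second vanishes by strong $L^p$-convergence of $D^{s_n}w_n$ and the $L^{p'}$-boundedness of $\boldsymbol{a}(u_n, D^{s_n}u_n)$; the third vanishes by Vitali (a.e.\ convergence of $u_n$ plus the growth \eqref{eq:upper_bound_for_A_when_omega_arbitrary} yield $\boldsymbol{a}(u_n, D^\sigma u)\to \boldsymbol{a}(u, D^\sigma u)$ in $L^{p'}$) paired with $D^{s_n}u_n-D^\sigma u\rightharpoonup 0$. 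Hence $\limsup_n\int P_n\le -\liminf_n \int b(u_n, D^{s_n}u_n)(u_n-w_n)$, and once this remainder is controlled a further subsequence yields $P_n\to 0$ a.e.; strict monotonicity \eqref{eq:strictMonotonicity} then forces $D^{s_n}u_n\to D^\sigma u$ a.e.

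The principal obstacle, and the technically delicate step, is precisely showing that $\int b(u_n, D^{s_n}u_n)(u_n-w_n)\to 0$ while $b$ still depends on the merely weakly convergent gradient. My strategy is to split $u_n-w_n = (u_n-u)+(u-w_n)$: the second piece is $L^p(\R^d)$-strongly null, so it pairs with the $L^{p'}$-bounded sequence $b(u_n, D^{s_n}u_n)$ to give $0$; the first piece is handled on each $\omega_k$ by the local strong $L^p$-convergence of $u_n\to u$ granted by Proposition \ref{prop:local_rellich_kondrachov}, while the exterior tail is controlled through tightness extracted from the strong coercivity $\beta>0$ in \eqref{eq:strongCorcivity} combined with the $L^{p'}$-equi-integrability of $b(u_n, D^{s_n}u_n)$ implied by \eqref{eq:upper_bound_for_B_when_omega_arbitrary}.

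Once the a.e.\ convergence $D^{s_n}u_n\to D^\sigma u$ is secured, another application of Vitali on compactly supported test functions, combined with uniqueness of weak limits in the reflexive $L^{p'}$, delivers $\boldsymbol{a}(u_n, D^{s_n}u_n)\rightharpoonup \boldsymbol{a}(u, D^\sigma u)$ and $b(u_n, D^{s_n}u_n)\rightharpoonup b(u, D^\sigma u)$ in $L^{p'}$. Finally, for any $v\in K$, testing \eqref{eq:VariationalProblemWeakConvergenceArbitraryDomains} against the M1-approximant $v_n\in K_n$ of $v$ and passing to the limit---using Fatou's lemma on the coercive terms $\int \boldsymbol{a}(u_n, D^{s_n}u_n)\cdot D^{s_n}u_n$ and $\int b(u_n, D^{s_n}u_n)u_n$, which become nonnegative after adding $\int k(x)$ thanks to \eqref{eq:strongCorcivity}---yields \eqref{eq:limitVariationalProblemWeakConvergenceArbitraryDomains}.
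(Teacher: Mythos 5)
Your skeleton broadly follows Browder's blueprint, and the a priori bounds, the diagonal/local Rellich--Kondrachov extraction, and the identification of $D^\sigma u$ via duality against $C^\infty_c$ fields are all sound. But there is a genuine gap at the step you yourself flag as delicate: the claim that
\[
\int_{\R^d} b(u_n, D^{s_n}u_n)(u_n - u)\,dx \;\longrightarrow\; 0
\]
with the tail $\int_{\R^d\setminus\omega_k}$ ``controlled through tightness extracted from the strong coercivity $\beta>0$''. Strong coercivity only yields a uniform bound $\sup_n\|u_n\|_{L^p(\R^d)}<\infty$; it does not prevent $L^p$-mass of $u_n$ from escaping to infinity, so for fixed $\epsilon$ there need not exist a $k$ with $\|u_n\|_{L^p(\R^d\setminus\omega_k)}<\epsilon$ uniformly in $n$. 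Since $b(u_n,D^{s_n}u_n)$ is only $L^{p'}$-bounded (not strongly convergent) and $u_n-u$ only converges weakly in $L^p(\R^d)$ and strongly on bounded sets, the duality pairing over the tail can remain bounded away from zero. The $L^{p'}$-equi-integrability of $b(u_n,D^{s_n}u_n)$ you invoke does not supply tightness either, for the same reason. So the inequality $\limsup_n\int P_n\le 0$ is not established, and the a.e.\ convergence of $D^{s_n}u_n$ does not follow.

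This is precisely what the paper's choice of integrand is engineered to avoid. Rather than isolating only the Leray--Lions term $P_n = q_n := (\boldsymbol{a}(u_n,D^{s_n}u_n)-\boldsymbol{a}(u_n,D^\sigma u))\cdot(D^{s_n}u_n-D^\sigma u)$, the paper works with the full
\[
p_n \;=\; q_n + r_n + t_n,\qquad t_n = (b(u_n,D^{s_n}u_n) - b(u,D^\sigma u))(u_n-u),
\]
equivalently $p_n = \boldsymbol{a}(u_n,D^{s_n}u_n)\cdot D^{s_n}u_n + b(u_n,D^{s_n}u_n)u_n - g^1_n - g^0_n$, where every term in $g^0_n,g^1_n$ pairs a \emph{fixed} $L^p$ or $L^{p'}$ function against a \emph{bounded} dual sequence; this structure furnishes both uniform integrability and tightness for the dominating function $|k|+|g^0_n|+|g^1_n|\geq p_n^-$, so Vitali applies on the unbounded domain. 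Crucially, the inequality $\limsup_n\int p_n\le 0$ is obtained from the variational inequality tested at the Mosco approximants $w_n$ of $u$ without ever needing $\int b(u_n,D^{s_n}u_n)(u_n-u)\to 0$: the only $b$-term that survives is $\int b(u_n,D^{s_n}u_n)(w_n-u)$, which vanishes because $w_n\to u$ \emph{strongly} in $L^p$. To repair your argument you should therefore carry the $b$-contribution inside the monotone integrand from the start (i.e.\ work with $p_n$, not $P_n$), and use the pointwise coercivity bound \eqref{eq:pointwiseLowerBoundPs} on $\{p_n^-\!>\!0\}$ to get the pointwise a.e.\ bound on $|D^{s_n}u_n|$ that then drives $r_n,t_n\to 0$ a.e.
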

\begin{remark}\label{rem:local_strong_convergence_browder}
    The main difference between Theorem \ref{theorem:weak_stability_variational_inequality} and Theorem \ref{theorem:weak_stability_variational_inequality_arbitrary_domains} is the weak convergence of $u_n$ to $u$ in $\Lambda^{t,p}_0(\Omega)$, $0\leq t<s^*$, because $\Omega$ can be unbounded in the second one. In this framework we still have the local strong convergence of the sequence $u_n$, using Rellich-Kondrachov's Proposition \ref{prop:local_rellich_kondrachov}. In fact, if $s^*p\leq d$, for any subset with finite measure $\omega\subset\Omega$, $u_n\to u$ in $L^q(\omega)$ for $q<p^*_{s^*}$ $(p^*_{s^*}=+\infty$ if $s^*p=d)$. On the other hand, if $s^*p> d$, for any compact subset $\Tilde{K}\subset\Omega$, $u_n\to u$ in $C^{0,\beta}(\Tilde{K})$ for $\beta<s^*-\frac{d}{p}$.
\end{remark}

\begin{lemma}\label{lemma:ConstructionSolutionOmegaUnbounded}
	Under the hypotheses of the Theorem \ref{theorem:weak_stability_variational_inequality_arbitrary_domains},  we can extract a subsequence from $\{u_n\}$ (still denoted by $u_n$) such that $\Pi_{s_n}(u_n)\rightharpoonup \Pi_\sigma(u)$ in $\L^p(\Omega)$ for some $u\in K$.
\end{lemma}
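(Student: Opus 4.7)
The plan is to follow the Browder strategy of Subsection \ref{sec:existence_browder} but now in a setting where the fractional order varies with $n$. The strong coercivity \eqref{eq:strongCorcivity} will play the role of the fractional Poincar\'e inequality (unavailable for unbounded $\Omega$), providing a simultaneous uniform control of $\|u_n\|_{L^p(\Omega)}$ and $\|D^{s_n}u_n\|_{L^p(\R^d;\R^d)}$ without bootstrapping through an inclusion of Lions--Calder\'on spaces.

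\emph{A priori bounds.} Fix any $v\in K$ and, using M1), select $v_n\in K_n$ with $\Pi_{s_n}(v_n)\to\Pi_\sigma(v)$ in $\L^p(\Omega)$; in particular $\|v_n\|_{L^p(\Omega)}$ and $\|D^{s_n}v_n\|_{L^p(\R^d;\R^d)}$ are bounded uniformly in $n$. Testing \eqref{eq:VariationalProblemWeakConvergenceArbitraryDomains} with $v_n$ and invoking \eqref{eq:strongCorcivity} on the left side, together with the growth conditions \eqref{eq:upper_bound_for_A_when_omega_arbitrary}--\eqref{eq:upper_bound_for_B_when_omega_arbitrary}, H\"older's inequality and the uniform bound on $F_n$ coming from \eqref{eq:convergence_data} on the right side, leads to an estimate of the form
\[
\alpha\|D^{s_n}u_n\|^{p}_{L^p(\R^d;\R^d)}+\beta\|u_n\|^{p}_{L^p(\Omega)}\leq C\bigl(1+\|u_n\|^{p-1}_{L^p(\Omega)}+\|D^{s_n}u_n\|^{p-1}_{L^p(\R^d;\R^d)}\bigr),
\]
with $C$ independent of $n$. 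A standard absorption via Young's inequality then yields a uniform bound $\|u_n\|_{L^p(\Omega)}+\|D^{s_n}u_n\|_{L^p(\R^d;\R^d)}\leq M$.

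\emph{Weak extraction and identification of the limit gradient.} By reflexivity of $L^p$, along a subsequence (still denoted $\{u_n\}$) we have $u_n\rightharpoonup u$ in $L^p_0(\Omega)$ and $D^{s_n}u_n\rightharpoonup \xi$ in $L^p(\R^d;\R^d)$ for some $u\in L^p_0(\Omega)$ and $\xi\in L^p(\R^d;\R^d)$. The decisive step is to identify $\xi$ with $D^\sigma u$. For any $\varphi\in C^\infty_c(\R^d;\R^d)$, Proposition \ref{prop:duality_gradient_divergent} gives
\[
\int_{\R^d}D^{s_n}u_n\cdot\varphi\,dx=-\int_{\R^d}u_n\, D^{s_n}\!\cdot\varphi\,dx.
\]
Since each component $\varphi_j$ lies in $C^\infty_c(\R^d)\subset\Lambda^{1,p'}(\R^d)$ and $\{s_n\}\subset[s^*,1]$ with $s^*>0$, Proposition \ref{prop:continuous_dependence_fractional_gradiente_on_s} yields $D^{s_n}\!\cdot\varphi\to D^\sigma\!\cdot\varphi$ strongly in $L^{p'}(\R^d)$; combining this with the weak convergences of $u_n$ and $D^{s_n}u_n$ and passing to the limit, we obtain
\[
\int_{\R^d}\xi\cdot\varphi\,dx=-\int_{\R^d}u\, D^\sigma\!\cdot\varphi\,dx\qquad\forall\varphi\in C^\infty_c(\R^d;\R^d),
\]
which is precisely the defining relation \eqref{eq:def_weak_fractional_gradient} for the weak $\sigma$-fractional gradient. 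Hence $\xi=D^\sigma u\in L^p(\R^d;\R^d)$ and $\Pi_{s_n}(u_n)\rightharpoonup\Pi_\sigma(u)$ in $\L^p(\Omega)$.

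\emph{Conclusion.} Condition M2) of the generalized Mosco convergence $K_n\xrightarrow{s_n-M} K$ then forces $u\in K\subset\Lambda^{\sigma,p}_0(\Omega)$, completing the argument. The main technical obstacle is the identification $\xi=D^\sigma u$ in the second step, which hinges on the strong $L^{p'}$-continuity in $s$ of the fractional divergence applied to smooth compactly supported test functions (Proposition \ref{prop:continuous_dependence_fractional_gradiente_on_s}); this is precisely where the standing hypothesis $s^*>0$ is exploited, so that all $s_n$ stay bounded away from $0$. All remaining ingredients are straightforward adaptations of the bounded-$\Omega$ argument in Lemma \ref{lemma:ConstructionSolutionOmegaBounded}, with the coercive lower-order term $\beta|r|^p$ replacing the role of Poincar\'e's inequality.
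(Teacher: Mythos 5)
Your proof is correct and, in substance, matches the paper's intended argument. The paper's own proof of this lemma is terse — it only spells out the modified a priori estimate coming from the strong coercivity, and delegates the rest to the phrase \emph{"we argue in the same way with the necessary adaptations"} — but the omitted steps are exactly the ones you supply: weak extraction by reflexivity of $\L^p(\Omega)$, identification of the weak limit of $D^{s_n}u_n$ with $D^\sigma u$ via duality \eqref{eq:def_weak_fractional_gradient} and the $s$-continuity of the fractional divergence on test functions, and then M2) to conclude $u\in K$. This identification device is in fact what the authors write out explicitly in the proof of Theorem \ref{thm:general_stability_s_to_0} (see equation \eqref{eq:weak_continuity_gradient_through_divergence}), so you have reconstructed precisely what the paper's "necessary adaptations" refer to, correctly replacing the compactness machinery of Theorem \ref{CompactnessFromUniformBoundOnDs} (unavailable on unbounded $\Omega$) by the purely weak argument.

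One small remark of yours is slightly off: you state that the hypothesis $s^*>0$ is "precisely" what makes the identification $\xi=D^\sigma u$ work, by keeping $s_n$ bounded away from $0$. But Proposition \ref{prop:continuous_dependence_fractional_gradiente_on_s} already gives strong continuity of $t\mapsto D^t\varphi$ in $L^{p'}$ for $p'>1$ \emph{even when} $t\to 0$ (indeed the paper reuses this identification in Theorem \ref{thm:general_stability_s_to_0}, where $\sigma=0$ is allowed). The restriction $s^*>0$ in Theorem \ref{theorem:weak_stability_variational_inequality_arbitrary_domains} is required by the Browder pseudomonotonicity argument downstream, not by the passage to the limit in your step two. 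This does not affect the validity of your proof of the lemma, only the accompanying commentary.
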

\begin{proof}
    The proof of this Lemma is similar to the proof of Lemma \ref{lemma:ConstructionSolutionOmegaBounded}. In fact, for \eqref{InequalityForAPrioriBound}, the only difference is that we have to the strong coercivity hypothesis made on $\boldsymbol{a}$ and $b$, which gives us the estimate
    \begin{multline*}
        \int_{\R^d}{\boldsymbol{a}(u_n,D^{s_n} u_n)\cdot D^{s_n} v_n}\,dx+\int_\Omega{b(u_n)v_n}\,dx+\langle F_n, u_n-v_n\rangle_{s_n}\\
        \geq \alpha \|D^{s_n} u_n\|^p_{L^p(\R^d;\R^d)}+\beta\|u_n\|^p_{L^p(\Omega)}-\|k\|_{L^1(\R^d)}.
    \end{multline*}
    For the estimates \eqref{UpperBoundAprioriA} and \eqref{UpperBoundAprioriL}, we argue in the same way now with the norm \eqref{eq:norm_lions_calderon_spaces} with the necessary adaptations.
\end{proof}

\begin{proof}[Proof of Theorem \ref{theorem:weak_stability_variational_inequality_arbitrary_domains}.]
	Since we are dealing with potentially unbounded domains, we use a local version of the Rellich-Kondrachov theorem and we consider an increasing sequence of bounded sets $\{\omega_j\}\subset\R^d$ such that $\Omega=\bigcup_{j=1}^\infty{\omega_j}$. Then, $\Lambda^{s_n,p}_0(\Omega)\subset\Lambda^{s^*,p}_0(\Omega)\Subset L^p(\omega_j)$ for each $j\in\N$. Using a diagonal argument, we can construct a subsequence, still denoted by $\{u_n\}$, such that $u_n\to u$ in $L^p(\omega_j)$ for any $j\in\N$. Consequently, we have, up to a subsequence, that $u_n(x)\to u(x)$ a.e. $x\in\Omega$. 

    Let us now consider the function
    \begin{equation}\label{eq:pInTermsOfG}
        p_n(x)=\boldsymbol{a}(u_n, D^{s_n}u_n)\cdot D^{s_n} u_n(x)+b(u_n, D^{s_n} u_n)u_n(x)-g_n^1(x)-g_n^0(x).
    \end{equation}
    where
    \begin{equation*}
        g_n^1(x)=\boldsymbol{a}(u, D^\sigma u)\cdot(D^{s_n} u_n-D^\sigma u)(x)+\boldsymbol{a}(u_n, D^{s_n} u_n)\cdot D^\sigma u(x),
    \end{equation*}
    and
    \begin{equation*}
        g_n^0(x)=b(u, D^\sigma u)(u_n-u)(x)+b(u_n, D^{s_n} u_n)u(x).
    \end{equation*}

    Following \cite{browder1977PseudoMonotone} we study the convergence of $p_n$, by using Vitalli's theorem to show first $p_n^-\to 0$ in $L^1(\R^d)$, and afterwards deducing $p_n\to 0$ a.e. in $\R^d$.
	
    We start by observing that the $g_n^1$ and $g_n^0$ are both equi-integrable in $\R^d$. This is an easy consequence of the growth conditions of $\boldsymbol{a}$ and $b$, \eqref{eq:upper_bound_for_A_when_omega_arbitrary} and \eqref{eq:upper_bound_for_B_when_omega_arbitrary}, respectively, and the a priori estimates on $u_n$ and $D^{s_n} u_n$. 
 
    Moreover, the growth conditions and the strong coercivity hypothesis applied to \eqref{eq:pInTermsOfG} imply the existence of constants $C_2, C_3>0$  and a function $\tilde{k}\in L^1(\R^d)$ such that 
    \begin{equation}\label{eq:pointwiseLowerBoundPs}
		p_n (x)\geq C_2(|u_n(x)|^p+|D^{s_n} u_n(x)|^p)-C_3(|u(x)|^p+|D^\sigma u(x)|^p+\tilde{k}(x)).
    \end{equation}
    So, restricting to $x\in\mathrm{supp}(p_n^-)$, we have
    \begin{equation}\label{eq:pointwiseUniformEstimateInS}
		C_2 |D^{s_n} u_n(x)|^p\leq C_2(|D^{s_n} u_n(x)|^p+|u_n|^p(x))\leq C_3(|u(x)|^p+|D^\sigma u(x)|^p)+\tilde{k}(x)= \rho(x)
	\end{equation}
	where $\rho$ is a function independent of $s$ and finite a.e. in $\R^d$ (otherwise it would not be integrable). 
	
	Writing $p_n(x)=q_n(x)+r_n(x)+t_n(x)$, with
	\begin{align*}
		&q_n(x) =(\boldsymbol{a}(u_n, D^{s_n} u_n)-\boldsymbol{a}(u_n, D^\sigma u))\cdot(D^{s_n} u_n -D^\sigma u)(x)\\
		&r_n(x) =(\boldsymbol{a}(u_n(x),D^\sigma u)-\boldsymbol{a}(u(x),D^\sigma u))\cdot(D^{s_n} u_n-D^\sigma u)(x)\\
		&t_n(x) =(b(u_n, D^{s_n}u_n)-b(u, D^\sigma u)(x))(u_n(x)-u(x)),
	\end{align*}
	and denoting $\chi_s=\chi_{\{x:\,p_n^->0\}}$, we can write a.e. $x\in\R^d$.
	\begin{equation}\label{eq:decompositionPsWithCharacteristics}
		-p_n^-=\chi_s q_n+\chi_s r_n+\chi_s t_n.
	\end{equation}
	By the monotonicity of $\boldsymbol{a}$ we have $q_n(x)\geq 0$ and so $\chi_s q_n\geq 0$. Since $\boldsymbol{a}$ is a Carathéodory function and \eqref{eq:pointwiseUniformEstimateInS} holds in $\{x:\,p_n^->0\}$ we have $\chi_s r_n\to 0$, as $u_n(x)\to u(x)$ a.e. Similarly, the growth conditions of $b$ with \eqref{eq:pointwiseUniformEstimateInS} imply $\chi_s t_n\to 0$ a.e. in $\R^d$. Consequently we get $p_n^-\to 0$ a.e. in $\R^d$.

	To prove $p_n^-\to 0$ in $L^1(\R^d)$, we show that $p_n^-$ is equi-integrable and apply Vitali's convergence theorem. In fact, the strong coercivity assumption on $\boldsymbol{a}$ and $b$, \eqref{eq:strongCorcivity}, and \eqref{eq:pInTermsOfG}, yields the estimate
	\begin{equation*}
		p_n(x)\geq\alpha |D^{s_n} u_n(x)|^p+\beta|u_n(x)|^p+k_2(x)-g_n^1(x)-g_n^0(x)\geq -|k_2(x)|-|g_n^1(x)|-|g_n^0(x)|,
	\end{equation*}
	and consequently,
	\begin{equation*}
		0\leq p_n^-(x)\leq |k_2(x)|+|g_n^1(x)|+|g_n^0(x)|, \quad \mbox{ a.e. in } \R^d.
	\end{equation*}
	This implies that $p_n^-$ is equi-integrable, since both $\{g_n^0\}$ and $\{g_n^1\}$ are equi-integrable.

    Now, we can easily prove $p_n(x)\to 0$ for a.e. $x\in\R^d$. Indeed, using the monotonicity of $\boldsymbol{a}$ in the last variable, we have $p_n\leq 0$, and, since $p_n^-\to 0$ in $L^1(\R^d)$, we get
    \begin{equation*}
		0\leq \limsup{\int_{\R^d}{p_n^+}\,dx}=\limsup{\int_{\R^d}{p_n}\,dx}-\lim{\int_{\R^d}{p_n^-}\,dx}\leq 0.
    \end{equation*}
    Therefore $p_n^+\to 0$ in $L^1(\R^d)$, and consequently, up to a subsequence, also $p_n(x)\to 0$ for a.e. $x\in\R^d$. This pointwise convergence with the estimate \eqref{eq:pointwiseLowerBoundPs} yields a uniform bound on $|D^{s_n} u_n(x)|^p$ a.e. in $\R^d$ and we can define a function $\xi(x)$ as the a.e. pointwise limit of $D^{s_n} u_n(x)$ as $n\to\infty$. Moreover, since $p_n(x)\to 0$, $r_n(x)\to 0$ and $t_n(x)\to 0$ for a.e. $x\in\R^d$ as $n\to\infty$, we have 
    \begin{equation*}
		\lim{p_n(x)}=\lim{q_n(x)}=(\boldsymbol{a}(x,u(x), \xi(x))-\boldsymbol{a}(x,u(x),D^\sigma u(x)))\cdot (\xi(x)-D^\sigma u(x))=0,
    \end{equation*}
    which, together with the strict monotonicity of $\boldsymbol{a}$ in the third variable, implies $\xi=D^\sigma u$ a.e. $x\in\R^d$.

    We rewrite 
     \begin{equation*}
        p_n=(\boldsymbol{a}(u_n, D^{s_n} u_n)-\boldsymbol{a}(u, D^\sigma u))\cdot (D^{s_n}u_n- D^\sigma u)+(b(u_n, D^{s_n}u_n)-b(u, D^\sigma u))(u_n-u)
    \end{equation*}
    and use $u_n\to u$ a.e. in $\Omega$ and $D^{s_n} u_n\to D^\sigma u$ a.e. in $\R^d$ to obtain
    \begin{align*}
        &\liminf_{n\to\infty}{\left(\int_{\R^d}{\boldsymbol{a}(u_n,D^{s_n} u_n)\cdot (D^{s_n} u_n-D^\sigma v)}\,dx+\int_\Omega{b(u_n, D^{s_n}u_n)(u_n-v)}\,dx\right)}\\
        &\geq\lim_{n\to\infty}{\left(\int_{\R^d}{p_n}\,dx+\int_{\R^d}{\boldsymbol{a}(u, D^\sigma u)\cdot (D^{s_n}u_n- D^\sigma u)}\,dx+\int_\Omega{b(u, D^\sigma u)(u_n-u)}\,dx\right)}\\
        &\qquad\qquad\qquad\qquad+\liminf_{n\to\infty}{\left(\int_{\R^d}{\boldsymbol{a}(u_n,D^{s_n} u_n)\cdot D^\sigma (u-v)}\,dx+\int_\Omega{b(u_n, D^{s_n} u_n)(u-v)}\,dx\right)}\\
        &\geq \int_{\R^d}{\lim_{n\to\infty}{(\boldsymbol{a}(u_n,D^{s_n} u_n))}\cdot D^\sigma (u-v)}\,dx+\int_\Omega{\lim_{n\to\infty}{(b(u_n, D^{s_n}u_n))}(u-v)}\,dx\\
        &=\int_{\R^d}{\boldsymbol{a}(u,D^\sigma u)\cdot D^\sigma (u-v)+b(u, D^\sigma u)(u-v)}\,dx.
    \end{align*}
    We conclude the proof as in \eqref{eq:fromLiminfIniequalityToVariationalInequality}.
\end{proof}

\subsection{The monotone case with $0\leq s\leq 1$ and $\Omega$ arbitrary}

Without the strict monotonicity of Subsection \ref{sec:existence_monotone}, we can obtain a Mosco type theorem with a weak convergence of solutions in the case of operators simply monotone, up to the lower limit $s=0$.

\begin{theorem}\label{thm:general_stability_s_to_0}
    Let $\Omega\subset\R^d$ be an open set, $1<p<+\infty$ and consider a sequence $\{s_n\}\subset[0,1]$ with $s_n\to\sigma\in[0,1]$. Let $K_n$ and $K$ be convex sets satisfying \eqref{eq:generalized_mosco_convergence} and $F_n={f_0}_n-D^{s_n}\cdot\boldsymbol{f}_n\in \Lambda^{-s_n,p'}(\Omega)$ satisfying \eqref{eq:convergence_data}. If the two Carathéodory functions $\boldsymbol{a}=\boldsymbol{a}(x,\xi):\R^d\times\R^d\to\R^d$ and $b=b(x,r):\R^d\times\R\to\R^d$, both monotone with respect to their last variable, satisfy the growth conditions \eqref{eq:upper_bound_for_A_when_omega_arbitrary} and \eqref{eq:upper_bound_for_B_when_omega_arbitrary}, respectively, and the strong coercivity hypothesis \eqref{eq:strongCorcivity}, then from the sequence of solutions $\{u_n\}$, with $u_n\in K_n$ satisfying
    \begin{equation}\label{eq:approximating_problem_s_to_0}
        \langle -D^{s_n}\cdot\boldsymbol{a}( D^{s_n} u_n)+b(u_n)-F_n, v-u_n\rangle_{s_n}\geq 0,\quad \forall v\in K_n,
    \end{equation}
    one can extract a subsequence, still denoted by $\{u_n\}$, with the property that
    \begin{equation}\label{eq:convergence_solutions_s_to_0}
        u_n\rightharpoonup u \mbox{ in } L^p(\Omega) \quad \mbox{ and } \quad D^{s_n}u_n\rightharpoonup D^\sigma u \mbox{ in } L^p(\R^d;\R^d)
    \end{equation}
    where, with $F=f-D^\sigma\cdot\boldsymbol{f}$, $u\in K$ solves
    \begin{equation}\label{eq:LimitProblemSTo0}
        \langle -D^\sigma\cdot\boldsymbol{a}( D^\sigma u)+b(u)-F, v-u\rangle_\sigma\geq 0,\quad \forall v\in K.
    \end{equation}
\end{theorem}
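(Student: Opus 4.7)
The plan is to follow the classical Minty-type argument for monotone variational inequalities, adapted to the fact that the ambient spaces $\Lambda^{s_n,p}_0(\Omega)$ vary with $n$. The advantage of the monotone (as opposed to strictly monotone) setting is that we will never need to pass to a.e.\ pointwise convergences of $D^{s_n}u_n$, which is precisely what allows the lower limit case $\sigma=0$, where $D^0=-R$ is the Riesz transform, to be treated on the same footing as $\sigma\in(0,1]$.

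First, I would derive uniform a priori estimates exactly as in Lemma~\ref{lemma:ConstructionSolutionOmegaUnbounded}: for a fixed $v_0\in K$ I pick a recovery sequence $v_n\in K_n$ with $\Pi_{s_n}(v_n)\to\Pi_\sigma(v_0)$ in $\L^p(\Omega)$ via M1), test \eqref{eq:approximating_problem_s_to_0} against $v_n$, and combine the strong coercivity \eqref{eq:strongCorcivity} (with $\alpha,\beta>0$), the growth conditions \eqref{eq:upper_bound_for_A_when_omega_arbitrary}--\eqref{eq:upper_bound_for_B_when_omega_arbitrary} and Young's inequality to obtain
\begin{equation*}
    \alpha\|D^{s_n}u_n\|_{L^p(\R^d;\R^d)}^p+\beta\|u_n\|_{L^p(\Omega)}^p\leq C,
\end{equation*}
independently of $n$. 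Reflexivity yields $u\in L^p_0(\Omega)$ and $\chi\in L^p(\R^d;\R^d)$ with $u_n\rightharpoonup u$ in $L^p(\Omega)$ and $D^{s_n}u_n\rightharpoonup\chi$ in $L^p(\R^d;\R^d)$ along a subsequence. To identify $\chi=D^\sigma u$ I use a pure duality argument that bypasses compactness: for any $\varphi\in C_c^\infty(\R^d;\R^d)$, Proposition~\ref{prop:duality_gradient_divergent} gives
\begin{equation*}
    \int_{\R^d}D^{s_n}u_n\cdot\varphi\,dx=-\int_{\R^d}u_n\,(D^{s_n}\cdot\varphi)\,dx,
\end{equation*}
and since $p>1$ the componentwise continuity statement of Proposition~\ref{prop:continuous_dependence_fractional_gradiente_on_s} gives $D^{s_n}\cdot\varphi\to D^\sigma\cdot\varphi$ strongly in $L^{p'}(\R^d)$, including down to $\sigma=0$; pairing this strong convergence with the weak convergence of $u_n$ (extended by zero to $\R^d$) and invoking \eqref{eq:duality_frac_divergence_frac_gradient} once more identifies $\chi=D^\sigma u$, so that $u\in\Lambda^{\sigma,p}_0(\Omega)$, with $u\in K$ following from property M2).

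Third, I would use monotonicity to trade $\boldsymbol{a}(D^{s_n}u_n)$ for $\boldsymbol{a}(D^{s_n}v_n)$ and $b(u_n)$ for $b(v_n)$ and pass to the limit. For $v\in K$ fixed, choose $v_n\in K_n$ with $\Pi_{s_n}(v_n)\to\Pi_\sigma(v)$ in $\L^p(\Omega)$; monotonicity of $\boldsymbol{a}$ in $\xi$ and of $b$ in $r$ combined with \eqref{eq:approximating_problem_s_to_0} yields
\begin{equation*}
    \int_{\R^d}\boldsymbol{a}(D^{s_n}v_n)\cdot D^{s_n}(v_n-u_n)\,dx+\int_\Omega b(v_n)(v_n-u_n)\,dx\geq \langle F_n,v_n-u_n\rangle_{s_n}.
\end{equation*}
By the Carathéodory continuity of the Nemytskii operators under \eqref{eq:upper_bound_for_A_when_omega_arbitrary}--\eqref{eq:upper_bound_for_B_when_omega_arbitrary} we have $\boldsymbol{a}(D^{s_n}v_n)\to\boldsymbol{a}(D^\sigma v)$ in $L^{p'}(\R^d;\R^d)$ and $b(v_n)\to b(v)$ in $L^{p'}(\Omega)$; combining these strong convergences with the weak convergences of $(u_n,D^{s_n}u_n)$ and with \eqref{eq:convergence_data} (each pairing being of ``strong $\times$ weak'' type) produces in the limit
\begin{equation*}
    \int_{\R^d}\boldsymbol{a}(D^\sigma v)\cdot D^\sigma(v-u)\,dx+\int_\Omega b(v)(v-u)\,dx\geq \langle F,v-u\rangle_\sigma,\qquad\forall v\in K.
\end{equation*}
Finally, since the operator $\mathscr{A}_\sigma$ is monotone and hemicontinuous on $\Lambda^{\sigma,p}_0(\Omega)$ (as used in Theorem~\ref{thm:ExistenceFractionalVariationalInequalityMonotoneOperators}), I apply the classical Minty lemma on rays $v_t=u+t(w-u)\in K$, $w\in K$, dividing by $t>0$ and letting $t\to 0^+$, to recover \eqref{eq:LimitProblemSTo0}.

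The main technical point I expect is the identification $\chi=D^\sigma u$ when the limit is $\sigma=0$: the passage to the limit in $\int u_n(D^{s_n}\cdot\varphi)\,dx$ requires the full strength of Proposition~\ref{prop:continuous_dependence_fractional_gradiente_on_s} (in particular the extension of the continuity statement to all of $[0,1]$ when $p>1$), together with the duality \eqref{eq:duality_frac_divergence_frac_gradient} on test functions; this is the only place where the Riesz transform is genuinely treated as a limit of fractional gradients, and it is precisely where the hypothesis $p>1$ is indispensable.
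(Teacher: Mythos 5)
Your proposal follows essentially the same path as the paper's proof: uniform a priori bounds via the strong coercivity and a recovery sequence, extraction of weakly convergent subsequences in $L^p$, identification of the weak limit $\chi=D^\sigma u$ by the duality pairing against $C^\infty_c(\R^d;\R^d)$ test functions together with the strong $L^{p'}$-continuity of $s\mapsto D^s\cdot\varphi$ (Proposition~\ref{prop:continuous_dependence_fractional_gradiente_on_s}), membership $u\in K$ via M2), and then the Minty monotonicity transfer $\boldsymbol{a}(D^{s_n}u_n)\mapsto\boldsymbol{a}(D^{s_n}w_n)$, $b(u_n)\mapsto b(w_n)$ followed by the ray argument $w=u+\theta(v-u)$, $\theta\to0^+$. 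Your explicit emphasis that the duality identification works down to $\sigma=0$ thanks to $p>1$, and that no pointwise convergence of gradients is needed in the purely monotone setting, makes the mechanism behind the admissibility of the limit case $s=0$ more transparent, but it is the same argument the paper uses.
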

\begin{remark}\label{rem:general_convergence_s_to_0}
    If $\sigma>s^*>0$, as in Theorem \ref{theorem:weak_stability_variational_inequality_arbitrary_domains}, the first weak convergence in \ref{eq:convergence_solutions_s_to_0} also holds weakly in $\Lambda^{t,p}_0(\Omega)$, for $0<t<s^*$. When $\Omega$ is bounded, using Poincaré inequality, we can relax the coercivity assumption \eqref{eq:strongCorcivity} by letting $\beta>-\alpha(s/C_P)^p$, as in Remark \ref{rem:limit_existence_with_poincare}, with $\alpha>0$. If $\sigma>s^*>0$, as in Theorem \ref{theorem:weak_stability_variational_inequality}, we can improve the first weak convergence in \ref{eq:convergence_solutions_s_to_0} by $u_n\to u$ in $\Lambda^{t,p}_0(\Omega)$, for $0<t<s^*$.
\end{remark}
\begin{remark}
    If $\sigma>0$ in unbounded domains a similar observation as in Remark \ref{rem:local_strong_convergence_browder} is also valid here. As for $\sigma=0$, in general one should not expect any strong convergence of $u_n$ to $u$ due to the lack of compactness, since the uniform estimates on the sequences ${u_n}$ and $D^{s_n}u_n$ are only in $\Lambda^{0,p}_0(\Omega)=L^p_0(\Omega)$ and in $L^p(\R^d;\R^d)$, respectively.
\end{remark}

\begin{proof}
    Using similar arguments to those employed in the proof of the Lemma \ref{lemma:ConstructionSolutionOmegaUnbounded} one can deduce that $\|u_n\|_{\Lambda^{s_n,p}_0(\Omega)}$ is uniformly bounded with respect to $s_n$. Consequently, there exist two functions $u\in L^p(\R^d)$ and $\eta\in L^p(\R^d;\R^d)$ such that
	\begin{equation}\label{eq:weakLimitsSolutionAndGradient}
        u_n\rightharpoonup u \mbox{  in } L^p(\R^d)\quad \mbox{ and }\quad D^{s_n} u_n\rightharpoonup \eta \mbox{  in } L^p(\R^d;\R^d).
    \end{equation}
    Notice that $\eta=D^\sigma u$ because
    \begin{equation}\label{eq:weak_continuity_gradient_through_divergence}
        \int_{\R^d}{D^{s_n} u_n \cdot\varphi}\,dx=-\int_{\R^d}{u_n D^{s_n}\cdot\varphi}\,dx\to-\int_{\R^d}{uD^\sigma\cdot\varphi}\,dx=\int_{\R^d}{D^\sigma u \cdot\varphi}\,dx, \quad \forall \varphi\in C^\infty_c(\R^d;\R^d).
    \end{equation}
    Hence we have $u\in K$, since $\Lambda^{s_n,p}_0(\Omega)\supset K_n\xrightarrow{s_n-M} K\subset \Lambda^{\sigma,p}_0(\Omega)$.
    
    Next, we prove that $u$ is a solution of \eqref{eq:LimitProblemSTo0}, by applying Minty's lemma. Indeed, let us consider an arbitrary function $w\in K$ and a sequence $\{w_n\}$ such that $w_n\in K_n$ and $\Pi_{s_n}(w_n)\to \Pi_\sigma(w)$ in $\L^p(\Omega)$. Due to the monotonicity of $\boldsymbol{a}$ and $b$, we have
    \begin{multline*}
        \int_{\R^d}{\boldsymbol{a}(D^{s_n} w_n)\cdot D^{s_n}(w_n-u_n)}\,dx+\int_\Omega{b(w_n)(w_n-u_n)}\,dx\\
        \geq \int_{\R^d}{\boldsymbol{a}(D^{s_n} u_n)\cdot D^{s_n} (w_n-u_n)}\,dx+\int_\Omega{b(u_n)(w_n-u_n)}\,dx=\langle F_n,w_n-u_n\rangle_{s_n}.
    \end{multline*}
    Using $\Pi_{s_n}(w_n)\to \Pi_\sigma(w)$ in $\L^p(\Omega)$, the continuity of the operators $\boldsymbol{a}:L^p(\R^d;\R^d)\to {L^p}'(\R^d;\R^d)$ and $b:L^p(\R^d)\to {L^p}'(\R^d)$, the convergence of $f_n\to f$ in ${\L^p}'(\R^d)$, and the weak limits \eqref{eq:weakLimitsSolutionAndGradient}, we get
    \begin{equation}\label{InequalityBeforeDensityGeneralOperator}
        \int_{\R^d}{\boldsymbol{a}(D^\sigma w)\cdot D^\sigma(w-u)}\,dx+\int_\Omega{b(w)(w-u)}\,dx\geq \langle F,\varphi-u\rangle_\sigma.
    \end{equation}
    Setting in \eqref{InequalityBeforeDensityGeneralOperator} $w=u+\theta(v-u)$, with $0<\theta<1 $ and an arbitrary $v\in K$, dividing by $\theta$ and letting $\theta\to 0$, we conclude that $u\in K$ is a weak solution of \eqref{eq:LimitProblemSTo0}:
    \begin{equation*}
        \int_{\R^d}{\boldsymbol{a}(D^\sigma u)\cdot D^\sigma(v-u)}\,dx+\int_\Omega{b(u)(v-u)}\,dx\geq \langle F,v-u\rangle_\sigma, \quad\forall v\in K.
    \end{equation*}
\end{proof}

\subsubsection{Stronger convergence for $p$-Laplacian type operators}

In the framework of Corollary \ref{UniquenessFractionalPLaplacian}, we can prove a strong convergence result for the fractional gradients in a Mosco type theorem for $p$-Laplacian operators with lower order terms, in addition to the conclusions of Remark \ref{rem:general_convergence_s_to_0}.

\begin{theorem}\label{theorem:StrongMosco}
    Let us assume the hypothesis of Theorem \ref{thm:general_stability_s_to_0} and let us also assume that $\boldsymbol{a}$ and $b$ satisfy \eqref{eq:monotonicityPLaplacian} and \eqref{eq:monotonicityb}, with $\beta_p>-\alpha_p(s/C_P)^p$  when $\Omega$ is bounded and with $\beta_p>0$ when $\Omega$ is unbounded, respectively. Then, the sequence of solutions $u_n\in K_n$ of \eqref{eq:approximating_problem_s_to_0} satisfies
    \begin{equation*}
        u_n\to u \mbox{ in } L^p(\Omega)\quad \mbox{ and }\quad D^{s_n}u_n\to D^\sigma u \mbox{ in } L^p(\R^d;\R^d),
    \end{equation*}
    where $u\in K$ is the unique solution of \eqref{eq:LimitProblemSTo0}.
\end{theorem}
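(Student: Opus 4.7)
The plan is to combine the weak stability of Theorem \ref{thm:general_stability_s_to_0} with the quantitative $p$-Laplacian type monotonicity \eqref{eq:monotonicityPLaplacian}--\eqref{eq:monotonicityb} to upgrade weak convergence into strong convergence. First I would invoke Theorem \ref{thm:general_stability_s_to_0} to obtain, after extracting a subsequence, $u_n\rightharpoonup u$ in $L^p(\Omega)$ and $D^{s_n}u_n\rightharpoonup D^\sigma u$ in $L^p(\R^d;\R^d)$ with $u\in K$ solving \eqref{eq:LimitProblemSTo0}. Since \eqref{eq:monotonicityPLaplacian} forces strict monotonicity of $\boldsymbol{a}$, the solution $u$ of \eqref{eq:LimitProblemSTo0} is unique (Corollary \ref{UniquenessFractionalPLaplacian} or Proposition \ref{UniquenessSolutionPointwisePrespective}), so the whole sequence converges.

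Next, using property \ref{M1} of the generalized Mosco convergence, I would pick a sequence $v_n\in K_n$ with $\Pi_{s_n}(v_n)\to \Pi_\sigma(u)$ in $\L^p(\Omega)$. Testing \eqref{eq:approximating_problem_s_to_0} with $v_n$, adding and subtracting the same quantity with $\boldsymbol{a}(D^{s_n}v_n)$ and $b(v_n)$, gives
\begin{equation*}
    I_n:=\int_{\R^d}\!\!\bigl(\boldsymbol{a}(D^{s_n}u_n)-\boldsymbol{a}(D^{s_n}v_n)\bigr)\!\cdot\!D^{s_n}(u_n-v_n)\,dx+\int_\Omega\!\!\bigl(b(u_n)-b(v_n)\bigr)(u_n-v_n)\,dx\leq R_n,
\end{equation*}
where $R_n:=\langle F_n,u_n-v_n\rangle_{s_n}-\int_{\R^d}\boldsymbol{a}(D^{s_n}v_n)\cdot D^{s_n}(u_n-v_n)\,dx-\int_\Omega b(v_n)(u_n-v_n)\,dx$. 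Each term of $R_n$ vanishes in the limit: for the first, $f_{0,n}\to f_0$ in $L^{p'}(\Omega)$, $\boldsymbol{f}_n\to\boldsymbol{f}$ in $L^{p'}(\R^d;\R^d)$, and $(u_n-v_n,D^{s_n}(u_n-v_n))\rightharpoonup(0,0)$ in $\L^p(\Omega)$; for the other two, the continuity of the Nemitsky operators associated to $\boldsymbol{a}$ and $b$ (from the growth conditions \eqref{eq:upper_bound_for_A_when_omega_arbitrary}--\eqref{eq:upper_bound_for_B_when_omega_arbitrary}) yields strong convergence of $\boldsymbol{a}(D^{s_n}v_n)\to \boldsymbol{a}(D^\sigma u)$ in $L^{p'}(\R^d;\R^d)$ and $b(v_n)\to b(u)$ in $L^{p'}(\Omega)$, paired with the weakly null sequences $D^{s_n}(u_n-v_n)$ and $(u_n-v_n)$. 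Hence $R_n\to 0$, and therefore $I_n\to 0$.

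Finally, I would exploit the $p$-Laplacian type monotonicity of $\boldsymbol{a}$ and $b$ to obtain $D^{s_n}(u_n-v_n)\to 0$ and $u_n-v_n\to 0$ in $L^p$. In the case $p\ge2$, \eqref{eq:monotonicityPLaplacian}--\eqref{eq:monotonicityb} give
\begin{equation*}
    I_n\ge \alpha_p\|D^{s_n}(u_n-v_n)\|_{L^p(\R^d;\R^d)}^p+\beta_p\|u_n-v_n\|_{L^p(\Omega)}^p,
\end{equation*}
which is already nonnegative if $\beta_p\ge0$, and otherwise is absorbed through the Poincaré inequality \eqref{eq:poincare_inequality} since, for $\Omega$ bounded, the assumption $\beta_p>-\alpha_p(s_n/C_P)^p$ ensures $\alpha_p+\beta_p(C_P/s_n)^p>0$, whence $I_n\ge c\|D^{s_n}(u_n-v_n)\|_{L^p(\R^d;\R^d)}^p$ for some $c>0$. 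In the harder case $1<p<2$, the quotient form of \eqref{eq:monotonicityPLaplacian}--\eqref{eq:monotonicityb} is combined with Hölder's inequality in the form
\begin{equation*}
    \|D^{s_n}(u_n-v_n)\|_{L^p}^p\le\Bigl(\int_{\R^d}\frac{|D^{s_n}(u_n-v_n)|^2}{(|D^{s_n}u_n|+|D^{s_n}v_n|)^{2-p}}\,dx\Bigr)^{p/2}\bigl(\|D^{s_n}u_n\|_{L^p}+\|D^{s_n}v_n\|_{L^p}\bigr)^{p(2-p)/2},
\end{equation*}
whose second factor is uniformly bounded by the a priori estimate. This yields $D^{s_n}(u_n-v_n)\to 0$ in $L^p(\R^d;\R^d)$ and $u_n-v_n\to0$ in $L^p(\Omega)$ (directly when $\beta_p>0$, or via Poincaré when $\Omega$ is bounded). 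Combining with the strong convergence $\Pi_{s_n}(v_n)\to\Pi_\sigma(u)$ concludes the proof. The main obstacle will be the delicate handling of the $1<p<2$ case, where the monotonicity is only available in weighted quotient form and one must carefully combine the Hölder estimate with the uniform a priori bound so as to extract genuine $L^p$-convergence of the fractional gradients.
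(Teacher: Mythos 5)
Your proof is correct and follows essentially the same route as the paper: extract weak limits from Theorem \ref{thm:general_stability_s_to_0}, use \ref{M1} to obtain a recovery sequence $v_n\in K_n$ with $\Pi_{s_n}(v_n)\to\Pi_\sigma(u)$, test the variational inequality with $v_n$ to isolate the monotonicity term $I_n$ with a remainder $R_n\to 0$, apply the lower bounds \eqref{eq:monotonicityPLaplacian}--\eqref{eq:monotonicityb} (with the Hölder estimate for $1<p<2$, which is precisely how the paper's bound \eqref{eq:lower_estimate_p_leq_2_strong_convergence} is obtained), and conclude by Poincaré plus uniqueness. The paper's exposition is the same, merely writing $w_n$ for your $v_n$ and presenting $I_n\le R_n$ as the upper bound in \eqref{eq:upper_estimate_strong_convergence}.
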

\begin{proof}
    From Theorem \ref{theorem:weak_stability_variational_inequality} we know that there exists a subsequence $\{u_n\}$ such that $\Pi_{s_n}(u_n)\rightharpoonup \Pi_\sigma(u)$ in $\L^p(\Omega)$ where $u$ is the solution to \eqref{eq:LimitProblemSTo0}. We only need to prove that this weak convergence is a strong convergence in $\L^p(\Omega)$. 
    
    Using the assumption $K_n\xrightarrow{s_n-M}K$ we may consider a sequence $\{w_n\}$, with $w_n\in K_n$, such that $\Pi_{s_n}(w_n)\to \Pi_\sigma(u)$ in $\L^p(\Omega)$. Since $u_n$ are solutions to \eqref{eq:approximating_problem_s_to_0}, we have
    \begin{multline}\label{eq:upper_estimate_strong_convergence}
            \langle D^{s_n}\cdot\boldsymbol{a}(D^{s_n} u_n)-D^{s_n}\cdot\boldsymbol{a}(D^{s_n}w_n)+b(w_n)-b(u_n),w_n-u_n\rangle_{s_n}\\
            \leq\int_{\R^d}{(\boldsymbol{a}(D^{s_n}w_n)-\boldsymbol{f}_n)\cdot(D^s w_n -u_n)}\,dx+\int_\Omega{(b(w_n)-{f_0}_n)(w_n-u_n)}\,dx.
    \end{multline}
    On the other hand, since $\boldsymbol{a}$ and $b$ satisfy \eqref{eq:monotonicityPLaplacian} and \eqref{eq:monotonicityb}, respectively, we have, for $p\geq 2$,
    \begin{multline}\label{eq:lower_estimate_p_geq_2_strong_convergence}
        \langle D^{s_n}\cdot\boldsymbol{a}(D^{s_n} u_n)-D^{s_n}\cdot\boldsymbol{a}(D^{s_n}w_n)+b(w_n)-b(u_n),w_n-u_n\rangle_{s_n}\\
        \geq \alpha_p\|D^{s_n}(u_n-w_n)\|^p_{L^p(\R^d;\R^d)}+\beta_p\|u_n-w_n\|^p_{L^p(\Omega)},
    \end{multline}
    and, for $1<p<2$, also
    \begin{multline}\label{eq:lower_estimate_p_leq_2_strong_convergence}
        \langle D^{s_n}\cdot\boldsymbol{a}(D^{s_n} u_n)-D^{s_n}\cdot\boldsymbol{a}(D^{s_n}w_n)+b(w_n)-b(u_n),w_n-u_n\rangle_{s_n}\\
        \geq \frac{1}{2^{\frac{2-p}{p}}(C_u+C_w)^{\frac{2-p}{p}}}
        \left(\alpha_p\|D^{s_n}(u_n-w_n)\|^p_{L^p(\R^d;\R^d)}+\beta_p\|u_n-w_n\|^p_{L^p(\Omega)}\right).
    \end{multline}
    Here $C_u, C_w>0$ are constants, independent of $n$, such that $\|u_n\|^p_{L^p(\Omega)}+\|D^{s_n}u_n\|^p_{L^p(\R^d;\R^d)}\leq C_u$ and $\|w_n\|^p_{L^p(\Omega)}+\|D^{s_n}w_n\|^p_{L^p(\R^d;\R^d)}\leq C_w$.
    Combining \eqref{eq:upper_estimate_strong_convergence} with \eqref{eq:lower_estimate_p_geq_2_strong_convergence}, when $p\geq 2$, and with \eqref{eq:lower_estimate_p_leq_2_strong_convergence}, when $1<p<2$, and using $b(w_n)\to b(u)$ in $L^{p'}(\Omega)$ and $\boldsymbol{a}(D^{s_n} w_n)\to \boldsymbol{a}(D^\sigma u)$ in ${L^{p'}}(\R^d;\R^d)$, respectively by the continuity of $\boldsymbol{a}$ and $b$, together with the convergences \eqref{eq:convergence_data}, we obtain
    \begin{equation*}
        \lim_{n\to\infty}{\|u_n-w_n\|_{\Lambda^{s_n,p}_0(\Omega)}}=\lim_{n\to\infty}{\left(\beta_p\|u_n-w_n\|^p_{L^p(\Omega)}+\|D^{s_n}(u_n-w_n)\|^p_{L^p(\R^d;\R^d)}\right)^{1/p}}=0.
    \end{equation*}
    Hence, using Poincaré inequality when $\Omega$ is bounded (see Remark \ref{rem:limit_existence_with_poincare}),
    recalling  $\Pi_{s_n}(w_n)\to \Pi_\sigma(u)$, we conclude $\Pi_{s_n}(u_n)\to\Pi_\sigma(u)$ in $\L^p(\Omega)$ and the convergence of the whole sequence is a consequence of the uniqueness of the limit problem \eqref{eq:LimitProblemSTo0}, by Corollary \ref{UniquenessFractionalPLaplacian}.
\end{proof}

\begin{remark}
    The typical example of operators that satisfy the hypothesis of the last Theorem are those in which $D^{s_n}\cdot\boldsymbol{a}(x,D^{s_n} u_n)=D^{s_n}\cdot(\alpha(x)|D^{s_n} u|^{p-2}D^{s_n} u)$ is the heterogeneous fractional $(s_n,p)$-Laplacian operator, with $0<\alpha_*\leq \alpha(x)\leq \alpha^*$, and the lower term $b(x,u)=\beta(x) |u|^{p-2}u$, with $0<\beta_*\leq \beta(x)\leq \beta^*$, if $\Omega$ is unbounded, or $-\alpha_*(s/C_P)^p<\beta_*\leq \beta(x)\leq \beta^*$, if  $\Omega$ is bounded.
\end{remark}

\begin{remark}
    In the context of problems without unilateral constraints where $K_n=\Lambda^{s_n,p}_0(\Omega)$ and $\mathscr{A}_{s_n}$ is of potential type, like in Remark \ref{remark:3.3}, instead of using Mosco convergence, one can use $\Gamma$-convergence, as in \cite{bellido2020gamma}.
\end{remark}

\section{Applications to unilateral problems}\label{sec:applications}
\subsection{Examples of generalized Mosco convergence with $s$ in $[0,1]$}\label{subsec:examples_mosco}

In order to illustrate the Mosco type convergence theorems, in this section we provide some examples of families of convex sets that converge in the generalized sense of Mosco. The first example corresponds to the case in which the convex sets coincide with the Lions-Calderón spaces. The two examples of unilateral constraints, leading to variational inequalities, will be given by the translations of cones in those spaces, corresponding to obstacle problems, and by a special case of fractional gradient constraints.

\begin{theorem}[No obstacle]\label{thm:no_obstacle}
    Let $\Omega\subset\R^d$ be any open set, $p\in(1,+\infty)$ and a sequence $\{s_n\}\subset [0,1]$ with $s_n\to \sigma\in [0,1]$. Then $\Lambda^{s_n,p}_0(\Omega)$ converge in the generalized sense of Mosco to $\Lambda^{\sigma,p}_0(\Omega)$, i.e.
    \begin{equation}\label{eq:mosco_convergence_lions_calderon}
        \Lambda^{s_n,p}_0(\Omega)\xrightarrow{s_n-M}\Lambda^{\sigma,p}_0(\Omega) \quad \mbox{ as } s_n\to\sigma.
    \end{equation}
\end{theorem}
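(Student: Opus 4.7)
The plan is to verify the two defining conditions M1) and M2) of generalized Mosco convergence with the identifications $K_n := \Lambda^{s_n,p}_0(\Omega)$ and $K := \Lambda^{\sigma,p}_0(\Omega)$. Since here $K$ coincides with the entire target space, condition M2) is immediate: the very hypothesis of M2) supplies a candidate limit $u$ that is \emph{assumed} to belong to $\Lambda^{\sigma,p}_0(\Omega) = K$, so there is nothing to prove.

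The content is therefore condition M1). Given an arbitrary $u \in \Lambda^{\sigma,p}_0(\Omega)$, the definition \eqref{eq:Lions_calderon_compact_support} of this space as the completion of $C^\infty_c(\Omega)$ under $\|\cdot\|_{\Lambda^{\sigma,p}(\R^d)}$ furnishes a sequence $\varphi_k \in C^\infty_c(\Omega)$ with $\varphi_k \to u$ in $\Lambda^{\sigma,p}(\R^d)$, that is
\begin{equation*}
    \|\varphi_k - u\|_{L^p(\Omega)} + \|D^\sigma\varphi_k - D^\sigma u\|_{L^p(\R^d;\R^d)} \longrightarrow 0 \quad \text{as } k \to \infty.
\end{equation*}
For each fixed $k$ the smooth compactly supported function $\varphi_k$ lies in $\Lambda^{t,p}(\R^d)$ for every $t \in [0,1]$, so Proposition~\ref{prop:continuous_dependence_fractional_gradiente_on_s} -- whose continuity statement extends down to the endpoint $0$ precisely when $p > 1$, which is assumed here -- yields
\begin{equation*}
    D^{s_n}\varphi_k \longrightarrow D^\sigma \varphi_k \quad \text{in } L^p(\R^d;\R^d) \quad \text{as } n \to \infty.
\end{equation*}

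A standard diagonal extraction then concludes the argument. One picks an increasing sequence of indices $n_1 < n_2 < \cdots$ such that $\|D^{s_n}\varphi_k - D^\sigma\varphi_k\|_{L^p(\R^d;\R^d)} < 1/k$ whenever $n \geq n_k$, and sets $u_n := \varphi_{k(n)}$ with $k(n) := \max\{k : n_k \leq n\}$, so that $k(n) \to \infty$. Then $u_n \in C^\infty_c(\Omega) \subset \Lambda^{s_n,p}_0(\Omega) = K_n$, and a triangle-inequality estimate against the approximation $\varphi_{k(n)} \to u$ in $\Lambda^{\sigma,p}(\R^d)$ gives both $\|u_n - u\|_{L^p(\Omega)} \to 0$ and $\|D^{s_n}u_n - D^\sigma u\|_{L^p(\R^d;\R^d)} \to 0$, i.e.\ $\Pi_{s_n}(u_n) \to \Pi_\sigma(u)$ in $\L^p(\Omega)$. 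The only step that demands any care is the application of Proposition~\ref{prop:continuous_dependence_fractional_gradiente_on_s} at the endpoint $\sigma = 0$, where the hypothesis $p > 1$ is essential; everything else -- density of $C^\infty_c(\Omega)$, inclusion $C^\infty_c(\Omega) \subset \Lambda^{s_n,p}_0(\Omega)$, and the diagonal selection -- is routine.
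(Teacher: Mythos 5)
Your proof is correct and follows essentially the same route as the paper's: approximate $u$ by test functions $\varphi_k\in C^\infty_c(\Omega)$, invoke Proposition~\ref{prop:continuous_dependence_fractional_gradiente_on_s} to get $D^{s_n}\varphi_k\to D^\sigma\varphi_k$ in $L^p$, and close the argument with a diagonal extraction. The one point you flag — that the endpoint $\sigma=0$ (or $s_n\to 0$) requires $p>1$ for the continuity of $t\mapsto D^t\varphi_k$ to extend to $[0,1]$ — is exactly the hypothesis the paper relies on, and your observation that M2) is vacuous when $K$ is the whole space is also the same as in the paper.
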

\begin{proof}
    Condition \ref{M2} follows immediately from the fact that $K=\Lambda^{\sigma,p}_0(\Omega)$.
    
    To prove \ref{M1}, let $u\in \Lambda^{\sigma,p}_0(\Omega)$ and $\{\varphi_j\}\subset C^\infty_c(\Omega)$ be a sequence such that $\varphi_j\to u$ in $\Lambda^{\sigma,p}_0(\Omega)$ as $j\to\infty$. For each $m\in\N$, there exists $q_1(m)\in\N$ such that $j\geq q_1(m)$, for which
    \begin{equation*}
        \|\varphi_j-u\|_{L^p(\R^d)}+\|D^\sigma \varphi_j- D^\sigma u\|_{L^p(\R^d;\R^d)}<1/m
    \end{equation*}
    For each $j\geq q_1(m)$, by Proposition \ref{prop:continuous_dependence_fractional_gradiente_on_s} there exists a positive integer $q_2(m,j)\geq j$ such that for all $k\geq q_2(m,j)$ and
    \begin{equation*}
        \|D^{s_k} \varphi_j- D^\sigma \varphi_j\|_{L^p(\R^d;\R^d)}\leq 1/m.
    \end{equation*}
    
    Define the sequence
    \begin{equation*}
        u_n=\begin{cases}
            \varphi_1 & \mbox{ if } n< q(1)\\
            \varphi_{q(k)} & \mbox{ if } q(k)\leq n<q(k+1), \quad \mbox{ for } k\in \N
        \end{cases}
    \end{equation*}
    where $q(k)=q_2(k,q_1(k))$. For $n\geq q_2(m)\geq q_1(m)$, then
    \begin{multline*}
        \|u_n-u\|_{L^p(\Omega)}+\|D^{s_n} u_n- D^\sigma u\|_{L^p(\R^d;\R^d)}\\
        \leq \|u_n-u\|_{L^p(\R^d)}+\|D^{s_n} u_n-D^\sigma u_n\|_{L^p(\R^d;\R^d)}+\|D^\sigma u_n- D^\sigma u\|_{L^p(\R^d;\R^d)}\leq 2/m,
    \end{multline*}
    which implies that $\Pi_{s_n}(u_n)\to \Pi_\sigma(u)$ in $\L^p(\Omega)$.
\end{proof}

\begin{remark}
    In the case $s_n\leq\sigma$, the proof of \eqref{eq:mosco_convergence_lions_calderon} is much simpler, since \ref{M1} holds directly from Proposition \ref{prop:continuous_dependence_fractional_gradiente_on_s} by considering the sequence $u_n\equiv u\in \Lambda^{\sigma,p}_0(\Omega)\subset\Lambda^{s_n,p}_0(\Omega)$. 
    
    In the case $s_n\geq\sigma$ we can also give a different proof of \eqref{eq:mosco_convergence_lions_calderon}. Let $u_n\in\Lambda^{s_n,p}_0(\Omega)$ be the solution of 
    \begin{equation}\label{AuxProblemForConvergence}
        \int_{\R^d}{|D^{s_n} u_n|^{p-2}D^{s_n} u_n\cdot D^{s_n} v}\,dx+\int_\Omega{|u_n|^{p-2}u_n v}\,dx = \int_{\Omega}{f_0 v}\,dx+\int_{\R^d}{\boldsymbol{f}\cdot D^{s_n} v}\,dx, \quad \forall v\in \Lambda^{s_n,p}_0(\Omega).
    \end{equation}
    Here the functions $f_0$ and $\mathbf {f}=(f_1,...,f_d)$ are given such that
    \begin{equation*}
        \langle-\Delta^\sigma_p u+|u|^{p-2}
        _\sigma u, v\rangle_\sigma = \langle F, v\rangle_\sigma=\int_{\Omega}{f_0 v}\,dx+\int_{\R^d}{\boldsymbol{f}\cdot D^\sigma v}\,dx \quad \forall v\in\Lambda^{\sigma,p}_0(\Omega),
    \end{equation*}
    where the functional $F=-\Delta^\sigma_p u+|u|^{p-2}u\in \Lambda^{-\sigma,p'}(\Omega)\subset \Lambda^{-s_n,p'}(\Omega)$ and the $f_0,f_1,...,f_d\in {L^p}'(\R^d)$ can be obtained from Theorem \ref{CharacterizationDualInTermsOfFractionalDerivatives} to characterize $F$.
   
   We prove that, up to a subsequence, $\Pi_{s_n}(u_n)\to \Pi_\sigma(u)$ in $\L^p(\Omega)$. Testing \eqref{AuxProblemForConvergence} with $v=u_n$ we get
	\begin{equation}\label{APrioriEstimateToyProblem}
            \|u_n\|^p_{\Lambda^{s_n,p}_0(\Omega)}= \int_{\R^d}{|D^{s_n} u_n|^{p-2}D^{s_n} u_n\cdot D^{s_n} u_n}\,dx+\int_\Omega{|u_n|^{p}}\,dx= \int_{\Omega}{f_0 u_n}\,dx+\int_{\R^d}{\boldsymbol{f}\cdot D^{s_n} u_n}\,dx.
	\end{equation}
	This identity, together with Hölder's inequality, allow us to obtain a uniform estimate of $\|u_n\|_{\Lambda^{s_n,p}_0(\Omega)}$ with respect to $n$, and so, there exist two functions $\Tilde{u}\in L^p(\R^d)$ and $\eta\in L^p(\R^d;\R^d)$ such that
	\begin{equation}\label{weakLimitsSolutionAndGradient}
        u_n\rightharpoonup \Tilde{u} \mbox{  in } L^p(\R^d)\quad \mbox{ and }\quad D^{s_n} u_n\rightharpoonup \eta \mbox{  in } L^p(\R^d;\R^d).
    \end{equation}
	We conclude that $\eta=D^\sigma \Tilde{u}$ as in \eqref{eq:weak_continuity_gradient_through_divergence}. Using Minty's lemma and the uniqueness of the solution of the limit problem, we deduce that $\Tilde{u}=u$. 

    Finally, since $u$ is a weak-solution of \eqref{AuxProblemForConvergence} when $s_n$ is replaced by $\sigma$, and $u_n\rightharpoonup u$ in $L^p(\Omega)$, from \eqref{APrioriEstimateToyProblem}, we obtain
    \begin{equation*}
        \|D^{s_n} u_n\|^p_{L^p(\R^d)}+\|u_n\|^p_{L^p(\Omega)}=\langle F, u_n\rangle_{s_n}\to \langle F, u\rangle_\sigma=\|D^\sigma u\|_{L^p(\R^d;\R^d)}+\|u\|_{L^p(\Omega)}.
    \end{equation*}
    Consequently, since $\L^p(\Omega)$ is uniformly convex for $p\in(1,\infty)$, we conclude that $\Pi_{s_n} (u_n)\to \Pi_\sigma (u)$ in $\L^p(\Omega)$ strongly.

    In the general case $s_n\to\sigma$ we can replace the proof of \ref{M1} in Theorem \ref{thm:no_obstacle} by choosing $u_n=u$ if $s_n\leq\sigma$, and $u_n$ as the solution of \eqref{AuxProblemForConvergence} if $s_n\geq\sigma$.
\end{remark}

As a simple consequence of the Theorems of Section \ref{sec:existence_uniqueness} and \ref{thm:no_obstacle}, we get the following general stability result for fractional partial differential equations.
\begin{corollary}
    Let $\Omega\subset \R^d$ be an open set, $p\in(1,\infty)$ and $\{s_n\}\subset (0,1)$ be a sequence such that $s_n\to \sigma\in(0,1]$, under the assumptions of Theorem \ref{theorem:weak_stability_variational_inequality}, \ref{theorem:weak_stability_variational_inequality_arbitrary_domains} or \ref{thm:general_stability_s_to_0}, with $F_n={f_0}_n-D^{s_n}\cdot\boldsymbol{f}_n\in \Lambda^{-s_n,p'}(\Omega)$ satisfying \eqref{eq:convergence_data}.  If $u_n$ are solutions to
    \begin{equation}
        \begin{cases}
            -D^{s_n}\cdot\boldsymbol{a}(u_n,D^{s_n} u_n)+b(u_n,D^{s_n} u_n)=F_n & \mbox{ in } \Omega\\
            u=0 &\mbox{ on } \R^d\setminus \Omega,
        \end{cases}
        \label{eq:stability_partial_differential_equations}
    \end{equation}
    then one can extract a subsequence, still denoted by $\{u_n\}$, such that $\Pi_{s_n}(u_n)\rightharpoonup\Pi_\sigma(u)$ in $\L^p(\Omega)$, i.e.
    \begin{equation*}
        u_n\rightharpoonup u \mbox { in } L^p(\Omega) \quad \mbox{ and } \quad D^{s_n}u_n\rightharpoonup D^\sigma u \mbox{ in } L^p(\R^d;\R^d),
    \end{equation*}
    where $u\in\Lambda^{\sigma,p}_0(\Omega)$ is a solution of
    \begin{equation}
        \begin{cases}
            -D^\sigma\cdot\boldsymbol{a}(u,D^\sigma u)+b(u,D^{\sigma} u)=F & \mbox{ in } \Omega\\
            u=0 &\mbox{ on } \R^d\setminus \Omega.
        \end{cases}
        \label{eq:limit_stability_partial_differential_equations}
    \end{equation}
\end{corollary}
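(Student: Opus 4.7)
The plan is to recast \eqref{eq:stability_partial_differential_equations} as a variational inequality on a convex set that coincides with the whole space, and then invoke the Mosco-type stability results of Section~\ref{sec:stability}. More precisely, each solution $u_n$ of \eqref{eq:stability_partial_differential_equations} is equivalently a solution of the variational inequality \eqref{eq:VariationalProblemWeakConvergence} (or \eqref{eq:VariationalProblemWeakConvergenceArbitraryDomains}, or \eqref{eq:approximating_problem_s_to_0}) with the choice
\begin{equation*}
K_n=\Lambda^{s_n,p}_0(\Omega), \qquad K=\Lambda^{\sigma,p}_0(\Omega),
\end{equation*}
since for these $K_n$ the test function $v-u_n$ ranges over all of $\Lambda^{s_n,p}_0(\Omega)$, and the inequality reduces to the equation in the distributional sense.

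The first step is to verify the generalized Mosco convergence $K_n\xrightarrow{s_n-M}K$ required by the stability theorems. This is precisely the content of Theorem~\ref{thm:no_obstacle}, which gives \eqref{eq:mosco_convergence_lions_calderon} for any $s_n\to\sigma\in[0,1]$. In particular, both \ref{M1} and \ref{M2} are already established, with \ref{M2} being trivial since $K=\Lambda^{\sigma,p}_0(\Omega)$.

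The second step is to select the appropriate stability theorem based on the standing assumptions of the corollary. If we are in the framework of Theorem~\ref{theorem:weak_stability_variational_inequality} (pseudomonotone, $\Omega$ bounded, $\sigma\in[s^*,1]$ with $s^*>0$), we obtain the subsequence with $u_n\to u$ strongly in $\Lambda^{t,p}_0(\Omega)$ for every $0\leq t<s^*$ and $D^{s_n}u_n\rightharpoonup D^\sigma u$ in $L^p(\R^d;\R^d)$; composing with the continuous embedding $\Lambda^{t,p}_0(\Omega)\hookrightarrow L^p(\Omega)$ yields the weak convergence $u_n\rightharpoonup u$ in $L^p(\Omega)$ asserted here. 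The same conclusion holds under the hypotheses of Theorem~\ref{theorem:weak_stability_variational_inequality_arbitrary_domains} (arbitrary $\Omega$, strict monotonicity, strong coercivity) or Theorem~\ref{thm:general_stability_s_to_0} (monotone case without dependence on the intermediate variables), which already deliver the weak convergences stated in the corollary directly.

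The last step is to identify the limit as a solution of the PDE \eqref{eq:limit_stability_partial_differential_equations}. In all three cases, the cited stability theorems provide $u\in K=\Lambda^{\sigma,p}_0(\Omega)$ satisfying the variational inequality \eqref{eq:limitVariationalProblemWeakConvergence} (or the analogous \eqref{eq:limitVariationalProblemWeakConvergenceArbitraryDomains} and \eqref{eq:LimitProblemSTo0}); since the test functions $v-u$ range over the entire linear space $\Lambda^{\sigma,p}_0(\Omega)$, replacing $v-u$ by $\pm\varphi$ with $\varphi\in C^\infty_c(\Omega)$ turns the inequality into the equation \eqref{eq:limit_stability_partial_differential_equations} in the distributional sense, with the homogeneous Dirichlet condition being built into $\Lambda^{\sigma,p}_0(\Omega)$. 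No new obstacle arises: the only non-trivial ingredient, namely the generalized Mosco convergence of the ambient Lions--Calder\'on spaces, has already been proved as Theorem~\ref{thm:no_obstacle}, and the corollary is then a direct bookkeeping application of the relevant Section~\ref{sec:stability} theorem.
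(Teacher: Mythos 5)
Your proposal is correct and follows the same path the paper intends: take $K_n=\Lambda^{s_n,p}_0(\Omega)$ and $K=\Lambda^{\sigma,p}_0(\Omega)$, invoke Theorem~\ref{thm:no_obstacle} for the generalized Mosco convergence $\Lambda^{s_n,p}_0(\Omega)\xrightarrow{s_n-M}\Lambda^{\sigma,p}_0(\Omega)$, apply the relevant stability theorem of Section~\ref{sec:stability}, and finally observe that a variational inequality over the whole linear space collapses to the distributional equation by testing with $\pm\varphi$, $\varphi\in C^\infty_c(\Omega)$. The paper states this corollary as an immediate consequence without further detail, and your write-up simply makes the bookkeeping explicit.
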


\begin{remark}\label{rem:strong_convergence_solutions_pdes_p_laplacian}
    If in the previous corollary we assume the hypothesis of Theorem \ref{thm:general_stability_s_to_0} for the functions $\boldsymbol{a}$ and $b$, we can then make also take sequences $s_n$ tending to $0$. Similarly, also under the stronger assumptions of Theorem \ref{theorem:StrongMosco} for fractional operators of $p$-Laplacian type we have $\Pi_{s_n}(u_n)\to\Pi_\sigma(u)$ in $\L^p(\Omega)$ strongly, for the whole sequence $s_n\to \sigma\in[0,1]$.
\end{remark}

For obstacle problems, we consider families of convex sets of the following type
\begin{equation*}
	K^s_{\geq\psi}:=\{v\in \Lambda^{s,p}_0(\Omega): v\geq \psi\},
\end{equation*}
with their convergence in the generalized sense of Mosco. For $\psi\in\Lambda^{s,p}_0(\Omega)$, the condition $\psi^+=\sup\{\psi,0\}\in\Lambda^{s,p}_0(\Omega)$ is sufficient for the convex set $K^s_{\geq\psi}$ to be non-empty. We start with the case, inspired in \cite[{Proposition~3 (1)}]{boccardo2021some}, where we have an upper bound on the obstacles $\psi_n$ by its limit $\psi$ and the fractional parameters $s_n$ are non-decreasing.

\begin{proposition}[With bound on the obstacles and order in the fractional parameters]\label{prop:GeneralizedWeak}
    Let $\Omega\subset\R^d$ be any open set, $p\in(1,+\infty)$ and a non-decreasing sequence $\{s_n\}\subset(0,1)$ with $s_n\nearrow \sigma\in (0,1]$. For a sequence of functions $\{\psi_n\}$ with $\psi_n\in \Lambda^{s_n,p}_0(\Omega)$ for all $n\in\N$, such that $\Pi_{s_n}(\psi_n)\rightharpoonup \Pi_\sigma(\psi)$ weakly in $\L^p(\Omega)$ for some $\psi\in \Lambda^{\sigma,p}_0(\Omega)$ and $\psi_{s_n}\leq \psi$, then $K^{s_n}_{\geq \psi_{s_n}}$ converges in the generalized sense of Mosco to $K^\sigma_{\geq \psi}$.
\end{proposition}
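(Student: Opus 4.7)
The plan is to verify the two axioms \ref{M1} and \ref{M2} of the generalized Mosco convergence directly, using essentially two facts: the monotonicity of the sequence $s_n$ (giving the nested inclusion $\Lambda^{\sigma,p}_0(\Omega)\subset\Lambda^{s_n,p}_0(\Omega)$), and the fact that the positive cone of $L^p(\Omega)$ is convex and strongly, hence weakly, closed.

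For \ref{M1}, given an arbitrary $u\in K^\sigma_{\geq\psi}$, I would take the constant approximating sequence $u_n:=u$. Two things need checking: that $u_n$ belongs to the right space and set, and that the convergence $\Pi_{s_n}(u_n)\to\Pi_\sigma(u)$ holds in $\L^p(\Omega)$. For the membership: since $s_n\leq\sigma$, Proposition \ref{prop:order_lions_calderon} (applied together with the fact that a sequence $\varphi_j\in C^\infty_c(\Omega)$ converging to $u$ in $\Lambda^{\sigma,p}(\R^d)$ also converges in $\Lambda^{s_n,p}(\R^d)$) yields $u\in\Lambda^{s_n,p}_0(\Omega)$; and the pointwise chain $u\geq\psi\geq\psi_n$ places $u$ inside $K^{s_n}_{\geq\psi_n}$. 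For the convergence: the first component is trivial, and $D^{s_n}u\to D^\sigma u$ in $L^p(\R^d;\R^d)$ is a direct application of Proposition \ref{prop:continuous_dependence_fractional_gradiente_on_s}, which is valid in the full range $s_n\to\sigma\in(0,1]$ since $u\in\Lambda^{\sigma,p}(\R^d)$ with $p>1$ (the limit $\sigma=1$ is admissible through the identification $D^1=D$).

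For \ref{M2}, suppose $u_n\in K^{s_n}_{\geq\psi_n}$ and $\Pi_{s_n}(u_n)\rightharpoonup\Pi_\sigma(u)$ in $\L^p(\Omega)$ with $u\in\Lambda^{\sigma,p}_0(\Omega)$. Projecting the weak convergence onto the first coordinate, $u_n\rightharpoonup u$ in $L^p(\Omega)$, while by hypothesis $\psi_n\rightharpoonup\psi$ in $L^p(\Omega)$. Therefore $u_n-\psi_n\rightharpoonup u-\psi$ in $L^p(\Omega)$. Since each $u_n-\psi_n\geq 0$ a.e.\ in $\Omega$ and the cone $\{w\in L^p(\Omega):\,w\geq 0\text{ a.e.}\}$ is convex and strongly closed in $L^p(\Omega)$, it is weakly closed, so $u-\psi\geq 0$ a.e.\ in $\Omega$, i.e.\ $u\in K^\sigma_{\geq\psi}$.

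There is no significant obstacle in this argument: the monotonicity $s_n\nearrow\sigma$ trivializes \ref{M1} by allowing a constant recovery sequence, reducing the statement to the continuity of $s\mapsto D^s u$ already available from Proposition \ref{prop:continuous_dependence_fractional_gradiente_on_s}; and \ref{M2} reduces to the standard weak closedness of the positive cone once the weak convergence of both $u_n$ and $\psi_n$ is in place. The only delicate point to keep in mind is that the argument crucially uses the one-sided inequality $\psi_n\leq\psi$ to make the constant sequence $u_n\equiv u$ admissible; without it, one would have to construct a genuinely varying recovery sequence (as will be needed in the more general examples treated subsequently).
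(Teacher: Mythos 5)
Your proof is correct, and while \ref{M1} follows the paper's own argument (constant recovery sequence $u_n\equiv u$, justified by the nesting $\Lambda^{\sigma,p}_0(\Omega)\subset\Lambda^{s_n,p}_0(\Omega)$, the chain $u\geq\psi\geq\psi_n$, and Proposition \ref{prop:continuous_dependence_fractional_gradiente_on_s} for the convergence of the gradients), your treatment of \ref{M2} is a genuinely different and appreciably simpler route. The paper passes through a localisation argument: multiplying by a cutoff $\phi\in C^\infty_c(\omega')$ with $\omega\Subset\omega'\Subset\Omega$, invoking Corollary \ref{prop:EstimateProductTestFunction} to bound $\|D^{s_n}(\phi u_n)\|_{L^p}$ uniformly (possible because $s_n$ is bounded away from $0$), then Theorem \ref{CompactnessFromUniformBoundOnDs} on the bounded set $\omega'$ to extract strong $L^p$ and hence a.e.\ convergent subsequences of $u_n$ and $\psi_n$ on each compact $\omega\Subset\Omega$, and finally deducing $u\geq\psi$ a.e.\ by exhausting $\Omega$. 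You instead observe that $u_n-\psi_n\geq 0$, $u_n-\psi_n\rightharpoonup u-\psi$ weakly in $L^p(\Omega)$ (projecting both hypotheses onto the first coordinate of $\L^p(\Omega)$), and that the cone $\{w\geq 0\}$ is convex and strongly closed, hence weakly closed, so $u\geq\psi$. This is shorter, requires no compactness, no cutoff, and in particular no boundedness of any auxiliary domain; it is essentially the same device the paper itself uses in the proof of Theorem \ref{prop:MoscoConvergencePositiveCone}. A byproduct worth noting, which you correctly anticipate at the end: your \ref{M2} argument uses neither the monotonicity $s_n\nearrow\sigma$ nor the order $\psi_n\leq\psi$ — both of those hypotheses serve only \ref{M1}. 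So your proof isolates exactly which hypotheses are doing which work, which the paper's proof of \ref{M2} obscures somewhat.
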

\begin{proof}
    To verify \ref{M1}, we just need to observe that for every $v\in K^\sigma_{\geq \psi}$ we can simply consider the sequence $v_n\equiv v$ for each $n$ since $K^\sigma_{\geq \psi}\subset K^{s_n}_{\geq \psi_n}$. 
    
    On the other hand, to verify that \ref{M2} holds, we start by considering a sequence $\{v_n\}$ with $v_n\in K^{s_n}_{\geq \psi_n}$ for each $n$, and a function $v\in \Lambda^{\sigma,p}_0(\Omega)$ such that $\Pi_{s_n}(v_n)\rightharpoonup \Pi_\sigma(v)$ in $\L^p(\Omega)$ as $s_n\to\sigma$. Now consider two compact sets $\omega$ and $\omega'$ such that $\omega\Subset\omega'\Subset\Omega$ and consider also a function $\phi\in C^\infty(\omega')$ such that $\phi\equiv 1$ in $\omega$. Moreover, from Proposition \ref{prop:EstimateProductTestFunction}, we know that $\|D^{s_n} (\phi u)\|_{L^p(\R^d;\R^d)}\leq \frac{C}{s}\|u\|_{\Lambda^{s_n,p}_0(\Omega)}$, which together with Theorem \ref{CompactnessFromUniformBoundOnDs}, implies the existence of two subsequences $\{\phi v_n\}$ and $\{\phi\psi_n\}$ such that 
    \begin{equation*}
    \begin{cases}
        \phi v_n\to \phi v\\
        \phi\psi_n\to \phi\psi
    \end{cases}
    \mbox{ in } L^p(\omega')\qquad \mbox{ and } \qquad
    \begin{cases}
        v_n\to v\\
        \psi_n\to \psi
    \end{cases}
    \mbox{ in } L^p(\omega).
    \end{equation*}
    Since this is valid for every compact $\omega\Subset\Omega$, and since $v_n\geq \psi_n$ for each $n$, we can easily deduce that $v\in K^\sigma_{\geq \psi}$.
\end{proof}

There are also some examples of sets that converge in the generalized sense of Mosco that do not require the order hypothesis on the obstacles, nor the order of the fractional parameters. As a first example, we consider $\psi_n=0$, where the convex sets are the cones of non negative functions.

\begin{theorem}[Positive cones]\label{prop:MoscoConvergencePositiveCone}
    Let $\Omega\subset\R^d$ be an open and bounded set, $p\in(1,\infty)$ and a sequence $\{s_n\}\subset (0,1)$ with $s_n\to\sigma\in[0,1]$. Then $K^{s_n}_{\geq 0}$ converges in the generalized sense of Mosco to $K^\sigma_{\geq 0}$.
\end{theorem}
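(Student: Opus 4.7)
The plan is to verify the two defining properties of the generalized Mosco convergence by combining the density result in Theorem \ref{thm:densityPositiveCone} with the continuity of $s\mapsto D^s$ from Proposition \ref{prop:continuous_dependence_fractional_gradiente_on_s}, following the diagonal selection scheme used in the proof of Theorem \ref{thm:no_obstacle}.

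Property \ref{M2} is immediate: if $u_n\in K^{s_n}_{\geq 0}$ and $\Pi_{s_n}(u_n)\rightharpoonup \Pi_\sigma(u)$ in $\L^p(\Omega)$, then in particular $u_n\rightharpoonup u$ in $L^p(\Omega)$. The positive cone $\{v\in L^p(\Omega):v\geq 0 \text{ a.e.}\}$ is convex and norm-closed, hence weakly closed, so the a.e.\ non-negativity of each $u_n$ passes to $u$. Since the generalized convergence already supplies $u\in \Lambda^{\sigma,p}_0(\Omega)$, we conclude $u\in K^\sigma_{\geq 0}$.

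For property \ref{M1}, fix $u\in K^\sigma_{\geq 0}$. Using Theorem \ref{thm:densityPositiveCone} (for $\sigma\in(0,1)$) I will select a sequence $\varphi_j\in C^\infty_c(\Omega)$ with $\varphi_j\geq 0$ such that $\varphi_j\to u$ in $\Lambda^{\sigma,p}_0(\Omega)$. Each $\varphi_j$ lies in $\Lambda^{s,p}(\R^d)$ for every $s\in[0,1]$, so Proposition \ref{prop:continuous_dependence_fractional_gradiente_on_s} (which covers the endpoint $s=0$ because $p>1$) ensures that $s\mapsto D^s\varphi_j$ is continuous into $L^p(\R^d;\R^d)$ on the whole interval $[0,1]$. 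A diagonal extraction of indices $n\mapsto j(n)\to \infty$, mirroring the construction in Theorem \ref{thm:no_obstacle}, then produces $u_n:=\varphi_{j(n)}$ with $u_n\geq 0$, $u_n\in C^\infty_c(\Omega)\subset \Lambda^{s_n,p}_0(\Omega)$, and $\Pi_{s_n}(u_n)\to \Pi_\sigma(u)$ strongly in $\L^p(\Omega)$, exactly as required.

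The main obstacle is that Theorem \ref{thm:densityPositiveCone} is stated for $\sigma\in(0,1)$, while here $\sigma$ is allowed to equal $0$ or $1$. For $\sigma=0$, $\Lambda^{0,p}_0(\Omega)=L^p_0(\Omega)$ and the density of non-negative functions of $C^\infty_c(\Omega)$ in the positive cone of $L^p_0(\Omega)$ follows by truncation, convolution with a non-negative mollifier, and multiplication by a smooth cutoff compactly supported in $\Omega$. For $\sigma=1$ the scheme in the proof of Theorem \ref{thm:densityPositiveCone}, combining a Netrusov-type cutoff $\eta u$ (with $0\leq\eta\leq 1$ vanishing on a neighborhood of $\Omega^c$) and convolution with a non-negative mollifier, which commutes with the classical gradient and preserves non-negativity, delivers non-negative $C^\infty_c(\Omega)$ approximants of any non-negative $u\in W^{1,p}_0(\Omega)$. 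With this extension of the density result to $\sigma\in\{0,1\}$, the argument above runs uniformly in $\sigma\in[0,1]$.
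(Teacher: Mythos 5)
Your proof follows the same strategy as the paper's: establish \emph{M2} via the weak closedness of the non-negative cone in $L^p(\Omega)$, and establish \emph{M1} by replacing the density of $C^\infty_c(\Omega)$ in the diagonal construction of Theorem~\ref{thm:no_obstacle} with the density of the non-negative smooth functions in the non-negative cone of $\Lambda^{\sigma,p}_0(\Omega)$ given by Theorem~\ref{thm:densityPositiveCone}. For \emph{M2} the paper tests $v_n$ against non-negative test functions $\varphi\in C^\infty_c(\Omega)$ and passes to the weak limit, while you invoke convexity and norm-closedness of the cone; these are equivalent observations.

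One point you handle more carefully than the paper: Theorem~\ref{thm:densityPositiveCone} is stated only for $\sigma\in(0,1)$, while the present statement allows $\sigma\in[0,1]$. The paper's one-line proof does not address the endpoints, whereas you explicitly supply the easy verifications — for $\sigma=0$ by truncation, mollification, and a cutoff in $L^p_0(\Omega)$, and for $\sigma=1$ by the classical (Netrusov cutoff plus mollifier) argument, which commutes with the gradient and preserves sign. This fills a small gap that the paper leaves implicit, and is a worthwhile addition.
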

\begin{proof}
    For \ref{M1} we use the same arguments as in the proof of the Theorem \ref{thm:no_obstacle}, but instead of using the density of $C^\infty_c(\Omega)$ in $\Lambda^{s_n,p}_0(\Omega)$, we use the density of the cone of non negative functions $C^\infty_c(\Omega)$ in the cone of non negative functions of $\Lambda^{s_n,p}_0(\Omega)$ of Theorem \ref{thm:densityPositiveCone}.

    For \ref{M2}, let us consider a sequence $\{v_n\}_{n\in \N}$ with $v_{n}\in K^{s_n}_{\geq 0}$ for each $n\in\N$, and a function $v\in \Lambda^{\sigma,p}_0(\Omega)$ such that $\Pi_{s_n}(v_n)\rightharpoonup \Pi_\sigma(v)$ in $\L^p(\Omega)$. As a consequence, we have
    \begin{equation*}
        0\leq \int_{\Omega}{v_n\varphi}\,dx\to \int_{\Omega}{v\varphi}\,dx \quad \forall \varphi\in C^\infty_c(\Omega) \mbox{ with } \varphi\geq 0,
    \end{equation*}
    which implies $v\geq 0$ a.e. in $\Omega$ and, consequently, $v\in K^\sigma_{\geq 0}$.
\end{proof}

This result is the basis for other examples of sequences of convex sets of obstacle type that converge in the generalized sense of Mosco. These examples can be obtained when we combine Theorem \eqref{prop:MoscoConvergencePositiveCone} with the following result, adapted from \cite[Lemma~1.6 on p.~529]{mosco1969convergence}, concerning the convergence of translations of the convex sets, for the $\L^p(\Omega)$ framework. 

\begin{proposition}[Translations of convex sets]\label{GeneralizedMoscoPlusStrong}
    Let $\Omega\subset\R^d$ be any open set, $p\in(1,+\infty)$ and a sequence $s_n$ in $(0,1)$ with $s_n\to\sigma\in[0,1]$. Consider a family of non-empty, closed convex sets $\{K_n\}$ such that $K_n\subset \Lambda^{s_n,p}_0(\Omega)$ converging in the sense of Mosco to $K\subset \Lambda^{\sigma,p}_0(\Omega)$, and also a sequence of functions $\{\psi_n\}$ such that $\psi_n\in \Lambda^{s_n,p}_0(\Omega)$ for all $n$ and $\Pi_{s_n}(\psi_n)\to \Pi_\sigma(\psi)$ in $\L^p(\Omega)$ for some function $\psi\in \Lambda^{\sigma,p}_0(\Omega)$. Then, the convex set $K_n+\psi_n$ converges in the generalized sense of Mosco to $K+\psi$.
\end{proposition}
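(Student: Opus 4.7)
The key observation is that the map $\Pi_s$ is linear in the function variable, so that for any $w\in\Lambda^{s,p}_0(\Omega)$ and $\psi\in\Lambda^{s,p}_0(\Omega)$ we have $\Pi_s(w+\psi)=\Pi_s(w)+\Pi_s(\psi)$ in $\mathbb{L}^p(\Omega)$. With this in mind, the plan is to verify conditions \emph{M1} and \emph{M2} for $K_n+\psi_n\subset\Lambda^{s_n,p}_0(\Omega)$ converging to $K+\psi\subset\Lambda^{\sigma,p}_0(\Omega)$ by reducing them to the corresponding conditions for the (already assumed) convergence $K_n\xrightarrow{s_n-M} K$ together with the strong convergence of $\Pi_{s_n}(\psi_n)$ to $\Pi_\sigma(\psi)$.

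For \emph{M1}, I would take an arbitrary $v\in K+\psi$ and write $v=w+\psi$ with $w\in K\subset\Lambda^{\sigma,p}_0(\Omega)$. Applying condition \emph{M1} to the sequence $K_n\xrightarrow{s_n-M} K$, there exists $w_n\in K_n$ with $\Pi_{s_n}(w_n)\to\Pi_\sigma(w)$ strongly in $\mathbb{L}^p(\Omega)$. Setting $v_n:=w_n+\psi_n\in K_n+\psi_n$, linearity of $\Pi_{s_n}$ and the sum of two strong convergences yields
\begin{equation*}
    \Pi_{s_n}(v_n)=\Pi_{s_n}(w_n)+\Pi_{s_n}(\psi_n)\longrightarrow \Pi_\sigma(w)+\Pi_\sigma(\psi)=\Pi_\sigma(v)\quad\text{in }\mathbb{L}^p(\Omega),
\end{equation*}
which is exactly the required approximation property.

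For \emph{M2}, I would take a sequence $v_n\in K_n+\psi_n$ and $v\in\Lambda^{\sigma,p}_0(\Omega)$ with $\Pi_{s_n}(v_n)\rightharpoonup\Pi_\sigma(v)$ weakly in $\mathbb{L}^p(\Omega)$, and write $v_n=w_n+\psi_n$ with $w_n\in K_n$. Since weak plus strong convergence is preserved under subtraction,
\begin{equation*}
    \Pi_{s_n}(w_n)=\Pi_{s_n}(v_n)-\Pi_{s_n}(\psi_n)\rightharpoonup\Pi_\sigma(v)-\Pi_\sigma(\psi)=\Pi_\sigma(v-\psi)
\end{equation*}
weakly in $\mathbb{L}^p(\Omega)$, where $v-\psi\in\Lambda^{\sigma,p}_0(\Omega)$ because both $v$ and $\psi$ lie in this space. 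Invoking \emph{M2} for $K_n\xrightarrow{s_n-M} K$ gives $v-\psi\in K$, so $v\in K+\psi$, as required.

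The argument is essentially a translation invariance property and should carry no serious obstacle; the only delicate point to check is that subtracting $\Pi_{s_n}(\psi_n)$ from $\Pi_{s_n}(v_n)$ indeed produces a well-defined element of the form $\Pi_{s_n}(w_n)$ with $w_n=v_n-\psi_n\in\Lambda^{s_n,p}_0(\Omega)$, which is immediate from the linearity of the space and of $\Pi_{s_n}$. Thus the closure and non-emptiness of $K_n+\psi_n$ are preserved automatically, and the proof reduces to the simple algebraic observation above together with the stated convergences.
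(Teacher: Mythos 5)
Your proof is correct and follows essentially the same route as the paper's: verifying M1 by approximating $\bar v \in K$ with $\bar v_n \in K_n$ and setting $v_n = \bar v_n + \psi_n$, and verifying M2 by subtracting $\psi_n$ and using that weak minus strong convergence is preserved. The only difference is that you spell out the linearity of $\Pi_{s_n}$ a bit more explicitly, which is fine but adds nothing substantive.
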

\begin{proof}
    In order to check that \ref{M1} holds, let us consider a function $v\in K+\psi$, i.e., $v=\bar{v}+\psi$ for some $\bar{v}\in K$. Since $K_m$ converges in the generalized sense of Mosco to $K$, then there exists $\bar{v}_n\in K_n$ such that $\Pi_{s_n}(\bar{v}_n)\to \Pi_\sigma(\bar v)$ in $\L^p(\Omega)$. This allow us to construct the functions $v_n=\bar{v}_n+\psi_n\in K_n+\psi_n$ with the property that $\Pi_{s_n}(v_n)=\Pi_{s_n}(\bar{v}_n)+\Pi_{s_n}(\psi_n)\to \Pi_\sigma(\bar{v})+\Pi_\sigma(\psi)=\Pi_\sigma(v)$ in $\L^p(\Omega)$ as $s\to \sigma$.
    
    For the condition \ref{M2}, we start by considering a sequence $\{v_n\}$ with $v_n\in K_n+\psi_n$ and a function $v\in \Lambda^{\sigma,p}_0(\Omega)$ such that $\Pi_{s_n}(v_n)\rightharpoonup \Pi_\sigma(v)$ in $\L^p(\Omega)$ as $n\to\infty$. Since $v_n-\psi_n\in K_n$ for each $n$, $\Pi_{s_n}(v_n-\psi_n)\rightharpoonup \Pi_\sigma(v-\psi)$ in $\L^p(\Omega)$, and $K_n$ converges in the generalized sense of Mosco to $K$, then $v-\psi\in K$.
\end{proof}

As consequence of the last proposition, we can adapt {\cite[Lemma~1.7 on p.~531]{mosco1969convergence}} to prove a result about the generalized Mosco convergence of convex sets when the obstacles, and their fractional gradients, converge strongly.

\begin{corollary}[Without order between obstacles]\label{cor:ObstacleNoOrder}
    Let $\Omega\subset\R^d$ be a bounded open set, $p\in(1,+\infty)$ and $\{s_n\}\subset(0,1)$ a sequence with $s\to\sigma\in[0,1]$. Consider $\{\psi_n\}$ to be a family of measurable functions with $\psi_n\in \Lambda^{s_n,p}_0(\Omega)$ for each $n$.
    If $\Pi_{s_n}(\psi_n)\to \Pi_\sigma(\psi)$ in $\L^p(\Omega)$ for some $\psi\in \Lambda^{\sigma,p}_0(\Omega)$, then $K^{s_n}_{\geq\psi_n}$ converges in the generalized sense of Mosco to $K^\sigma_{\geq \psi}$.
\end{corollary}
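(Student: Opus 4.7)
The plan is to reduce this corollary directly to the two preceding results, namely the generalized Mosco convergence of the positive cones established in Theorem \ref{prop:MoscoConvergencePositiveCone} and the stability under strong translations in Proposition \ref{GeneralizedMoscoPlusStrong}. The key observation is that each obstacle-type convex set decomposes as a translation of the positive cone by the obstacle itself.

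More precisely, since $\psi_n \in \Lambda^{s_n,p}_0(\Omega)$ and $\Lambda^{s_n,p}_0(\Omega)$ is a vector space, for any $v$ the condition $v \in K^{s_n}_{\geq \psi_n}$ is equivalent to $v - \psi_n \in K^{s_n}_{\geq 0}$. Therefore
\begin{equation*}
    K^{s_n}_{\geq \psi_n} = K^{s_n}_{\geq 0} + \psi_n \quad \mbox{and} \quad K^\sigma_{\geq \psi} = K^\sigma_{\geq 0} + \psi.
\end{equation*}
The first step is to record this identity explicitly, which is a one-line verification in each direction.

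Next, because $\Omega$ is bounded, Theorem \ref{prop:MoscoConvergencePositiveCone} applies and yields
\begin{equation*}
    K^{s_n}_{\geq 0} \xrightarrow{s_n-M} K^\sigma_{\geq 0}.
\end{equation*}
Combined with the hypothesis $\Pi_{s_n}(\psi_n) \to \Pi_\sigma(\psi)$ in $\L^p(\Omega)$, the translation stability result of Proposition \ref{GeneralizedMoscoPlusStrong} with $K_n = K^{s_n}_{\geq 0}$, $K = K^\sigma_{\geq 0}$, and translations by $\psi_n$ immediately gives
\begin{equation*}
    K^{s_n}_{\geq \psi_n} = K^{s_n}_{\geq 0} + \psi_n \xrightarrow{s_n-M} K^\sigma_{\geq 0} + \psi = K^\sigma_{\geq \psi},
\end{equation*}
which is the desired conclusion.

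There is no serious obstacle here: the entire argument is structural and consists in recognising the correct decomposition and invoking the two previously established tools. The only minor point to verify is that the hypotheses of Theorem \ref{prop:MoscoConvergencePositiveCone} (boundedness of $\Omega$, $\sigma \in [0,1]$) and of Proposition \ref{GeneralizedMoscoPlusStrong} (strong $\L^p(\Omega)$ convergence of the translations) are all inherited from the statement of the corollary, which they indeed are.
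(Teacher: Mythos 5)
Your proof is correct and follows exactly the same route as the paper: decompose $K^{s_n}_{\geq\psi_n} = K^{s_n}_{\geq 0} + \psi_n$, apply Theorem~\ref{prop:MoscoConvergencePositiveCone} to the positive cones, and then invoke the translation stability of Proposition~\ref{GeneralizedMoscoPlusStrong}. No differences worth noting.
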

\begin{proof}
    As $K^{s_n}_{\geq 0}\xrightarrow{s_n-M} K^{\sigma}_{\geq 0}$ by Theorem \ref{prop:MoscoConvergencePositiveCone}, and $\Pi_{s_n}(\psi_n)\to \Pi_\sigma(\psi)$ in $\L^p(\Omega)$, Proposition \ref{GeneralizedMoscoPlusStrong} immediately yields  $K^{s_n}_{\geq \psi_n}=K^{s_n}_{\geq 0}+\psi_n\xrightarrow{s_n-M} K^{\sigma}_{\geq 0}+\psi=K^{\sigma}_{\geq \psi}$.
\end{proof}

\begin{remark}\label{remark:generalizationForSConstant}
    The results in Proposition \ref{prop:GeneralizedWeak} and Corollary \ref{cor:ObstacleNoOrder} are extensions of the Mosco convergence of convex sets of obstacle type in the spaces $\Lambda^{s,p}_0(\Omega)$ with varying $s$, of classical results of \cite{mosco1969convergence} corresponding to constant $s_n=\sigma\in(0,1]$, in particular for $W^{1,p}_0(\Omega)$ \cite{boccardo2021some}.
\end{remark}

\begin{remark}
     It was proved in \cite{antil2021on} that, as a consequence of the compactness of the positive cone of $\Lambda^{-\sigma,2}(\Omega)$ in $\Lambda^{-\sigma,q}(\Omega)$ for $1<q<2$, if $\psi_n\rightharpoonup\psi$ in $\Lambda^{\sigma,p}_0(\Omega)$ for some $p>2$, then $H^\sigma_0(\Omega)\supset K^\sigma_{\geq \psi_n}\xrightarrow{M} K^\sigma_{\geq \psi}\subset H^\sigma_0(\Omega)$. It would be interesting to extend this property to $K^{s_n}_{\geq \psi_n}\xrightarrow{s_n-M} K^{\sigma}_{\geq \psi}$, even in the Hilbertian framework of $H^s_0(\Omega)$ with varying $s$.
\end{remark}
\begin{remark}\label{rem:obstacle_above}
    Theorem \ref{prop:MoscoConvergencePositiveCone} and Corollary \ref{cor:ObstacleNoOrder} are also valid for convex sets of the obstacle type of the form
    \begin{equation*}
        K^s_{\leq\phi}=\{v\in \Lambda^{s,p}_0(\Omega): v\leq \phi\}\neq\emptyset,
    \end{equation*}
    with the appropriate adaptations of the proofs.
\end{remark}

Another type of admissible sets that one can consider are the ones that correspond to constraints on the fractional gradient, namely convex sets of the form
\begin{equation}\label{eq:constraintConvex}
	\mathcal{K}^s_{g}=\{v\in \Lambda^{s,p}_0(\Omega): |D^s v|\leq g \mbox{ a.e. in }\R^d\}\neq \emptyset.
\end{equation}
Concerning the generalized Mosco convergence, the case of the fractional gradient constraints are, in general, more difficult to deal with than those with obstacle constraints on the function. However, we present an interesting example of a family of convex sets of that type that converge in the generalized sense of Mosco, using the approximation of the Riesz operator $I_\alpha$ to the identity as $\alpha\to 0$ (\cite{kurokawa1981On}).

\begin{theorem}[Constraint in the fractional gradient]
    Let $\Omega\subset\R^d$ any open set, $p\in(1,+\infty)$, $\sigma\in (0,1]$ and an increasing sequence $\{s_n\}\subset(0,1)$ such that $s_n\nearrow \sigma\in (0,1]$. Consider a function $g\in L^p(\R^d)\cap L^q(\R^d)$ with $1<q<p$ and $g\geq 0$ a.e., and a sequence of functions $g_n= I_{\sigma-s_n}g$. Then $\mathcal{K}^{s_n}_{g_n}$, as defined in \eqref{eq:constraintConvex}, converges in the generalized sense of Mosco to $\mathcal{K}^\sigma_{g}$.
\end{theorem}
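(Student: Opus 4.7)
The plan is to verify the two conditions \ref{M1} and \ref{M2} of the generalized Mosco convergence separately, exploiting the semigroup relation $D^{s_n}u = I_{\sigma-s_n}D^\sigma u$ from Proposition~\ref{prop:fractionalGradientInTermsOfRieszPotentialAndFractionalGradient} (which applies since $s_n\le\sigma$) together with Kurokawa's approximation of the identity by Riesz potentials as the Riesz-potential index tends to zero, which under the hypothesis $g\in L^p(\R^d)\cap L^q(\R^d)$ with $q<p$ gives $g_n=I_{\sigma-s_n}g\to g$ in $L^p(\R^d)$ (and a.e.) as $s_n\nearrow\sigma$.

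For \ref{M1}, given $u\in\mathcal{K}^\sigma_g$, the natural candidate is the constant recovery sequence $u_n\equiv u$. Since $\Lambda^{\sigma,p}_0(\Omega)\subset\Lambda^{s_n,p}_0(\Omega)$ by Proposition~\ref{prop:order_lions_calderon}, we have $u\in\Lambda^{s_n,p}_0(\Omega)$. By Proposition~\ref{prop:fractionalGradientInTermsOfRieszPotentialAndFractionalGradient}, $D^{s_n}u=I_{\sigma-s_n}D^\sigma u$, and since the Riesz potential kernel $I_{\sigma-s_n}$ is nonnegative and $|D^\sigma u|\le g$ a.e., we obtain
\begin{equation*}
|D^{s_n}u(x)|=|I_{\sigma-s_n}D^\sigma u(x)|\le I_{\sigma-s_n}|D^\sigma u|(x)\le I_{\sigma-s_n}g(x)=g_n(x)\quad\text{a.e. in }\R^d,
\end{equation*}
so $u\in\mathcal{K}^{s_n}_{g_n}$. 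The convergence $\Pi_{s_n}(u)\to\Pi_\sigma(u)$ in $\L^p(\Omega)$ is immediate in the first component and follows from the continuity $s\mapsto D^su$ in $L^p(\R^d;\R^d)$ (Proposition~\ref{prop:continuous_dependence_fractional_gradiente_on_s}) for the second.

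For \ref{M2}, let $u_n\in\mathcal{K}^{s_n}_{g_n}$ with $\Pi_{s_n}(u_n)\rightharpoonup\Pi_\sigma(u)$ in $\L^p(\Omega)$, so that $u\in\Lambda^{\sigma,p}_0(\Omega)$; we must show $|D^\sigma u|\le g$ a.e. The argument proceeds by testing the weak limit against characteristic functions of balls. Fix a unit vector $\mathbf{v}\in\R^d$ and a ball $B=B_r(x_0)\subset\R^d$; since $\chi_B\mathbf{v}\in L^{p'}(\R^d;\R^d)$, the weak convergence $D^{s_n}u_n\rightharpoonup D^\sigma u$ in $L^p(\R^d;\R^d)$ gives
\begin{equation*}
\int_B D^\sigma u\cdot\mathbf{v}\,dx=\lim_{n\to\infty}\int_B D^{s_n}u_n\cdot\mathbf{v}\,dx,
\end{equation*}
while the pointwise constraint combined with the strong $L^p$-convergence $g_n\to g$ (hence $L^1_{\mathrm{loc}}$-convergence on the bounded set $B$) yields
\begin{equation*}
\left|\int_B D^{s_n}u_n\cdot\mathbf{v}\,dx\right|\le\int_B g_n\,dx\longrightarrow\int_B g\,dx.
\end{equation*}
Combining the two, $|\int_B D^\sigma u\cdot\mathbf{v}\,dx|\le\int_B g\,dx$ for every ball $B$ and every unit $\mathbf{v}$. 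Dividing by $|B|$ and letting $r\to 0$, Lebesgue's differentiation theorem applied both to $D^\sigma u\cdot\mathbf{v}\in L^p(\R^d)$ and to $g\in L^p(\R^d)$ gives $|D^\sigma u(x)\cdot\mathbf{v}|\le g(x)$ for a.e.\ $x\in\R^d$, first for every $\mathbf{v}$ in a countable dense subset of the unit sphere and then for all such $\mathbf{v}$ by continuity, whence $|D^\sigma u|\le g$ a.e., i.e.\ $u\in\mathcal{K}^\sigma_g$.

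The main obstacle is justifying $g_n\to g$ in $L^p(\R^d)$: this is precisely where the technical hypothesis $g\in L^p\cap L^q$ with $q<p$ enters, via Kurokawa's results on the convergence $I_\alpha g\to g$ as $\alpha\to 0$ (already invoked in the Introduction). All other steps reduce to combining the duality/semigroup properties of $D^s$ with classical weak-convergence arguments.
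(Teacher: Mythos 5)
Your proof is correct, and its first half (condition M1) is identical to the paper's: both exploit the inclusion $\mathcal{K}^\sigma_g\subset\mathcal{K}^{s_n}_{g_n}$ coming from $D^{s_n}u=I_{\sigma-s_n}D^\sigma u$ and the pointwise bound $|I_{\sigma-s_n}D^\sigma u|\le I_{\sigma-s_n}g=g_n$, so the constant recovery sequence works (you make explicit the appeal to Proposition~\ref{prop:continuous_dependence_fractional_gradiente_on_s}, which the paper leaves implicit). For condition M2, however, you take a genuinely different route. The paper restricts the weak convergence $D^{s_n}u_n\rightharpoonup D^\sigma u$ to arbitrary measurable $\omega\subset\R^d$ and invokes weak lower semicontinuity of the $L^p(\omega)$-norm to pass the constraint to the limit in one step:
\begin{equation*}
\int_\omega|D^\sigma u|^p\,dx\le\liminf_n\int_\omega|D^{s_n}u_n|^p\,dx\le\liminf_n\int_\omega g_n^p\,dx=\int_\omega g^p\,dx,
\end{equation*}
from which $|D^\sigma u|\le g$ a.e.\ follows because $\omega$ is arbitrary. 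You instead dualize: test the weak convergence against $\chi_B\mathbf{v}$, bound by $\int_B g_n\,dx\to\int_B g\,dx$ using strong $L^1_{\mathrm{loc}}$ convergence of $g_n$, then recover the pointwise inequality by the Lebesgue differentiation theorem applied to $D^\sigma u\cdot\mathbf{v}$ and to $g$ simultaneously, intersecting the exceptional null sets over a countable dense set of directions $\mathbf{v}$. Both arguments rely on the same ingredient ($g_n\to g$ strongly via Kurokawa), but the paper's is shorter because it uses convexity/weak lower semicontinuity of the $L^p$-norm rather than linear testing plus a differentiation-basis argument; your version is slightly longer yet more elementary in the sense that it avoids weak lower semicontinuity and only uses duality, Cauchy--Schwarz, and Lebesgue points.
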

\begin{proof}
    We start by noting that $g_n\to g$ in $L^p(\R^d)$ by \cite[Theorem~C]{kurokawa1981On}. Moreover, $\mathcal{K}^\sigma_{g}\subset \mathcal{K}^{s_n}_{g_n}$ for every $n$. In fact, if $u\in \mathcal{K}^\sigma_{g}$, then Proposition \ref{prop:fractionalGradientInTermsOfRieszPotentialAndFractionalGradient} allow us to obtain the estimate
    \begin{equation*}
        |D^{s_n} u|=|I_{\sigma-s_n} D^\sigma u|\leq I_{\sigma-s_n}|D^\sigma u|\leq I_{\sigma-s_n}g=g_n.
    \end{equation*}
    Consequently, we can set $u_n\equiv u\in \mathcal{K}^\sigma_{g}\subset \mathcal{K}^{s_n}_{g_n}$ for all $n$, and \ref{M1} holds.
    
    For the condition \ref{M2}, consider a sequence of functions $u_n\in\mathcal{K}{s_n}_{g_n}$ such that $\Pi_{s_n}(u_n)\rightharpoonup \Pi_\sigma(u)$ in $\L^p(\Omega)$. Then, $D^{s_n} u_n\rightharpoonup D^\sigma u$ in $L^p(\omega;\R^d)$ for all measurable sets $\omega\subset \R^d$, because $\Pi_{s_n}(u_n)\rightharpoonup \Pi_\sigma(u)$ in $\L^p(\Omega)$, and we have
    \begin{equation*}
        \int_\omega{|D^\sigma u|^p}\,dx\leq \liminf_{s_n\nearrow\sigma}{\int_\omega{|D^{s_n}u_n|^p}}\,dx\leq \liminf_{s_n\nearrow \sigma}{\int_\omega{|g_n|^p}}\,dx=\int_\omega{|g|^p}\,dx.
    \end{equation*}
    Since this inequality is valid for all measurable sets $\omega\subset \R^d$, we conclude that $|D^\sigma u|\leq g$ a.e. in $\R^d$.
\end{proof}

\subsection{Quasi-variational Inequalities}\label{subsec:quasivariational}

As a natural application of the Mosco convergence in variational inequalities, quasi-variational inequalities correspond to unilateral problems where the convex sets depend on the solutions themselves. This type of problems with $s=1$ arise in different models of control theory, solid mechanics or superconductivity, for instance. In this section, we provide examples of quasi-variational inequalities involving the $s$-fractional gradient. In particular, we study the existence of solutions, and their continuous dependence with respect to the fractional parameter $s$, to a quasi-variational inequality of obstacle type, where the obstacle is itself a solution to a coupled problem. We also study the existence of solutions to a quasi-variational inequality of $s$-gradient constraint type.

\subsubsection{Quasi-variational inequality of obstacle type}

Consider the problem of finding a solution $u\in K^s_r(u)$, such that
\begin{equation}\label{eq:quasivariationalObstacleAuxiliarProblem}
    \int_{\R^d}{\boldsymbol{a}(D^s u)D^s(w-u)}\,dx+\int_\Omega{b(u)(w-u)}\,dx\geq \langle F, w-u\rangle_s,\quad \forall w\in K^s_t(u),
\end{equation}
where the convex set is of obstacle type 
\begin{equation}\label{eq:convex_set_qvi}
    K^s_t(u)=\{w\in\Lambda^{s,p}_0(\Omega):\, w\geq \Psi(u)\},
\end{equation}
with the obstacle $\psi=\Psi(u)$ being itself a solution to the problem, depending on $u$, given by
\begin{equation}\label{eq:auxiliary_problem_for_qvi}
    \begin{cases}
        -\Delta^{t}_p \psi+|\psi|^{p-2}\psi=T(u, D^s u) & \mbox{in }\Omega,\\
        \psi=0 &\mbox{on } \R^d\setminus \Omega,
    \end{cases}
\end{equation}
where $T$ is a suitable continuous operator.
The implicit obstacle problem \eqref{eq:quasivariationalObstacleAuxiliarProblem}, \eqref{eq:convex_set_qvi} and \eqref{eq:auxiliary_problem_for_qvi} may be solved with the general abstract existence theory for quasi-variational inequalities of \cite{joly1979propos} or \cite[Theorem 2.2]{kano2008existence}. However, for this type of application it is simpler to use directly a combination of the Mosco continuous dependence results with a compactness argument. To illustrate also the continuous dependence on the fractional parameter in the case of monotone operators, including of $p$-Laplacian type, we present the following theorem, which proof we develop for completeness. 

\begin{theorem}\label{thm:quasiVariationalPseudoMonotone}
    Let $\Omega\subset\R^d$ be any open set, $s\in[0,1]$ and $p\in(1,\infty)$. Let us consider two Carathéodory functions $\boldsymbol{a}=\boldsymbol{a}(x,\xi):\R^d\times\R^d\to\R^d$ and $b=b(x,r):\R^d\times\R\to\R^d$, both monotone with respect to their last variable, and at least on of them being strictly monotone, satisfying the growth conditions \eqref{eq:upper_bound_for_A_when_omega_arbitrary} and \eqref{eq:upper_bound_for_B_when_omega_arbitrary} and the strong coercivity \eqref{eq:strongCorcivity}. Consider also a family of closed convex sets $\{K(v):\, v\in\Lambda^{s,p}_0(\Omega)\}$ such that
    \begin{itemize}
        \item[i)] there exists a bounded set $E\subset\Lambda^{s,p}_0(\Omega)$ with the property that $K(v)\cap E\neq\empty\emptyset$ for all $v\in\Lambda^{s,p}_0(\Omega)$;
        \item[ii)] whenever $v_n\rightharpoonup v$ in $\Lambda^{s,p}_0(\Omega)$, we have $K(v_n)\to K(v)$ in the (classical) sense of Mosco.
    \end{itemize}
    Then, there exists a function $u\in K(u)$, such that
    \begin{equation}\label{QuasiVariationalInequality}
        \int_{\R^d}{\boldsymbol{a}(D^s u)\cdot D^s(w-u)}\,dx+\int_\Omega{b(u)(w-u)}\,dx\geq 0, \quad \forall w\in K(u).
    \end{equation}
\end{theorem}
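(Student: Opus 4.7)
The strategy is a fixed-point argument for the single-valued solution map $T:v\mapsto u$, where $u\in K(v)$ solves the variational inequality with the frozen convex set $K(v)$. The plan has four steps.

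First, for each $v\in\Lambda^{s,p}_0(\Omega)$, I would apply Theorem \ref{thm:ExistenceFractionalVariationalInequalityMonotoneOperators} to obtain existence of a solution $u=T(v)\in K(v)$ of
\begin{equation*}
    \int_{\R^d}{\boldsymbol{a}(D^s u)\cdot D^s(w-u)}\,dx+\int_\Omega{b(u)(w-u)}\,dx\geq 0,\quad \forall w\in K(v).
\end{equation*}
Uniqueness follows from Proposition \ref{UniquenessSolutionPointwisePrespective} because at least one of $\boldsymbol{a}$ or $b$ is strictly monotone in the last variable, so $T$ is a well-defined single-valued map $\Lambda^{s,p}_0(\Omega)\to\Lambda^{s,p}_0(\Omega)$.

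Second, I would derive an a priori bound independent of $v$. Using hypothesis (i), pick $w_0(v)\in K(v)\cap E$ with $\|w_0(v)\|_{\Lambda^{s,p}_0(\Omega)}\leq M$ uniformly. Testing the variational inequality with $w=w_0(v)$ and combining the strong coercivity \eqref{eq:strongCorcivity} with the growth conditions \eqref{eq:upper_bound_for_A_when_omega_arbitrary}, \eqref{eq:upper_bound_for_B_when_omega_arbitrary} and standard Young's inequality absorption, I would obtain $\|T(v)\|_{\Lambda^{s,p}_0(\Omega)}\leq R$ for some constant $R$ depending only on $M$ and the structural constants. Hence $T$ maps the closed ball $B_R\subset\Lambda^{s,p}_0(\Omega)$ into itself.

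Third, and this is the main analytical step, I would establish that $T$ is weakly sequentially continuous on $B_R$. Let $v_n\rightharpoonup v$ and set $u_n=T(v_n)$. By the a priori bound extract a subsequence with $u_n\rightharpoonup\bar u$; the weak closure part of the Mosco convergence $K(v_n)\to K(v)$ in hypothesis (ii) forces $\bar u\in K(v)$. For each $w\in K(v)$, the recovery part of (ii) produces $w_n\in K(v_n)$ with $w_n\to w$ strongly in $\Lambda^{s,p}_0(\Omega)$. Testing the VI for $u_n$ with $w_n$ and exploiting the monotonicity of $\boldsymbol{a}$ and $b$ yields the dual (Minty) inequality
\begin{equation*}
    \int_{\R^d}{\boldsymbol{a}(D^s w_n)\cdot D^s(w_n-u_n)}\,dx+\int_\Omega{b(w_n)(w_n-u_n)}\,dx\geq 0,
\end{equation*}
in which I pass to the limit using the continuity of $\boldsymbol{a}$ and $b$ (via the growth conditions) together with the strong convergence of $w_n$ and the weak convergence of $u_n$. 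Applying Minty's lemma with $w=\bar u+\theta(z-\bar u)\in K(v)$ for $z\in K(v)$ and $\theta\searrow 0$ then shows that $\bar u$ solves the VI on $K(v)$; by uniqueness $\bar u=T(v)$, and since every subsequence has this same limit, the full sequence $T(v_n)$ converges weakly to $T(v)$.

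Finally, since $\Lambda^{s,p}_0(\Omega)$ is reflexive for $1<p<\infty$, the ball $B_R$ is weakly compact, convex, and metrizable on separable subspaces, so the Schauder–Tychonoff fixed-point theorem applied to the weakly continuous self-map $T:B_R\to B_R$ produces a fixed point $u=T(u)\in K(u)$ satisfying \eqref{QuasiVariationalInequality}. The principal obstacle is the continuity step: passing to the limit in the variational inequality along the two simultaneously varying sequences $u_n\rightharpoonup\bar u$ and $w_n\to w$, without any compactness granted by the ambient domain (which may be unbounded), relies essentially on the monotonicity of the operator via Minty's trick and on the strong recovery sequence furnished by the Mosco hypothesis (ii).
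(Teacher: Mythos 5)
Your proof is correct and follows essentially the same fixed-point scheme as the paper: define the single-valued solution map on the frozen convex sets (existence from the monotone-operator theorem, uniqueness from the strict monotonicity of $\boldsymbol{a}$ or $b$), get a uniform bound $\|T(v)\|\leq R$ by testing against a point of $K(v)\cap E$, show weak sequential continuity, and conclude with Tychonov's fixed-point theorem on the weakly compact ball $\overline{B_R}$. The only difference is cosmetic: the paper delegates the weak-continuity step to its already-proved stability theorem (Theorem \ref{thm:general_stability_s_to_0} with $s_n\equiv\sigma$, reducing to classical Mosco), whereas you unpack that black box into the direct Minty argument — testing the VI against the Mosco recovery sequence, passing to the limit under the growth conditions with $w_n\to w$ strong and $u_n\rightharpoonup\bar u$ weak, then deconvexifying with $\theta\searrow 0$; the underlying mechanism is identical, and your version has the small advantage of being self-contained (and of citing the correct existence theorem, Theorem \ref{thm:ExistenceFractionalVariationalInequalityMonotoneOperators}, for arbitrary $\Omega$ and $s\in[0,1]$).
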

\begin{proof}
    For each $v\in \Lambda^{s,p}_0(\Omega)$, we can apply Theorem \ref{ExistenceFractionalVariationalInequality} and Proposition \ref{UniquenessSolutionPointwisePrespective}, to get the existence and uniqueness of the solution $u_v=S(v)\in K(v)$ to the problem
    \begin{equation}\label{inequalityForSolutionMap}
        \int_{\R^d}{(\boldsymbol{a}(D^s u_v))\cdot D^s(w-u_v)}\,dx+\int_\Omega{b(u_v)(w-u_v)}\,dx\geq 0, \quad \forall w\in K(v).
    \end{equation}
    We show that $S$ has a fixed point. We start by obtaining an a priori estimate for the functions $u_v$. Choose, for each $v\in\Lambda^{s,p}_0(\Omega)$, a function $g_v\in E\cap K(v)$. With these functions, we can apply the coercivity condition to get
    \begin{align*}
        \int_{\R^d}{\boldsymbol{a}(D^s u_v)\cdot D^s g_v}\,dx+\int_\Omega{b(u_v)g_v}\,dx&\geq \int_{\R^d}{\boldsymbol{a}(D^s u_v)\cdot D^s u_v}\,dx+\int_\Omega{b(u_v)u_v}\,dx\\
        &\geq \alpha \|D^s u_v\|^p_{L^p(\R^d;\R^d)}+\beta\|u_v\|^p_{L^p(\Omega)}-\|k\|_{L^1(\R^d)}.
    \end{align*}
    On the other hand, from the growth conditions, we have
    \begin{align*}
        &\int_{\R^d}{\boldsymbol{a}(D^s u_v)\cdot D^s g_v}\,dx+\int_\Omega{b(u_v)g_v}\,dx\\
        &\qquad \leq \|\gamma_1\|_{{L^p}'(\R^d)}\|D^s g_v\|_{L^p(\R^d;\R^d)}+C_1\|D^s u_v\|^{p-1}_{L^p(\R^d;\R^d)}\|D^s g_v\|_{L^p(\R^d;\R^d)}\\
        &\qquad\qquad+\|\gamma_2\|_{{L^p}'(\Omega)}\|g_v\|_{L^p(\Omega)}+C_2 \|u_v\|^{p-1}_{L^p(\R^d)}\|g_v\|_{L^p(\R^d)}.
    \end{align*}
    Combining this two estimates, and using the fact that $E$ is bounded in $\Lambda^{s,p}_0(\Omega)$, we deduce that there exists $R>0$, such that $\|u_v\|_{\Lambda^{s,p}_0(\Omega)}\leq R$, and so
    \begin{equation}\label{eq:inclusion_between_balls_for_qvi}
        S\left(\overline{B_R}\right)\subset S(\Lambda^{s,p}_0(\Omega))\subset \overline{B_R},
    \end{equation}
    where $\overline{B_R}=\{v\in\Lambda^{s,p}_0(\Omega):\, \|v\|_{\Lambda^{s,p}_0(\Omega)}\leq R\}$. Since $\overline{B_R}$ is weakly compact, then there exists $v,z\in \overline{B_R}$ such that
    \begin{equation*}
        v_n\rightharpoonup v \mbox{ in }\Lambda^{s,p}_0(\Omega) \qquad \mbox{ and } \qquad S(v_n)\rightharpoonup z \mbox{ in }  \Lambda^{s,p}_0(\Omega).
    \end{equation*}
    The weak convergence of $v_n$, implies $K(v_n)\to K(v)$ in the sense of Mosco by assumption \textit{ii)}. As a consequence of Theorem \ref{thm:general_stability_s_to_0}, which in this case is equivalent to the classical Mosco's theorem because $s_n\equiv \sigma$, see Remark \ref{rem:classicalMoscoTheorem}, we also get $S(v_n)\rightharpoonup S(v)$ in $\Lambda^{s,p}_0(\Omega)$.

    With these properties, we can apply Tychonov's fixed point theorem, \cite[Theorem 9.16]{baiocchi1984Variational} to deduce that $S$ has a fixed point in $\overline{B_R}$, completing the proof.
\end{proof}

As an application of this type of fixed point argument, we have the following existence result for \eqref{eq:quasivariationalObstacleAuxiliarProblem}, \eqref{eq:convex_set_qvi} and \eqref{eq:auxiliary_problem_for_qvi}, where we can relax the continuity assumption on the dependence $v\mapsto K^s_r(v)$.

\begin{theorem}\label{prop:existenceQuasivariationalObstacleAuxiliarProblem}
    Let $\Omega\subset\R^d$ be an open bounded set, $0<s< t \leq1$ and $1<p<\infty$. Let $\boldsymbol{a}$ and $b$ satisfy the same assumptions as in Theorem \ref{thm:quasiVariationalPseudoMonotone}. Let $T:\L^p(\Omega)\mapsto L^{(p^*_s)'}(\Omega)$ be a continuous operator such that $\|T(g)\|_{L^{(p^*_s)'}(\R^d)}\leq M<\infty$ for all $g\in \L^p(\Omega)$. Then, the quasi-variational inequality \eqref{eq:quasivariationalObstacleAuxiliarProblem}, \eqref{eq:convex_set_qvi} and \eqref{eq:auxiliary_problem_for_qvi} has a solution.
\end{theorem}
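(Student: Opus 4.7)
The plan is a Schauder-Tychonov fixed-point argument on a suitable ball of $\Lambda^{s,p}_0(\Omega)$, where the strict inequality $t>s$ provides the compactness that drives the argument.

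First, I would define the solution operator $\Phi:\Lambda^{s,p}_0(\Omega)\to \Lambda^{s,p}_0(\Omega)$ by $\Phi(v)=u_v$, built in two stages. Given $v$, Corollary \ref{UniquenessFractionalPLaplacian} (applied with the coefficient $\beta_p=1>0$) provides a unique $\Psi(v)\in\Lambda^{t,p}_0(\Omega)$ solving $-\Delta^t_p\psi+|\psi|^{p-2}\psi=T(v,D^s v)$; this is well defined because $T(v,D^s v)\in L^{(p_s^*)'}(\Omega)\subset \Lambda^{-s,p'}(\Omega)\subset \Lambda^{-t,p'}(\Omega)$, the last inclusion being continuous since $\Lambda^{t,p}_0(\Omega)\subset\Lambda^{s,p}_0(\Omega)$ continuously. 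The set $K^s_t(v)=\{w\in\Lambda^{s,p}_0(\Omega):w\geq\Psi(v)\}$ is non-empty since it contains $\Psi(v)$ itself, so Theorem \ref{thm:ExistenceFractionalVariationalInequalityMonotoneOperators} together with Proposition \ref{UniquenessSolutionPointwisePrespective} yields a unique $u_v\in K^s_t(v)$.

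For the a priori estimates, the uniform bound $\|T(v,D^s v)\|_{L^{(p_s^*)'}(\Omega)}\leq M$ passes, via the continuous inclusion above, to a uniform bound in $\Lambda^{-t,p'}(\Omega)$, and Theorem \ref{thm:ContinuityInversePLaplacian} then gives $\|\Psi(v)\|_{\Lambda^{t,p}_0(\Omega)}\leq M'$ independently of $v$. Using $\Psi(v)\in K^s_t(v)$ as admissible test function in the variational inequality for $u_v$ and invoking the strong coercivity of $\mathscr{A}_s$ yields $\|u_v\|_{\Lambda^{s,p}_0(\Omega)}\leq R$ with $R$ independent of $v$. Hence $\Phi$ maps the closed ball $B_R\subset\Lambda^{s,p}_0(\Omega)$ into itself.

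The heart of the proof is the compactness/continuity of $\Phi$. Writing $\Phi=\Phi_2\circ\Phi_1$ with $\Phi_1(v)=\Psi(v)$ and $\Phi_2(\psi)=u_\psi$, the range of $\Phi_1$ lies in a bounded subset of $\Lambda^{t,p}_0(\Omega)$, which by Corollary \ref{cor:fracRellichKondrachovFixedP} is relatively compact in $\Lambda^{s,p}_0(\Omega)$. The strong continuity of $\Phi_1:\Lambda^{s,p}_0(\Omega)\to\Lambda^{t,p}_0(\Omega)$ follows by composing the strong continuity of $T$ with the Hölder or locally Lipschitz continuity of the $p$-Laplacian solution map (Theorem \ref{thm:ContinuityInversePLaplacian}). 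For $\Phi_2$: when $\psi_n\to\psi$ strongly in $\Lambda^{s,p}_0(\Omega)$, Corollary \ref{cor:ObstacleNoOrder} (in its classical case $s_n\equiv s$, cf.\ Remark \ref{rem:classicalMoscoTheorem}) yields the Mosco convergence $\{w\geq\psi_n\}\xrightarrow{M}\{w\geq\psi\}$, and the classical Mosco stability for variational inequalities (Theorem \ref{thm:general_stability_s_to_0} with $s_n\equiv\sigma$) delivers $u_{\psi_n}\rightharpoonup u_\psi$ in $\Lambda^{s,p}_0(\Omega)$. Applying Schauder-Tychonov's theorem to $\Phi$ on the (weakly compact, convex) set $B_R$ then produces a fixed point $u^*=\Phi(u^*)$, which by construction solves the quasi-variational inequality.

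The main technical obstacle is that $\Phi_2$ is only strong-to-weak continuous in general (strong-to-strong convergence would follow under the additional $p$-Laplacian structure of Theorem \ref{theorem:StrongMosco}), so direct application of Schauder in the strong topology is not immediate. The resolution is to exploit the strong compactness harvested by $\Phi_1$ from the embedding $\Lambda^{t,p}_0(\Omega)\Subset \Lambda^{s,p}_0(\Omega)$: along any bounded sequence $v_n$, $\Phi_1(v_n)$ admits strongly convergent subsequences in $\Lambda^{s,p}_0(\Omega)$, so the weak limit of $\Phi(v_n)$ coincides with the strong cluster points, which converts the strong-to-weak continuity of $\Phi_2$ into the continuity needed for the fixed-point theorem. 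It is precisely this compactness, which is guaranteed by the assumption $t>s$ in the auxiliary problem, that makes the argument go through.
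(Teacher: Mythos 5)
Your proposal follows the paper's strategy: a Schauder fixed-point argument on a ball $\overline{B_R}\subset\Lambda^{s,p}_0(\Omega)$, the same decomposition $S=Q\circ\overline\Psi\circ T\circ\Pi_s$, the same a priori estimate obtained by testing the inequality with $\Psi(v)\in K^s_t(v)$, and the same source of compactness (the gap $t>s$). The only presentational difference is where the compact link sits: you place it in the primal embedding $\Lambda^{t,p}_0(\Omega)\Subset\Lambda^{s,p}_0(\Omega)$ (so $v\mapsto\Psi(v)$ is compact into $\Lambda^{s,p}_0(\Omega)$), while the paper places it in the dual embedding $L^{(p^*_s)'}(\Omega)\subset\Lambda^{-s,p'}(\Omega)\Subset\Lambda^{-t,p'}(\Omega)$ (so $v\mapsto T(\Pi_s(v))$ is compact into the forcing space of the auxiliary problem). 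These are dual formulations of the same compactness and are interchangeable.

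Where your write-up overreaches is in the final paragraph, in the claim that the strong compactness of $\Phi_1$ ``converts the strong-to-weak continuity of $\Phi_2$ into the continuity needed for the fixed-point theorem.'' It does not. If $\Phi_1(v_n)\to\psi$ strongly along a subsequence, $\Phi_2$ being only strong-to-weak continuous gives $\Phi_2(\Phi_1(v_n))\rightharpoonup\Phi_2(\psi)$ weakly; that neither makes $\Phi(\overline{B_R})$ relatively compact in $\Lambda^{s,p}_0(\Omega)$ nor makes $\Phi$ strong-to-strong continuous, which are the two hypotheses Schauder needs in the norm topology. Falling back to Tychonov in the weak topology is also blocked here, because $T$ depends on $D^s v$, which only converges weakly when $v_n\rightharpoonup v$, so $\Phi_1$ is not weak-to-strong continuous and $\Phi$ is not sequentially weak-to-weak continuous. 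The honest resolution, which the paper's own terse proof also leaves implicit, is to verify that the obstacle-solution map $Q$ is in fact strong-to-strong continuous. This is achievable: from the Minty-type comparison $\langle Au_n-Au,u_n-u\rangle\le\langle Au_n-Au,\psi_n-\psi\rangle\to 0$ one gets $\langle Au_n,u_n\rangle\to\langle Au,u\rangle$, and combining the strong coercivity \eqref{eq:strongCorcivity} with the a.e.\ convergence of $(u_n,D^su_n)$ (which requires $\boldsymbol{a}$, not just $b$, to be strictly monotone), a Scheff\'e--Vitali argument upgrades $D^su_n\rightharpoonup D^su$ to strong $L^p$ convergence; alternatively, one imposes the $p$-Laplacian-type strengthening \eqref{eq:monotonicityPLaplacian}--\eqref{eq:monotonicityb}, whereupon Theorem~\ref{theorem:StrongMosco} with $s_n\equiv s$ gives the strong continuity of $Q$ directly. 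You should insert one of these arguments; as written, your continuity step would not survive scrutiny.
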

\begin{proof}
    We apply Schauder's fixed theorem to $v\mapsto S(v)=Q\circ \Psi(v)$, where $\Psi=\overline{\Psi}\circ T\circ \Pi_s(v)$, being $T\circ \Pi_s: \Lambda^{s,p}_0(\Omega)\to\Lambda^{-t,p'}(\Omega)$ a compact operator since $L^{(p^s_*)'}(\Omega)\subset \Lambda^{-s,p'}(\Omega)\Subset \Lambda^{-t,p'}(\Omega)$ by Corollary \ref{cor:fracRellichKondrachovFixedP}, and with $\overline{\Psi}:\Lambda^{-t,p'}(\Omega)\ni T_v\mapsto\psi\in\Lambda^{t,p}_0(\Omega)$ being the solution of \eqref{eq:auxiliary_problem_for_qvi} given by Corollary \ref{UniquenessFractionalPLaplacian}, and $Q:\Lambda^{t,p}_0(\Omega)\subset \Lambda^{s,p}_0(\Omega)\to \Lambda^{s,p}_0(\Omega)$ is the solution to the variational inequality \eqref{eq:quasivariationalObstacleAuxiliarProblem} with the convex $K^s_t(v)$ given by \eqref{eq:convex_set_qvi}.
    
    In order to obtain the inclusion \eqref{eq:inclusion_between_balls_for_qvi} for some $R>0$, we need the following a priori estimate. For every $v\in\Lambda^{s,p}_0(\Omega)$, let $\Psi(v)$ denote the solution to \eqref{eq:auxiliary_problem_for_qvi} with $T_v=T(v, D^s v)$, so, recalling $\Lambda^{t,p}_0(\Omega)\subset\Lambda^{s,p}_0(\Omega)$, we have
    \begin{multline}\label{eq:estimate_for_size_ball_qvi}
        C'\|\Psi(v)\|^p_{\Lambda^{s,p}_0(\Omega)}
        \leq \|\Psi(v)\|^p_{\Lambda^{t,p}_0(\Omega)}=\langle -\Delta^{t}_p \Psi(v)+|\Psi(v)|^{p-2}\Psi(v),\Psi(v)\rangle_\sigma=\int_\Omega{T_v\Psi(v)}\,dx\\
        \leq \|T_v\|_{L^{(p^*_s)'}(\Omega)} \|\Psi(v)\|_{L^{p^*_s}(\Omega)}\leq C'_s\|T_v\|_{L^{(p^*_s)'}(\Omega)} \|\Psi(v)\|_{\Lambda^{s,p}_0(\Omega)}.
    \end{multline}
    Since $\|T_v\|_{{L^{p_s^*}}'(\Omega)}\leq M$, then we have
    \begin{equation}\label{eq:apriori_estimate_solution_coupled_problem}
        \|\Psi(v)\|_{\Lambda^{s,p}_0(\Omega)}\leq (M/C_s)^{\frac{1}{p-1}}=R,
    \end{equation}
    and we have a fixed point $u=Su\in \overline{B_R}\cap K^s_t(u)$.
\end{proof}

\begin{remark}
    An example may be given by the Uryson operator $T:\L^p(\Omega)\mapsto L^{(p^*_s)'}(\Omega)$ given by
    \begin{equation}
        T(g)(x)=\int_{\mathscr{O}}{\tau(x, y, g(y), D^s g(y))}\,dy,\quad \mbox{ for a.e. } x\in\Omega,
    \end{equation}
    where $\mathscr{O}$ is a bounded subset of $\R^d$ and $\tau:\Omega\times \mathscr{O}\times\R\times\R^d\to\R$ is a Carathéodory function, continuous in $(v,\xi)$ and measurable for a.e. $(x,y)$, satisfying the inequality
    \begin{equation*}
        |\tau(x,y,v,\xi)|\leq \phi(x,y), \mbox{ for a.e. } (x,y)\in\Omega\times\mathscr{O} \mbox{ and for all } (v,\xi)\in \R\times\R^d
    \end{equation*}
    with $\phi\in L^{(p^*_s)'}(\Omega; L^\infty(\mathscr{O}))$.  Here we may choose $M=\left(\int_\Omega{\left(\int_{\mathscr{O}}{\phi(x,y)}\,dy\right)^{(p^*_s)'}}\,dx\right)^\frac{1}{(p^*_s)'}$ in Theorem \ref{prop:existenceQuasivariationalObstacleAuxiliarProblem}. For the continuity of $T:\L^p(\Omega)\mapsto L^{(p^*_s)'}(\Omega)$, we observe that for every $(g_n, E_n)\in\L^p(\Omega)$ converging to $(g,E)\in\L^p(\Omega)$ as $n\to\infty$ we have $\tau(x,y, g_n(y), E_n(y))\to \tau(x,y,g(y),E(y))$ in measure as $n\to\infty$, for a.e. $(x,y)\in\Omega\times \mathscr{O}$, because $\tau$ is a Carathéodory function. Moreover, for $\mathcal{C}\subset \mathscr{O}$ we have
    \begin{equation*}
        \lim_{|\mathcal{C}|\to 0}{\int_{\mathcal{C}}{|\tau(x, y, g_n(y), E_n(y))|}\,dy}\leq \lim_{|\mathcal{C}|\to 0}{\int_{\mathcal{C}}{\phi(x, y)}\,dy}=0 
    \end{equation*}
    uniformly with respect to $n$. From Vitalli's theorem, we obtain that $\tau(x, \cdot, g_n, E_n)\to \tau(x, \cdot, g, E)$ in $L^1(\mathscr{O})$ for a.e. $x\in\Omega$.
    The continuity of $T$ then follows by Lebesgue's dominated convergence theorem.
\end{remark}

\begin{remark}\label{rem:qvi_obstacle_with_T_depending_only_on_u}
    If in Theorem \ref{prop:existenceQuasivariationalObstacleAuxiliarProblem} we assume that $T$ does not depend on $D^s u$, i.e. $T:L^q(\Omega)\to L^{(p^*_s)'}(\Omega)$, with $1\leq q<p^*_s$, and keep the same assumptions on $T$, we can still prove the existence of a solution to \eqref{eq:quasivariationalObstacleAuxiliarProblem}, \eqref{eq:convex_set_qvi} and \eqref{eq:auxiliary_problem_for_qvi} when $0<s=t\leq 1$, as a Corollary of Theorem \ref{thm:quasiVariationalPseudoMonotone}. Indeed, by the fractional Rellich-Kondrachov's theorem, Theorem \ref{CompactnessResultALaBellido}, we can use the compactness of $\Lambda^{s,p}_0(\Omega)\Subset L^q(\Omega)$ for any sequence $v_n\rightharpoonup v$ in $\Lambda^{s,p}_0(\Omega)$,  and the continuity of $T: \L^q(\Omega)\to L^{(p^*_s)'}(\Omega)$, to get $T_{v_n}\to T_v$ in $L^{(p^*_s)'}(\Omega)$, and consequently, by Corollary \ref{UniquenessFractionalPLaplacian}, we have $\Psi(v_n)\to \Psi(v)$ in $\Lambda^{s,p}_0(\Omega)$. Hence, Corollary \ref{cor:ObstacleNoOrder}, with $s_n\equiv\sigma$, yields $K^s_t(v_n)\to K^s_t(v)$ in the sense of Mosco and the assumption \textit{ii)} of Theorem \ref{thm:quasiVariationalPseudoMonotone} is satisfied. The assumption \textit{i)} is satisfied with $E=\overline{B_R}$ by the a priori estimate \eqref{eq:estimate_for_size_ball_qvi}.
\end{remark}

\begin{remark}
    An example of a continuous operator $T:L^q(\Omega)\to L^{(p^*_s)'}(\Omega)$ satisfying the hypothesis stated in Remark \ref{rem:qvi_obstacle_with_T_depending_only_on_u} is given by
    \begin{equation}
        T(g)(x)=(g(x)\wedge k(x))\vee -k(x),
    \end{equation} 
    with $k\geq 0$ a.e. and $k\in L^{(p^*_s)'}(\Omega)$. In fact, we may choose $M=\|k\|_{{L^{(p^*_s)}}'(\Omega)}$, and since
    \begin{equation*}
        |T(g)(x)|\leq k(x) \quad \mbox{ a.e. } x\in\Omega, \quad \mbox { for all } g\in L^q(\Omega),
    \end{equation*}
    using Vitalli's theorem, the continuity follows
    \begin{equation*}
        T(g_n)\to T(g) \mbox{ in }L^{(p^*_s)'}(\Omega)\quad \mbox{ for all } g_n\to g \mbox{ in } L^q(\Omega).
    \end{equation*}
\end{remark}

In this framework it is also possible to show the continuous dependence, with respect to $s$, of the solutions to the quasi-variational inequality of obstacle type \eqref{eq:quasivariationalObstacleAuxiliarProblem}, \eqref{eq:convex_set_qvi} and \eqref{eq:auxiliary_problem_for_qvi}. This is a consequence of the general stability results obtained in Section \ref{sec:stability} and the convergence, in the generalized sense of Mosco, of convex sets of obstacle type as obtained in Subsection \ref{subsec:examples_mosco}, combined with compactness methods. For simplicity, we assume that $T$ does not depend on $D^s$ and $q=p$, so that we can use the fractional Poincaré's inequality, for which the dependence of the constant with respect to the fractional parameter $s$ can be controlled.

\begin{theorem}\label{thm:convergenceSolutionsQuasivariationalInequality}
    Let $\Omega$ be a bounded open set in $\R^d$, $1<p<\infty$ and consider a sequence $\{s_n\}\subset (s_*,\sigma]$ with $0<s_*<\sigma\leq 1$ such that $s_n\to \sigma$. Let $\boldsymbol{a}$ and $b$ have the same hypothesis as in Proposition \ref{prop:existenceQuasivariationalObstacleAuxiliarProblem} and $F_n={f_0}_n-D^{s_n}\cdot\boldsymbol{f}_n\in \Lambda^{-s_n,p'}(\Omega)$ satisfying \eqref{eq:convergence_data}. Assume that $T:L^p(\Omega)\mapsto L^{p'}(\Omega)$ be a continuous operator such that $\|T(g)\|_{L^{p'}(\R^d)}\leq M<\infty$ for all $g\in L^p(\Omega)$. If $u_n\in K^{s_n}_{\sigma}(u_n)$ is a solution of \eqref{eq:quasivariationalObstacleAuxiliarProblem}, \eqref{eq:convex_set_qvi} and \eqref{eq:auxiliary_problem_for_qvi} for each $n\in\N$, then we can extract a subsequence such that 
    \begin{equation*}
        u_n\to u \mbox{ in } \Lambda^{t,p}(\Omega), \mbox{ for } 0\leq t\leq s^* \quad \mbox{ and } \quad D^{s_n}u_n\rightharpoonup D^\sigma u \mbox{ in } L^p(\R^d;\R^d). 
    \end{equation*}
    where $u\in K^\sigma_\sigma(u)$ solves
    \begin{equation}\label{eq:limit_qvi_problem}
        \int_{\R^d}{\boldsymbol{a}(D^\sigma u)\cdot D^\sigma(w-u)\,dx+\int_\Omega b(u)(w-u)}\,dx\geq \langle F, w-u\rangle_\sigma,\quad \forall w\in K^\sigma_\sigma(u).
    \end{equation}
\end{theorem}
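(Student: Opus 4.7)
The strategy is to combine the existence framework of Proposition \ref{prop:existenceQuasivariationalObstacleAuxiliarProblem} with the generalized Mosco stability theory of Section \ref{sec:stability}. Along a subsequence we aim to show: (i) $u_n$ admits a weak limit $u$ with strong $L^p$--convergence by Theorem \ref{CompactnessFromUniformBoundOnDs}; (ii) the implicit obstacles $\psi_n:=\Psi(u_n)$ converge strongly in $\Lambda^{\sigma,p}_0(\Omega)$ to $\psi:=\Psi(u)$, via the compactness of $u\mapsto T(u)$ and the continuous dependence result for the $\sigma$-fractional $p$-Laplacian; (iii) the sets $K^{s_n}_\sigma(u_n)$ converge in the generalized sense of Mosco to $K^\sigma_\sigma(u)$; and finally (iv) apply Theorem \ref{theorem:weak_stability_variational_inequality} to pass to the limit.

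The first step is a uniform a priori estimate. Since $\psi_n^+\in\Lambda^{\sigma,p}_0(\Omega)\cap K^{s_n}_\sigma(u_n)$ and \eqref{eq:apriori_estimate_solution_coupled_problem} gives $\|\psi_n\|_{\Lambda^{\sigma,p}_0(\Omega)}\leq R$, Corollary \ref{cor:order_between_Ds} combined with $s_n\geq s^*$ yields the uniform bound $\|\psi_n^+\|_{\Lambda^{s_n,p}_0(\Omega)}\leq C_{s^*}R$. Testing \eqref{eq:quasivariationalObstacleAuxiliarProblem} with $w=\psi_n^+$ and using the strong coercivity \eqref{eq:strongCorcivity}, the growth conditions \eqref{eq:upper_bound_for_A_when_omega_arbitrary}--\eqref{eq:upper_bound_for_B_when_omega_arbitrary}, Young's inequality, and the uniform bound on $F_n$ given by \eqref{eq:convergence_data}, one obtains $\sup_n\|u_n\|_{\Lambda^{s_n,p}_0(\Omega)}<\infty$. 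Theorem \ref{CompactnessFromUniformBoundOnDs} then provides a subsequence with $u_n\to u$ in $\Lambda^{t,p}_0(\Omega)$ for $t<s^*$ and $D^{s_n}u_n\rightharpoonup D^\sigma u$ in $L^p(\R^d;\R^d)$; in particular $u_n\to u$ in $L^p(\Omega)$. Continuity of $T:L^p(\Omega)\to L^{p'}(\Omega)$ and the embedding $L^{p'}(\Omega)\hookrightarrow \Lambda^{-\sigma,p'}(\Omega)$ yield $T(u_n)\to T(u)$ in $\Lambda^{-\sigma,p'}(\Omega)$, and Corollary \ref{UniquenessFractionalPLaplacian} (the solution map of $-\Delta^\sigma_p+|\cdot|^{p-2}(\cdot)$ is Hölder/locally Lipschitz continuous) gives $\psi_n\to\psi$ in $\Lambda^{\sigma,p}_0(\Omega)$.

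The key step is to upgrade this to $\Pi_{s_n}(\psi_n)\to\Pi_\sigma(\psi)$ in $\L^p(\Omega)$: the convergence in $L^p(\Omega)$ is immediate, while for the fractional gradients we split
\begin{equation*}
\|D^{s_n}\psi_n-D^\sigma\psi\|_{L^p(\R^d;\R^d)}\leq \|D^{s_n}(\psi_n-\psi)\|_{L^p(\R^d;\R^d)}+\|D^{s_n}\psi-D^\sigma\psi\|_{L^p(\R^d;\R^d)}.
\end{equation*}
Since $\psi_n-\psi\in\Lambda^{\sigma,p}_0(\Omega)$, Corollary \ref{cor:order_between_Ds} bounds the first summand by $C(s^*)^{-1-1/p}\|D^\sigma(\psi_n-\psi)\|_{L^p(\R^d;\R^d)}\to 0$, while the second vanishes by Proposition \ref{prop:continuous_dependence_fractional_gradiente_on_s}. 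Corollary \ref{cor:ObstacleNoOrder} now applies and delivers the generalized Mosco convergence $K^{s_n}_\sigma(u_n)\xrightarrow{s_n-M}K^\sigma_\sigma(u)$.

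With all the hypotheses of Theorem \ref{theorem:weak_stability_variational_inequality} in place (pseudomonotone structure coming from the monotonicity plus strict monotonicity of $\boldsymbol{a}$ or $b$, bounded $\Omega$, generalized Mosco convergence of the convex sets, and convergence of the data $F_n$), we conclude that $u\in K^\sigma_\sigma(u)$ solves \eqref{eq:limit_qvi_problem}. The main obstacle is the self-referential character of the convex sets: because the obstacle is the image $\Psi(u_n)$ of the very sequence we are trying to pass to the limit, verifying the generalized Mosco convergence requires the compactness of the composite map $u\mapsto \Psi(u)$, which is recovered through the compact embedding $\Lambda^{s_n,p}_0(\Omega)\Subset L^p(\Omega)$, the continuity of $T$, and the continuous dependence for the $p$-Laplacian provided by Theorem \ref{thm:ContinuityInversePLaplacian}.
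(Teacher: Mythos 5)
Your proof is correct and follows essentially the same strategy as the paper: establish a uniform a priori bound on $u_n$ by testing with the obstacle (the paper tests with $\Psi(u_n)$ itself, you use $\psi_n^+$ — both lie in $K^{s_n}_\sigma(u_n)$), extract a convergent subsequence via Theorem \ref{CompactnessFromUniformBoundOnDs}, propagate this through $T$ and the $(\sigma,p)$-Laplacian solution map to get $\Pi_{s_n}(\psi_n)\to\Pi_\sigma(\psi)$ in $\L^p(\Omega)$ by the same triangle-inequality decomposition using Corollary \ref{cor:order_between_Ds} and Proposition \ref{prop:continuous_dependence_fractional_gradiente_on_s}, then invoke Corollary \ref{cor:ObstacleNoOrder} and the Mosco-type stability theorem. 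The only cosmetic difference is that you conclude with Theorem \ref{theorem:weak_stability_variational_inequality}, while the paper invokes Theorem \ref{thm:general_stability_s_to_0} (implicitly combined with Remark \ref{rem:general_convergence_s_to_0} to upgrade to the strong $\Lambda^{t,p}_0(\Omega)$-convergence); both are valid since the operator is monotone with $\boldsymbol{a}=\boldsymbol{a}(x,\xi)$ and $b=b(x,r)$, and the strong coercivity \eqref{eq:strongCorcivity} implies \eqref{eq:weakCoercivity}. One small imprecision in your wording: Theorem \ref{theorem:weak_stability_variational_inequality} requires only the monotonicity \eqref{eq:monotonicity}, not strict monotonicity of $\boldsymbol{a}$ or $b$; the strictness is used only for the well-posedness of the single-$s$ map $u\mapsto\Psi(u)\mapsto S(\cdot)$ via Proposition \ref{UniquenessSolutionPointwisePrespective}, not in the Mosco passage to the limit.
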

\begin{proof}
    From Theorem \ref{thm:general_stability_s_to_0}, we just need to prove that $K^{s_n}_{\sigma}(u_n)\to K^\sigma_\sigma(u)$ in the generalized sense of Mosco.
	
    Using the same argument as in \eqref{eq:estimate_for_size_ball_qvi} with $s=s_n$, $t=\sigma$, and using Corollary \ref{cor:order_between_Ds} and the fractional Poincaré's inequlity \eqref{eq:poincare_inequality} to make explicit the dependence of the constant in terms of $s_*<s_n\leq \sigma$, we obtain
    \begin{equation}\label{eq:uniform_bound_for_qvi}
        \begin{split}
            \|D^{s_n}\Psi(u_n)\|^{p-1}_{L^p(\R^d;\R^d)}
            \leq \left(\frac{C}{s_*^{1+1/p}}\right)^{p-1}\|D^\sigma \Psi(u_n)\|^{p-1}_{L^p(\R^d;\R^d)}\leq \left(\frac{C}{s_*^{2+1/p}}\right)^{p-1}\|\psi(u_n)\|^{p-1}_{\Lambda^{\sigma,p}(\R^d)}\\
            \leq \left(\frac{C}{s_*^{2+1/p}}\right)^{p-1}\|\psi(u_n)\|^{-1}_{\Lambda^{\sigma,p}(\R^d)}\langle-\Delta^{r}_p \Psi(u_n)+|\Psi(u_n)|^{p-2}\Psi(u_n),\Psi(u_n)\rangle_\sigma
            \leq M\left(\frac{C}{s_*^{2+1/p}}\right)^{p-1}.
        \end{split}
    \end{equation}
    This estimate is useful to obtain an estimate for $u_n$ in $\Lambda^{s_n,p}_0(\Omega)$, independent of $s_n$. In fact, testing \eqref{eq:quasivariationalObstacleAuxiliarProblem} with $\Psi(u_n)\in K^{s_n}_{\sigma}(u_n)$, and using an adaptation of the arguments employed in \eqref{InequalityForAPrioriBound}, \eqref{UpperBoundAprioriA} and \eqref{UpperBoundAprioriL} with the coercivity and growth assumptions on $\boldsymbol{a}$ and $b$ and the bound \eqref{eq:uniform_bound_for_qvi}, we get the uniform estimate,
    \begin{equation*}
        \|u_n\|_{\Lambda^{s_n,p}_0(\Omega)}\leq C
    \end{equation*}
    where $C>0$ is a positive constant depending on $\Omega,\alpha,\beta, M$ and $s_*$ but not on $n$.

    From Theorem \ref{CompactnessFromUniformBoundOnDs} we get $u_n\to u$ in $L^p(\Omega)$, which by continuity of $T$ implies $T(u_n)\to T(u)$ in ${L^p}'(\Omega)$, and consequently, from Corollary \ref{UniquenessFractionalPLaplacian} we get $\Psi(u_n)\to \Psi(u)$ in $\Lambda^{\sigma,p}_0(\Omega)$. Moreover, we also have $D^{s_n} \Psi(u_n)\to D^\sigma \Psi(u)$ in $L^p(\R^d;\R^d)$ because
    \begin{align*}
        &\lim_{n\to\infty}{\|D^{s_n} \Psi(u_n)-D^\sigma \Psi(u)\|_{L^p(\R^d;\R^d)}}\\
        &\leq \lim_{n\to\infty}{\left(\|D^{s_n}\Psi(u_n)-D^{s_n}\Psi(u)\|_{L^p(\R^d;\R^d)}+\|D^{s_n} \Psi(u)- D^\sigma \Psi(u)\|_{L^p(\R^d;\R^d)}\right)}\\
        &\leq \lim_{n\to\infty}{\left(C_{s_*}\|\Psi(u_n)-\Psi(u)\|_{\Lambda^{\sigma,p}_0(\Omega)}+\|D^{s_n} \Psi(u)- D^\sigma \Psi(u)\|_{L^p(\R^d;\R^d)}\right)}=0.
    \end{align*}
    To conclude, we just need to apply Corollary \ref{cor:ObstacleNoOrder}.
\end{proof}

\begin{remark}\label{rem:convergence_solutions_qvi_sn_equal_rn}
    When $T$ depends only on $u$ and $q=p$, a similar argument can be used to prove that the solutions $u_n\in K^{s_n}_{s_n}(u_n)$ to \eqref{eq:quasivariationalObstacleAuxiliarProblem}, \eqref{eq:convex_set_qvi} and \eqref{eq:auxiliary_problem_for_qvi}, for each $n\in\N$, converge to a solution $u\in K^{\sigma}_{\sigma}(u)$ of \eqref{eq:limit_qvi_problem}, as $s_n\to\sigma$. The only difference between the proofs is that we do not need to use Corollary \ref{cor:order_between_Ds} to get an estimate of the type of \eqref{eq:uniform_bound_for_qvi}, and instead of using Corollary \ref{UniquenessFractionalPLaplacian} for the convergence of $\Psi(u_n)$ to $\Psi(u)$ in $\Lambda^{\sigma,p}_0(\Omega)$, we use Remark \ref{rem:strong_convergence_solutions_pdes_p_laplacian} for the convergence of $\Pi_{s_n}(u_n)$ to $\Pi_\sigma(u)$ in $\L^p(\Omega)$.
\end{remark}

\begin{remark}
    In Theorem \ref{thm:convergenceSolutionsQuasivariationalInequality} and in Remark \ref{rem:convergence_solutions_qvi_sn_equal_rn} we can recover the solutions $u\in K^1_1(u)$ to the classical quasi-variational problem associated to \eqref{eq:quasivariationalObstacleAuxiliarProblem}, \eqref{eq:convex_set_qvi} and \eqref{eq:auxiliary_problem_for_qvi} with $s=r=1$, which is a localization result of the fractional quasi-variational inequality as $s\to1$.
\end{remark}

\subsubsection{Quasi-variational inequality of $s$-gradient constraint type}

Consider the problem of finding $u\in \mathcal{K}^s_{G(u)}$, such that
\begin{equation}\label{eq:qvi_gradient_constraint}
    \int_{\R^d}{\boldsymbol{a}(D^s u)\cdot D^s(v-u)}\,dx+\int_\Omega{b(u)(v-u)}\,dx\geq \langle F_s,v-u\rangle_s,\quad \forall v\in \mathcal{K}^s_{G(u)}
\end{equation}
where $G:L^{p^*_s}(\Omega)\to L^\infty_\nu(\R^d)$ is a continuous and bounded operator, with
\begin{equation*}
    L^\infty_\nu(\R^d)=\{g\in L^\infty(\R^d):\, g\geq \nu\mbox{ a.e. in }\R^d\}
\end{equation*}
for some $\nu>0$, and
\begin{equation}\label{eq:convex_set_qvi_gradient_constraint}
    \mathcal{K}^s_{G(u)}=\{w\in \Lambda^{s,p}_0(\Omega):\, |D^s w|\leq G(u) \mbox{ a.e. in }\R^d\}.
\end{equation}
This extends a result of \cite{rodrigues2019On} to the nonlinear fractional setting, inspired in examples of the classic case $s=1$ surveyed in \cite{rodrigues2019Variational}.

\begin{theorem}
    Let $\Omega\subset\R^d$ be a bounded open set, $s\in(0,1]$, $p\in(1,\infty)$ and $\nu>0$. Let $\mathscr{A}_s(u)=-D^s\cdot\boldsymbol{a}(D^su)$ be a $p$-Laplacian type operator, let $b:\R^d\times\R\to\R^d$ satisfy the growth condition \eqref{eq:upper_bound_for_B_when_omega_arbitrary} and the monotonicity condition \eqref{eq:monotonicityb}, with the constant $\beta_p\geq 0$, and let $F\in \Lambda^{-s,p'}(\Omega)$. If $G:L^{p^*_s}_0(\Omega)\to L^\infty_\nu(\R^d)$ is a continuous and bounded operator, then there exists a solution $u\in \mathcal{K}^s_{G(u)}$ to \eqref{eq:qvi_gradient_constraint}.
\end{theorem}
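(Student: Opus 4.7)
The natural strategy is a fixed-point argument for the solution operator
\[
\mathcal{S}:L^{p^*_s}_0(\Omega)\longrightarrow L^{p^*_s}_0(\Omega),\qquad \mathcal{S}(v)=u_v,
\]
where $u_v\in\mathcal{K}^s_{G(v)}$ is the unique solution (by Corollary~\ref{UniquenessFractionalPLaplacian}) of the variational inequality obtained by freezing $G(v)$ in the constraint; note that $\mathcal{K}^s_{G(v)}$ is non-empty (it contains $0$, since $|D^s0|=0\le\nu\le G(v)$), convex, and closed in $\Lambda^{s,p}_0(\Omega)$ because an $L^p$-convergent subsequence of $D^sw_n$ gives an a.e. pointwise limit preserving the constraint. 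A fixed point $u=\mathcal{S}(u)$ is the sought solution.

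\textbf{A priori estimate and continuity.} Testing the VI defining $\mathcal{S}(v)$ with the admissible function $w=0$ and using $\boldsymbol{a}(x,0)=0$ together with \eqref{eq:monotonicityPLaplacian} and \eqref{eq:monotonicityb} yields, for $p\ge 2$,
\[
\alpha_p\|D^su_v\|_{L^p(\R^d;\R^d)}^p+\beta_p\|u_v\|_{L^p(\Omega)}^p\le \langle F,u_v\rangle_s-\int_\Omega b(x,0)u_v\,dx,
\]
(with an analogous bound for $1<p<2$ as in \eqref{eq:almostRightBoundPLeq2}). Combined with the Poincaré inequality (Theorem~\ref{FracPoincareInequality}) and the estimate $|b(\cdot,0)|\le\gamma_2\in L^{p'}$, this produces a uniform bound $\|u_v\|_{\Lambda^{s,p}_0(\Omega)}\le R_0$, and by Proposition~\ref{prop:sobolev_embeddings_compact_support} also $\|u_v\|_{L^{p^*_s}(\Omega)}\le R$, independent of $v$. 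For continuity, suppose $v_n\to v$ in $L^{p^*_s}_0(\Omega)$; then $G(v_n)\to G(v)$ in $L^\infty(\R^d)$ by hypothesis, and I claim $\mathcal{K}^s_{G(v_n)}\xrightarrow{M}\mathcal{K}^s_{G(v)}$ in the classical Mosco sense. For the recovery property, given $w\in\mathcal{K}^s_{G(v)}$ choose $\theta_n=1-\|G(v)-G(v_n)\|_{L^\infty(\R^d)}/\nu\to 1$ and set $w_n=\theta_nw$; since $G(v)\ge\nu>0$, one gets $\theta_nG(v)\le G(v_n)$ and therefore $|D^sw_n|=\theta_n|D^sw|\le G(v_n)$, so $w_n\in\mathcal{K}^s_{G(v_n)}$ and $w_n\to w$ in $\Lambda^{s,p}_0(\Omega)$. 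The weak closure property is immediate from the $L^\infty$-convergence of $G(v_n)$. Theorem~\ref{theorem:StrongMosco} (specialized to the stationary case $s_n\equiv s$, with the $p$-Laplacian structure of $\mathscr{A}_s$ and the monotone $b$) then gives $\mathcal{S}(v_n)\to\mathcal{S}(v)$ strongly in $\Lambda^{s,p}_0(\Omega)$, hence in $L^{p^*_s}_0(\Omega)$ by the continuous embedding.

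\textbf{Fixed point and main obstacle.} With $B_R=\{v\in L^{p^*_s}_0(\Omega):\|v\|_{L^{p^*_s}}\le R\}$ the solution operator satisfies $\mathcal{S}(B_R)\subset B_R$, is continuous (Step~2 above), and its image is bounded in $\Lambda^{s,p}_0(\Omega)$; applying Schauder--Tychonoff on the weakly compact convex set $\{u\in\Lambda^{s,p}_0(\Omega):\|u\|_{\Lambda^{s,p}_0}\le R_0\}$ (reflexive for $p\in(1,\infty)$) delivers a fixed point. The delicate point is that the Sobolev embedding $\Lambda^{s,p}_0(\Omega)\hookrightarrow L^{p^*_s}_0(\Omega)$ is not compact at the critical exponent, so Schauder cannot be applied directly in the $L^{p^*_s}$-norm without additional care: either one interprets the continuity of $G$ as weak--strong sequential continuity from $L^{p^*_s}_0$ into $L^\infty_\nu$ and runs the argument in the weak topology of the bounded set in $\Lambda^{s,p}_0(\Omega)$, or one passes to a subcritical exponent $q<p^*_s$, exploits the compactness $\Lambda^{s,p}_0\Subset L^q_0$ (Theorem~\ref{CompactnessResultALaBellido}) to extract subsequential limits, and uses the uniform $L^{p^*_s}$-bound together with the continuity of $G$ to close the loop. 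I expect this topological bookkeeping to be the main technical step; once it is in place, uniqueness of $u_v$ for each $v$ (from the strict monotonicity of the $p$-Laplacian type operator) and the strong Mosco-stability of Theorem~\ref{theorem:StrongMosco} make the fixed-point argument run smoothly.
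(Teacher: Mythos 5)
You identify the right fixed-point architecture and the a priori bound, and your Mosco-based route to continuity of the solution map is a legitimate alternative to the paper's direct quantitative estimate (the paper compares $u_1=S(F,g_1)$ and $u_2=S(F,g_2)$ by testing with the rescaled functions $\tilde u_i=\tfrac{\nu}{\nu+\eta}u_i$, $\eta=\|g_1-g_2\|_{L^\infty}$, which happen to be admissible for the opposite constraint, and obtains an explicit Hölder modulus; your scaling $\theta_n w$ is the same trick used in the Mosco recovery step). However, you stop exactly where the real work begins, and the two workarounds you float do not close the gap.

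The missing observation is that the gradient constraint itself manufactures compactness, so neither the weak-topology Tychonoff detour nor the pass-to-subcritical detour is needed. Since $u_{G(w)}\in\mathcal{K}^s_{G(w)}$ and $G$ maps boundedly into $L^\infty_\nu(\R^d)$, the solutions satisfy a uniform $L^\infty$ bound on $D^s u_{G(w)}$, namely $|D^s u_{G(w)}|\le G(w)\le \|G(w)\|_{L^\infty}$ a.e. Interpolating this against the uniform $L^p$ bound $\|D^s u_{G(w)}\|_{L^p}\le C_F$ yields, for any $q>p$,
\begin{equation*}
\|D^s u_{G(w)}\|_{L^q(\R^d;\R^d)}^q\le \|G(w)\|_{L^\infty(\R^d)}^{q-p}\,\|D^s u_{G(w)}\|_{L^p(\R^d;\R^d)}^p,
\end{equation*}
uniformly over $w\in\overline{B_{C_F}}$. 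Taking $q>d/s$ (so $sq>d$), the fractional Morrey embedding of Proposition~\ref{prop:sobolev_embeddings_lions_calderon}/\ref{prop:sobolev_embeddings_compact_support} gives a uniform bound in $C^{0,\gamma}(\overline{\Omega})$ with $\gamma=s-d/q$, and Ascoli--Arzel\`a then shows $T(\overline{B_{C_F}})$ is precompact in $C^0(\overline{\Omega})\subset L^{p^*_s}(\Omega)$. This is precisely what permits ordinary Schauder in the strong $L^{p^*_s}$ topology, bypassing the critical-exponent non-compactness entirely.

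Your two proposed escapes do not work as stated: reinterpreting $G$ as weakly--strongly continuous strengthens the hypothesis of the theorem (only strong--strong continuity from $L^{p^*_s}_0$ to $L^\infty_\nu$ is assumed), and passing to a subcritical $q<p^*_s$ leaves you with convergence $v_n\to v$ in $L^q$ only, from which you cannot conclude $G(v_n)\to G(v)$ in $L^\infty$ since $G$ is defined and continuous on $L^{p^*_s}_0(\Omega)$, not on $L^q_0(\Omega)$; a uniform $L^{p^*_s}$ bound plus $L^q$-convergence does not upgrade to $L^{p^*_s}$-convergence without an equi-integrability argument you do not have. So the compactness step is a genuine gap, and the key idea you missed is to exploit the pointwise $L^\infty$ gradient bound inherited from the constraint.
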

\begin{proof}
    Let $g\in L^\infty_\nu(\R^d)$. We know from Corollary \ref{UniquenessFractionalPLaplacian} that there exists a unique solution $u_g=S(F, g)\in \mathcal{K}^s_g$ such that
    \begin{equation*}
        \int_{\R^d}{\boldsymbol{a}(D^s u_g)\cdot D^s(v-u_g)}\,dx+\int_\Omega{b(u)(v-u_g)}\,dx\geq \langle F, v-u_g\rangle_s, \quad\forall v\in \mathcal{K}^s_g.
    \end{equation*}
    Moreover, since $0\in \mathcal{K}^s_g$, we have
    \begin{align*}
        \alpha_p\|D^s u_g\|_{L^p(\R^d;\R^d)}^p+\beta_p\|u_g\|_{L^p(\Omega)}
        &\leq \int_{\R^d}{\boldsymbol{a}(D^s u_g)\cdot D^s u_g}\,dx+\int_\Omega{b(u_g)u_g}\,dx\\
        &\leq\langle F, u_g\rangle_s\leq \|\boldsymbol{f}\|_{{L^p}'(\R^d;\R^d)}\|D^s u_g\|_{L^p(\R^d;\R^d)},
    \end{align*}
    which implies the existence of a positive constant $C_{F}$, independent of $g$, such that
    \begin{equation*}
        \|u_g\|_{L^{p_s^*}(\Omega)}\leq C_s\|D^s u_g\|_{L^p(\R^d;\R^d)}\leq C_{F}.
    \end{equation*}
    
    Let us consider the operator $T=S(F,\cdot)\circ G$. Notice that for every $w\in \overline{B_{C_F}}=\{v\in L^{p^*_s}_0(\Omega):\, \|v\|_{L^{p^*_s}(\Omega)}\leq C_F\}$, we have $T(w)=u_{G(w)}\in \overline{B_{C_F}}\cap \mathcal{K}^s_{G(w)}$, so by proving that $T$ is continuous and compact in $L^{p^*_s}_0(\Omega)$, we can apply Schauder's fixed point theorem on $\overline{B_{C_F}}$ and deduce that there exists a function $u\in \overline{B_{C_F}}$ such that $T(u)=u$.
    
    To prove that $T:L^{p^*_s}_0(\Omega)\to L^{p^*_s}_0(\Omega)$ is continuous, we only need to prove that $S(F_s,\cdot):L^\infty(\Omega)\to L^{p^*_s}_0(\Omega)$ is continuous since $G:L^{p^*_s}(\Omega)\to L^\infty_\nu(\R^d)$ is by hypothesis a continuous operator. Consider two functions $g_1,g_2\in L^{p^*}(\Omega)$ and define $u_i=S(F,g_i)$ as well as $\Tilde{u}_i=\frac{\nu}{\nu+\eta}u_i$, where $\eta:=\|g_1-g_2\|_{L^\infty(\R^d)}$, for $i=1,2$. Notice that $\Tilde{u}_1\in K^s(g_2)$ and $\Tilde{u}_2\in K^s(g_1)$ because
    \begin{equation*}
        |D^s\Tilde{u}_1|\leq \frac{\nu}{\nu+\eta}|g_1|\leq \frac{\nu}{\nu+\eta}(|g_1-g_2|+|g_2|)\leq \frac{\eta}{\nu+\eta}\nu+\frac{\nu}{\nu+\eta}|g_2|\leq |g_2|,
    \end{equation*}
    and similarly $|D^s\Tilde{u}_2|\leq |g_1|$. Moreover, these functions also satisfy
    \begin{equation*}
        |u_i-\Tilde{u}_i|=\frac{\eta}{\nu+\eta}|u_i|\leq\frac{\eta}{\nu}|u_i|\quad \mbox{ and }\quad |D^s(u_i-\Tilde{u}_i)|=\frac{\eta}{\nu+\eta}|D^s u_i|\leq\frac{\eta}{\nu}|D^s u_i|.
    \end{equation*}
    These estimates, in conjugation with Hölder's inequality, the fractional Poincaré inequality and the fact that $u_{g_i}=S(F_s, g_i)$, allows us to obtain
    \begin{align*}
        &\int_{\R^d}{(\boldsymbol{a}(D^s u_1)-\boldsymbol{a}(D^s u_2))\cdot D^s(u_1-u_2)}\,dx+\int_\Omega{(b(u_1)-b(u_2))(u_1-u_2)}\,dx\\
        &\leq \langle F, (u_1-\Tilde{u}_1)+(u_2-\Tilde{u}_2))\rangle_s+\int_{\R^d}{\boldsymbol{a}(D^s u_1)\cdot D^s(\Tilde{u}_2-u_2)}\,dx+\int_{\R^d}{\boldsymbol{a}(D^s u_2)\cdot D^s(\Tilde{u}_1-u_1)}\,dx\\
        &\qquad\qquad+\int_{\Omega}{b(u_1)(\Tilde{u}_2-u_2)}\,dx+\int_{\Omega}{b(u_2)\cdot (\Tilde{u}_1-u_1)}\,dx\\
        &\leq \frac{\eta}{\nu}\|f\|_{{L^p}'(\Omega)}(\|u_1\|_{L^p(\Omega)}+\|u_2\|_{L^p(\Omega)})+2C\frac{\eta}{\nu}\left(\|D^s u_1\|^{p-1}_{L^p(\R^d;\R^d)}\|D^s u_2\|_{L^p(\R^d;\R^d)}+\|u_1\|^{p-1}_{L^p(\Omega)}\|u_2\|_{L^p(\Omega)}\right)\\
        &\leq C\|g_1-g_2\|_{L^\infty(\R^d)}.
    \end{align*}
    Moreover, if we combine this estimate with the coercivity conditions on $\boldsymbol{a}$ and $b$, \eqref{eq:monotonicityPLaplacian} and \eqref{eq:monotonicityb} respectively, we have for $p>2$ 
    \begin{equation*}
        C_{\alpha_p,\beta_p}\|u_1-u_2\|^p_{\Lambda^{s,p}(\Omega)}\leq C\|g_1-g_2\|_{L^\infty(\R^d)}
    \end{equation*}
    and for $1<p<2$
    \begin{multline*}
        C_{\alpha_p,\beta_p}\|u_1-u_2\|^2_{\Lambda^{s,p}(\Omega)}\leq C\|g_1-g_2\|_{L^\infty(\R^d)}\left(\|D^s u_1\|_{L^p(\R^d;\R^d)}+\|D^s u_2\|_{L^p(\R^d;\R^d)}\right)^{2-p}\\
        \leq (2C_{F})^{2-p}C\|g_1-g_2\|_{L^\infty(\R^d)},
    \end{multline*}
    concluding the continuity of $S(F_s,\cdot):L^\infty(\Omega)\to L^{p^*_s}(\Omega)$.
    
    To prove the compactness of $T$, we use the boundedness of $G:L^{p^*_s}(\Omega)\to L^\infty_\nu(\R^d)$ and the estimate 
    \begin{multline*}
        \|D^s u_{G(w)}\|^q_{L^q(\R^d;\R^d)}=\int_{\R^d}{|D^s u_{G(w)}|^q}\,dx=\int_{\R^d}{|D^s u_{G(w)}|^{q-p}|D^s u_{G(w)}|^p}\,dx\\
        \leq \|D^s u_{G(w)}\|^{q-p}_{L^\infty(\R^d;\R^d)}\|D^s u_{G(w)}\|_{L^p(\R^d;\R^d)}^p\leq \|G(w)\|^{q-p}_{L^\infty(\R^d;\R^d)}\|D^s u_{G(w)}\|_{L^p(\R^d;\R^d)}^p
    \end{multline*}
    with $q>\frac{N}{s}$ $q>p$ and $\gamma=s-\frac{N}{q}$, to derive $T(\overline{B_{C_F}})\subset C^{0,\gamma}(\overline{\Omega})\Subset C^0(\overline{\Omega})\subset L^{p^*_s}(\Omega)$, by using Ascoli-Arzelà's theorem.
\end{proof}

\begin{remark}
    Similarly to \cite[Example 4.1]{rodrigues2019Variational}, we can consider $G:L_0^{p^*_s}(\Omega)\to\R^d$ defined as
    \begin{equation*}
        G(u)(x)=F(x,w_u(x)),
    \end{equation*}
    where $F:\R^d\times\R\to\R$ is a bounded function in $x\in\R^d$ and continuous in $w\in \R$ uniformly with respect to $x\in\R^d$, satisfying for some $\nu>0$,
    \begin{equation*}
        0<\nu\leq F(x,w)\leq \varphi(|w|)\quad \mbox{ a.e. } x\in\R^d
    \end{equation*}
    for some monotone increasing function $\varphi$. Moreover, for $u\in L_0^{p^*_s}(\Omega)$, we set
    \begin{equation*}
        w_u(x)=\int_\Omega{\theta(x,y)u(y)}\,dy
    \end{equation*}
    with $\theta\in L^\infty(\R^d_x; L^{(p^*_s)'}(\Omega_y))$.
\end{remark}

The case of the convergence of the solutions of the quasi-variational problems with gradient constraints is much more delicate, since the criteria of the generalized Mosco convergence of the convex sets $K(u)$ is not satisfied in general for the quasi-variational solutions $u$. However, using a result of \cite{azevedo2022transport} on the localization of the solutions of variational inequalities with gradient constraint as $s\to 1$ in the Hilbertian framework $\Lambda^{s,2}_0(\Omega)=H^s_0(\Omega)$, we can obtain the following result.

\begin{theorem}
    Let $\Omega\subset\R^d$ be a bounded open set, $0<s_*<s<1$ and $F_s={f_0}_s-D^s\cdot\boldsymbol{f}_s\in H^{-s}(\Omega)$ satisfying \eqref{eq:convergence_data}  as $s\nearrow 1$ with $p'=2$. Consider the linear operator $\mathscr{A}_{s}u=-D^s\cdot(A(x)D^{s} u)$ with $A$ being a bounded, measurable and strictly elliptic matrix-valued function, satisfying
    \begin{equation*}
        \alpha|\xi|^2\leq A(x)\xi\cdot\xi\quad \mbox{ and } \quad A(x)\xi\cdot\eta\leq C|\xi||\eta|,
    \end{equation*}
    and a continuous operator $G:L^2(\Omega)\to L^\infty_\nu(\R^d)$, such that $0<g_*\leq G(v)\leq g^*$ for all $v\in L^2(\Omega)$. From the set of $u_s\in \mathcal{K}^{s}_{G(u_s)}\subset H^s_0(\Omega)$ that are solutions of \eqref{eq:qvi_gradient_constraint}-\eqref{eq:convex_set_qvi_gradient_constraint} for each $s$, we can extract a generalized subsequence such that, as $s\nearrow 1$,
    \begin{equation*}
        u_s\to u \mbox{ in } \Lambda^{s_*,p}_0(\Omega)\cap C^{0,\lambda}(\overline{\Omega}) \quad \mbox{ and } \quad D^{s}u_s\overset{\ast}{\rightharpoonup} D u \mbox{ in } L^\infty(\R^d;\R^d),
    \end{equation*}
    for all $0\leq\lambda<s_*$ and all $1<p<\infty$, where $u\in K^1_{G(u)}\subset H^1_0(\Omega)\cap C^{0,1}(\overline{\Omega}) $ solves
    \begin{equation*}
        \int_{\Omega}{A D u\cdot D(w-u)}\,dx\geq \langle F, w-u\rangle,\quad \forall w\in K^1_{G(u)}.
    \end{equation*}
\end{theorem}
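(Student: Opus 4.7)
The strategy combines uniform a priori bounds coming from the pointwise $L^\infty$-constraint on $D^s u_s$ with the localization result of \cite{azevedo2022transport} for $s$-gradient-constrained variational inequalities, and handles the quasi-variational coupling through the continuity of $G$. The compactness engine is the weak-$*$ relative compactness of $\{D^s u_s\}$ in $L^\infty(\R^d;\R^d)$ together with the generalized Mosco machinery of Section \ref{sec:stability}.

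First, I would exploit the constraint $|D^s u_s|\leq G(u_s)\leq g^*$ a.e.\ in $\R^d$ together with $u_s\in H^s_0(\Omega)$ and the representation $D^s u_s=D(I_{1-s}u_s)$ from Proposition \ref{prop:characterization_fractional_gradient_for_functions_in_lions_calderon}. Combined with the fact that $u_s$ is supported in $\overline{\Omega}$ (bounded), the decay properties of Riesz potentials of compactly supported functions give uniform bounds of $D^s u_s$ in $L^p(\R^d;\R^d)$ for every $1<p<\infty$. The fractional Poincaré inequality (Theorem \ref{FracPoincareInequality}), with constant controlled in terms of $s_*>0$, then yields uniform bounds of $u_s$ in $\Lambda^{s,p}_0(\Omega)$ for every $p\in(1,\infty)$, and the fractional Morrey inequality (item~3 of Proposition \ref{prop:sobolev_embeddings_lions_calderon}) with $p$ large produces uniform Hölder estimates $\|u_s\|_{C^{0,s-d/p}(\overline{\Omega})}\leq C_p$.

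Next, by a diagonal argument using Theorem \ref{CompactnessFromUniformBoundOnDs} along a countable dense set of exponents $p$ and Arzelà--Ascoli on the uniform Hölder bounds, combined with Banach--Alaoglu applied to $\{D^s u_s\}\subset L^\infty(\R^d;\R^d)$, I would extract a generalized subsequence and a function $u$ such that
\begin{equation*}
    u_s\to u \text{ in } \Lambda^{t,p}_0(\Omega)\cap C^{0,\lambda}(\overline{\Omega}), \quad D^s u_s \overset{*}{\rightharpoonup} \xi \text{ in } L^\infty(\R^d;\R^d),
\end{equation*}
for every $t,\lambda<s_*$ and every $1<p<\infty$. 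To identify $\xi=Du$, I would test the weak-$*$ convergence against arbitrary $\boldsymbol\varphi\in C^\infty_c(\R^d;\R^d)$ and use the duality from Proposition \ref{prop:duality_gradient_divergent} together with the continuity $D^s\cdot\boldsymbol\varphi\to D\cdot\boldsymbol\varphi$ in $L^{p'}(\R^d)$ from Proposition \ref{prop:continuous_dependence_fractional_gradiente_on_s}, obtaining
\begin{equation*}
    \int_{\R^d}\xi\cdot\boldsymbol\varphi\,dx=\lim_{s\nearrow 1}\int_{\R^d}D^s u_s\cdot\boldsymbol\varphi\,dx=-\lim_{s\nearrow 1}\int_{\R^d}u_s\,D^s\cdot\boldsymbol\varphi\,dx=-\int_{\R^d}u\,D\cdot\boldsymbol\varphi\,dx,
\end{equation*}
so that $\xi=Du$ distributionally. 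Consequently $u\in W^{1,\infty}(\R^d)$, $u=0$ in $\R^d\setminus\Omega$, and $|Du|\leq g^*$ a.e., giving $u\in H^1_0(\Omega)\cap C^{0,1}(\overline{\Omega})$.

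Finally, since $u_s\to u$ uniformly in $\overline\Omega$, hence in $L^2(\Omega)$, the continuity of $G\colon L^2(\Omega)\to L^\infty_\nu(\R^d)$ yields $g_s:=G(u_s)\to g:=G(u)$ in $L^\infty(\R^d)$, with the uniform lower bound $g_s\geq g_*>0$. I would then invoke the localization result of \cite{azevedo2022transport}: for the \emph{fixed-constraint} sequence of variational inequalities $u_s\in\mathcal{K}^s_{g_s}$ with right-hand sides $F_s\to F$, the uniform positive lower bound $g_*>0$ allows recovery sequences $v_s\in\mathcal{K}^s_{g_s}$ to be constructed for any $v\in K^1_g$ via a standard rescaling $v_s=v/(1+\|g_s-g\|_{L^\infty}/g_*)$ (which satisfies $|D^s v_s|\leq g_s$ using Proposition \ref{prop:continuous_dependence_fractional_gradiente_on_s} applied to the smooth truncations of $v$), with $\Pi_{s}(v_s)\to\Pi_1(v)$ in $\L^2(\Omega)$. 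This yields $\mathcal{K}^s_{g_s}\xrightarrow{s-M} K^1_g$, and the ellipticity of $A$ combined with the weak-$*$ convergence $D^s u_s\overset{*}{\rightharpoonup}Du$ permits passing to the limit in the variational inequality as in the proof of Theorem \ref{thm:general_stability_s_to_0}, identifying $u\in K^1_{G(u)}$ as a solution of the classical limit problem.

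The main obstacle will be the construction of recovery sequences compatible with the varying $s$-dependent constraint $|D^s v_s|\leq G(u_s)$. This is the technical core of the argument and the reason the result is stated only in the linear Hilbertian case with a \emph{strictly positive} lower bound $g_*>0$: without the buffer $g_*$ one cannot rescale $v$ to force it strictly inside the constraint, and the generalized Mosco convergence of the fractional gradient-constrained convex sets would fail.
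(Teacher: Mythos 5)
Your proposal reconstructs from scratch an argument that the paper dispatches in one sentence by invoking \cite[Theorem~5.1]{azevedo2022transport}. The paper's actual proof is a reduction: the constraint $|D^s u_s|\leq g^*$ gives a uniform $L^\infty$ bound on the fractional gradients, which (via the fractional Morrey estimate) yields a uniform Hölder bound and hence uniform convergence $u_s\to u$ in $\overline\Omega$; this in turn gives $G(u_s)\to G(u)$ in $L^\infty(\R^d)$ by continuity of $G$; and then the fixed-constraint localization theorem of \cite{azevedo2022transport} is applied with the converging constraint functions $g_s=G(u_s)\to g=G(u)$. Your a priori estimates, compactness extraction and identification of the weak-$*$ limit $\xi=Du$ all match the ingredients needed to verify the hypotheses of that external theorem, and are correct.

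Where your proposal is not sound as written is the recovery-sequence construction for the generalized Mosco convergence $\mathcal{K}^s_{g_s}\xrightarrow{s\text{-}M}K^1_g$. You propose $v_s=v/(1+\|g_s-g\|_{L^\infty}/g_*)$ for $v\in K^1_g$ and claim this satisfies $|D^s v_s|\leq g_s$ a.e.\ ``using Proposition \ref{prop:continuous_dependence_fractional_gradiente_on_s} applied to the smooth truncations of $v$.'' This step has a real gap: the hypothesis $|Dv|\leq g$ constrains the \emph{local} gradient, whereas the target constraint is on the \emph{nonlocal} quantity $D^s v_s$, and Proposition \ref{prop:continuous_dependence_fractional_gradiente_on_s} gives convergence $D^s v\to Dv$ only in $L^p(\R^d;\R^d)$, not pointwise a.e.\ nor in $L^\infty$. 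A constant rescaling factor therefore cannot absorb the error $|D^s v-Dv|$, which may be large on a set of positive measure for a generic $v\in W^{1,\infty}_0(\Omega)$. Making this rigorous requires smoothing $v$, establishing \emph{uniform} convergence $D^s v_\varepsilon\to Dv_\varepsilon$ for the mollified functions, and carefully balancing the mollification and rescaling parameters — and even then one must show density of smooth functions in the gradient-constrained convex set, which is itself nontrivial. This is precisely the content that \cite[Theorem~5.1]{azevedo2022transport} supplies, and it is why the paper cites it rather than reproving it. You correctly flag this as ``the technical core,'' but the sketch you give does not close the gap.
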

\begin{proof}
    This is a direct application of \cite[Theorem 5.1]{azevedo2022transport}, because the assumption and the uniform estimate of $D^{s}u_s$ in $L^{\infty}(\Omega)$ imply the uniform convergence $u_s\to u$ and, consequently also, $G(u_s)\to G(u)$ in $L^\infty(\Omega)$ as $s\nearrow 1$.
\end{proof}


\textbf{Acknowledgments.} The first and second authors' research was done under the framework of CMAFcIO, FCT project: UIDB/04561/2020 and UIDP/04561/2020. The first author was also supported by the PhD FCT-grant UI/BD/152276/2021.

\newcommand{\etalchar}[1]{$^{#1}$}

\end{document}